\def\[#1\]{\begin{equation}#1\end{equation}}
\def\beq{%
   \relax\ifmmode
      \@badmath
   \else
      \ifvmode
         \nointerlineskip
         \makebox[.6\linewidth]%
      \fi
      $$
   \fi
}
\def\eeq{%
   \relax\ifmmode
      \ifinner
         \@badmath
      \else
         $$
      \fi
   \else
      \@badmath
   \fi
   \ignorespaces
}
\def\enddisplaymath{\eeq\global\@ignoretrue}
\newtheorem{thm}{Theorem}
\newtheorem{cor}[thm]{Corollary}
\newtheorem{lem}[thm]{Lemma}
\newtheorem{prop}[thm]{Proposition}
\theoremstyle{remark}
\newtheorem*{rem}{Remark}
\newtheorem{rems}{Remark}[thm]
\theoremstyle{definition}
\newtheorem{defn}{Definition}
\numberwithin{equation}{section}
\numberwithin{thm}{section}
\numberwithin{eg}{section}
\numberwithin{defn}{section}
\DeclareMathOperator{\Mat}{Mat}
\DeclareMathOperator{\GL}{GL}
\DeclareMathOperator{\SL}{SL}
\DeclareMathOperator{\PGL}{PGL}
\DeclareMathOperator{\PSL}{PSL}
\DeclareMathOperator{\GU}{GU}
\DeclareMathOperator{\SU}{SU}
\DeclareMathOperator{\PGU}{PGU}
\DeclareMathOperator{\PSU}{PSU}
\DeclareMathOperator{\GO}{GO}
\DeclareMathOperator{\SO}{SO}
\DeclareMathOperator{\Sp}{Sp}
\DeclareMathOperator{\PSp}{PSp}
\DeclareMathOperator{\Aut}{Aut}
\DeclareMathOperator{\Out}{Out}
\DeclareMathOperator{\Gal}{Gal}
\DeclareMathOperator{\ST}{ST}
\DeclareMathOperator{\Alt}{Alt}
\def\normal{\mathrel{\unlhd}}
\DeclareMathOperator{\Tr}{Tr}
\DeclareMathOperator{\Pic}{Pic}
\DeclareMathOperator{\Prym}{Prym}
\DeclareMathOperator{\Spec}{Spec}
\DeclareMathOperator{\Tor}{Tor}
\DeclareMathOperator{\coker}{coker}
\DeclareMathOperator{\im}{im}
\newcommand{\frakm}{{\mathfrak{m}}}
\newcommand{\frakp}{{\mathfrak{p}}}
\newcommand{\frakg}{{\mathfrak{g}}}
\newcommand{\Q}{\mathbb Q}
\newcommand{\F}{\mathbb F}
\newcommand{\Z}{\mathbb Z}
\renewcommand{\H}{\mathbb H}
\renewcommand{\P}{\mathbb P}
\newcommand{\A}{\mathbb A}
\newcommand{\G}{\mathbb G}
\newcommand{\C}{\mathbb C}
\newcommand{\R}{\mathbb R}
\newcommand{\gl}{{\mathfrak{gl}}}
\renewcommand{\sl}{{\mathfrak{sl}}}
\newcommand{\su}{{\mathfrak{su}}}
\renewcommand{\sp}{{\mathfrak{sp}}}
\DeclareMathOperator{\Lie}{Lie}
\DeclareMathOperator{\Ad}{Ad}
\DeclareMathOperator{\cosoc}{cosoc}
\DeclareMathOperator{\rad}{rad}
\DeclareMathOperator{\ord}{ord}
\DeclareMathOperator{\rnk}{rnk}
\DeclareMathOperator{\Sel}{Sel}
\DeclareMathOperator{\Res}{Res}
\newcommand{\et}{\text{\'et}}
\newcommand{\fl}{\text{fl}}
\newcommand{\sep}{\text{sep}}
\DeclareMathOperator{\U}{U}
\begin{document}

\title{The monodromy of cyclic Pryms}
  \author{Eric M. Rains}

\date{December 22, 2025}
\maketitle

\begin{abstract}
The {\em Prym} of a cyclic covering of smooth projective curves is the
``new'' part of the Jacobian: the quotient of the Jacobian of the covering
curve by the Jacobians of the intermediate covers.  Given a family of such
coverings, the fundamental group of the base of the family acts on the Tate
modules of the Pryms, and the image of this representation is a key
ingredient in answering arithmetic statistics questions about the
distribution of the group structure of the $L$-torsion of a random Prym in
the family.  (Over $\F_q$, the action of Frobenius is roughly uniformly
distributed over the {\em arithmetic} monodromy, a coset of the image of
the fundamental group of the base change to $\bar\F_q$ (the {\em geometric}
monodromy).)  In the present note, we show for a number of natural families
that (with limited exceptions) the geometric monodromy is sandwiched
between a certain unitary group and its derived subgroup.  In particular,
this holds for the one-parameter families obtained by starting with any
fixed cover and varying one (tame) ramification point.  As an application,
we deduce analogous largeness results for the monodromy of the Selmer
groups of elliptic surfaces with $j=0$ or $j=1728$, by relating them to
cyclic covers of degree 6 or 4 respectively, implying that their Selmer
groups do not satisfy the standard heuristics.  For instance, for eliptic
surfaces with $j=0$ of sufficiently large height over $\P^1_{\F_q}$, the
average size of the $l$-Selmer group is $l+3+o_q(1)$ when $l$ (fixed) and
$q$ (large) are both 1 mod 3, compared to $l+1+o_q(1)$ for general elliptic
surfaces.

\end{abstract}

\tableofcontents

\section{Introduction}

The present work arose out of the work of \cite{selmer0}, showing that
Selmer groups of rational elliptic surfaces satisfy the heuristic of
\cite{BKLPR} in a suitable limit.  This led naturally to the question of
whether one could obtain similar results for families for which the
heuristic does not apply.  The calculations of \cite{selmer0} were in two
parts: an arithmetic calculation of the (geometric) monodromy, and a
combinatorial calculation of the limiting distribution implied by the
monodromy and equidistribution.  Thus the first question is: are there
interesting families not satisfying the general heuristic for which one can
compute the monodromy?

The most natural such families are the families with constant $j$-invariant
$0$ or $1728$, where the curves have nontrivial (geometric) automorphism
group, giving additional structure to the Selmer group and forcing the
monodromy group to be smaller.  In particular, it is not too difficult to
guess a close approximation to the monodromy: the automorphism forces it to
be contained in a generalized unitary group, and in the absence of any
additional structure, one expects it to contain the corresponding special
unitary group.  (This expectation is what we call ``large monodromy''
below.)  This would be enough to suggest a revised heuristic, but one would
of course like to know that the monodromy truly is as expected, giving the
original objective of the present note.

The key further insight that leads to our actual main topic is the
observation that (away from characteristics 2 and 3, at least) curves with
$j=0$ are cyclic twists of constant curves: there is a degree 6 cyclic
extension of the field of definition over which it is isomorphic to ones
favorite curve $E_0$ of $j$-invariant over 0 the prime field.  In
particular, the Galois cohomology of $E[l]$ is closely related to the
Galois cohomology of $E_0[l]$, and the relevant restriction and transfer
maps respect the additional conditions cutting out the Selmer groups.  One
thus finds that the Selmer group of $E[l]$ is essentially just the
invariant part of the Selmer group of $E_0[l]$ over the cyclic extension.
Since the latter elliptic surface has only smooth fibers, its Selmer group
is computable using \'etale cohomology, and one finds that it is nothing
other than the tensor product of the $l$-torsion of $\Pic(E_0)$ and the
$l$-torsion of $\Pic(\tilde{C})$ where $\tilde{C}$ is the covering curve.
As a module over the cyclic group algebra $\Z[z]/(z^6-1)$, $\Pic(E_0)$ is
geometrically isomorphic to $\Z[\zeta_6]$, and thus one essentially has
\[
\Sel_l(E)\cong
\Pic(\tilde{C})[l]\otimes_{(\Z/l\Z)[z]/(z^6-1)} (\Z/l\Z)[\zeta_6].
\]
(This remains true as a representation of the monodromy group, with the one
caveat that the monodromy typically acts as a nontrivial scalar on
$(\Z/l\Z)[\zeta_6]$, so the representation will be twisted accordingly.)
In other words, computing the Selmer monodromy of the general $j=0$ curve
reduces up to a twist to computing the monodromy of the $l$-torsion of the
general $6$-cyclic Prym:
\[
\Pic(\tilde{C})\otimes_{\Z[z]/(z^6-1)} \Z[\zeta_6],
\]
and similarly for $j=1728$.

This then leads naturally to the question: given a family of curves
$\tilde{C}$ with an action of the cyclic group of order $N$, when does the
Prym
\[
\Pic(\tilde{C})\otimes_{\Z[z]/(z^N-1)} \Z[\zeta_N]
\]
have large $l$-torsion monodromy?  (Suppose for the moment that $l$, $N$,
and the characteristic are pairwise relatively prime.)  We show that this
holds in significant generality, with the main result being that even if
one fixes the quotient curve and all but one of the ramification points,
the resulting one-parameter family of curves already has large monodromy,
so long as the rank of the Prym as a $\Z[\zeta_N]$-module is at least $5$.
(We also explain precisely what can happen for rank $3$ and $4$.)  In
slightly weaker form, this continues to hold in a number of cases with
$\gcd(l,N)\ne 1$ or even in characteristic dividing $N$.

The argument proceeds in several steps.  The first is to note that the
monodromy of the Prym in characteristic 0 is determined by the monodromy of
appropriate period integrals.  When the base curve is $\P^1$ (and working
geometrically so that we may replace the cyclic group by $\mu_N$), these
integrals satisfy a well-known differential equation (the Jordan-Pochhammer
equation) in the varying ramification point, the monodromy of which is
well-understood.  In particular, one finds that up to a choice of basis,
one can explicitly right down the matrices by which the generators of the
fundamental group act (though it turns out we do not need to do so).  One
is thus left with the question of determining when the group generated by
those explicit matrices is large mod $l$.  We in fact use a
basis-independent description: the generators are pseudo-reflections (they
fix a codimension 1 subspace pointwise) of known determinant and multiply
to a known scalar.  This (under mild conditions that are necessary for
largeness) is enough to characterize the representation up to conjugacy.

To show largeness of this representation, we first consider largeness
modulo each individual prime of $\Q(\zeta_N+\zeta_N^{-1})$.  For this, we
use a well-known classification of irreducible subgroups of
$\GL_n(\bar\F_p)$ generated by pseudo-reflections to show that (with
limited exceptions for small $n$) the residue group is as expected.  (This
classification, though well-known, does not appear to have been collected
in any single place, so we do so below for all $n$.)  One complication that
arises when $\zeta_N+\zeta_N^{-1}\notin \Q$ is that different primes over
the same rational prime give rise to isomorphic residue groups, and thus
largeness modulo the individual primes need not imply largeness over the
product.  On can show, however, that it suffices to show largeness modulo
products of two primes, which essentially reduces to showing that the two
representations are not related by an isomorphism of the residue group.
The finishing step of largeness is then to lift to powers of primes.  For
unramified primes, this uses the general lifting result of
\cite{VasiuA:2003}, except that the statement given there has some errors;
we correct the statement and discuss a corrected proof below.  For ramified
primes, we use more ad hoc methods to construct enough elements of the
kernel of reduction.

The above suffices to prove largeness of the monodromy for cyclic covers of
$\P^1$ in characteristic 0.  There remain two more issues: extending to
higher genus base curves, and extending to finite characteristic (including
characteristic dividing $N$).  For extending to higher genus, we use the
fact that there are covers of {\em singular} curves such that the Prym is
not only smooth, but isomorphic to the Prym of the normalization.  In
particular, this lets us construct families of covers of {\em nodal} curves
of genus $g$ that have the same Pryms as covers of $\P^1$ and thus have
large monodromy.  This implies largeness for smooth base curves; this
strictly speaking only applies to cases in which the curve and ramification
data all vary, but in characteristic 0, the monodromy of subfamilies is
often normal, making largeness relatively straightforward.  For restriction
to finite characteristic, we avoid some of the technical issues by working
with 1-parameter families, letting us reduce them to checking whether a
certain cyclic cover has good reduction.

Putting it all together, we find that for 1-parameter families of cyclic
covers with a moving {\em tame} ramification point, the monodromy of the
Prym is large (away from $p$-torsion and primes where the monodromy is
reducible) as long as the rank of the Prym as a $\Z[\zeta_N]$-module
(determined from the genus of the base curve and the number of ramification
points) is at least 5; similar results apply for families of general curves
as long as the base genus is positive and $N$ is not a power of $p$.

Given the above, our original application to Selmer groups reduces (modulo
verifying that transfer maps respect Selmer groups) to the cases $N=6$ and
$N=4$ for Prym monodromy, and thus the corresponding 1-parameter families
of elliptic surfaces have large Selmer monodromy.  This is enough to
establish that those Selmer groups do not satisfy the heuristic of
\cite{BKLPR}; in particular, for elliptic surfaces with $j=0$ over finite
fields with a cube root of unity, the expected size of the $l$-Selmer group
for $l=1(3)$ is $l+3$, compared to the $l+1$ one has for general curves.
(See Theorem \ref{thm:avgsel_j0} for a more precise statement.)

The plan of the paper is as follows.  Section 2 discusses the monodromy
group of the Jordan-Pochhammer equation, including analogues over general
fields.  Section 3 discusses the general strategy for proving largeness,
and section 4 carries it out modulo individual primes (including a
discussion at the beginning of the full classification of irreducible
subgroups of $\GL_n(\bar\F_p)$ generated by pseudo-reflections), with
section 5 extending that to any square-free ideal.  Section 6 discusses
lifting; subsection 6.1 discusses a graded Lie algebra associated to the
natural filtration of a $p$-adic group, subsection 6.2 gives the corrected
version of \cite{VasiuA:2003}, with subsection 6.3 briefly applying that to
Jordan-Pochhammer representations.  Subsection 6.4 and 6.5 then discuss
lifting from ramified primes, with the latter dealing with the case that
the group modulo the prime is symplectic.

Section 7 discusses Pryms of singular curves, with particular attention to
when the Prym of a degeneration has smooth N\'eron model.  Section 8
discusses local ramification data, with attention to smoothability of
ramified nodes in the tame case, and to lifting to characteristic 0 in the
wild case.  Section 9 uses those results to extend to higher genus base
curves in characteristic 0, and Section 10 pushes everything down to finite
characteristic.  Finally, Section 11 contains the application to Selmer
groups of constant $j$ elliptic surfaces.

{\bf Acknowledgements}.  This paper arose out of discussions with T. Feng
and A. Landesman on next steps to take after \cite{selmer0}, both of whom
patiently listened to the author explain various earlier approaches to the
$j=0$ case.  They also helped figure out and prove the correct formulation
of Lemma \ref{lem:ramdeg} below, as well as giving additional suggestions
to improve exposition.

\section{Jordan-Pochhammer monodromy}

The typical $\mu_N$-cover $C/\P^1$ with marked ramification points takes
the form
\[
y^N = \prod_{0\le i\le n+1} (x-x_i)^{m_i}
\]
with $\sum_i m_i=0(N)$ and $\gcd(m_0,\dots,m_{n+1},N)=1$ lest the cover be
geometrically reducible.  (Note that we have implicitly marked an
unramified point of the covering curve lying over $\infty\in \P^1$.)  The
action of $\mu_N$ on $C$ induces an action on the space of differentials
and thus a $\Z/N\Z$-grading on the space of differentials, with the
holomorphic differentials of weight $d\bmod N$ ($0\le d<N$) having the form
\[
p(x) y^{d-N} dx
\]
where p(x) is a polynomial of degree at most $(1-d/N)(\sum_i m_i)-2$ such
that
\[
\ord_{x_i} p(x) - m_i(1-d/N)>-1
\]
for $0\le i\le n+1$.

For $x\in \R$, let $\{x\}:=x-\lfloor x\rfloor$.

\begin{prop}
  The dimension of the space of differentials of weight $d\ne 0(N)$ is
  \[
  -1+\sum_{0\le i\le n+1} \{-dm_i/N\}
  \]
\end{prop}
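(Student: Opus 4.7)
The plan is to translate the explicit description of the space of weight-$d$ holomorphic differentials into a count of polynomials subject to prescribed degree and vanishing bounds, and then evaluate the dimension by direct manipulation of floors and fractional parts.

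First, I would convert the strict local condition into an integer-valued bound. For any integer $v$, the inequality $v > m_i(1-d/N) - 1$ is equivalent to $v \ge \lfloor m_i(1-d/N) \rfloor$, uniformly in whether $m_i d/N$ is an integer or not (the two cases collapse to the same floor). Using $\lfloor -x \rfloor = -\lceil x \rceil$, this rewrites as $\ord_{x_i} p \ge a_i := m_i + \lfloor -m_i d/N \rfloor$.

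Next, since the $x_i$ are distinct, the substitution $p(x) = \prod_i (x - x_i)^{a_i}\, q(x)$ is a linear isomorphism between the admissible polynomials $p$ (of degree at most $D := (1 - d/N) M - 2$, where $M := \sum_i m_i$) and the polynomials $q$ of degree at most $D - \sum_i a_i$. The dimension of the latter is $\max(D - \sum_i a_i + 1, 0)$. A short arithmetic step using $\{-m_i d/N\} = -m_i d/N - \lfloor -m_i d/N \rfloor$ together with $N \mid M$ (so $Md/N \in \Z$) collapses $D - \sum_i a_i + 1$ into $-1 + \sum_i \{-m_i d/N\}$, which is the claimed formula.

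Finally, I would check that the formula is non-negative (so the $\max$ above is redundant). The sum $\sum_i \{-m_i d/N\}$ is an integer, since $Md/N \in \Z$ forces $\sum_i (-m_i d/N)$ to be integral. It is moreover at least $1$: otherwise every $m_i d/N$ would be an integer, which would force $N/\gcd(N,d)$ to divide every $m_i$, and then $\gcd(m_0,\dots,m_{n+1},N)=1$ would give $N \mid d$, contradicting $d \not\equiv 0 \pmod{N}$. The only genuine obstacle is the careful bookkeeping around the strict inequality (which really is just a matter of being uniform in whether the relevant quantity is integral); everything else is routine algebra.
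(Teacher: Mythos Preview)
Your proof is correct and follows essentially the same route as the paper's own argument: both convert the strict inequality $\ord_{x_i}p > m_i(1-d/N)-1$ into $\ord_{x_i}p \ge \lfloor m_i(1-d/N)\rfloor$, count the resulting space as polynomials of bounded degree with prescribed vanishing (the paper phrases this as global sections of a line bundle on $\P^1$, you phrase it as the substitution $p=\prod_i(x-x_i)^{a_i}q$), and then rule out the degenerate case where every $dm_i/N$ is an integer via $\gcd(m_0,\dots,m_{n+1},N)=1$. Your write-up is in fact slightly more explicit about the arithmetic and the non-negativity check than the paper's; the only point you leave implicit is that $a_i\ge 0$ (which follows since $0<d<N$ and $m_i>0$ give $-m_i\le\lfloor -m_id/N\rfloor$), needed for the factorization to land in polynomials.
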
  

\begin{proof}
  Indeed, if we round up the conditions on the order of vanishing of $p$ at
  $x_i$, we see that it must vanish to order at least $\lfloor
  m_i(1-d/N)\rfloor$ at $x_i$, and has a pole of order at most $-2+\sum_i
  m_i(1-d/N)$ at $\infty$.  In other words, $p$ ranges over the global
  sections of a line bundle of degree
  \[
  -2+\sum_{0\le i\le n+1} \{-dm_i/N\},
  \]
  so has the stated number of global sections unless the degree is $-2$.
  This only happens when $dm_i/N$ is an integer for all $i$, which forces
  $d=0(N)$.
\end{proof}

\begin{cor}
  The dimensions of the spaces of differentials of weight $d$ and $-d$ add
  to $2$ less than the number of points such that $m_id/N$ is not integer.
\end{cor}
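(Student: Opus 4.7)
The plan is essentially a direct computation, applying the preceding proposition twice and collapsing the two sums using an elementary identity for fractional parts.

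First, assume that both $d$ and $-d$ are nonzero mod $N$ (the case $2d \equiv 0 \pmod N$ can be handled separately, since then $-d \equiv d \pmod N$ and the two weight spaces coincide). By the proposition, the dimensions of the weight $d$ and weight $-d$ spaces are
\[
-1 + \sum_{0\le i\le n+1} \{-dm_i/N\}
\quad\text{and}\quad
-1 + \sum_{0\le i\le n+1} \{dm_i/N\},
\]
respectively. Adding them gives
\[
-2 + \sum_{0\le i\le n+1} \bigl(\{dm_i/N\} + \{-dm_i/N\}\bigr).
\]

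The key (and trivial) ingredient is the identity that for any real number $x$,
\[
\{x\} + \{-x\} = \begin{cases} 0 & x \in \Z, \\ 1 & x \notin \Z. \end{cases}
\]
Applying this term-by-term to $x = dm_i/N$, each index $i$ contributes $1$ precisely when $dm_i/N$ is not an integer, and contributes $0$ otherwise. Hence the sum on the right counts the number of $i$ such that $dm_i/N \notin \Z$, and the total equals that count minus $2$, as claimed.

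There is no real obstacle here: the corollary is a one-line manipulation of the proposition. The only thing to keep an eye on is the degenerate case $d \equiv -d \pmod N$, which for the corollary to still make sense should either be excluded or interpreted as twice the dimension of the single weight space; in that case $dm_i/N \in \tfrac12\Z$ and the same fractional-part identity still yields the stated count when $dm_i/N \notin \Z$.
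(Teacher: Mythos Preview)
Your proof is correct and essentially identical to the paper's own argument: add the two instances of the proposition and invoke the identity $\{x\}+\{-x\}=1$ for $x\notin\Z$, $=0$ for $x\in\Z$. Your side discussion of the case $d\equiv -d\pmod N$ is unnecessary---the proposition applies to any $d\not\equiv 0\pmod N$, so it applies equally to $-d$, and the fractional-part identity handles that case without modification---but it does no harm.
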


\begin{proof}
  The sum is
  \[
  -2+\sum_{0\le i\le n+1} \{-dm_i/N\}+\{dm_i/N\}
  \]
  and $\{x\}+\{-x\}=1$ unless $x\in \Z$.
\end{proof}

Over $\C$, there is a natural ``period integral'' pairing
\[
H_1(C;\Z)\otimes \Gamma(C;\Omega_C)\to \C^*
\]
given by integrating the holomorphic differential along the homology class,
such that the induced map
\[
H_1(C;\Z)\to \Gamma(C;\Omega_C)^*
\]
is injective.  If we let $\Gamma^P(C;\Omega_C)$ denote the subspace
on which $\Z/N\Z$ acts with weight prime to $N$, then we may consider the
composition
\[
H_1(C;\Z)\to \Gamma^P(C;\Omega_C)^*
\]
Since the period integral map is natural, so in particular
$\mu_N$-equivariant, the same applies to the composition.  In particular,
if we fix a generator $g\in \mu_N$, then $\Phi_N(g)$ annihilates the
right-hand side, where $\Phi_N$ is the cyclotomic polynomial, and thus the
map factors through $H_1(C;\Z)\otimes \Z[\zeta_N]$.

\begin{prop}
  The map
  \[
  H_1(C;\Z)\otimes \Z[\zeta_N]\to \Gamma^P(C;\Omega_C)^*
  \]
  is injective.
\end{prop}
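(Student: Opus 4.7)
The plan is to reduce the integral injectivity to an $\R$-linear statement where the period pairing is a $\mu_N$-equivariant isomorphism, and then pin down the integral refinement. The given injectivity, together with the fact that $H_1(C;\Z)$ maps to $\Gamma(C;\Omega_C)^*$ as a full-rank lattice, shows that the $\R$-linear extension $H_1(C;\R) \to \Gamma(C;\Omega_C)^*$ is an isomorphism of real vector spaces, and it is $\mu_N$-equivariant by naturality of the period pairing. Under this isomorphism, the projection $\Gamma(C;\Omega_C)^* \twoheadrightarrow \Gamma^P(C;\Omega_C)^*$ corresponds to the projection $H_1(C;\R) \twoheadrightarrow V_\R$ onto the subspace annihilated by $\Phi_N(g)$ (i.e.\ the $\Z[\zeta_N]$-isotypic component), yielding an $\R$-linear isomorphism $V_\R \xrightarrow{\sim} \Gamma^P(C;\Omega_C)^*$.

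The map in the proposition then factors as
\[
H_1(C;\Z)\otimes\Z[\zeta_N] \to V_\R \cong \Gamma^P(C;\Omega_C)^*,
\]
so it suffices to show the first arrow is injective. Tensoring with $\Q$ sends the left side to the $\Q[\zeta_N]$-isotypic component $V_\Q$ of $H_1(C;\Q)$, which embeds naturally in $V_\R$; hence the first arrow is already injective at the $\Q$-level, and its integral kernel is pure torsion.

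The remaining step, which I expect to be the core technical content, is to show that this torsion vanishes: equivalently, that $\Phi_N(g)\,H_1(C;\Z)$ is saturated in $H_1(C;\Z)$. I would verify this using the explicit cyclic-cover model $y^N=\prod_i(x-x_i)^{m_i}$, writing down a $\Z$-basis of $H_1(C;\Z)$ adapted to the branch locus (e.g.\ from vanishing cycles encircling pairs of branch points), and computing the image of $\Phi_N(g)$ directly on the complementary isotypic sublattice. The hypothesis $\gcd(m_0,\dots,m_{n+1},N)=1$ is essential here, since it forces at least one genuine ramification point; the analogous saturation can fail for unramified cyclic covers, where e.g.\ $(1+g+\cdots+g^{p-1})H_1(C;\Z)$ sits with index $p$ inside the $\mu_p$-invariant lattice. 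Handling this saturation cleanly is the main obstacle in the argument.
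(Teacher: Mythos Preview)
Your reduction is correct: the map factors through $V_\R\cong\Gamma^P(C;\Omega_C)^*$, and injectivity is equivalent to $H_1(C;\Z)/\Phi_N(g)H_1(C;\Z)$ being torsion-free, i.e.\ to $\Phi_N(g)H_1(C;\Z)$ being saturated. Your unramified counterexample correctly shows this is not a formality. But you have left exactly this step undone---you only propose an explicit cycle computation and flag it as ``the main obstacle''---so the argument is incomplete at its one substantive point.

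The paper takes a quite different route and does not isolate the saturation question at all. It lifts a putative nonzero kernel class $\bar\alpha$ to some $\alpha\in H_1(C;\Z)$; since $\alpha\neq 0$ and the full period map $H_1(C;\Z)\hookrightarrow\Gamma(C;\Omega_C)^*$ is injective, $\alpha$ pairs nontrivially with some holomorphic $\omega$, and since $\bar\alpha$ is in the kernel this $\omega$ must lie in an eigenspace of weight $d$ with $\gcd(d,N)>1$. The paper then notes that $\Phi_N(g)$ annihilates $\bar\alpha$ but acts invertibly on that eigenspace, and declares a contradiction. This bypasses any explicit basis computation. If you try to make the last step precise, however, you will find that the pairing $\langle\alpha,\omega\rangle$ for non-prime-weight $\omega$ does not descend to $H_1(C;\Z)\otimes\Z[\zeta_N]$, so the two stated facts are not literally in tension; the paper's argument is tacitly leaning on the same torsion-freeness you identified. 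Your instinct that ramification is doing real work here is correct.
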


\begin{proof}
  A nonzero element of the kernel would have nontrivial integral with {\em
    some} holomorphic differential, which would necessarily lie in the sum
  of subspaces of degree {\em not} prime to $N$.  But $\Phi_N(g)$
  annihilates $H_1(C;\Z)\otimes \Z[\zeta_N]$ and acts invertibly on those
  differentials, giving a contradiction.
\end{proof}

In particular, we see that we can in principle recover the monodromy of the
Prym from the monodromy of the corresponding period integrals.  One way to
obtain a basis of the homology of $C$ is to choose otherwise disjoint
simple closed contours in $\P^1$ from $\infty$ around each marked point in
$\P^1$ (i.e., standard generators of the fundamental group).  Since we have
chosen a preimage of $\infty$, each such contour has a marked preimage in
$C$, which in turn generates a $\mu_N$-orbit of preimages.  If $\gamma_i$
is the marked preimage in $C$ of the loop around $x_i$,
\[
\partial \gamma_i = g^{m_i}[\infty]-[\infty],
\]
There are also boundaries coming from the regions into which the original
contours separate $\P^1$, generated by
\[
(\sum_{0\le j<N/\gcd(N,m_i)} g^{m_ij}) \gamma_i
\]
for $0\le i\le n+1$ and
\[
\gamma_0+g^{m_0}\gamma_1+g^{m_0+m_1}\gamma_2+\cdots.
\]
It is not too difficult to see that $H^1(C;\Z)\otimes \Z[\zeta_N]$ is
the middle homology of the tensor product complex, but this is somewhat
inconvenient to describe uniformly.  However, over $\Q[\zeta_N]$,
there are a large class of such descriptions coming from the fact that
\[
(g^{m_i}-1)\gamma_j-(g^{m_j}-1)\gamma_i
\]
for $i\ne j$ are all cocycles, and any collection of $n$ such cocycles for
which the corresponding graph has no cycles gives a basis of
$H^1(C;\Z)\otimes \Q[\zeta_N]$.  Since we are now working rationally,
it will be convenient to consider the rescaled classes
\[
\gamma_{ij}:=(\zeta_N^{m_j}-1)^{-1}\gamma_j-(\zeta_N^{m_i}-1)^{-1}\gamma_i.
\]
The significance of this is that the integral of a holomorphic differential
along $\gamma_{ij}$ is equal to the (improper) integral of the differential
along a contour from $x_i$ to $x_j$.  Indeed, the integral along $\gamma_i$
is $\zeta_N^{m_i}-1$ times the integral along a contour $\gamma_i'$ from
$\infty$ to $x_i$, since $\gamma_i$ is homotopic in $C$ to a contour that
first proceeds from the marked preimage of $\infty$ along the preimage of
$\gamma_i'$ and then returns to $g^{m_i}\infty$ along {\em that} preimage
of $\gamma'_i$.

In particular, if we can compute the action of monodromy on the integrals
$\int_{x_i}^{x_j} \omega$, this will tell us the rational monodromy of the
Prym.  (In fact, it is then not too difficult to recover the integral
monodromy, but the rational monodromy will suffice for us, as in the cases
of interest there is a unique homothety class of lattices.)  These are
special cases of hypergeometric integrals
\[
\int_{x_i}^{x_{i+1}}
\prod_{0\le i\le n+1} (x-x_i)^{\alpha_i-1}
dx,
\]
and in particular satisfy nice differential equations in $x_0$ (essentially
the Jordan-Pochhammer equation, up to an overall normalization factor).
The following can thus be read off from the local structure of the
Jordan-Pochhammer equation.  Call an element $g\in\GL_n$ a {\em
  pseudoreflection} if $\rnk(g-1)=1$, and refer to its other eigenvalue
as the ``nontrivial'' eigenvalue (which may equal $1$).

\begin{prop} 
  If none of the quantities $\alpha_i$ or $\alpha_0+\alpha_i$ are integers,
  then the monodromy in $x_0$ of the hypergeometric integrals
  \[
  \int_{x_i}^{x_{i+1}} \prod_{0\le i\le n+1} (x-x_i)^{\alpha_i-1} dx
  \]
  for $1\le i\le n$ is generated by elements $g_i$, $1\le i\le n+1$
  such that each $g_i$ is a pseudoreflection with nontrivial eigenvalue
  \[
  \exp(2\pi\sqrt{-1}(\alpha_0+\alpha_i)),
  \]
  and $\prod_i g_i = \exp(2\pi\sqrt{-1}\alpha_0)$.
\end{prop}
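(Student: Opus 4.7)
The plan is to read the monodromy directly off the local structure of the Jordan--Pochhammer equation satisfied by the integrals $f_i(x_0) := \int_{x_i}^{x_{i+1}} \prod_{0 \le j \le n+1} (x-x_j)^{\alpha_j-1}\, dx$ as multivalued functions of $x_0$ with the remaining $x_j$ fixed. These $n$ branches span an $n$-dimensional local system on $\P^1 \setminus \{x_1,\dots,x_{n+1}\}$, and the proposition amounts to the classical local monodromy data of this local system at its singular points $x_1,\dots,x_{n+1},\infty$.

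For each finite singularity $x_0 = x_i$ ($1 \le i \le n+1$), I would compute $g_i$ as follows. Only the contours $\gamma_{i-1}$ and $\gamma_i$ with endpoint $x_i$ are affected by the loop; any other $\gamma_j$ can be deformed within $\P^1 \setminus \{x_0,x_1,\dots,x_{n+1}\}$ throughout the loop, so $f_j$ returns to itself. For the affected integrals, split each contour at a small disk $D$ around $x_i$. The portion outside $D$ is monodromy-invariant. On $D$, the substitution $x = x_i + (x_0 - x_i)t$ factors the integrand as $(x_0 - x_i)^{\alpha_0 + \alpha_i - 1}$ times an expression holomorphic in $x_0$ near $x_i$, so a CCW loop of $x_0$ about $x_i$ multiplies this local portion by $\exp(2\pi\sqrt{-1}(\alpha_0+\alpha_i))$. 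Hence each affected $f_j$ gains a scalar multiple of a single ``vanishing cycle'' vector $v_i$ (the local portion itself, up to scale), making $g_i - 1$ of rank at most $1$ with nontrivial eigenvalue $\exp(2\pi\sqrt{-1}(\alpha_0+\alpha_i))$ on $v_i$. The hypothesis $\alpha_0 + \alpha_i \notin \Z$ makes this eigenvalue differ from $1$, and $\alpha_i \notin \Z$ (via non-vanishing of the associated local beta integral at $x_i$) makes the coefficient of $v_i$ nonzero, so $g_i$ is a genuine pseudoreflection of the stated form.

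For the product relation, analyze the monodromy at infinity. Take $|x_0|$ much larger than the other $x_j$ and let $x_0$ make one CCW revolution in its own coordinate. On each fixed $\gamma_i$, the integrand is $(x - x_0)^{\alpha_0 - 1}$ times a function single-valued in $x_0$; this prefactor picks up the uniform phase $\exp(2\pi\sqrt{-1}(\alpha_0 - 1)) = \exp(2\pi\sqrt{-1}\alpha_0)$ independent of $x$. Hence the large loop acts as the scalar $\exp(2\pi\sqrt{-1}\alpha_0)$ on every $f_i$. In $\pi_1(\P^1 \setminus \{x_1,\dots,x_{n+1}\})$ the large CCW loop is homotopic (with a suitable ordering from the base point) to $g_1 g_2 \cdots g_{n+1}$, so the product relation follows; generation by the $g_i$ is also immediate from this presentation.

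The main obstacle is the pseudoreflection claim. One must check that no contour other than $\gamma_{i-1}$ and $\gamma_i$ picks up an unexpected contribution, and that the coefficient of $v_i$ in those that do does not accidentally vanish. Both depend on the non-integrality hypotheses: with them, the outside-of-$D$ pieces are genuinely monodromy-invariant by a straightforward homotopy argument, and the relevant local integral is a nonzero beta function. Everything else is then a routine matter of assembling local data into the global statement.
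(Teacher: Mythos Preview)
Your argument is correct and follows essentially the same local-analysis strategy as the paper. The only notable difference is the basis in which you carry it out: you stay in the given basis $\{[x_j,x_{j+1}]\}_{1\le j\le n}$, where two contours touch $x_i$ and you must verify they move along a common vanishing-cycle direction $v_i$, whereas the paper (in the paragraph immediately following the proposition) changes, for each $i$ separately, to a basis containing $\gamma_{0i}$ together with $n-1$ edges not involving $x_0$ at all. In that adapted basis the pseudoreflection structure is immediate: the $n-1$ non-$x_0$ integrals are holomorphic at $x_0=x_i$, hence fixed, and the reparametrization $x_0=x_i+u$ exhibits $\gamma_{0i}$ directly as an eigenvector with eigenvalue $\exp(2\pi\sqrt{-1}(\alpha_0+\alpha_i))$. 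Both routes are sound; the paper's basis change simply makes the rank-one structure visible without the extra step of checking that your two affected contours share a direction. Your treatment of the product relation via the large loop is the same in spirit as the paper's, which does not spell it out.
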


We can say a bit more, in fact: if we first change to a basis that contains
$\gamma_{0i}$ and no other edge involving $x_0$, then the integrals
associated to those other edges are all fixed by the monodromy around
$x_i$, while the integral along $\gamma_{0i}$ has eigenvalue
$\exp(2\pi\sqrt{-1}(\alpha_0+\alpha_1))$.  Indeed, if we reparametrize
$x_0=x_i+uy$, then the monodromy of $x_0$ around $x_i$ is the monodromy of
a small loop in $u$ around $0$.  The integrals along edges not containing
$0$ are holomorphic at $u=0$, so have trivial monodromy, while the integral
along $\gamma_{0i}$ is $u^{\alpha_0+\alpha_i}$ times a holomorphic
function.  Since the changes of basis can be made explicit, this lets one
write the monodromy in any desired basis.  (This only applies directly to
cases in which the generators are diagonalizable and none of the exponents
are integers, but the monodromy is clearly meromorphic in the parameters,
so the resulting expression extends.)

In particular, we see that when the $\alpha_i$ are rationals, then the
monodromy is defined over the corresponding cyclotomic field.  Moreover, if
$\beta_i$ is another set of parameters with the same common denominator
such that $\beta_i-d\alpha_i\in \Z$ for some fixed $d$, then {\em its}
monodromy is related by Galois.  We thus find that the different choices of
holomorphic differentials indeed give compatible expressions for the
monodromy of the Prym.

\begin{prop}
  Let $C/\P^1$ be a $\mu_N$-cover of $\P^1_{\C}$ ramified at
  $x_0,\dots,x_{n+1}\in \A^1_{\C}$ with weights $m_0,\dots,m_{n+1}$, and
  consider the family of $\mu_N$-covers as $x_0$ varies (fixing a point
  over $\infty$).  Then the monodromy of the Prym of this family is
  generated by elements $g_1,\dots,g_{n+1}$ such that each $g_i$
  is a pseudoreflection with nontrivial eigenvalue $\zeta_N^{m_0+m_i}$,
  and $\prod_i g_i = \zeta_N^{m_0}$.
\end{prop}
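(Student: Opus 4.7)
The plan is to translate the Jordan--Pochhammer monodromy result of the previous proposition directly into the Prym setting by matching parameters and then assembling the weight-$d$ pieces into a single $\Z[\zeta_N]$-linear representation. First, I would fix an integer $d$ coprime to $N$ and examine the weight-$d$ holomorphic differentials, which take the form $p(x)y^{d-N}\,dx$; since $y^N=\prod_i(x-x_i)^{m_i}$, the factor $y^{d-N}$ is (the appropriate branch of) $\prod_i(x-x_i)^{m_i(d-N)/N}$, so these are hypergeometric integrands with parameters $\alpha_i = 1+m_i(d-N)/N$, in particular satisfying $\alpha_i\equiv m_i d/N\pmod\Z$. Pairing against the rational basis of $H_1(C;\Z)\otimes \Q[\zeta_N]$ given by the $\gamma_{ij}$ along a tree on $\{x_1,\dots,x_{n+1}\}$ (say $\gamma_{1,2},\gamma_{2,3},\dots,\gamma_{n,n+1}$), the identity $\int_{\gamma_{ij}}\omega = \int_{x_i}^{x_j}\omega$ identifies the weight-$d$ period integrals with exactly the hypergeometric integrals appearing in the previous proposition.

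That proposition then furnishes generators $g_1^{(d)},\dots,g_{n+1}^{(d)}$ of the monodromy on the weight-$d$ piece, each a pseudoreflection with nontrivial eigenvalue $\exp(2\pi\sqrt{-1}(\alpha_0+\alpha_i))=\zeta_N^{(m_0+m_i)d}$, whose product is the scalar $\exp(2\pi\sqrt{-1}\alpha_0)=\zeta_N^{m_0 d}$. I would then invoke the observation (in the text following the previous proposition) that varying $d$ over $(\Z/N\Z)^\times$ produces parameters with $\alpha_i^{(d)}\equiv d\cdot\alpha_i^{(1)}\pmod\Z$, so the weight-$d$ representations are all Galois-conjugates of the weight-$1$ one. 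This is exactly the compatibility needed to assemble them into a single $\Z[\zeta_N]$-linear representation on the Prym $\Pic(\tilde C)\otimes_{\Z[z]/(z^N-1)}\Z[\zeta_N]$, whose $\Z[\zeta_N]$-eigenvalues must (under any embedding $\zeta_N\mapsto\exp(2\pi\sqrt{-1}d/N)$) recover those on the corresponding weight-$d$ piece. Matching to the $d=1$ computation forces the $\Z[\zeta_N]$-eigenvalue of $g_i$ to be $\zeta_N^{m_0+m_i}$ and the product scalar to be $\zeta_N^{m_0}$.

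The pseudoreflection property lifts to the $\Z[\zeta_N]$-module setting by the same argument indicated in the remark after the previous proposition: after switching to a basis (over $\Q[\zeta_N]$) that contains $\gamma_{0,i}$ together with edges $\gamma_{j,k}$ disjoint from $x_0$, the monodromy $g_i$ fixes every other basis vector and scales $\gamma_{0,i}$ by its nontrivial eigenvalue, so $\rnk(g_i-1)=1$ over $\Z[\zeta_N]$. The only subtlety is the genericity hypothesis of the previous proposition, which requires $\alpha_i$ and $\alpha_0+\alpha_i$ to avoid integers; for $d$ coprime to $N$ this translates to $m_i$ and $m_0+m_i$ being nonzero mod $N$. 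For the degenerate cases (in particular, when $m_0+m_i\equiv 0\pmod N$, in which case the purported pseudoreflection has nontrivial eigenvalue $1$ and could in principle collapse to the identity), I would appeal to the meromorphic dependence of the monodromy on the parameters (also noted in the text preceding the proposition), which lets the stated characterization persist by continuity. This last bookkeeping is essentially the only non-trivial obstacle; the rest is a direct transcription of the hypergeometric result together with Galois-averaging over the primitive characters of $\mu_N$.
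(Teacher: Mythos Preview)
Your proposal is correct and follows essentially the same route as the paper, which presents this proposition as an immediate consequence of the preceding discussion: identify the weight-$d$ periods with hypergeometric integrals having $\alpha_i\equiv m_id/N\pmod\Z$, read off the pseudoreflection eigenvalues and product scalar from the previous proposition, invoke the Galois compatibility of the different weight pieces to assemble a single $\Z[\zeta_N]$-linear representation, and use meromorphic dependence on the parameters to cover the degenerate cases. Your write-up is in fact more explicit than the paper's, which leaves the proposition unproved and relies on the reader to extract exactly the argument you give.
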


\begin{rem}
Note that although we know the monodromy over $\Z(\zeta_N)$ is generated by
pseudoreflections, it is a priori possible for those pseudoreflections to
become $1$ mod some prime.
\end{rem}

More generally, given elements $\lambda_0,\dots,\lambda_{n+1}\in K^\times$
for some field $K$ (possibly of finite characteristic) with product $1$,
the {\em Jordan-Pochhammer group} is (up to conjugacy) the subgroup of
$\GL_n(K)$ generated by elements $g_1,\dots,g_{n+1}$ such that
$\rnk(g_i-1)\le 1$, $\det(g_i)=\lambda_0\lambda_i$, and
\[
\prod_i g_i = \lambda_0.
\]
(This forces $\prod_{0\le i\le n+1}\lambda_i=1$.)  Of course, the word
``the'' here needs to be justified.  It is, in fact, shown in
\cite{VolkleinH:1998} that there is a unique (up to conjugacy) such tuple
of matrices such that the elements are pseudoreflections and corresponding
representation is irreducible.  Thus the question reduces to showing the
pseudoreflection condition (i.e., that no $g_i$ is 1) and irreducibility.

\begin{lem}
  Let $V$ be a vector space of dimension $\ge n$, and let $g_1,\dots,g_n\in
  \GL(V)$ be such that $\rnk(g_i-1)\le 1$ and $\prod_ig_i$ is a scalar
  $\mu$.  Then $\mu=1$ or $\det(g_i)\equiv \mu$.
\end{lem}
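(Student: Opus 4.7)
The plan is to split on whether the fixed hyperplanes of the $g_i$ meet nontrivially. Let $H_i := \ker(g_i - 1) \subseteq V$; each has codimension at most $1$. If $W := \bigcap_i H_i \ne 0$, then every $g_j$ fixes $W$ pointwise, so $\prod_i g_i = \mu I$ must act as the identity on $W$, forcing $\mu = 1$, and I am done.

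Otherwise $W = 0$, and a codimension count gives $\dim V \le \sum_i \operatorname{codim}(H_i) \le n$; combined with the hypothesis $\dim V \ge n$ this forces $\dim V = n$ and each $H_i$ to have codimension exactly $1$. In particular each $g_i$ is a genuine rank-$1$ pseudoreflection, so $H_i = \ker \phi_i$ for some $\phi_i \in V^*$, and the $\phi_i$ form a basis of $V^*$. Taking the dual basis $\{e_1, \dots, e_n\}$ so that $\phi_j(e_i) = \delta_{ij}$, each $g_i$ has the form $I + v_i \otimes \phi_i$ for some $v_i \in V$, and every $g_j$ with $j \ne i$ fixes $e_i$.

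The step I would exploit next is that, since $\prod_i g_i = \mu I$ is central, conjugating by $g_1 \cdots g_{i-1}$ shows that each cyclic rotation $g_i g_{i+1} \cdots g_n g_1 \cdots g_{i-1}$ also equals $\mu I$. Applying this rotated product to $e_i$, every factor except $g_i$ acts as the identity on $e_i$, so the product collapses to $g_i(e_i) = e_i + v_i$; setting this equal to $\mu e_i$ forces $v_i = (\mu - 1) e_i$, so $g_i$ acts as $\mu$ on $e_i$ and as the identity on $H_i$, giving $\det g_i = \mu$. The whole argument is short, and the only real content is recognizing the cyclic-rotation trick in the last step --- I would not anticipate any other obstacles.
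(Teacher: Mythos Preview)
Your argument is correct. The case split on whether $\bigcap_i H_i$ is zero, the codimension count forcing $\dim V = n$ and each $g_i$ to be a genuine pseudoreflection, and the cyclic-rotation trick applied to the dual basis vector $e_i$ all go through exactly as you describe.

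The paper takes a somewhat different route. Rather than splitting on the intersection of fixed hyperplanes and setting up an explicit dual basis, it uses the rank subadditivity $\rnk(gh-1)\le \rnk(g-1)+\rnk(h-1)$ directly: since $\prod_{i<n} g_i = \mu g_n^{-1}$, the element $\mu^{-1}g_n - 1 = (\prod_{i<n} g_i)^{-1} - 1$ has rank at most $n-1 < \dim V$, so $\mu$ is an eigenvalue of $g_n$. As $g_n$ has eigenvalue $1$ with multiplicity at least $\dim V - 1$, either $\mu=1$ or $\mu$ is the nontrivial eigenvalue $\det(g_n)$; cyclic symmetry then gives the same for every $g_i$. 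Your approach is more constructive --- you actually exhibit the $\mu$-eigenvector $e_i$ of $g_i$ --- while the paper's is shorter and avoids choosing coordinates. The paper's rank-subadditivity viewpoint also feeds directly into the next lemma (on irreducibility when there are $n+1$ generators), where the same inequality is reused, so it is a natural tool to introduce at this point.
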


\begin{proof}
  We first note that if $\rnk(g-1)\le a$, $\rnk(h-1)\le b$, then
  $\rnk(gh-1)\le a+b$.  It follows, in particular, that
  \[
  (\prod_{1\le i\le n-1}g_i)^{-1}-1
  \]
  has rank at most $n-1<\dim(V)$.  But this is equal to $\mu^{-1}g_n-1$ and
  thus $\mu$ must be an eigenvalue of $g_n$.  In particular, either $\mu=1$
  (forced if $n<\dim(V)$, since then the eigenspace of eigenvalue $\mu$ is
  at least $2$-dimensional) or $\mu$ is the nontrivial eigenvalue of $g_n$.
  But then cyclic symmetry implies the same for the other $g_i$ as well.
\end{proof}

\begin{lem}
  Let $V$ be a vector space of dimension $n$, and let $g_1,\dots,g_{n+1}\in
  \GL(V)$ be such that $\rnk(g_i-1)\le 1$ and $\prod_i g_i$ is the scalar
  $\mu$.  If $\mu\ne 1$ and is not an eigenvalue of any $g_i$, then
  the $g_i$ are pseudoreflections and generate an irreducible subgroup of
  $\GL(V)$.
\end{lem}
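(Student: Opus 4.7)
The plan is to handle two assertions in sequence: first that each $g_i$ is a genuine pseudoreflection (i.e.\ $\rnk(g_i-1)=1$, not $0$), and then that the group they generate acts irreducibly on $V$.

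For the first assertion, I would argue by contradiction: if $g_i=1$ for some $i$, then the remaining $n$ elements $g_j$, $j\ne i$, have $\rnk(g_j-1)\le 1$ and product equal to the scalar $\mu$. Since the number of elements equals $\dim V=n$, the previous lemma forces either $\mu=1$ or $\det(g_j)=\mu$ for each such $j$; the latter says $\mu$ is the nontrivial eigenvalue of $g_j$, so both alternatives are ruled out by the hypotheses that $\mu\ne 1$ and that $\mu$ is not an eigenvalue of any $g_i$.

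For irreducibility, suppose toward contradiction that $W\subset V$ is a proper nonzero invariant subspace of dimension $k$ with $1\le k\le n-1$. Writing $g_i-1$ in block form adapted to $0\subset W\subset V$ yields the standard inequality
\[
\rnk(g_i-1)\ \ge\ \rnk\bigl((g_i-1)|_W\bigr)+\rnk\bigl((g_i-1)|_{V/W}\bigr),
\]
so that at most one of the two restrictions is nonzero. Partition
\[
I=\bigl\{i:(g_i-1)|_W\ne 0\bigr\},\qquad J=\bigl\{i:(g_i-1)|_{V/W}\ne 0\bigr\};
\]
these are disjoint subsets of $\{1,\dots,n+1\}$. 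On $W$ every $g_i$ with $i\notin I$ acts as the identity, so $\prod_{i\in I} g_i|_W$ equals $\mu$ times the identity of $W$; and for $i\in I$ one has $g_i|_{V/W}=1$ and hence $\det(g_i|_W)=\det(g_i)$, so the nontrivial eigenvalue of the pseudoreflection $g_i|_W$ coincides with that of $g_i$. If $|I|=0$ this forces $\mu=1$, and if $1\le |I|\le k$ the previous lemma applied in $W$ forces $\mu=1$ or $\mu=\det(g_i)$ for all $i\in I$; both contradict the hypothesis. Thus $|I|\ge k+1$, and symmetrically $|J|\ge n-k+1$, giving $|I|+|J|\ge n+2$, which contradicts $|I|+|J|\le n+1$.

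The main obstacle is essentially bookkeeping: establishing the rank subadditivity for block-triangular matrices so that $I$ and $J$ are forced to be disjoint, and observing that on $W$ (respectively $V/W$) the factors outside $I$ (respectively $J$) drop out of the product so that the previous lemma can be applied cleanly in each summand. Given those observations, the dimension count $|I|+|J|\ge n+2>n+1$ closes the argument.
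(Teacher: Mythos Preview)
Your proposal is correct and follows essentially the same approach as the paper: both use the previous lemma to rule out $g_i=1$, then for an invariant subspace $W$ observe that each $g_i$ acts nontrivially on at most one of $W$ and $V/W$, apply the previous lemma on each piece to force $|I|\ge\dim W+1$ and $|J|\ge\dim(V/W)+1$, and derive the contradiction $|I|+|J|\ge n+2$. Your version is slightly more explicit about the rank subadditivity and the $|I|=0$ edge case, but the structure is identical.
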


\begin{proof}
  That the $g_i$ are pseudoreflections follows from the previous Lemma, as
  otherwise we effectively have $\le n$ generators.  So it remains only to
  prove irreducibility.  Let $W$ be an invariant subspace.  This determines
  two smaller representations of the free group, $W$ and $V/W$, with each
  $g_i$ either acting trivially on both or as the identity on one and a
  pseudoreflection on the other.  If at most $\dim(W)$ of the $g_i$ act
  nontrivially on $W$ or at most $\dim(V/W)$ act nontrivially on $V/W$,
  then the previous Lemma tells us (since $\mu\ne 1$) that those
  pseudoreflections have nontrivial eigenvalue $\mu$, contradicting our
  hypothesis.  It follows that at least $\dim(W)+1$ elements act
  nontrivially on $W$ and at least $\dim(V/W)+1$ elements act nontrivially
  on $V/W$.  These sets are disjoint and together account for at least
  $(\dim(W)+1)+(\dim(V/W)+1)=n+2>n+1$ generators, giving a contradiction.
\end{proof}

In other words, the Jordan-Pochhammer representation is well-defined and
irreducible as long none of the parameters is $1$.

\begin{lem}
  For each $1\le m\le n$, the image of the product
  $g_{\{1,\dots,m\}}:=g_1\cdots g_m$ under the Jordan-Pochhammer
  representation with parameters
  $(\lambda_0;\lambda_1,\dots,\lambda_{n+1})$ satisfies
  \[
  \dim\ker(g_{\{1,\dots,m\}}-1)=n-m,\qquad
  \dim\ker(g_{\{1,\dots,m\}}-\lambda_0)=m-1
  \]
  with remaining eigenvalue equal to $\lambda_0\cdots\lambda_m$.
\end{lem}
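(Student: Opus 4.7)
The plan is to combine two rank bounds coming from the pseudoreflection hypothesis with a determinant calculation to pin down the characteristic polynomial of $g_{\{1,\dots,m\}}$, and then read off the eigenspace dimensions. First, the identity $gh - 1 = g(h-1) + (g-1)$ gives $\rnk(gh-1) \le \rnk(g-1) + \rnk(h-1)$; iterating this over $g_1,\dots,g_m$ yields $\rnk(g_{\{1,\dots,m\}} - 1) \le m$, and hence $\dim\ker(g_{\{1,\dots,m\}}-1) \ge n - m$. For the second bound I would use the relation $\prod_i g_i = \lambda_0$ to rewrite $g_{\{1,\dots,m\}} = \lambda_0(g_{m+1}\cdots g_{n+1})^{-1}$, so that $g_{\{1,\dots,m\}} - \lambda_0$ has rank equal to $\rnk(g_{m+1}\cdots g_{n+1} - 1) \le n+1-m$, giving $\dim\ker(g_{\{1,\dots,m\}}-\lambda_0) \ge m - 1$.

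Since $\lambda_0 \ne 1$ (implicit in well-definedness of the Jordan-Pochhammer representation), the two eigenspaces meet only in $0$, so the characteristic polynomial of $g_{\{1,\dots,m\}}$ is divisible by $(x-1)^{n-m}(x-\lambda_0)^{m-1}$. Matching $\det(g_{\{1,\dots,m\}}) = \prod_{i=1}^m \lambda_0\lambda_i = \lambda_0^m\lambda_1\cdots\lambda_m$ against $\lambda_0^{m-1}\cdot\mu$ then forces the remaining linear factor to be $x - \lambda_0\lambda_1\cdots\lambda_m$, yielding the claimed ``remaining eigenvalue''.

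Finally I would verify that the geometric multiplicities match the stated $\dim\ker$ values. When $\mu := \lambda_0\lambda_1\cdots\lambda_m$ is distinct from both $1$ and $\lambda_0$, the three algebraic multiplicities are exactly $(n-m,\ m-1,\ 1)$, so the rank-based lower bounds are already tight and the lemma follows. The main obstacle is the degenerate cases $\mu \in \{1,\lambda_0\}$: the algebraic multiplicity of one of the repeated eigenvalues bumps up by $1$, and one must rule out the corresponding geometric multiplicity also bumping up (equivalently, rule out $g_{\{1,\dots,m\}}$ becoming diagonalizable at that eigenvalue). I expect to handle this by passing to an explicit matrix realization $g_i = I + (\lambda_0\lambda_i - 1)\,u_i\phi_i^T$ with $\phi_i^T u_i = 1$, in which irreducibility of the Jordan-Pochhammer representation forces the $u_i$ and $\phi_i$ into sufficient general position; a short direct computation then shows that the rank-$1$ summands in $g_{\{1,\dots,m\}} - 1 = \sum_{i=1}^m g_1\cdots g_{i-1}(g_i-1)$ contribute independently, giving $\rnk(g_{\{1,\dots,m\}} - 1) = m$ on the nose (and the symmetric argument applied to $\lambda_0(g_{m+1}\cdots g_{n+1})^{-1}$ handles $\rnk(g_{\{1,\dots,m\}}-\lambda_0) = n+1-m$).
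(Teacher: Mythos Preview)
Your setup—two rank bounds plus a determinant—matches the paper's, and the generic case ($\mu \notin \{1,\lambda_0\}$) is handled correctly. The gap is in the degenerate cases: the sketch ``irreducibility forces the $u_i,\phi_i$ into general position, so the rank-one summands in $\sum_i g_1\cdots g_{i-1}(g_i-1)$ contribute independently'' is not a proof. Irreducibility tells you the spaces $\im(g_i-1)$ together span $V$, but not that the first $m$ of them (twisted by $g_1\cdots g_{i-1}$) are linearly independent; making this precise amounts to proving $\rnk(g_{\{1,\dots,m\}}-1)=m$ directly, which is what you were trying to avoid.

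The paper sidesteps the case split by a rank squeeze that you already have all the ingredients for. You know $\rnk(g_1\cdots g_m - 1) \le m$ and, by the same subadditivity, $\rnk(g_{m+1}\cdots g_n - 1) \le n-m$. Now multiply: $(g_1\cdots g_m)(g_{m+1}\cdots g_n) = \lambda_0 g_{n+1}^{-1}$, and $\lambda_0 g_{n+1}^{-1} - 1$ has rank exactly $n$ because its eigenvalues are $\lambda_0$ and $\lambda_{n+1}^{-1}$, neither equal to $1$ by the standing hypothesis that no $\lambda_i$ equals $1$. Your own inequality $\rnk(AB-1) \le \rnk(A-1)+\rnk(B-1)$ then forces both upper bounds to be equalities, uniformly in $\mu$. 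The $\lambda_0$-eigenspace dimension is obtained by the same squeeze after cyclically rotating which generator is isolated on the right, and the remaining eigenvalue then falls out of exactly your determinant calculation. This buys you a uniform two-line argument in place of an explicit matrix computation for the boundary cases.
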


\begin{proof}
  Since $g_{\{1,\dots,m\}}$ is a product of $m$ pseudoreflections,
  \[
  g_{\{1,\dots,m\}}-1
  \]
  has rank at most $m$, while
  \[
  g_{m+1}\cdots g_n-1
  \]
  has rank at most $n-m$, and
  \[
  g_{\{1,\dots,m\}}g_{m+1}\cdots g_n-1 = \lambda_0 g_{n+1}^{-1}-1
  \]
  has rank $n$.  Since the ranks at most add, the various inequalities must
  be tight, and thus
  \[
  \dim\ker(g_{\{1,\dots,m\}}-1)=n-m
  \]
  as required.  By the same argument,
  \[
  \dim\ker(g_{\{1,\dots,m\}}-\lambda_0)=m-1,
  \]
  and the remaining eigenvalue is
  \[
  \det(g_{\{1,\dots,m\}}) \lambda_0^{1-m}
  =
  \lambda_0^{1-m}\prod_{1\le i\le m}\det(g_i)
  =
  \lambda_0^{1-m}\prod_{1\le i\le m}\lambda_0\lambda_i.
  =
  \prod_{0\le i\le m}\lambda_i
  \]
  as required.
\end{proof}

\begin{lem}
  If $\lambda_0\cdots\lambda_m\ne 1$, then the restriction of
  $g_1,\dots,g_m,\lambda_0g_{\{1,\dots,m\}}^{-1}$ to
  $\im(g_{\{1,\dots,m\}}-1)$ is the Jordan-Pochhammer representation with
  parameters
  $(\lambda_0;\lambda_1,\dots,\lambda_m,1/\lambda_0\cdots\lambda_m)$.
\end{lem}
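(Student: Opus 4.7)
The plan is to take $W:=\im(g_{\{1,\dots,m\}}-1)$ as the underlying space of the restricted representation, verify that $W$ is stable under each of $g_1,\dots,g_m$ (and hence under $\lambda_0 g_{\{1,\dots,m\}}^{-1}$), and then check that the restrictions satisfy the defining axioms of the Jordan-Pochhammer representation with the advertised parameters.

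The key tool for invariance is the telescoping identity
\[
g_{\{1,\dots,m\}}-1 \;=\; \sum_{i=1}^m g_1\cdots g_{i-1}\,(g_i-1)\,g_{i+1}\cdots g_m,
\]
which shows that $W$ is contained in $L_1+\cdots+L_m$, where $L_i:=\im(g_i-1)$ is a line. The right-hand side has dimension at most $m$, but by the preceding lemma $\dim W = n-(n-m)=m$, so the containment is forced to be an equality; in particular each $L_i\subset W$. For any $v\in W$ this gives $g_iv = v+(g_i-1)v \in W$, so $W$ is $g_i$-invariant for every $i\le m$, and hence also invariant under $g_{\{1,\dots,m\}}^{\pm 1}$.

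With invariance in hand, each $g_i|_W$ has $\rnk(g_i|_W-1)\le 1$ and (since its image $L_i$ lies in $W$) the same nontrivial eigenvalue $\lambda_0\lambda_i$ as on $V$, while the product identity $g_1\cdots g_m\cdot\lambda_0 g_{\{1,\dots,m\}}^{-1}=\lambda_0$ automatically restricts. For the last generator $h:=\lambda_0 g_{\{1,\dots,m\}}^{-1}|_W$, the preceding lemma gives $\dim\ker(g_{\{1,\dots,m\}}-\lambda_0)=m-1$; since this kernel is disjoint from $\ker(g_{\{1,\dots,m\}}-1)$ (because $\lambda_0\ne 1$), it embeds into $W$ under the direct-sum decomposition and supplies an $(m-1)$-dimensional fixed subspace of $h$, forcing $\rnk(h-1)\le 1$. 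Finally, the eigenvalues of $g_{\{1,\dots,m\}}|_W$ are $\lambda_0$ with multiplicity $m-1$ and $\lambda_0\cdots\lambda_m$, so
\[
\det h \;=\; \lambda_0^m\,\det(g_{\{1,\dots,m\}}|_W)^{-1} \;=\; \frac{\lambda_0^m}{\lambda_0^{m-1}\cdot\lambda_0\cdots\lambda_m} \;=\; \frac{1}{\lambda_1\cdots\lambda_m} \;=\; \lambda_0\cdot\frac{1}{\lambda_0\cdots\lambda_m},
\]
matching the determinant condition for the $(m+1)$-st parameter.

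The only genuine subtlety, and the only point at which the hypothesis $\lambda_0\cdots\lambda_m\ne 1$ is used, is ensuring that $1$ is a \emph{semisimple} eigenvalue of $g_{\{1,\dots,m\}}$, so that $V=W\oplus\ker(g_{\{1,\dots,m\}}-1)$ holds and $W$ genuinely has dimension $m$ with the eigenvalues listed above. Given the eigenvalue data of the preceding lemma, this semisimplicity follows because the algebraic multiplicity of $1$ in the characteristic polynomial stays at $n-m$ precisely when the remaining eigenvalue $\lambda_0\cdots\lambda_m$ does not coincide with $1$; once this is ruled out, the rest of the verification is purely formal.
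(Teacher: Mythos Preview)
Your approach is the same as the paper's: verify that $W=\im(g_{\{1,\dots,m\}}-1)$ is invariant under $g_1,\dots,g_m$, check that the restrictions satisfy the Jordan--Pochhammer axioms, and conclude by uniqueness. You supply more detail on invariance than the paper (which simply asserts it), and your argument via $W=\sum_i L_i$ is the natural one.

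Two small corrections. First, your telescoping identity is misstated: the correct version is
\[
g_{\{1,\dots,m\}}-1=\sum_{i=1}^m (g_i-1)\,g_{i+1}\cdots g_m,
\]
which directly gives $W\subset L_1+\cdots+L_m$; your version with $g_1\cdots g_{i-1}$ on the left and $g_{i+1}\cdots g_m$ on the right does not sum to $g_{\{1,\dots,m\}}-1$. Second, your diagnosis of where the hypothesis $\lambda_0\cdots\lambda_m\ne 1$ enters is slightly off. You do not actually need semisimplicity of the eigenvalue $1$ or the decomposition $V=W\oplus\ker(g_{\{1,\dots,m\}}-1)$: rank--nullity already gives $\dim W=m$, the $\lambda_0$-eigenspace lies in $W$ simply because $\lambda_0\ne 1$, and $\det(g_{\{1,\dots,m\}}|_W)=\det(g_{\{1,\dots,m\}})$ since $g_{\{1,\dots,m\}}$ acts trivially on $V/W$. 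The hypothesis is really needed for the final step you leave implicit: the restricted tuple satisfies the JP axioms, but to conclude it is \emph{the} JP representation one must know none of the new parameters equal $1$, and the new $(m{+}1)$-st parameter is $1/(\lambda_0\cdots\lambda_m)$. The paper makes exactly this point in its last sentence.
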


\begin{proof}
  The space $\im(g_{\{1,\dots,m\}}-1)$ is invariant under $g_1,\dots,g_m$,
  and thus their restrictions are either pseudoreflections or trivial,
  as is $\lambda_0 g_{\{1,\dots,m\}}^{-1}$, and thus gives a
  Jordan-Pochhammer representation with the given parameters.  Since none
  of the parameters are 1, it is {\em the} Jordan-Pochhammer representation
  with those parameters.
\end{proof}

This can be significantly generalized.  There is a well-known action of the
braid group on the free group in $n+1$ generators in which $\sigma_i$ acts
by
\[
(g_1,\dots,g_{i-1},g_{i+1},g_{i+1}^{-1}g_ig_{i+1},g_{i+2},\dots,g_{n+1}).
\]
The composition of a Jordan-Pochhammer representation with $\sigma_i$ is
still a representation by pseudo-reflections multiplying to (the same)
scalar, and thus is again a Jordan-Pochhammer representation, with
$\lambda_i$ and $\lambda_{i+1}$ swapped.  For any subset $S\subset
\{1,\dots,n+1\}$, we may apply a sequence of such transformations to get a
representation of the same form in which the tuple starts with $g_i$ for
$i\in S$ (in order).  With this in mind, define $g_S$ to be the {\em
  ordered} product of $g_i$ for $i\in S$, and let $\lambda_S:=\prod_{i\in
  S}\lambda_i$.

\begin{lem}
  The image of $g_S$ under the Jordan-Pochhammer
  representation with parameters
  $(\lambda_0;\lambda_1,\dots,\lambda_{n+1})$
  satisfies
  \[
  \dim\ker(g_S-1)=n-|S|,\qquad
  \dim\ker(g_S-\lambda_0)=|S|-1
  \]
  with remaining eigenvalue $\lambda_0\lambda_S$.  If $\lambda_S\ne 1$,
  then the restriction of $(g_i:i\in S,\lambda_0g_S^{-1})$ to
  $\im(g_S-1)$ is the Jordan-Pochhammer representation with parameters
  $(\lambda_0;\lambda_i:i\in S,1/\lambda_0\lambda_S)$.
\end{lem}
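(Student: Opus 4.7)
The plan is to reduce the statement to the already-proved consecutive case $S = \{1,\dots,m\}$ by using the braid action to bring the generators indexed by $S$ to the front of the tuple.

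First I would verify the following explicit braid reduction: given $S = \{i_1 < i_2 < \cdots < i_k\}$, I process the indices left to right, and at stage $j$ I apply $\sigma_{i_j-1},\sigma_{i_j-2},\dots,\sigma_j$ in sequence. At the start of this stage $g_{i_j}$ is still in its original position $i_j$ (untouched so far), so each $\sigma$-move slides it one slot to the left unconjugated while conjugating the element it displaces by $g_{i_j}$. Crucially, the elements $g_{i_{j+1}},\dots,g_{i_k}$ have not yet been disturbed and still sit at positions $i_{j+1} < \cdots < i_k$, all strictly greater than $i_j$, so none of them is passed over or modified at this stage. After all $k$ stages the resulting tuple $(h_1,\dots,h_{n+1})$ satisfies $h_j = g_{i_j}$ for $1 \le j \le k$, and by definition of $g_S$ the partial product $h_1 h_2 \cdots h_k$ equals $g_S$. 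Because each $\sigma_i$ preserves both the pseudo-reflection property and the overall scalar product $\lambda_0$, this new tuple is again a Jordan--Pochhammer representation, whose first $k$ parameters are $\lambda_{i_1},\dots,\lambda_{i_k}$.

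With this reduction in hand, the statement follows by direct application of the two preceding lemmas to $(h_1,\dots,h_{n+1})$ with $m = k$. The dimension lemma yields the two kernel dimensions and identifies the remaining eigenvalue of $g_S = h_1\cdots h_k$ as $\lambda_0 \lambda_{i_1}\cdots\lambda_{i_k} = \lambda_0 \lambda_S$. Assuming $\lambda_S \ne 1$, the restriction lemma then identifies the restriction of $(h_1,\dots,h_k,\lambda_0 (h_1\cdots h_k)^{-1}) = (g_{i_1},\dots,g_{i_k},\lambda_0 g_S^{-1})$ to $\im(g_S-1)$ as the Jordan--Pochhammer representation with parameters $(\lambda_0;\lambda_{i_1},\dots,\lambda_{i_k},1/\lambda_0\lambda_S)$, which is what is claimed.

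The only substantive point requiring attention is the ordering in the braid reduction. Processing the elements of $S$ from right to left would move a later $g_{i_m}$ across the still-untouched $g_{i_j}$'s with $j < m$ and would conjugate the latter, breaking the identification $h_j = g_{i_j}$. Left-to-right processing avoids this precisely because, at each stage, no later element of $S$ has yet been disturbed. Beyond this bookkeeping the argument is routine.
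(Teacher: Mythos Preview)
Your proposal is correct and follows precisely the approach the paper intends: the paper sets up this lemma by remarking that a sequence of braid moves brings the generators indexed by $S$ to the front (in order), so that the result follows from the two preceding lemmas. You have simply made that braid reduction explicit and carefully verified the ordering bookkeeping, which is exactly what is needed.
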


Note that by rigidity and irreducibility of the Jordan-Pochhammer
representation, any element of the braid group that acts trivially on the
parameters (e.g., elements of the {\em pure} braid group) gives rise to a
unique (up to scalars) matrix normalizing the Jordan-Pochhammer
representation.  The generators of the pure braid group correspond to loops
in which some {\em other} ramification point moves, and thus are themselves
given by pseudo-reflections.  Combining this with the fundamental group
gives a representation (the Gassner representation) of the pure braid group
in $n+2$ strands, which can again be made explicit.  (This was shown
by topological methods in \cite{VenkataramanaTN:2014}, subject to a technical
constraint on the ramification.)  However, it seems natural from a
monodromy perspective to focus on the specific {\em conjugacy classes} of
elements that arise, rather than their actual representations as matrices.

Another important fact about Jordan-Pochhammer representations can be
derived directly from the relation to Pryms.

\begin{prop}
  A Jordan-Pochhammer representation with parameters $(-1;-1,\dots,-1)$
  over a field not of characteristic 2 preserves a unique symplectic form.
\end{prop}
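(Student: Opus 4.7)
The plan is to verify the three assertions (existence, uniqueness, and symplectic rather than symmetric) using the previous irreducibility result together with Völklein's rigidity and a short computation on a single transvection.

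\textbf{Uniqueness.} For parameters $(-1;-1,\dots,-1)$, each generator satisfies $\det(g_i)=\lambda_0\lambda_i=1$, so as a pseudoreflection it is a unipotent transvection whose eigenvalues are all $1$. In particular $\mu=\lambda_0=-1$ is not an eigenvalue of any $g_i$, so the previous irreducibility lemma applies and the representation is irreducible. Schur's lemma then forces $\dim_K\Hom_G(V,V^*)\le 1$, so any nondegenerate invariant bilinear form is unique up to scalar; and any nonzero invariant form is automatically nondegenerate, its kernel being a proper $G$-invariant subspace.

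\textbf{Existence.} I will show that the dual of a Jordan-Pochhammer representation is again Jordan-Pochhammer, with parameters $(\lambda_0^{-1};\lambda_1^{-1},\dots,\lambda_{n+1}^{-1})$. Indeed, $(g_i^{-1})^T$ remains a pseudoreflection with $\det=(\lambda_0\lambda_i)^{-1}$, and although transposition reverses the order of a product, the scalar $\lambda_0 I$ is unaffected, so $\prod_i(g_i^{-1})^T=\lambda_0^{-1}$. When all parameters equal $-1$ they are self-inverse, so the dual representation has the same parameters as the original. By Völklein's rigidity (cited just above), the original and its dual are then conjugate in $\GL(V)$, producing a $G$-equivariant isomorphism $V\to V^*$, i.e., a nondegenerate invariant bilinear form. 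This matches the Prym interpretation: the case $(-1;-1,\dots,-1)$ is the monodromy of the Prym of $\mu_2$-covers with all ramification weights $1$; the cover $\tilde C$ is hyperelliptic, its Prym is the full Jacobian $J(\tilde C)$, and over $\C$ the intersection/Weil pairing supplies the form.

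\textbf{Alternating, not symmetric.} A nontrivial transvection preserves no nondegenerate symmetric form in characteristic $\ne 2$. Writing $g(v)=v+\phi(v)u$ with $\phi(u)=0$ and $u\ne 0$, invariance of a symmetric $\omega$ under $g$ with $v=w=u$ yields $\omega(u,u)=0$; then $v=w$ general yields $2\phi(v)\omega(u,v)=0$, so $\omega(u,\cdot)\equiv 0$, contradicting nondegeneracy. Applied to $g_1$, this rules out a symmetric invariant form. Since the transpose of an invariant form is invariant, uniqueness forces the form to equal $c$ times its own transpose with $c^2=1$; with $c=+1$ excluded, we get $c=-1$, so the form is alternating, hence symplectic.

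\textbf{Main obstacle.} The least routine ingredient is invoking Völklein's rigidity over arbitrary fields of characteristic $\ne 2$: rigidity is standard in characteristic $0$, and in positive characteristic one needs to ensure the classifying argument still applies --- or, equivalently, to produce the form by specializing from the Prym / $\Z[1/2]$ picture, where the intersection pairing is visibly defined integrally. Once rigidity (or this specialization) is in hand, the rest is formal.
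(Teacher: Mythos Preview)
Your argument is correct and takes a genuinely different route from the paper's.  The paper obtains existence geometrically: in characteristic $0$ the representation is the monodromy on $H_1$ of a hyperelliptic curve, so the intersection pairing supplies the symplectic form; for odd characteristic $p$ it lifts to characteristic $0$ and invokes semicontinuity to see that a nonzero alternating form survives mod $p$.  Uniqueness and nondegeneracy then come from irreducibility, exactly as in your proof.  Your approach instead establishes existence algebraically via self-duality: since all parameters are $-1$, the contragredient tuple $(g_i^{-1})^T$ again satisfies the Jordan--Pochhammer conditions with the same parameters, and V\"olklein's rigidity (which the paper has already invoked over arbitrary fields) yields $V\cong V^*$.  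You then separately rule out the symmetric case by the transvection computation, whereas the paper gets ``alternating'' for free from the geometric origin of the form.  Your route is more self-contained---no geometry, no lifting---at the cost of leaning harder on rigidity in positive characteristic, which you rightly flag but which the paper treats as available.

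One small slip in the alternating step: setting $v=w=u$ gives no information since $\phi(u)=0$.  What you want is to set $w=u$ (with $v$ general) in the invariance identity to obtain $\phi(v)\omega(u,u)=0$, hence $\omega(u,u)=0$; then with $\omega(u,u)=0$ the identity becomes $\phi(v)\omega(u,w)+\phi(w)\omega(u,v)=0$, and fixing $v$ with $\phi(v)\ne 0$ and letting $w$ vary forces $\omega(u,\cdot)=0$, contradicting nondegeneracy.  The conclusion is unchanged.
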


\begin{proof}
  In characteristic 0, the representation acts on the middle homology of a
  hyperelliptic curve, and thus indeed preserves a symplectic form.
  Otherwise, we may lift to characteristic 0, and observe that
  semicontinuity implies that the mod $p$ representation also preserves an
  alternating form.  Uniqueness then follows immediately from
  irreducibility, as does nondegeneracy.
\end{proof}

\begin{prop}
  Let $L/K$ be a quadratic field extension, and let
  $\lambda_0,\dots,\lambda_{n+1}\in L$ be elements of norm 1 multiplying to
  1, none of which are equal to 1.  Then the corresponding
  Jordan-Pochhammer representation in $\GL_n$ fixes a nondegenerate
  anti-Hermitian form, unique up to scalars.
\end{prop}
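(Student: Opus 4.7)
The plan is to parallel the previous proposition (the symplectic case), replacing its geometric appeal to hyperelliptic curves with the uniqueness of the Jordan--Pochhammer representation recalled above. A nondegenerate $\rho$-invariant $L$-sesquilinear form on $V$ is the same data as an $L$-linear isomorphism $H$ satisfying $\rho(g)^T H \,\bar\rho(g) = H$ for all $g$ in the image, equivalently an intertwiner from the Galois-conjugate representation $\bar\rho$ to the contragredient $\rho^\vee$. So the whole task reduces to exhibiting and then normalizing such an $H$.

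First I would identify both $\rho^\vee$ and $\bar\rho$ as Jordan--Pochhammer representations with identical parameters. Each $\rho^\vee(g_i) = \rho(g_i)^{-T}$ is again a pseudoreflection (the rank of $A^{-T} - I$ equals the rank of $A - I$) with nontrivial eigenvalue $(\lambda_0 \lambda_i)^{-1}$, and $\prod_i \rho^\vee(g_i) = \lambda_0^{-1} I$; so $\rho^\vee$ has parameters $(\lambda_0^{-1}; \lambda_1^{-1}, \ldots, \lambda_{n+1}^{-1})$. For $\bar\rho$, the norm-$1$ hypothesis gives $\overline{\lambda_0\lambda_i} = (\lambda_0 \lambda_i)^{-1}$ and $\bar\lambda_0 = \lambda_0^{-1}$, so $\bar\rho$ has exactly the same parameters. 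Since none of the $\lambda_i$ equals $1$, both representations are irreducible Jordan--Pochhammer representations, so the uniqueness statement above forces $\bar\rho \cong \rho^\vee$, and Schur's lemma makes the intertwiner $H$ unique up to an $L^\times$ scalar. This already produces a nondegenerate $\rho$-invariant sesquilinear form, unique up to scalars.

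The remaining issue, and the only real subtlety, is arranging that the form be anti-Hermitian. I would note that transposing and then Galois-conjugating the invariance relation shows that $\bar H^T$ also satisfies it, so by uniqueness $\bar H^T = cH$ for some $c \in L^\times$; applying the involution again forces $c\bar c = 1$. By Hilbert 90 for $L/K$ there exists $d \in L^\times$ with $\bar d/d = -\bar c$ (which has norm $1$ since both $c$ and $-1$ do), and replacing $H$ by $dH$ produces $H^T = -\bar H$. The remaining scalar ambiguity restricts to $t \in L^\times$ with $\bar t = t$, i.e.\ $t \in K^\times$, so the anti-Hermitian form is itself unique up to $K^\times$-scalar, as claimed. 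The main point requiring care is precisely this last rescaling: existence of some invariant form drops straight out of the uniqueness of the Jordan--Pochhammer representation, but distinguishing anti-Hermitian from Hermitian is exactly what Hilbert 90 buys.
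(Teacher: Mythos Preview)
Your proof is correct and takes a genuinely different route from the paper's. The paper establishes existence of the invariant form geometrically: for $L/K=\C/\R$ with root-of-unity parameters, the Jordan--Pochhammer representation is the monodromy of a Prym, and the natural wedge-and-integrate pairing on $H^1(C;\C)$ restricts to a nondegenerate anti-Hermitian form on the relevant $\mu_N$-eigenspace. General fields are then handled by Zariski density of the root-of-unity locus. You instead use the rigidity/uniqueness of the Jordan--Pochhammer representation directly: since the norm-$1$ hypothesis forces $\bar\rho$ and $\rho^\vee$ to have identical Jordan--Pochhammer parameters, they must be conjugate, and Schur produces the intertwiner. Both arguments finish with the same Hilbert~90 step to normalize the form to be anti-Hermitian.

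Your approach is more self-contained and elementary --- it avoids the geometric input entirely and works uniformly without singling out the all-$(-1)$ case. The paper's route, on the other hand, yields more: it identifies the signature of the form (the counts of positive and negative eigenvalues over $\C/\R$) with the dimensions of holomorphic and antiholomorphic differentials, which is the content of the subsequent corollary and is not visible from your purely algebraic argument.
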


\begin{proof}
  In light of the previous proposition, we may assume that at least one
  parameter is not $-1$.  Suppose first that $L/K=\C/\R$ and
  $\lambda_0,\dots,\lambda_{n+1}$ are roots of unity, so that the
  Jordan-Pochhammer representation is the monodromy of a Prym.  There is a
  natural $\C$-valued alternating form on $H^1(C;\C)$ given by integrating
  the wedge of differentials over $C$, which combined with the natural
  conjugation gives an anti-Hermitian form, given (up to a constant factor)
  by
  \[
  \langle f_1\,dz+\bar{g}_1\,d\bar{z},f_2\,dz+\bar{f}_2\,d\bar{z}\rangle
  =
  2\sqrt{-1}\iint \bigl(\bar{f_1}g_2-g_1\bar{f_2}\bigr) dxdy.
  \]
  It follows that the monodromy preserves the restriction of this form to
  the appropriate eigenspace of $\mu_N$.  This eigenspace splits naturally
  relative to the form as an orthogonal sum of a space of holomorphic
  differentials and a space of antiholomorphic differentials.  On the
  holomorphic differentials, it is $\sqrt{-1}$ times a positive definite
  form, while on the antiholomorphic differentials it is $\sqrt{-1}$ times
  a negative definite form, and thus it is indeed nondegenerate.
  Uniqueness again follows from irreducibility.

  Since the cases in which the parameters are roots of unity are Zariski
  dense, it follows that the representation over any field fixes at least
  one skew-bilinear form.  Uniqueness up to scalars follows by
  irreducibility, and thus the forms are proportional to their conjugates;
  by Hilbert's Theorem 90, they are proportional to anti-Hermitian forms
  which are unique up to rescaling by $K^\times$.
\end{proof}

\begin{cor}
  For any parameters $\lambda_0,\dots,\lambda_{n+1}\in S^1$ multiplying to
  1 and all different from $1$, the Jordan-Pochhammer representation
  preserves a nondegenerate Hermitian form with
  \[
  -1+\sum_{0\le i\le n+1} \{\log(\lambda_i/2\pi\sqrt{-1})\}
  \]
  positive eigenvalues and
  \[
  -1+\sum_{0\le i\le n+1} \{-\log(\lambda_i/2\pi\sqrt{-1})\}
  \]
  negative eigenvalues.
\end{cor}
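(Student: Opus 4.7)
The plan is to get existence of the Hermitian form directly from the previous proposition and then pin down its signature by continuity in the parameters together with explicit computation at rational points realized as Prym monodromies. For existence: the previous proposition supplies a nondegenerate anti-Hermitian form unique up to $K^\times = \R^\times$, so multiplying by $\sqrt{-1}$ gives a nondegenerate Hermitian form, again unique up to a real scalar; its unordered signature $\{p,q\}$ (with $p + q = n$) is therefore a well-defined function of the parameter tuple.

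Write $\lambda_i = \exp(2\pi\sqrt{-1}\,\theta_i)$ with $\theta_i \in (0,1)$; the relation $\prod \lambda_i = 1$ forces $k := \sum_{i=0}^{n+1}\theta_i \in \{1,\dots,n+1\}$, and the parameter space decomposes into $n+1$ convex---hence connected---components, one for each such $k$. On each component the Jordan-Pochhammer representation together with a suitably normalized preserved form can be chosen to vary continuously in the parameters, e.g.\ in the contour basis $\gamma_{ij}$ introduced earlier in the section, so the unordered signature is locally constant. It thus suffices to compute the signature at one convenient point of each component.

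Fix $k$ and choose a positive integer $N$ together with integers $m_i \in \{1,\dots,N-1\}$ satisfying $\theta_i = m_i/N$, $\sum_i m_i = kN$, and $\gcd(m_0,\dots,m_{n+1},N)=1$ (always possible for $N$ large enough). Then, by the Prym Proposition, the Jordan-Pochhammer representation with parameters $(\zeta_N^{m_0},\dots,\zeta_N^{m_{n+1}})$ is realized as the weight-$1$ piece of the Prym monodromy of the $\mu_N$-cover $y^N = \prod_i(x-x_i)^{m_i}$, and the preserved Hermitian form is, up to an $\R^\times$ factor, the restriction of $\pm\sqrt{-1}$ times the intersection form on $H^1(C;\C)$ to this weight-$1$ eigenspace. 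Since the intersection form is definite of opposite signs on the Hodge summands $(H^{1,0})_{\mathrm{wt}\,1}$ and $(H^{0,1})_{\mathrm{wt}\,1}$, the unordered signature equals the pair of their dimensions, and the dimension formula at the beginning of the section yields
\[
\dim(H^{1,0})_{\mathrm{wt}\,1}=-1+\sum_i\{-m_i/N\}=n+1-k,\qquad \dim(H^{0,1})_{\mathrm{wt}\,1}=-1+\sum_i\{m_i/N\}=k-1.
\]
The unordered signature is thus $\{k-1,\,n+1-k\}$, which matches the pair $\bigl\{-1+\sum\{\theta_i\},\ -1+\sum\{-\theta_i\}\bigr\}$ appearing in the corollary.

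The main subtlety is the continuity step: because the preserved form is only well-defined up to a real scalar, one must normalize it consistently across each component so that ``positive eigenvalues'' has a uniform meaning as the parameters move. This is a routine consequence of the rigidity of Jordan-Pochhammer representations (Völklein, cited above), but a fully self-contained proof would have to exhibit explicit matrices for the $g_i$ together with an explicit continuous normalization of the form in those matrices; the contour basis $\gamma_{ij}$ provides exactly such a frame.
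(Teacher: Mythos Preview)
Your approach is essentially the same as the paper's: both reduce to the root-of-unity case, identify the signature there via the Hodge splitting into holomorphic and antiholomorphic differentials (using the dimension formula from the start of the section), and then pass to general parameters by density/continuity. The paper's proof is a one-liner observing that the two displayed quantities are precisely the holomorphic and antiholomorphic dimensions at the dense set of root-of-unity parameters; you make the continuity step explicit by cutting the parameter space into the convex components indexed by $k=\sum\theta_i$, which is a perfectly good way to say the same thing. Your worry in the final paragraph about consistently normalizing the sign of the form is unnecessary here: since the statement only asserts the existence of \emph{some} Hermitian form with the given ordered signature, and one can always negate, determining the unordered pair $\{p,q\}$ suffices.
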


\begin{proof}
  Indeed, in the (topologically) dense set of cases when the parameters are
  roots of unity, these are nothing other than the numbers of holomorphic
  and antiholomorphic differentials in the relevant eigenspace.
\end{proof}

Let
\[
R_{JP}:=\Z[\lambda_0,\dots,\lambda_{n+1},1/(\lambda_0-1)\cdots(\lambda_{n+1}-1)]/(\lambda_0\cdots\lambda_{n+1}-1)
\]
be the ring over which the Jordan-Pochhammer representation is generically
defined.

\begin{prop}
  The Jordan-Pochhammer representation can be defined globally over
  $R_{JP}$, and preserves an anti-Hermitian form with coefficients in
  $R_{JP}$ and unit determinant.
\end{prop}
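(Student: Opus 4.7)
The plan is to give an explicit matrix realization of both the representation and an invariant anti-Hermitian form over $R_{JP}$, and then to verify the claimed properties by direct computation, appealing to Zariski density of the topological case to promote identities.  The first thing to observe is that $R_{JP}$ carries a natural involution $\sigma$ sending each $\lambda_i$ to $\lambda_i^{-1}$: the defining relation $\prod_i\lambda_i=1$ is $\sigma$-invariant, $\lambda_i^{-1}=\prod_{j\ne i}\lambda_j\in R_{JP}$, and each factor $\lambda_j-1$ individually divides the inverted product $\prod_k(\lambda_k-1)$ and is therefore itself a unit of $R_{JP}$, so $(\lambda_i^{-1}-1)^{-1}=-\lambda_i(\lambda_i-1)^{-1}$ also lies in $R_{JP}$.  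This $\sigma$ will serve as the conjugation in the anti-Hermitian form.

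Next I would write explicit matrices for $g_1,\dots,g_{n+1}$ in the basis $\gamma_{0,1},\dots,\gamma_{0,n}$ of rescaled relative cycles from the beginning of the section.  In this basis each of $g_1,\dots,g_n$ is a pseudoreflection that fixes $\gamma_{0,j}$ for $j\ne i$ and sends $\gamma_{0,i}$ to $\lambda_0\lambda_i\gamma_{0,i}$ plus an explicit linear combination of the other $\gamma_{0,j}$; the coefficients, read off from the topological monodromy description given earlier, are rational functions in the $\lambda_k$ with denominators among the $\lambda_k-1$, and thus lie in $R_{JP}$.  The generator $g_{n+1}:=\lambda_0(g_1\cdots g_n)^{-1}$ then automatically has entries in $R_{JP}$ (since each $g_i$ has unit determinant $\lambda_0\lambda_i$), and by the rank and eigenvalue lemmas above it is indeed a pseudoreflection with nontrivial eigenvalue $\lambda_0\lambda_{n+1}$, so we really do obtain the desired Jordan-Pochhammer representation.

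For the form I would take $H$ to be the matrix of the anti-Hermitian form of the previous proposition in the basis above; its entries can be written as explicit rational functions in the $\lambda_k$ with denominators among the $\lambda_k-1$, and so lie in $R_{JP}$.  The identities $\sigma(H)^T=-H$ and $\sigma(g_i)^T H g_i=H$ become polynomial identities in the $\lambda_k$ after clearing denominators, and hold on the Zariski-dense locus of parameters coming from actual $\mu_N$-covers of $\P^1_\C$ (by the preceding proposition and corollary), hence hold identically over $R_{JP}$.

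The main obstacle is the unit determinant claim.  I would attack it by computing $\det H$ directly: exploiting the sparse structure of $H$ in the above basis, one can row-reduce $H$ into a product of elementary factors each of the form $\pm\lambda_i^{\pm 1}$ or $\pm(\lambda_i-1)^{\pm 1}$, which are precisely the units of $R_{JP}$.  As a cross-check, nondegeneracy of $H$ at every $\C$-point with unitary parameters (by the preceding corollary) rules out any other prime factors of $\det H$, confirming the explicit computation.
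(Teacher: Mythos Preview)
Your approach is genuinely different from the paper's.  The paper argues abstractly: irreducibility at every point of $\Spec R_{JP}$ makes the family of representations a $\PGL_n$-bundle, and the line $\im(g_1-1)$ gives a section of the associated Brauer--Severi scheme, so the bundle comes from a vector bundle, necessarily trivial since $R_{JP}$ is an open subscheme of $\A^{n+1}$.  The invariant skew forms likewise form a line bundle, hence trivial; a generating section is nondegenerate at every point (again by irreducibility everywhere), so its determinant is a unit.  Finally, anti-Hermitianness is obtained by a Hilbert 90/tame-ramification argument: the generating form satisfies $\bar\beta=-\zeta\beta$ for a unit $\zeta$ of norm $1$, equal to $1$ along the fixed locus of the involution, hence a coboundary.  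None of this requires writing down a single matrix entry.

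Your explicit route can in principle be made to work, but as written it has real gaps.  First, you assert the shapes of the $g_i$ and of $H$ in the basis $\gamma_{0,1},\dots,\gamma_{0,n}$ without producing them; in particular, the paper's description of the monodromy says $g_i$ is diagonal in a basis containing $\gamma_{0,i}$ together with $n-1$ edges \emph{not} involving $x_0$, which is not your basis, so the claim that $g_i$ fixes each $\gamma_{0,j}$ with $j\ne i$ needs justification (or a different explicit model).  Second, and more seriously, the unit-determinant argument is incomplete.  You never pin down a normalization of $H$, so ``$\det H$'' is only well-defined up to an $n$-th power of a scalar; and your cross-check fails as an argument: nondegeneracy on the unitary locus in $\C$-points cannot exclude an integer prime (say $2$) dividing $\det H$, since such primes cut out loci with no $\C$-points at all.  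To upgrade this without a full computation, you would need nondegeneracy at \emph{every} closed point of $\Spec R_{JP}$, including those of positive characteristic, which is exactly what the paper invokes via irreducibility.  If you prefer the explicit route, you must actually write down the form (e.g., the standard sesquilinear form on the reduced Burau/Gassner model) and exhibit its determinant as a monomial in the $\lambda_i$ and $\lambda_i-1$.
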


\begin{proof}
  By irreducibility, the family of Jordan-Pochhammer representations is
  a $\PGL_n$-bundle over $\Spec(R)$, and $\im(g_1-1)$ produces a section of
  the corresponding Brauer-Severi variety, so that the representation acts
  on a vector bundle.  But $R$ is open in $\A^{n+1}$, so the vector bundle
  is trivial.

  Similarly, the space of skew-bilinear forms is locally free of rank 1, so
  trivial, and thus there is such a form with unit determinant, itself
  determined up to rescaling by a unit.  In particular, if $\beta$ is such
  a form, then there is a unit $\zeta$ such $\bar\beta = -\zeta \beta$,
  which by uniqueness must have norm $1$.  It is moreover $1$ at the fixed
  points of the involution (where all parameters are $-1$), and the
  involution is tamely ramified, and thus the corresponding cohomology
  class is trivial, letting us rescale $\beta$ to be anti-Hermitian as
  required.
\end{proof}

\begin{rem}
  The form, of course, is only unique up to rescaling by invariant units,
  e.g., $(\lambda_i-1)^2/\lambda_i$.
\end{rem}

\begin{cor}
  Suppose $C/\P^1$ is a cyclic $N$-cover such that none of its ramification
  degrees are prime powers.  Then $\Prym(C)$ is isomorphic to its dual.
\end{cor}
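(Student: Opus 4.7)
The plan is to invoke the preceding Proposition --- which furnishes a Jordan-Pochhammer-invariant anti-Hermitian form with unit determinant over $R_{JP}$ --- and specialize it to the Prym of $C$.

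First, I would read off the Jordan-Pochhammer parameters for $\Prym(C)$. Matching the earlier Proposition (pseudoreflection eigenvalues $\zeta_N^{m_0+m_i}$, scalar product $\zeta_N^{m_0}$) against the Jordan-Pochhammer conventions $\det(g_i)=\lambda_0\lambda_i$ and $\prod_i g_i=\lambda_0$ forces $\lambda_i=\zeta_N^{m_i}$ for $0\le i\le n+1$. Then I would check that the specialization $R_{JP}\to\Z[\zeta_N]$, $\lambda_i\mapsto\zeta_N^{m_i}$, is well-defined: since $R_{JP}$ inverts each $\lambda_i-1$, this amounts to each $\zeta_N^{m_i}-1$ being a unit in $\Z[\zeta_N]$. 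By standard cyclotomic algebra, $\zeta_N^a-1$ is a unit in $\Z[\zeta_N]$ iff the order of $\zeta_N^a$ --- namely the ramification degree $N/\gcd(N,m_i)$ of $C\to\P^1$ at $x_i$ --- is not a prime power, which is exactly the hypothesis.

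Under this specialization, the form descends to an anti-Hermitian form on the $\Z[\zeta_N]$-module $H_1(\Prym(C))$ with unit $\Z[\zeta_N]$-determinant, providing a $\Z[\zeta_N]$-linear isomorphism onto the $\Z[\zeta_N]$-linear dual. Since the different of $\Z[\zeta_N]/\Z$ is principal (generated by $\Phi_N'(\zeta_N)$), multiplication by that generator then identifies the $\Z[\zeta_N]$-linear dual with the $\Z$-linear dual $H_1(\Prym(C)^\vee)$, compatibly with the Hodge structure (the $\Z[\zeta_N]$-action always preserves the Hodge decomposition on a $\Z[\zeta_N]$-abelian variety, so both steps in the composition are Hodge-compatible). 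The resulting isomorphism of abelian varieties is the conclusion. The one step requiring any real attention is the translation of the ramification-degree hypothesis into the unit-specialization condition, but this is routine cyclotomic algebra; the substantive input is the preceding Proposition.
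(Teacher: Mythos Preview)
Your reduction to the unit condition on $1-\zeta_N^{m_i}$ and the specialization of $R_{JP}$ to $\Z[\zeta_N]$ are exactly right, and this is how the paper begins as well. The divergence comes after that, and the gap is in your final step.

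You construct a lattice isomorphism $H_1(\Prym(C))\to H_1(\Prym(C)^\vee)$ from the specialized anti-Hermitian form (plus the different trick) and then assert Hodge-compatibility on the grounds that ``the $\Z[\zeta_N]$-action always preserves the Hodge decomposition.'' That reasoning is not sufficient. Endomorphisms of an abelian variety do preserve the Hodge filtration, but a sesquilinear form on $H_1$ is not an endomorphism; for the induced map $H_1\to H_1^\vee$ to come from a morphism of abelian varieties it must satisfy the Riemann bilinear relations, and there is no reason an arbitrary monodromy-invariant anti-Hermitian form should do so. (The complex conjugation on $\Z[\zeta_N]$ that enters the Hermitian condition is unrelated to the conjugation swapping $H^{-1,0}$ and $H^{0,-1}$.) There is also a quieter issue: you identify the specialized $R_{JP}^n$ with the integral lattice $H_1(\Prym(C))$ itself, which the paper has not established---it only knows the monodromy rationally.

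The paper sidesteps both problems by running the argument in the other direction. It starts from the \emph{standard polarization} $\Prym(C)\to\Prym(C)^\vee$, which is an honest morphism of abelian varieties by construction, observes that it is monodromy-invariant, and then uses irreducibility (uniqueness of the invariant form up to $\Q(\zeta_N)^\times$) to conclude it is a $\Z[\zeta_N]$-scalar multiple of the unit-determinant Jordan--Pochhammer form. Hence its kernel is the kernel of that scalar endomorphism, and the polarization factors through an isomorphism. Your approach can be repaired by exactly this uniqueness argument (the JP form is a scalar multiple of the polarization, hence does satisfy the Riemann relations), but once you do that you have essentially reproduced the paper's proof.
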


\begin{proof}
   The constraint on the ramification degrees implies that each
   $1-\lambda_i$ is a unit in $\Z[\zeta_N]$, and thus the map from $R_{JP}$
   to $\Q(\zeta_N)$ factors through $\Z[\zeta_N]$.  Since the standard
   polarization of $\Prym(C)$ is monodromy-invariant, it must be a scalar
   multiple (by an element of $\Z[\zeta_N]$) of the unique form coming from
   the Proposition, and thus the kernel of the standard polarization is the
   kernel of an endomorphism, so factors through an isomorphism
   $\Prym(C)\cong \Prym(C)^\vee$.
\end{proof}

\begin{rem}
   This does not imply that $\Prym(C)$ admits a principal polarization,
   which requires that the isomorphism to $\Prym(C)^\vee$ be its own dual
   as an isogeny.
\end{rem}

\section{Largeness}

Let $L/K$ be a quadratic extension of number fields, with corresponding
maximal orders $O_L$, $O_K$, and let $A\mapsto A^\dagger$ denote the
corresponding conjugate transpose operation on $\Mat_n(L)$.  Given an
anti-Hermitian matrix $A=-A^\dagger$ with nonzero determinant, we define
the corresponding unitary group $\GU(A)$ to be the subgroup of
$\GL_n(O_L)$ consisting of matrices $g$ such that $gAg^\dagger=A$.  If
$\frakp$ is any prime of $K$, we similarly define $\GU(A_\frakp)$ to
be the analogous (compact!) group of matrices over the completion at
$\frakp$.  We also define $\SU$ to be the corresponding determinant 1
subgroups.  Let $S_A$ be the set of finite primes of $K$ not dividing
$\det(A)\overline{\det(A)}$.

\begin{defn}
  A homomorphism $\phi:G\to \SU(A)$ is {\em large} if the
  composition
  \[
  G\to \SU(A)\to \prod_{\frakp\in {S_A}} \SU(A_\frakp)
  \]
  has dense image.  A homomorphism $\phi:G\to \GU(A)$ is large if the
  restriction to the preimage of $\SU(A)$ is large.
\end{defn}

\begin{rem}
  For our application to monodromy, we cannot expect density in the
  ad\`elic unitary group, for the simple reason that the determinant is
  always a global root of unity.  Of course, to understand the image of a
  large homomorphism, all that remains is to control the determinants.  For
  our families of covers of $\P^1$, this is straightforward, but for the
  higher genus families our arguments only give a lower bound on the image
  of the determinant.
\end{rem}

The $p$-adic groups $U(A_\frakp)$ come in three flavors, depending
on how $\frakp$ behaves in $L$.  If $\frakp$ is split, then
$\GL_n(O_{L,\frakp})\cong \GL_n(O_{K,\frakp})^2$ and
$A_\frakp$ induces a nondegenerate pairing between the two free
modules, so that $\GU(A_\frakp)\cong \GL_n(O_{K,\frakp})$ and
$\SU(A_\frakp)\cong \SL_n(O_{K,\frakp})$.  Similarly, if
$\frakp$ is inert, then $A_\frakp$ is an anti-Hermitian form
and thus $\SU(A_\frakp)$ is the usual $p$-adic unitary group, a
quasi-split form of $\SU_n$ over $O_{K,\frakp}$.  (An easy induction
by powers of $\frakp$ shows that any two nondegenerate
anti-Hermitian forms are equivalent, so give isomorphic groups.)  In either
case, reduction modulo $\frakp$ gives a surjection to $\SL_n$ or
$\SU_n$ over the corresponding residue field.

When $\frakp$ is tamely ramified (we ignore the wild case, as it
will not arise in our application), the group may still be viewed
as a group scheme over $O_{K,\frakp}$, but is no longer reductive.
If $\frakp'$ is the corresponding prime of $L$, then induction by
powers of $\frakp'$ again shows that $A_\frakp$ is essentially
unique.  In particular, we may take $A_\frakp$ to have coefficients
in $O_{K,\frakp}$, and thus be the base change of a symplectic form.
(In particular, $n$ must be even!)  We thus see that $\SU(A_\frakp)$
contains $\Sp_n(O_{K,{\mathfrak
    p}})$, and thus reduction mod $\frakp'$ gives a surjection from
$\SU(A_\frakp)$ to $\Sp_n(k_\frakp)$.  The kernel, however, has
nontrivial image modulo $\frakp$, an additive group of dimension
$n(n-1)/2-1$, which may be identified with the space of alternating
forms over $k_\frakp$ orthogonal under the trace pairing to $A$.
(To be precise, if $\pi$ is a uniformizer for $\frakp'$ such that
$\bar\pi=-\pi$, then an element $1+\pi B$ is unitary mod $\frakp$
iff $BA = -(BA)^t \pmod \frakp'$ and has determinant 1 iff $\Tr(B)=0$.)

\begin{rem}
  One could also consider the case of a Hermitian form, for which the
  unramified cases agree (scale by an element of trace 0) while the
  ramified case (again assuming tameness) would have reduction an extension
  of $\SO_n$ by an $n(n+1)/2-1$-dimensional space of symmetric forms.
  Also, note that although we work with a matrix $A$, this is merely to
  simplify the discussion; everything below works equally well for forms on
  locally free $O_K$-modules.
\end{rem}

Since largeness is a question of (topological) surjectivity to a product,
it will be helpful to have some lemmas about such surjectivity.  One
simplification is that apart from some low rank cases that we avoid, the
factors in the product are all perfect.  (This also adds the simplification
that we can restrict our homomorphism to $\GU(A)$ to the derived subgroup of
the domain without affecting largeness.)

\begin{lem}
  Suppose $H\subset G_1\times G_2$ is a proper subgroup such that both
  projections are surjective.  Then there are proper normal subgroups
  $N_1\normal G_1$, $N_2\normal G_2$ such that $G_1/N_1\cong G_2/N_2$.
\end{lem}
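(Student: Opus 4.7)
The statement is (a mild reformulation of) Goursat's lemma, and I would prove it by the standard slicing construction. Define
\[
N_1 := \{g \in G_1 : (g, e) \in H\}, \qquad
N_2 := \{g \in G_2 : (e, g) \in H\},
\]
so that $N_i$ is the kernel of the restriction to $H$ of the projection onto the other factor, and verify in turn that $N_1 \normal G_1$ and $N_2 \normal G_2$, that $N_1 \times N_2 \subset H$, that $H/(N_1 \times N_2)$ is the graph of an isomorphism $G_1/N_1 \cong G_2/N_2$, and finally that properness of $H$ forces $N_1$ (hence $N_2$) to be proper.

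Normality of $N_1$ in $G_1$ follows from surjectivity of the first projection: given $g_1 \in G_1$, pick $g_2 \in G_2$ with $(g_1, g_2) \in H$; then for $n \in N_1$,
\[
(g_1, g_2)(n, e)(g_1, g_2)^{-1} = (g_1 n g_1^{-1}, e) \in H,
\]
and the analogous argument handles $N_2$. The inclusion $N_1 \times N_2 \subset H$ is immediate from $(n_1, n_2) = (n_1, e)(e, n_2)$. For the isomorphism, set $\bar H := H/(N_1 \times N_2)$. Both projections $\bar H \to G_i/N_i$ are surjective by hypothesis, and are injective: if $(g_1, g_2) \in H$ with $g_1 \in N_1$, then $(g_1^{-1}, e) \in H$ and so $(e, g_2) \in H$, forcing $g_2 \in N_2$ and the class of $(g_1, g_2)$ in $\bar H$ to be trivial. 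Hence $\bar H$ is simultaneously the graph of a surjective homomorphism $G_1/N_1 \to G_2/N_2$ and of its inverse.

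For the final claim, suppose $N_1 = G_1$, so $G_1 \times \{e\} \subset H$. Given $g_2 \in G_2$, surjectivity of the second projection gives $(g_1, g_2) \in H$ for some $g_1$; multiplying by $(g_1^{-1}, e) \in H$ yields $(e, g_2) \in H$, hence $\{e\} \times G_2 \subset H$ and $H = G_1 \times G_2$, contradicting that $H$ is proper. The only conceptually nontrivial move is the choice of definition of $N_1$ and $N_2$; every subsequent step is a routine verification, and I do not anticipate any genuine obstacle.
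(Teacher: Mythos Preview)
Your proof is correct and follows essentially the same Goursat-lemma argument as the paper: define $N_i$ as the kernel of the projection $H\to G_{3-i}$, quotient $H$ by $N_1\times N_2$, and observe that the resulting projections to $G_i/N_i$ are bijections. The paper's version is terser (it asserts normality and properness of the $N_i$ without the verifications you spell out), but the approach is identical.
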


\begin{proof}
  Let $N_1\subsetneq G_1$ be the kernel of the projection $H\to G_2$ and
  similarly for $N_2\subsetneq G_2$.  Then $N_1\times N_2\normal H$ and thus
  we may quotient to obtain a subgroup $H/(N_1\times N_2)\subset
  (G_1/N_1)\times (G_2/N_2)$ for which both projections are also injective.
  But then both projections are isomorphisms, and the claim follows.
\end{proof}

\begin{lem}
  Let $G_1,\dots,G_n$ be a sequence of perfect groups and let $H\subset
  G_1\times\cdots\times G_n$ be a subgroup such that any projection
  $\pi_{ij}:H\to G_i\times G_j$ is surjective.  Then
  $H=G_1\times\cdots\times G_n$.
\end{lem}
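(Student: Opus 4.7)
The plan is to induct on $n$, with base case $n=2$ immediate from the surjectivity of $\pi_{12}$. For the inductive step, set $A:=G_1\times\cdots\times G_{n-1}$ and consider the projection $\pi_A(H)\subset A$. Since the projection of $\pi_A(H)$ onto any pair $G_i\times G_j$ with $i,j<n$ agrees with that of $H$, it is surjective, and the inductive hypothesis forces $\pi_A(H)=A$. Similarly $\pi_n(H)=G_n$ (from surjectivity of $\pi_{1n}$). If $H\ne A\times G_n$, the previous lemma produces proper normal subgroups $N\triangleleft A$, $M\triangleleft G_n$ and an isomorphism $A/N\cong G_n/M$; call this common quotient $Q$, which is perfect and nontrivial as a quotient of the perfect group $G_n$. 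Goursat's lemma then identifies $H$ with the full graph of this isomorphism, i.e.\ $H=\{(a,b): \alpha(a)=\beta(b)\}$, where $\alpha:A\to Q$ and $\beta:G_n\to Q$ are the (surjective) quotient maps.

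Define $Q_i:=\alpha(G_i)\subset Q$ for $1\le i<n$, where $G_i\subset A$ sits as the $i$-th factor. Since the $G_i$ pairwise commute elementwise inside $A$, so do the $Q_i$ inside $Q$. The key observation is that for each $i<n$, $\prod_{j\ne i,\, j<n} Q_j = Q$. Indeed, surjectivity of $\pi_{in}(H)=G_i\times G_n$ says that for any $g_i\in G_i$ and any $g_n\in G_n$, some $a\in A$ with $\pi_i(a)=g_i$ satisfies $\alpha(a)=\beta(g_n)$; as $g_n$ ranges over $G_n$ and $\beta$ is surjective, $\alpha$ must take every value of $Q$ on the coset $g_i\cdot\ker(\pi_i|_A)$. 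Translating by $\alpha(g_i)^{-1}$ gives $\alpha(\ker\pi_i|_A)=Q$, and $\ker\pi_i|_A=\prod_{j\ne i}G_j$ maps under $\alpha$ to $\prod_{j\ne i}Q_j$, establishing the claim.

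Now fix any $i<n$. Using $Q=\prod_{j\ne i}Q_j$ and the elementwise commutation $[Q_j,Q_i]=1$ for $j\ne i$, we compute
\[
[Q,Q_i] = \bigl[\prod_{j\ne i}Q_j,\,Q_i\bigr] = 1,
\]
so $Q_i\subset Z(Q)$. This holds for every $i<n$, hence $Q=\prod_i Q_i\subset Z(Q)$, meaning $Q$ is abelian. But a perfect abelian group is trivial, contradicting the properness of $M\triangleleft G_n$. Therefore $H=A\times G_n=G_1\times\cdots\times G_n$.

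The main obstacle is the verification of $\prod_{j\ne i}Q_j=Q$: one has to track how surjectivity of the single pair $\pi_{in}$ translates, through the Goursat description of $H$, into a statement purely inside the quotient $Q$. Once this is in hand, the commutator calculation collapsing $Q$ to an abelian group (and hence, by perfectness, to the trivial group) is routine and handles $n=3$ and $n\ge 4$ uniformly.
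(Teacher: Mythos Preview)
Your proof is correct but organized differently from the paper's.  The paper works directly inside $H$: it sets $N_i=\ker(H\to G_i)$, notes that $N_i$ surjects onto each $G_j$ with $j\ne i$, and then uses the commutator identity $[x,y]\in N_i\cap N_j$ with $\pi_1([x,y])=[\pi_1(x),\pi_1(y)]$ to show inductively that $N_2\cap\cdots\cap N_n$ surjects onto (hence contains) $G_1$; combined with the inductive surjection onto $G_2\times\cdots\times G_n$, this gives $H$ everything.  You instead pass to the Goursat quotient $Q=A/N\cong G_n/M$ and argue by centrality: the images $Q_i=\alpha(G_i)$ pairwise commute, and pairwise surjectivity forces $\prod_{j\ne i}Q_j=Q$, so each $Q_i$ is central, whence $Q$ is abelian and (being perfect) trivial.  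The paper's argument is slightly more elementary in that it never needs the full Goursat description of $H$ as a fiber product, only the existence of the normal subgroups; your argument is a touch more conceptual, reducing the problem to the clean slogan ``a perfect group generated by pairwise commuting subgroups, any $n-2$ of which already generate, is trivial.''  Both hinge on the same underlying commutator/perfectness mechanism, just viewed from opposite ends of the exact sequence.
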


\begin{proof}
  Let $N_i$ be the kernel of the projection $H\to G_i$.  This is normal in
  $H$ and surjects on $G_j$ for all $i\ne j$.  For any $g_1,h_1\in G_1$ and
  $i,j>1$, there is an element of $N_i$ with first coordinate $g_1$ and an
  element of $N_j$ with first coordinate $h_1$, and thus the commutator is
  an element of $N_i\cap N_j$ with first coordinate
  $g_1h_1g_1^{-1}h_1^{-1}$.  In particular, we see that $N_i\cap N_j$
  surjects on $G_1$ for all $1\notin \{i,j\}$, and inductively that
  $N_2\cap\cdots N_n$ surjects on (so contains!) $G_1$.  Since by induction
  $H$ surjects on $G_2\times\cdots\times G_n$, the claim follows.
\end{proof}

Note that the above arguments work equally well for closed subgroups of
topological groups and extend immediately to infinite products.  We thus
see that as long as the factors $\SU_n(A_\frakp)$ are perfect (automatic if
$n\ge 4$ by Corollary \ref{cor:sc_is_perfect} below), showing largeness
reduces to showing density of the image in any product over a {\em pair} of
primes.  Each factor is an extension of a finite simple group by a
pro-$p$-group and (since it is perfect) no quotient is a pro-$p$-group.  It
follows (again avoiding low-rank cases) that for primes with distinct
residue fields, surjectivity to the pair reduces to individual surjectivity
along with surjectivity to the product of finite simple groups!  (If the
proper subgroup $H$ has surjective projections, then quotienting by the
full pro-$p$-groups gives an isomorphism between the finite simple
quotients.)  We thus reduce to showing (a) surjectivity to each factor and
(b) surjectivity to each pair of finite simple groups over the same residue
field.  (In our case, $K$ will be an abelian extension of $\Q$, so ``same
residue field'' simply means ``same characteristic''.)  In practice, both
(a) and (b) start with a common first step, namely showing that the map to
each residue group is surjective.  Then (a) (a.k.a. ``lifting'') reduces to
showing that the image contains the kernel of reduction; since that group
is nilpotent, this inductively reduces to {\em linear} questions.  For (b),
on the other hand, it suffices to rule out the image being the graph of an
isomorphism.

\section{Largeness mod primes}

The first step in proving that the (reduced, cyclotomic) Jordan-Pochhammer
representation is large is to show that it is essentially surjective modulo
primes of the real subfield.  (This was shown for $n>8$ in
\cite{VolkleinH:1998}; we extend this to $n>4$ and describe the limited
exceptions for $n=3,4$ as well as the less limited exceptions for $n=2$.)
This in turn reduces to understanding the images of {\em modular}
Jordan-Pochhammer representations, i.e., in which the parameters are
elements of $\F_q^\times\setminus\{1\}$.

Since the generators of the free group map to pseudo-reflections
under the Jordan-Pochhammer representation, the image will be an
irreducible subgroup of $\GL_n(\bar\F_p)$ generated by pseudo-reflections.
The classification of such groups up to conjugation is well-known, though
oddly does not appear to have been stated in any single place.  These
groups fall into three categories:
\begin{itemize}
  \item[1.] Classical groups satisfying one of $G=\Sp_{2n}(\F_q)$,
    \begin{align}
      \SL_n(\F_q)\subseteq &G\subseteq \GL_n(\F_q),\\
      \SU_n(\F_q)\subseteq &G\subseteq \GU_n(\F_q),\\
      \SL_2(\F_q)\subseteq &G\subseteq \F_{q^2}^\times\otimes \GL_2(\F_q),\\
      \Omega_n(\F_q)\subseteq &G\subseteq \GO_n(\F_q),
    \end{align}
    with the last case excluding the groups $\Omega_n(\F_q)$, $\SO_n(\F_q)$
    not containing reflections.
  \item[2.] Faithful reductions of irreducible {\em complex} reflection
    groups (as classified in \cite{ShephardGC/ToddJA:1954}).
  \item[3.] The groups $S_{pn}\subset \GL_{pn-2}(\F_p)$,
    $3.\Alt_6\subset\GL_3(\F_4)$, $3.\Alt_7\times 2\subset \GU_3(\F_5)$, 
    $4.\PSL_3(\F_4).2\subset\GU_4(\F_3)$, and
    $3.\SU_4(\F_3).2_2\subset\GU_6(\F_2)$.
\end{itemize}
For the classification in dimension $>2$, see
\cite[\S2.1]{MalleG/MatzatBH:2018} and references therein.  For $n=2$, it
was shown in \cite{DicksonLE:1958} that any finite irreducible subgroup of
$\PGL_2(\bar\F_p)$ either lifts to characteristic 0 or is $\PSL_2(\F_q)$ or
$\PGL_2(\F_q)$ for some $q$.  Since any nonidentity element of
$\PGL_2(\bar\F_q)$ is the image of a transvection or a pair of homologies,
it follows easily that any reflection group with liftable image is also
liftable, and that the lifts of $\PSL_2(\F_q)$ and $\PGL_2(\F_q)$ are of
the form stated.

\begin{rem}
  The groups $3.\Alt_6$ and $3.\SU_4(\F_3).2_2$ appearing in case 3 are the
  reductions mod 2 of $ST_{27}$ and $ST_{34}$ respectively; in both cases
  the kernel of reduction is $\mu_2$.  In all other cases for which the
  reduction of a Shephard-Todd group is not faithful, the reduction is
  either reducible or appears elsewhere on the list.
\end{rem}

The groups of type 2 reduce to questions in characteristic 0.

\begin{lem}
  For $n>4$, let $\lambda_0,\dots,\lambda_{n+1}\in \F_q\setminus \{0,1\}$
  be parameters multiplying to $1$.  Then the image of the
  Jordan-Pochhammer representation cannot be a faithful reduction of an
  irreducible complex reflection group.
\end{lem}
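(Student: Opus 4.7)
The approach is to argue by contradiction via the Shephard--Todd classification. Suppose the image of the Jordan--Pochhammer representation is a faithful reduction of an irreducible complex reflection group $W$ of rank $n>4$. Then $W$ is either a member of the imprimitive infinite family $G(de,e,n)$ (which includes $S_{n+1}$ via $G(1,1,n+1)$ acting on its $n$-dimensional reflection representation), or one of the five exceptional groups of rank $\ge 5$: $G_{33}$ (rank $5$), $G_{34}$ and $G_{35}=W(E_6)$ (rank $6$), $G_{36}=W(E_7)$ (rank $7$), and $G_{37}=W(E_8)$ (rank $8$). The key structural constraint is that $W$ must be generated by $n+1$ pseudo-reflections $g_i$ with prescribed nontrivial eigenvalues $\lambda_0\lambda_i$ satisfying $\prod_i g_i=\lambda_0\cdot I$; in particular $\lambda_0$ must lie in the cyclic center $Z(W)$, pinning it to a root of unity of small explicit order in each case.

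For $W=S_{n+1}$ the center is trivial, which would force $\lambda_0=1$, contradicting the hypothesis that $\lambda_0\in \F_q\setminus\{0,1\}$. For each of the remaining four exceptional groups I would proceed by explicit finite case analysis: the rank, order, reflection conjugacy classes, and character table are known explicitly, and enumerating $(n+1)$-tuples of pseudo-reflections generating $W$ with central product is a finite problem, handled either by a character-theoretic count (Frobenius' formula applied to the reflection classes) or by direct computation in CHEVIE. In each case the resulting multiset of nontrivial eigenvalues would be shown incompatible with any choice of $\lambda_i\in\F_q\setminus\{0,1\}$ with $\prod_i\lambda_i=\lambda_0^{-1}$, in particular ruling out a faithful reduction of the group in the specified rank.

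For the bulk case, the imprimitive family $G(de,e,n)\subset \mu_{de}^n\rtimes S_n$, I would exploit the natural dichotomy: pseudo-reflections split into ``transposition-type'' (order $2$, nontrivial eigenvalue $-1$) and ``diagonal-type'' (nontrivial eigenvalue in $\mu_d\setminus\{1\}$). Projecting the relation $\prod g_i=\lambda_0\cdot I$ to $S_n$ forces the transposition-type projections to multiply to the identity in $S_n$, while irreducibility of the Jordan--Pochhammer representation (valid since $\lambda_0\ne 1$, by the earlier lemma) forces those projections to generate a transitive subgroup of $S_n$. Transitivity requires at least $n-1$ transposition-type generators, and the sign/parity constraint on their $S_n$-product being the identity then forces the number $t$ of transposition-types to equal $n$ or $n+1$, leaving at most one diagonal-type generator. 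Consequently at least $n$ of the $\lambda_0\lambda_i$ equal $-1$, pinning $n$ of the $\lambda_i$ to $-1/\lambda_0$; the remaining (at most one) diagonal generator, together with the constraint $\prod_i\lambda_i=\lambda_0^{-1}$, then cannot produce a faithful image of all of $G(de,e,n)$ for $n>4$, since a single diagonal generator plus the transposition part lands in a proper subgroup of $G(de,e,n)$ (essentially the normal closure of one diagonal element inside a wreath-type subgroup), too small to hit the full diagonal $\mu_{de}^n$ when $n>4$.

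\textbf{Main obstacle.} The imprimitive family is the substantive case; the exceptional cases, while numerous, are each finite and mechanical. The difficulty is tracking the $S_n$-word relation, the diagonal product, and the Jordan--Pochhammer constraint $\prod_i\lambda_i=\lambda_0^{-1}$ simultaneously, and ruling out every combination of $(d,e)$ uniformly. The rank hypothesis $n>4$ is precisely what makes the transitivity-plus-parity count binding: genuine exceptional configurations involving $G_{23}$--$G_{32}$ arise for $n=3,4$ and must be described separately, which is why the paper treats those low-rank exceptions elsewhere.
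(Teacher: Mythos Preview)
Your approach diverges substantially from the paper's, and the divergence costs you. The paper's proof is two sentences: since the reduction is faithful, the preimages of the $g_i$ in $W\subset\GL_n(\C)$ are again pseudoreflections multiplying to a central scalar, so one has a \emph{characteristic-zero} Jordan--Pochhammer representation with finite image $W$; such representations were classified by Takano--Bannai, and none exist for $n>4$. You have implicitly carried out the lifting step (you speak of $W$ being generated by $n+1$ pseudoreflections with product in $Z(W)$), but you do not recognise that this is exactly a finite-monodromy JP tuple over $\C$, for which a complete classification already exists. Instead you attempt to redo that classification by hand via Shephard--Todd.

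That attempt has a genuine gap in the imprimitive case. Your closing claim---that ``a single diagonal generator plus the transposition part lands in a proper subgroup of $G(de,e,n)$''---is false. For $G(d,1,n)$, any $n$ transpositions whose associated graph on $\{1,\dots,n\}$ is connected generate all of $S_n$, and conjugating a single diagonal reflection $\mathrm{diag}(\zeta,1,\dots,1)$ by $S_n$ produces every coordinate diagonal reflection, hence all of $\mu_d^n\rtimes S_n=G(d,1,n)$. So your proposed obstruction does not exist; the real obstruction must come from incompatibility of the eigenvalue and central-scalar constraints, which you have not analysed. (Your parity count $t\in\{n,n+1\}$ is also not quite right: for $n$ odd one only gets $t\ge n-1$, leaving up to two diagonal generators.) The exceptional cases you defer to unspecified computation; that is exactly the content of the Takano--Bannai paper you are not citing. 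The $S_{n+1}$ case via $Z(S_{n+1})=1$ is fine, but it is the only case you actually dispatch.
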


\begin{proof}
  If the image of an irreducible Jordan-Pochhammer representation is the
  faithful reduction of a complex reflection group, then the preimages of
  the generators are still pseudoreflections, and thus the representation
  is the reduction of a representation in characteristic 0.  The cases with
  finite monodromy were classified in \cite{TakanoK/BannaiE:1976}, and
  there are no instances with $n>4$.  (They worked with a slightly
  different version of the monodromy in which the scalar is absorbed in one
  of the generators, but this has no effect on finiteness.)
\end{proof}

\begin{rem}
  Up to the braid group action and Galois, the only cases with $n\in
  \{3,4\}$ have parameters
  \[
  (\zeta_6^2;\zeta_6,\zeta_6,\zeta_6,\zeta_6)\ [3^{1+2}.2],
  (\zeta_6;\zeta_6^2,\zeta_6,\zeta_6,\zeta_6)\ [\ST_{26}]
  \]
  for $n=3$ ($p\ne 3$) or
  \[
  (\zeta_6;\zeta_6,\zeta_6,\zeta_6,\zeta_6,\zeta_6)\ [\ST_{32}]
  \]
  for $n=4$.  For $n=2$, there are a large number of cases: any pair of
  elements that generate a finite subgroup of $\PSL_2(\C)$ gives
  rise to 8 finite Jordan-Pochhammer representations over $\C$.  (Two ways
  each of lifting the generators to pseudoreflections, plus an additional
  choice of which eigenvalue of the product is $\lambda_0$.)
\end{rem}

\begin{rem}
  The two cases for $n=3$ correspond to covers with the same ramification,
  but with a different choice of which ramification point is being moved.
  If we allow {\em all} of the ramification points to move, the group
  becomes larger, but is still finite, as it contains the
  single-moving-point monodromy as a normal subgroup.
\end{rem}

For the groups of type 3 and the orthogonal groups, the pseudoreflections
all have order 2, forcing the parameters to have the form
\[
(\lambda_0;-1/\lambda_0,\dots,-1/\lambda_0)
\]
with $\lambda_0^n=(-1)^{n+1}$ but $\lambda_0\notin \{\pm 1\}$.  The
symmetric and orthogonal cases are thus immediately ruled out by having
centers of order $\le 2$.  For the remaining sporadic cases, the rank 3
cases are ruled out by the fact that the corresponding parameters in
characteristic 0 generate an imprimitive reflection group, while
$4.\PSL_3(\F_4).2$ would need to have a central element of order 4.
Finally, if the image were $2.U_4(3).2_2\cong \ST_{34}/\mu_2$, then
we could lift the generators to pseudoreflections in $\ST_{34}$; the lifts
would still multiply to a central element and thus give a contradiction to
\cite{TakanoK/BannaiE:1976}.

For the $\Sp_{2n}$ case, all of the generators are transvections, and thus
the parameters must be
\[
(\lambda_0;1/\lambda_0,\dots,1/\lambda_0)
\]
Moreover, since $\Sp_{2n}$ has center $\mu_2$, we must have $\lambda_0=-1$,
and thus the parameters are all $-1$, in which case the group is indeed
contained in $\Sp_{2n}(\F_p)$, and thus (by the above arguments) equal to
$\Sp_{2n}(\F_p)$.

It remains only to show that in the remaining cases, the group
contains the relevant linear or unitary group.  (I.e., we must rule
out that it can be defined over a subfield.) The argument for this in
\cite{VolkleinH:1998} remains valid for $n>2$, and indeed we can prove a
generalization, which will be useful for dealing with pairwise
largeness.

\begin{lem}\label{lem:pairwise}
  Let $L/K$ be a quadratic extension of finite \'etale algebras, and for
  $n>2$, let $\lambda_0,\dots,\lambda_{n+1}\subset L$ be elements of norm 1
  multiplying to 1 and such that for each $i$, $1-\lambda_i$ is a unit.
  If $L$ is generated by $\lambda_0,\dots,\lambda_{n+1}$, then there is no
  subalgebra $L'\subset L$ such that the image of the Jordan-Pochhammer
  representation in $\GL_n(L)$ is contained in a conjugate of
  $\GL_1(L)\otimes \GL_n(L')$.
\end{lem}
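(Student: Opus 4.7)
The plan is to assume for contradiction that the image of the Jordan-Pochhammer representation lies in a conjugate of $\GL_1(L)\otimes \GL_n(L')$ for some proper subalgebra $L'\subsetneq L$, and to deduce that every $\lambda_i$ lies in $L'$, contradicting the generation hypothesis. After conjugating in $\GL_n(L)$, each generator can be written as $g_i=c_ih_i$ with $c_i\in L^\times$ and $h_i\in \GL_n(L')$.

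The crux is to show $c_i,\lambda_0\lambda_i\in L'$ for each $i$ by analyzing the characteristic polynomial of $h_i$. Since $g_i$ is a pseudoreflection with eigenvalues $1$ (multiplicity $n-1$) and $\lambda_0\lambda_i$, the matrix $h_i=c_i^{-1}g_i$ has characteristic polynomial
\[
\chi_{h_i}(T) = (T-c_i^{-1})^{n-1}(T-c_i^{-1}\lambda_0\lambda_i) \in L'[T].
\]
The key point is that because $n>2$, the root $c_i^{-1}$ is the unique root of multiplicity $\ge n-1$, and is therefore canonically determined by $\chi_{h_i}$. In the field case, Galois theory over $L/L'$ immediately forces $c_i^{-1}\in L'$; in the étale algebra setting one argues componentwise, using that $\chi_{h_i}\in L'[T]$ constrains the componentwise roots to be compatible with the embedding $L'\hookrightarrow L$. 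Either way, $c_i^{-1}\in L'$, and dividing $\chi_{h_i}$ by $(T-c_i^{-1})^{n-1}$ then gives $c_i^{-1}\lambda_0\lambda_i\in L'$, whence $c_i\in L'^\times$ and $\lambda_0\lambda_i\in L'$.

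With those containments, the product relation $\prod_ig_i=\lambda_0 I$ rewrites as $(\prod_ic_i)(\prod_ih_i)=\lambda_0 I$; since $\prod_ih_i\in \GL_n(L')$ is a scalar matrix, its scalar $\lambda_0/\prod_ic_i$ lies in $L'$, and combined with $\prod_ic_i\in L'^\times$ this yields $\lambda_0\in L'$. From $\lambda_0\in L'$ and $\lambda_0\lambda_i\in L'$ we conclude $\lambda_i=\lambda_0^{-1}(\lambda_0\lambda_i)\in L'$ for every $i$, so the hypothesis that $\lambda_0,\dots,\lambda_{n+1}$ generate $L$ forces $L'\supseteq L$, contradicting $L'\subsetneq L$.

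The main obstacle I expect is the extraction step in the étale algebra setting, where a subalgebra $L'\subseteq L$ need not respect the idempotent decomposition of $L$. The hypothesis $n>2$ is precisely what makes the extraction possible, by distinguishing the two eigenvalues of $h_i$ via multiplicity; the method fails cleanly at $n=2$, where the two eigenvalues cannot be told apart, which is presumably why that case is excluded.
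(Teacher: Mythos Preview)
Your proof is correct and follows essentially the same route as the paper. Both arguments hinge on the fact that for $n>2$ the eigenvalue $1$ of $g_i$ is distinguished by its multiplicity $n-1\ge 2$; the paper phrases this as ``the conjugates under $\Gal(L/L')$ are proportional and thus equal (since $1$ has different multiplicity than any other eigenvalue),'' while you phrase it via the characteristic polynomial $\chi_{h_i}(T)=(T-c_i^{-1})^{n-1}(T-c_i^{-1}\lambda_0\lambda_i)\in L'[T]$ having a unique root of multiplicity $\ge n-1$. The extraction of $\lambda_0$ from the product relation and then of each $\lambda_i$ is identical in both proofs.
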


\begin{proof}
  Otherwise, after making the appropriate choice of basis, we may assume
  that the generators are all proportional to matrices with coefficients in
  $L'$.  But then the generators must actually have coefficients in $L'$,
  since the conjugates under $\Gal(L/L')$ are proportional and thus equal (since $1$ has
  different multiplicity than any other eigenvalue).  In particular, the
  nontrivial eigenvalues of the generators must be in $L'$, i.e.,
  $\lambda_0\lambda_i\in L'$ for all $i$.  But the product of the
  generators is also in $\GL_n(L')$ and thus $\lambda_0\in L'$, forcing
  $L'=L$.
\end{proof}

\begin{rem}
  In particular, given any finite \'etale algebra $R$ and any parameters
  $\lambda_0,\dots,\lambda_{n+1}\in R$ multiplying to 1 and such that
  $\lambda_i-1$ is a unit for all $i$, we may take $L$ to be the subalgebra
  of $R\times R$ generated by
  $(\lambda_0,1/\lambda_0)$,\dots,$(\lambda_{n+1},1/\lambda_{n+1})$, and
  find that the Jordan-Pochhammer representation is contained in
  $\GU_n(L/(L\cap R))$ and in no conjugate of a smaller unitary or linear group.
\end{rem}

\begin{rem}
  For $n=2$, this argument fails, as we can no longer reliably distinguish
  the two eigenvalues of the generators.  We instead find that the group is
  contained in a conjugate of $\GL_1(L)\SL_2(K')$ where $K'$ is generated
  by $\lambda_0+1/\lambda_0$, $\lambda_1+1/\lambda_1$,
  $\lambda_2+1/\lambda_2$ and
  \[
  \frac{(1-\lambda_0)(1-\lambda_1)(1-\lambda_2)(1-\lambda_0\lambda_1\lambda_2)}{\lambda_0\lambda_1\lambda_2}.
  \]
\end{rem}

\section{Pairwise surjectivity}

The next step in proving largeness of cyclotomic Jordan-Pochhammer
representations is to show that for any pair of (irreducible) primes of
$\Q(\zeta_N+\zeta_N^{-1})$ over the same rational prime $p$, the reduction
map remains surjective.  Since cyclotomic fields are abelian extensions of
$\Q$, the quotients of $\Z[\zeta_N]$ by the two primes are isomorphic,
either $\F_{q^2}$ or $\F_q\times \F_q$.  Fix a standard form of this
\'etale algebra $A$ and fix choices of morphisms $\Z[\zeta_N]\to A$ with
kernels the two primes, so that we get a pair of representations of
$F_{n+1}$ in $\GU_n(A/\F_q)$ (which is either $\GL_n(\F_q)$ or
$\GU_n(\F_q)$).  These two morphisms differ by an automorphism of
$\Q(\zeta_N)$, which may be denoted by an element $l\in (\Z/N\Z)^\times$,
where we note that two elements correspond to the same prime in
$\Q(\zeta_N+\zeta_N^{-1})$ iff they are in the same coset of the subgroup
generated by $p$ and $-1$.

Our assumption $n>4$ ensures that $\GU_n(A)'/Z(\GU_n(A)')$ is a simple group,
and thus if $G$ is the image of $F_{n+1}$ in $\GU_n(A)$, then the image of
$G'$ in $\PSU_n(A/\F_q)^2$ is either everything or the graph of an
automorphism.  The automorphisms of $\PSU_n(A/\F_q)$ (indeed, the
automorphisms of any simple group of Lie type) are well-understood: they
are generated over $\PGU_n(A/\F_q)$ by the action of $\Gal(A/\F_p)$.  (Note
that in the $\SL$ case, the usual diagram automorphism corresponds to the
automorphism of $A$ that swaps the two factors of $\F_q$.)  In particular,
we see that we may absorb the action of $\Gal(A/\F_p)$ into our choice of
$l$ above, and thus reduce to the case that the automorphism of
$\PSU_n(A/\F_q)$ comes from $\PGU_n(A/\F_q)$.

Now, consider more generally the image of $g\in F_{n+1}$ in
$\PGU_n(A/\F_q)^2$, say $(g_1,g_2)$. For any $h\in F_{n+1}'$, its image in
$\PSU_n(A/\F_q)^2$ is $(\phi(h),u\phi(h)u^{-1})$ for some fixed element
$u\in \PGU_n(A/\F_q)$, and this must respect the action of conjugation:
\begin{align}
g_1 \phi(h) g_1^{-1} &= \phi(g h g^{-1})\\
g_2 u \phi(h) u^{-1} g_2^{-1} &= u \phi(g h g^{-1}) u^{-1}
\end{align}
Since this is true for all $h$, we conclude that
\[
g_2 = u g_2 u^{-1},
\]
and thus the image of $F_{n+1}$ in $G^2\subset \PGU_n(A/\F_q)^2$ is still
the graph of the automorphism corresponding to $u$.

In particular, we see that the image of $F_{n+1}$ in $\GU_n(A/\F_q)^2$ is
contained in a conjugate of $\GU_n(A/\F_q)$.  By Lemma \ref{lem:pairwise},
this implies that the diagonal copy of $A$ contains
$(\lambda_i,\lambda'_i)$ for all $i$, and thus that $\lambda'_i=\lambda_i$
for all $i$.  In other words, the two primes are actually the same!

We have thus proved the requisite pairwise surjectivity.

\begin{rem}
Although we have assumed $n>4$ here, the argument actually works for $n>2$
(assuming, that is, that both maps are surjective and the group is not
$\SU_3(\F_2)$).
\end{rem}

\section{Lifting to the $p$-adics}

\subsection{Graded Lie algebras from $p$-adic groups}

Let $R$ be a mixed characteristic dvr with maximal ideal $\frakm$ and
residue field $k$ of characteristic $p$.  Then $\GL_n(R)$ has a natural
filtration coming from the filtration of $R$ by powers of $\frakm$; that
is, we have an inverse system of groups
\[
\cdots\to \GL_n(R/\frakm^e)\to\cdots \GL_n(R/\frakm^2)\to \GL_n(k)\to
1.
\]
If $k$ has characteristic $p$, then $(p)=\frakm^d$ for some $d$, and we
define for $e>0$, $I_{e/d}:=\ker(\GL_n(R)\to \GL_n(R/\frakm^e))$, with
$I_0=\GL_n(R)$.  For $e>0$, there is an essentially natural identification
of $I_{e/d}/I_{(e+1)/d}$ with $\gl_n(k)$ (up to a choice of
uniformizer), and this glues together to form a graded Lie algebra with an
action of $\GL_n(k)$.  More precisely, consider a commutator
$(A,B)=A^{-1}B^{-1}AB$ where $A\in I_{e_1/d}$, $B\in I_{e_2/d}$.  We may
rewrite this as
\[
(A,B) = 1 + A^{-1}B^{-1}[A,B] = 1 + A^{-1}B^{-1}[A-1,B-1]
\]
and thus find that $(A,B)\in I_{(e_1+e_2)/d}$ and the corresponding map on
the associated graded is just the usual Lie bracket.  This graded Lie
algebra has an additional structure coming from the $p$-th power map on
$\GL_n$: if $A\in I_{e/d}$, then $A^p\in I_{\min(pe/d,(e/d)+1)}$.
The precise action may be seen by noting that the first and last terms in
the sum
\[
A^p-1 = \sum_{1\le i\le p} \binom{p}{i} (A-1)^p
\]
strictly dominate, so that
\[
A^p-1 = \begin{cases}
  (A-1)^p +o(p^{pe/d})& e/d<1/(p-1)\\
  p(A-1)+(A-1)^p +o(p^{p/(p-1)}) & e/d=1/(p-1)\\
  p(A-1)+ o(p^{1+e/d}) & e/d >1/(p-1).
\end{cases}
\]

If $R$ is complete and $G\subset \GL_n(R)$ is a {\em topologically} closed
subgroup, then there is an induced filtration on $G$, with $G/(G\cap
I_{1/d})$ acting on an associated graded Lie subalgebra (over $\F_p$ not
$k$!)
\[
\bigoplus_{i>0} (G\cap I_{i/d})/(G\cap I_{(i+1)/d})
\subset
\bigoplus_{i>0} \gl_n(k),
\]
which is also closed under the $p$-th power operation.  When $G$ is a {\em
  Zariski} closed subgroup, we see that this associated graded Lie algebra
is nothing other than the obvious graded Lie algebra structure with
$\Lie(G)(k)$ in every positive degree, so is independent (as one would
expect) of the choice of faithful representation.  (Also, the $p$-th power
structure has the form one would expect, with $(A-1)^p$ computed using the
restricted Lie algebra structure on $\Lie(G)$.)

In each case of interest, the problem of showing that the monodromy is
large reduces to showing that some subgroup $H$ of the target group $G$
(which is always closed in $\GL_n(R)$, and often Zariski closed) is equal to $G$,
which in turn reduces to showing that it surjects on $\GL_n(k)$ and has the
same graded Lie algebra.  The benefit of this is that the graded Lie
algebra is often generated in degree $1/d$.

\begin{prop}
  Let $G/k$ be a simply connected simple affine group scheme, and suppose
  either that $2\in k^\times$ or $G$ is not of the form $\Sp_{2n}$.  Then
  $[\Lie(G)(k),\Lie(G)(k)]=\Lie(G)(k)$.
\end{prop}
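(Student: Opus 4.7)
The plan is to base-change to the algebraic closure of $k$ (the property of being perfect is preserved under base change, since forming the derived subalgebra commutes with tensor products), and then work with a Chevalley basis $\{e_\alpha\}_{\alpha\in\Phi}\cup\{h_i\}_{\alpha_i\in\Delta}$ of $\Lie(G)$ relative to a split maximal torus $T$. Since $G$ is simply connected, $X_*(T)$ is the coroot lattice, so the simple coroots $h_i$ form a $k$-basis of $\Lie(T)$; each arises as $h_i=\pm[e_{\alpha_i},e_{-\alpha_i}]$, so the whole Cartan already lies in $[\Lie(G),\Lie(G)]$. It remains only to show every root space $\frakg_\alpha$ is in the derived subalgebra.

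For each root $\alpha$, two tools are available: either a Cartan action $e_\alpha=\langle\alpha,\beta^\vee\rangle^{-1}[h_\beta,e_\alpha]$ for some simple $\beta$ whose Cartan matrix pairing with $\alpha$ is invertible in $k$; or a root commutator $e_\alpha=N_{\beta,\gamma}^{-1}[e_\beta,e_\gamma]$ using some decomposition $\beta+\gamma=\alpha$ into roots with Chevalley structure constant $N_{\beta,\gamma}=\pm(1+p)$ invertible in $k$, where $p$ is the length of the $\beta$-string through $\gamma$. In characteristic zero, or for any simply-laced type in any characteristic, the relevant integers lie in $\{\pm1,\pm2\}$ with a $\pm1$ always available somewhere, so the proposition is immediate.

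The main obstacle is the case-by-case verification for non-simply-laced types in small characteristic: $B_n$, $F_4$, $G_2$ in characteristic $2$ and $G_2$ in characteristic $3$, each of which submits to a short inspection of the roots. The key observation is that enough Cartan pairings and root-string decompositions remain available with invertible coefficient to produce every root space: for instance, in $B_n$ with $n\geq 3$ in characteristic $2$, a long root $e_i+e_j$ arises as $\pm[e_{e_i-e_k},e_{e_j+e_k}]$ for any $k\notin\{i,j\}$ with Chevalley constant $\pm1$ (string length $1$), and the remaining root spaces are reachable by the Cartan trick applied from an appropriate non-adjacent simple root. The excluded case $C_n$ in characteristic $2$ is in fact the only obstruction: for a long root $2e_i$, all pairings $\langle 2e_i,\beta^\vee\rangle$ for simple $\beta$ lie in $\{0,\pm2\}$ and every decomposition $2e_i=\beta+\gamma$ into two roots has $N_{\beta,\gamma}=\pm2$ (string length $2$), so in characteristic $2$ both moves fail, confirming that the hypothesis of the proposition is sharp.
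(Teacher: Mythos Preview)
The paper states this proposition without proof, treating it as a standard (and easily verified) fact about Chevalley Lie algebras, so there is nothing to compare against. Your approach via a Chevalley basis is the natural one and is essentially correct: reduce to $\bar k$, get the Cartan from $h_i=[e_{\alpha_i},e_{-\alpha_i}]$ (using simple connectedness to know the $h_i$ span), and then reach each root space either by a Cartan bracket with invertible pairing or by a root-sum bracket with invertible structure constant.

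One gap worth patching: your blanket claim that ``for any simply-laced type in any characteristic \dots\ a $\pm1$ is always available somewhere'' fails for $A_1$ in characteristic $2$, where the only Cartan pairing is $\langle\alpha,\alpha^\vee\rangle=2$ and $\alpha$ admits no decomposition as a sum of two roots; indeed $\sl_2$ is not perfect in characteristic $2$. This is not a genuine obstruction to the proposition, since $\SL_2=\Sp_2$ is excluded by hypothesis, but you should say so explicitly rather than sweep it under the simply-laced rug. Similarly, your $B_n$ argument silently requires $n\ge 3$ (you need a third index $k\notin\{i,j\}$); this is again fine because $B_2\cong C_2=\Sp_4$ and $B_1=A_1=\Sp_2$ are excluded, but the identification deserves a sentence. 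With those low-rank coincidences made explicit, the case analysis you sketch (simply-laced of rank $\ge 2$; then $B_{n\ge 3}$, $F_4$, $G_2$ in characteristic $2$; and $G_2$ in characteristic $3$) goes through.
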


We may also use the action of $G(k)$.

\begin{prop}
  Let $G/k$ be a simply connected simple affine group scheme, and suppose
  that $G\ne \SL_2/\F_2$.  Then $H_0(G(k)';\Lie(G)(k))=0$.
\end{prop}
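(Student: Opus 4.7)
The coinvariants $H_0(G(k)'; \Lie(G)(k))$ are dual to the space of $G(k)'$-invariant linear functionals $\phi: \Lie(G)(k) \to k$, so the claim reduces to showing that the only such $\phi$ is zero. My plan is to descend $\phi$ to the top irreducible quotient of the adjoint representation of $G$ and rule out invariant functionals there, and only afterward worry about the difference between $G(k)$ and $G(k)'$.

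For simply connected simple $G$, $\Lie(G)$ carries a canonical $G$-stable filtration $0 \subseteq Z \subseteq \Lie(G)$, where $Z = Z(\Lie(G))$ is the (at most one-dimensional) center of the restricted Lie algebra; this vanishes outside a short list of bad (type, characteristic) pairs, notably type $A_{\ell}$ with $p \mid \ell + 1$. The key structural input, due to Hogeweij, is that the quotient $L := \Lie(G)/Z$ is a simple $G$-module and that the extension $0 \to Z \to \Lie(G) \to L \to 0$ does not split. Consequently $Z$ occurs as a subobject but never as a quotient of the adjoint representation, so any $G$-invariant $\phi$ factors through $L$. Since $L$ is a simple $G$-module of dimension at least $2$, it admits no nonzero $G$-invariant linear functional, and we are done at the level of $G(k)$-invariants.

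It then suffices to verify that no new invariant functionals appear when passing from $G(k)$ to the derived subgroup $G(k)'$. By Steinberg's simplicity theorem, $G(k)'$ is generated by the Chevalley (or twisted Chevalley) root elements $u_\alpha(s)$, $s \in k$, and coincides with $G(k)$ apart from the classical small-field exceptions $\SL_2(\F_2)$, $\SL_2(\F_3)$, $\SU_3(\F_2)$, $\Sp_4(\F_2)$, $G_2(\F_2)$, ${}^2B_2(\F_2)$, ${}^2G_2(\F_3)$, ${}^2F_4(\F_2)$. For each of these other than $\SL_2(\F_2)$, one checks directly that $G(k)'$ still acts on $L$ by an absolutely irreducible representation of dimension at least $2$, using that the root subgroup generators together with their Chevalley commutators $(u_\alpha(s), u_\beta(t)) = \prod u_{i\alpha + j\beta}(c_{\alpha\beta}^{ij} s^i t^j)$ lie in $G(k)'$ and produce enough adjoint action to rule out invariant functionals. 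The sole genuine failure is $\SL_2/\F_2$, where $G(k)' = 1$ and the coinvariants are all of $\Lie(G)(k)$; this is precisely the excluded case.

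The main obstacle is the structural input on the non-splitness of $0 \to Z \to \Lie(G) \to L \to 0$ and on the simplicity of $L$ in bad characteristic: both depend on a Chevalley-basis calculation, exhibiting in each bad pair a specific bracket $[X_\alpha, Y]$ whose image in any purported complement of $Z$ would have to be simultaneously fixed and nonzero, forcing a contradiction. The small-field verification in the last paragraph is conceptually routine but unavoidably case-by-case, especially for $\Sp_{2n}(\F_2)$ and the twisted exceptional types, where the interaction between the Frobenius action on the root datum and the surviving commutator relations has to be tracked carefully.
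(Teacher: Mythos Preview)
Your overall strategy---dualize to invariant functionals, pass to the top simple quotient of the adjoint representation, and argue that quotient is not one-dimensional---is the right shape and matches what the paper assembles in \S6.2. But there are two genuine gaps.

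First, the structural input is misstated. It is not true that $\Lie(G)/Z(\Lie(G))$ is always a simple $G$-module: for $G_2$ in characteristic $3$ or $F_4$ in characteristic $2$ the center of $\Lie(G)$ is zero, yet $\Lie(G)$ has a large short-root ideal. What Hogeweij and Hiss actually give is that $\Lie(G)$ has a unique maximal $G$-submodule $\rad\Lie(G)$, so that $\cosoc\Lie(G)$ is simple; the radical need not coincide with the center. The paper works with $\cosoc\Lie(G)$ throughout and observes that the long root spaces inject into it, so the Weyl group acts faithfully and the cosocle cannot be one-dimensional.

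Second, and this is the essential point, you slide between $G$-modules and $G(k)$-modules. Your claim that ``$Z$ occurs as a subobject but never as a quotient'' and that the extension is nonsplit are statements about $G$-submodules; to conclude that a $G(k)'$-invariant functional factors through the cosocle you would need the same statements for $G(k)'$-submodules. These can genuinely differ: for $\SL_2/\F_2$ the extension $0\to Z\to\mathfrak{sl}_2\to L\to 0$ is nonsplit as $G$-modules yet splits as $\SL_2(\F_2)$-modules, and the resulting projection onto $Z$ is exactly the forbidden invariant functional. Ruling out such accidental splittings for every other pair $(G,k)$ is the real work, and it is the content of the paper's key Lemma in \S6.2, which shows---by exhibiting root subgroups or suitably twisted maximal tori whose $\F_q$-points already span the image of the group algebra under $\Ad$---that every $G(\F_q)$-invariant submodule of $\Lie(G)$ is already $G(\bar\F_q)$-invariant. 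Your final paragraph only treats the handful of cases with $G(k)'\subsetneq G(k)$; it does not touch this more fundamental issue.
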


\subsection{Unramified lifts in general}

It turns out (following \cite{VasiuA:2003}) that in the unramified case, lifting
tends to be automatic.  More precisely, apart from a small number of
low-rank cases, surjectivity of $H\to G_0$ implies that the corresponding
graded Lie algebra is also large.

Given a reductive group over an unramified extension of $\Z_p$, we may
always take the restriction of scalars to obtain a reductive group over
$\Z_p$; if the original group is a twist of a group of type $T$, then the
new group will be a twist of $T^m$ where $m$ is the degree of the
extension.  With this in mind, we denote (for $X=A,\dots,G$) by
${}^d\!X^{(m)}_n$ the twist of $X^m_n$ by the diagram automorphism that cycles
the $m$ copies of $X$, twisted by the diagram automorphism of $X$, so that
${}^d\!X^{(m)}_n(\F_p)\cong {}^d\!X_n(\F_{p^m})$.

The main result is then the following.  Note that $G(\Z_p)'$ is not the
same as the group of $\Z_p$ points of the derived subgroup {\em scheme}
$G'$.

\begin{thm}\label{thm:vasiu_fixed}
  Let $G$ be a Zariski-connected reductive group over $\Z_p$ such that
  one of the following holds:
  \begin{itemize}
  \item[(i)] $p>3$.
  \item[(ii)] $p=3$ and $\Ad(G)$ has no factor of type $A_1$.
  \item[(iii)] $p=2$ and $\Ad(G)$ has no factor of type $A_1$,
    $A_1^{(2)}$, $A_2$, ${}^2\!A_2$, $B_2$, $G_2$, $A_3$, ${}^2\!A_3$, $B_3$,
    or $D_4$.
  \end{itemize}
  Then for any closed subgroup $H\subset G(\Z_p)$, if $H\bmod p$ contains
  $G(\F_p)'$, then $H$ contains $G(\Z_p)'$.
\end{thm}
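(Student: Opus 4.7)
The plan is to analyze $H$ through the graded Lie algebra formalism of subsection 6.1 and reduce the containment to a representation-theoretic statement about $\Lie(G^{\mathrm{der}})(\F_p)$ as a module over $G(\F_p)'$. First, replace $H$ by the topological closure of its commutator subgroup: since $G(\F_p)'$ is perfect (a consequence of the second proposition of subsection 6.1 for the types of interest), this replacement still surjects onto $G(\F_p)'$ modulo $p$, and it lies inside $G(\Z_p)'$. Write $\mathfrak{g} := \Lie(G^{\mathrm{der}})(\F_p)$. Since both $H$ and $G(\Z_p)'$ are closed in $G(\Z_p)$ with the same image modulo $p$, the desired containment $H \supseteq G(\Z_p)'$ is equivalent to the equalities $\mathfrak{h}_i = \mathfrak{g}$ for all $i \ge 1$, where
\[
\mathfrak{h}_i := (H \cap I_i)/(H \cap I_{i+1}) \subseteq \mathfrak{g}
\]
is the $i$-th graded piece. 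Each $\mathfrak{h}_i$ is a $G(\F_p)'$-submodule of $\mathfrak{g}$ under the conjugation action of $H/(H\cap I_1) \cong G(\F_p)'$, and the collection $\bigoplus_i \mathfrak{h}_i$ is closed under the bracket and $p$-th power structure.

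The proof then has three steps. \emph{Step 1.} Show $\mathfrak{h}_1 \ne 0$: if it vanished, induction on the $\mathfrak{h}_i$ (using closure under the bracket) would force $H \cap I_1 = 1$, giving a finite-order splitting $G(\F_p)' \hookrightarrow G(\Z_p)'$ of the reduction map; but no such splitting exists, because a Sylow $p$-subgroup of $G(\F_p)'$ lifts only to the infinite pro-$p$ unipotent radical of the corresponding Borel in $G(\Z_p)'$. \emph{Step 2.} Show $\mathfrak{h}_1 = \mathfrak{g}$: outside the exclusion list, the adjoint representation of $G(\F_p)'$ on $\mathfrak{g}$ has no nonzero proper submodule, so any nonzero $G(\F_p)'$-stable subspace is all of $\mathfrak{g}$. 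The exclusions in (i)--(iii) are precisely the (type, characteristic) pairs for which a submodule (a center, a Frobenius-kernel, or a small adjoint direct summand) intervenes. \emph{Step 3.} Propagate $\mathfrak{h}_1 = \mathfrak{g}$ to $\mathfrak{h}_i = \mathfrak{g}$ for all $i$: since $[\mathfrak{h}_1, \mathfrak{h}_{i-1}] \subseteq \mathfrak{h}_i$ and the first proposition of subsection 6.1 gives $[\mathfrak{g}, \mathfrak{g}] = \mathfrak{g}$ outside the excluded $\Sp_{2n}/\F_2$ case (which is covered by the type-$B$ exclusions in (iii)), induction on $i$ closes the argument.

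The main obstacle is Step 2, which requires a careful case analysis of adjoint-module structure at small ranks and small characteristics; it is precisely here that \cite{VasiuA:2003} contains the errors necessitating the revised exclusion list above. The verification can be organized along the classification of irreducible modules for finite groups of Lie type, treating each small-rank exceptional case in characteristics $2$ and $3$ individually against the standard tables, and noting that reductivity of $G$ combined with simple-connectedness of its derived part keeps us inside the realm where the generic irreducibility results apply. As a bonus, the $p$-th power operation on the graded Lie algebra, which in principle could also propagate $\mathfrak{h}_1$ into higher degrees, is not actually needed --- bracket generation alone suffices for Step 3.
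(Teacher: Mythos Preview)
Your proposal has a genuine gap at Step~2.  The claim that, outside the exclusion list, $\mathfrak{g}=\Lie(G^{\mathrm{der}})(\F_p)$ is irreducible as a $G(\F_p)'$-module is false.  For instance, take $G=\SL_p$ with $p>3$: no type is excluded, yet $\mathfrak{sl}_p$ has the $1$-dimensional center of scalar matrices as a proper nonzero submodule.  More generally, whenever $p$ divides the determinant of the Cartan matrix, $\Lie(G)$ has nontrivial center, and this happens for infinitely many types at every prime.  The exclusions in the theorem are \emph{not} the cases where $\Lie(G)$ acquires a submodule; they are the cases where the theorem actually fails (the paper exhibits explicit finite subgroups surjecting onto $G(\F_p)$ for each excluded type).

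The paper's fix is to work with $\cosoc\Lie(G)(\F_p)$, which it proves has a single simple summand, so that $\Lie(G)$ has a unique maximal $G(\F_p)$-submodule $\rad\Lie(G)$.  The task then becomes producing a single element of $H\cap\Lie(G)(\F_p)$ lying outside that radical.  This cannot be done by a blanket appeal to representation theory; the paper instead passes to centralizers $G^\Delta$ of diagonalizable subgroups, which lets one descend inductively through the type classification to a short list of base cases ($A_1^{(m)}$ with $p^m>4$; $A_2^{(m)},{}^2\!A_2^{(m)}$ with $p^m\in\{3,4\}$; $A_4,{}^2\!A_4$ with $p=2$), each of which is then handled by a direct nonsplitting calculation in $G(\Z/p^2\Z)$.

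A smaller point: your Step~1 deduction that $\mathfrak{h}_1=0$ forces all $\mathfrak{h}_i=0$ ``using closure under the bracket'' is not valid as stated.  Bracket only raises degree, so $\mathfrak{h}_1=0$ says nothing about $\mathfrak{h}_2$.  The nonsplitting argument you gesture at is in fact the correct mechanism, but it is exactly the content of the base-case verifications and does not follow from the soft Sylow observation you give (indeed, for the excluded types the reduction map \emph{does} split).  Your Step~3, on the other hand, is fine once one has reduced to simply connected simple $G$: bracket-perfection of $\Lie(G)$ does propagate $\mathfrak{h}_1=\mathfrak{g}$ upward, and the paper's use of the $p$-th power map is an alternative route to the same conclusion.
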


\begin{rem}
  The exceptions include all of the cases ($A_1(2)$, $A_1(3)$,
  ${}^2\!A_2(2)$, $B_2(2)$, $G_2(2)$) for which the corresponding simply
  connected group of Lie type is not perfect.  In particular, the
  conclusion can be strengthened to say that $H'=G(\Z_p)'$.
\end{rem}

\begin{rem}
  This is essentially a reformulation of the main result of
  \cite{VasiuA:2003}, except that the argument there is not entirely valid.
  Indeed, the statement in \cite{VasiuA:2003} is not correct, as it misses
  some counterexamples: $W(E_8)\subset O_8^+(\Z_2)$, $W(E_7)\subset
  O_7(\Z_2)$, $S_6\subset O_5(\Z_2)$, and the preimage of $S_8\subset
  O^+_6(\Z/4\Z)$.  (The last example is constructed as an invariant
  subquotient of $(\Z/4\Z)^8$ coming from the $\Z/8\Z$-valued form $\sum_i
  x_i^2$ on $(\Z/4\Z)^8$; the induced form on the subquotient is even, so
  dividing by 2 gives a quadratic form over $\Z/4\Z$.)
\end{rem}

\begin{rem}
  In the present paper, we of course only need cases in which every factor
  has type ${}^d\!A^{(m)}_n$ with $n>4$, but we will need the $D_n$ case in
  future work on Selmer groups, and the general reductive case makes the
  inductive argument simpler.
\end{rem}

This reduces in several stages to the case of a simply connected simple
group.  The first is that we may replace $G$ by $\Ad(G)$.

\begin{lem}
  If $G$ satisfies the hypotheses and $\Ad(G)$ satisfies the conclusion, then
  $G$ also satisfies the conclusion.
\end{lem}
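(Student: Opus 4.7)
The plan is to push $H$ down along the adjoint quotient $\pi : G \to \Ad(G)$, apply the assumed conclusion for $\Ad(G)$, and then lift back using centrality of $Z := Z(G) = \ker\pi$. Set $\bar H := \pi(H)$; as the continuous image of the compact set $H$, it is a closed subgroup of $\Ad(G)(\Z_p)$.

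First I verify $\bar H \bmod p \supseteq \Ad(G)(\F_p)'$, so that the assumed conclusion for $\Ad(G)$ applies. Since $H \bmod p \supseteq G(\F_p)'$, we have $\bar H \bmod p \supseteq \pi(G(\F_p))'$. Let $G^{sc}$ denote the simply connected cover of $G^{der}$. The hypotheses (i)--(iii) are designed precisely to exclude the types for which $G^{sc}(\F_p)$ fails to be perfect; hence $G^{sc}(\F_p)$, and therefore its image $\pi(G^{sc}(\F_p))$ in $\Ad(G)(\F_p)$, is perfect. Since $\ker(G^{sc} \to \Ad(G))$ is a finite multiplicative group scheme, the cokernel $\Ad(G)(\F_p)/\pi(G^{sc}(\F_p))$ is abelian (controlled by Galois cohomology), so $\Ad(G)(\F_p)' \subseteq \pi(G^{sc}(\F_p)) = \pi(G^{sc}(\F_p))' \subseteq \pi(G(\F_p))'$. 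Applying the assumed conclusion then yields $\bar H \supseteq \Ad(G)(\Z_p)'$.

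Next I lift back to $G$. Because $Z$ is central in $G$, each commutator $[a, b]$ in $G(\Z_p)$ depends only on $\pi(a), \pi(b)$. For any $a, b \in G(\Z_p)$, $\pi([a, b]) = [\pi(a), \pi(b)] \in \Ad(G)(\Z_p)' \subseteq \bar H$, so $[a, b] = h z$ for some $h \in H$ and $z \in Z(\Z_p)$. Taking products gives $G(\Z_p)' \subseteq H \cdot Z(\Z_p)$, and thus $G(\Z_p)'/(G(\Z_p)' \cap H)$ embeds in the abelian group $Z(\Z_p)/(Z(\Z_p) \cap H)$.

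To finish I use that $G(\Z_p)'$ is perfect under the theorem's hypotheses: a perfect group admits no nontrivial abelian quotient, so the embedding above forces $G(\Z_p)' \cap H = G(\Z_p)'$, i.e., $H \supseteq G(\Z_p)'$. Perfectness holds because $G(\Z_p)'$ equals the image of $G^{sc}(\Z_p)$, and each congruence quotient $G^{sc}(\Z_p/\frakm^e)$ is perfect: at level $e=1$ by the explicit exclusion of non-perfect small types, and inductively via the surjectivity of the Lie bracket on $\Lie(G^{sc})(\F_p)$ from the Proposition above, combined with the $G^{sc}(\F_p)$-action on the graded pieces. The main obstacle is thus the perfectness statement, which is where the precise list of excluded types in the hypotheses is used; the reduction from $G$ to $\Ad(G)$ itself is a formal manipulation of the central extension $Z \to G \to \Ad(G)$.
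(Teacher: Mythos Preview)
Your proof is correct and follows essentially the same strategy as the paper: push $H$ down along $\pi:G\to\Ad(G)$, invoke the assumed conclusion there, then use centrality of $Z(G)$ together with perfectness of $G(\Z_p)'$ to lift back. The paper streamlines the first step by replacing $H$ with $H'$ and $G$ with $G'$ (so that $G$ is semisimple), whereas you instead verify directly via $G^{sc}$ that $\pi(H)\bmod p\supseteq \Ad(G)(\F_p)'$; both routes arrive at the same place. One caution: your assertion that $G(\Z_p)'$ \emph{equals} the image of $G^{sc}(\Z_p)$ is stronger than what you actually need (and not obviously true in this generality)—what matters is only that $G(\Z_p)'$ is perfect, which the paper also uses as a forward reference (cf.\ Corollary~\ref{cor:sc_is_perfect} and the remark following the theorem).
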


\begin{proof}
  Suppose $H\subset G(\Z_p)$ were a counterexample.  Then $H'\subset
  G(\Z_p)'\subset G'(\Z_p)$ still surjects on $G(\F_p)'$ since $G(\F_p)'$
  is perfect.  Thus $H'$ would also give a counterexample, and thus we may
  assume $G=G'$.  Since the image of $H$ in $\Ad(G)(\Z_p)$ surjects on
  $\Ad(G)(\F_p)'$, the image of $H$ contains $\Ad(G)(\Z_p)'$ and thus
  contains the image of $\tilde{G}(\Z_p)$ in $\Ad(G)(\Z_p)$.  In
  particular, it contains the image of $G(\Z_p)'$ in $\Ad(G)(\Z_p)$, so
  that $\langle H,Z(G)(\Z_p)\rangle$ contains $G(\Z_p)'$.  The latter group
  is perfect and thus
  \[
  G(\Z_p)'\subset \langle H,Z(G)(\Z_p)\rangle'\subset H
  \]
  as required.
\end{proof}

\begin{lem}
  Let $G$ be an adjoint group satisfying the hypotheses such that
  the conclusion holds for $\tilde{G}$.  Then it holds for $G$.
\end{lem}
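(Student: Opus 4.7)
The plan is to lift $H$ along the central isogeny $\pi\colon\tilde G\to G$ to a closed subgroup $\tilde H:=\pi^{-1}(H)\subset\tilde G(\Z_p)$, apply the theorem (which holds for $\tilde G$ by hypothesis, since conditions (i)--(iii) depend only on $\Ad(\tilde G)=\Ad(G)$) to $\tilde H$, and push the resulting inclusion back down to $G$.

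The key technical input is that the hypothesis still holds upstairs: $\tilde H\bmod p\supset\tilde G(\F_p)'$. This rests on the observation that $\pi$ induces an isomorphism $K_{\tilde G}\xrightarrow{\sim}K_G$ between the pro-$p$ kernels of reduction mod $p$: the kernel $Z(\Z_p)$ of $\pi$ is finite of order prime to $p$ and so meets $K_{\tilde G}$ trivially, while $\pi$ acts as the identity on the identified graded pieces $\Lie(\tilde G)(\F_p)=\Lie(G)(\F_p)$. Given $\tilde g\in\tilde G(\F_p)'$, choose any lift $\tilde h_0\in\tilde G(\Z_p)$; then $\pi(\tilde h_0)\bmod p=\pi(\tilde g)\in G(\F_p)'\subset H\bmod p$, so $\pi(\tilde h_0)=hk$ for some $h\in H$ and $k\in K_G$; lifting $k$ to $\tilde k\in K_{\tilde G}$ via the isomorphism, the element $\tilde h_0\tilde k^{-1}\in\tilde H$ reduces to $\tilde g$. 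Applying the theorem for $\tilde G$ then yields $\tilde H\supset\tilde G(\Z_p)'$, and so $H\supset\pi(\tilde H)\supset\pi(\tilde G(\Z_p)')$.

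To finish I would establish that $\tilde G(\Z_p)$ is itself perfect: then $\pi(\tilde G(\Z_p)')=\pi(\tilde G(\Z_p))$ is an open subgroup of $G(\Z_p)$ whose cokernel is a subquotient of the abelian group $H^1(\Z_p,Z)$, and therefore contains $G(\Z_p)'$. Perfectness of $\tilde G(\Z_p)$ is the main obstacle and is precisely what forces the low-rank/low-characteristic exclusions in hypotheses (i)--(iii): it is extracted from the two propositions of subsection 6.1, namely the perfectness of $\tilde G(\F_p)$ (which lets one write every element of $\tilde G(\F_p)$ as the reduction of a product of commutators in $\tilde G(\Z_p)$) and the vanishing $H_0(\tilde G(\F_p)';\Lie(\tilde G)(\F_p))=0$ (which, via a standard iterated commutator argument along the filtration $\{K_{\tilde G}\cap I_{n/d}\}$, places the entire pro-$p$ congruence subgroup $K_{\tilde G}$ in $\tilde G(\Z_p)'$). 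The remaining bookkeeping --- closure of $\tilde H$ as a profinite subgroup, and commutation of reduction mod $p$ with derived operations --- is routine.
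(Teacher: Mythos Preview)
Your overall strategy—pull $H$ back to $\tilde H=\pi^{-1}(H)\subset\tilde G(\Z_p)$, apply the theorem upstairs, and push down using perfectness—is exactly the paper's approach. The difficulty is your claim that $\pi$ induces an isomorphism $K_{\tilde G}\xrightarrow{\sim}K_G$ of congruence kernels. This is false whenever $p$ divides the order of the center $Z=\ker\pi$. Take $\tilde G=\SL_p$, $G=\PGL_p$ over $\Z_p$ with $p$ odd: an element of $K_{\PGL_p}$ is represented by $A\equiv I\pmod p$, and to lift it to $K_{\SL_p}$ one must scale $A$ by a $p$-th root of $\det(A)\in 1+p\Z_p$; but $(1+p\Z_p)^p=1+p^2\Z_p$, so the image of $K_{\SL_p}$ has index $p$ in $K_{\PGL_p}$. (Injectivity also fails for $p=2$: $-1\in\SL_{2n}(\Z_2)$ lies in the kernel of reduction.) Once surjectivity fails, your lifting step breaks: you write $\pi(\tilde h_0)=hk$ with $h\in H$ and $k\in K_G$, but there is no reason for $k$ to lie in $\pi(K_{\tilde G})$, since you have no control over whether $h\in\pi(\tilde G(\Z_p))$.

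The paper sidesteps this by first replacing $H$ by a perfect subgroup (iterating $H\mapsto\overline{H'}$; this preserves the hypothesis since $G(\F_p)'$ is perfect, and preserves being a counterexample). Once $H$ is perfect, $H\subset G(\Z_p)'\subset\pi(\tilde G(\Z_p))$ because the cokernel of $\pi$ is abelian, so every element of $H$ genuinely lifts. Then $\tilde H$ surjects onto $\tilde G(\F_p)$ (using that $Z(\Z_p)\to Z(\F_p)$ is surjective for multiplicative $Z$), and the theorem for $\tilde G$ forces $\tilde H\supset\tilde G(\Z_p)'$; together with $Z(\Z_p)\subset\tilde H$ and the perfectness of $\tilde G(\Z_p)$ (which you correctly identify as the remaining ingredient), this gives $\tilde H=\tilde G(\Z_p)$ and hence $H=\pi(\tilde G(\Z_p))\supset G(\Z_p)'$. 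Your later steps are fine; the fix is just to insert the reduction to perfect $H$ at the start.
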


\begin{proof}
  Again, suppose $H\subset G(\Z_p)$ were a counterexample.  We may assume
  $H$ is perfect, so that $H\subset G(\Z_p)'$.  But this implies that $H$
  is contained in the image of $\tilde{G}(\Z_p)$, so we may take its
  preimage in $\tilde{G}(\Z_p)$.  But this is a proper closed subgroup
  surjecting on $\tilde{G}(\F_p)$, so would give a counterexample for
  $\tilde{G}$.
\end{proof}

The following is essentially the same argument we used to reduce largeness
to surjectivity modulo pairs of primes and lifting for a single prime.

\begin{lem}\label{lem:lifting_products}
  Suppose the theorem holds for every simply connected simple group over
  $\Z_p$ which is not one of the listed exceptions.  Then it holds for any
  (finite) product of such groups.
\end{lem}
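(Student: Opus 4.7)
The plan is to reduce to pairs via the perfect-product lemma of Section~3, then resolve each pair by combining Goursat with the single-factor theorem. Write $G=\prod_{i=1}^k G_i$, with each $G_i$ a simply connected simple group satisfying the hypotheses of the theorem. By the listed exclusions, each $G_i(\F_p)'$ (and hence each $G_i(\Z_p)'$) is perfect.

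I first arrange that $H\subseteq\prod_i G_i(\Z_p)'$. Projecting $H$ to each factor, $\pi_i(H)\subseteq G_i(\Z_p)$ is a closed subgroup whose mod-$p$ reduction contains $G_i(\F_p)'$, so the single-factor theorem yields $\pi_i(H)\supseteq G_i(\Z_p)'$. Replacing $H$ by its derived subgroup, I may therefore assume $H\subseteq\prod_i G_i(\Z_p)'$, still with each projection surjecting onto $G_i(\Z_p)'$ and with $H\bmod p\supseteq\prod_i G_i(\F_p)'$ (both preserved by perfectness of the targets). By the perfect-product lemma of Section~3, it then suffices to show that each pairwise projection $\pi_{ij}(H)\subseteq G_i(\Z_p)'\times G_j(\Z_p)'$ is surjective.

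Fix $i\ne j$ and suppose for contradiction that $\pi_{ij}(H)$ is proper. By Goursat (the first lemma of Section~3, which applies verbatim to closed subgroups of topological groups) there are proper closed normal subgroups $N_i\normal G_i(\Z_p)'$ and $N_j\normal G_j(\Z_p)'$ together with a topological isomorphism $G_i(\Z_p)'/N_i\cong G_j(\Z_p)'/N_j$ whose graph describes $\pi_{ij}(H)$. Reducing mod $p$ and using that $\pi_{ij}(H)\bmod p=G_i(\F_p)'\times G_j(\F_p)'$ is the full product, the induced mod-$p$ Goursat data must be trivial, forcing $N_i\bmod p=G_i(\F_p)'$ and similarly for $j$. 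But then, viewed as a closed subgroup of $G_i(\Z_p)$, $N_i$ itself satisfies the hypothesis of the single-factor theorem, giving $N_i\supseteq G_i(\Z_p)'$; combined with $N_i\subseteq G_i(\Z_p)'$ this forces $N_i=G_i(\Z_p)'$, contradicting properness.

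I expect the only real obstacles to be bookkeeping: verifying that passing to derived subgroups and to mod-$p$ reductions preserves the surjectivity hypotheses (this is precisely what the exclusions buy us, via perfectness of $G_i(\Z_p)'$ and $G_i(\F_p)'$), and checking that Goursat's lemma goes through in the topological category (which it does, since the projections are continuous and the kernels that appear are automatically closed). No further Lie-algebra or lifting input beyond the single-factor theorem is required.
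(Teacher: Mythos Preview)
Your overall strategy is sound and genuinely different from the paper's, but there is a real gap in the Goursat step. You claim that since $\pi_{ij}(H)\bmod p$ is the full product, ``the induced mod-$p$ Goursat data must be trivial, forcing $N_i\bmod p=G_i(\F_p)'$.'' This conflates the Goursat data of the reduction with the reduction of the Goursat data. By definition $N_i=\{g:(g,1)\in\pi_{ij}(H)\}$; fullness mod $p$ only tells you that for each $\bar g\in G_i(\F_p)'$ there is some $(g,h)\in\pi_{ij}(H)$ with $g\equiv\bar g$ and $h\in K_j:=\ker\bigl(G_j(\Z_p)'\to G_j(\F_p)'\bigr)$---not that $h=1$. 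So $N_i\bmod p=G_i(\F_p)'$ does not follow directly.

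The repair costs one more pass through the single-factor theorem. Set $M_i=\pi_i\bigl(\pi_{ij}(H)\cap(G_i(\Z_p)'\times K_j)\bigr)$; this is closed (image of a compact set) and by the observation above surjects mod $p$, so the single-factor theorem gives $M_i=G_i(\Z_p)'$. Through the Goursat isomorphism $\phi$ this says every class in $G_i(\Z_p)'/N_i$ lands in $K_jN_j/N_j$, forcing $K_jN_j=G_j(\Z_p)'$, i.e.\ $N_j\bmod p=G_j(\F_p)'$. \emph{Now} your intended application of the single-factor theorem to $N_j$ yields the contradiction. With this patch the argument is complete and more elementary than the paper's, which works instead with the associated graded Lie algebra of $H$: surjectivity of each projection makes that Lie algebra surject onto each $\Lie(G_i)$, and since $H$ surjects on $\prod_i G_i(\F_p)$ one may apply $\Ad(g_i)-1$ (which annihilates the other summands) and invoke $H_0(G_i(\F_p)';\Lie(G_i))=0$ to see that each $\Lie(G_i)$ already sits inside it.
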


\begin{proof}
  Let $H\subset \prod_i G_i$ be a counterexample.  Then the image of $H$ in
  each factor $G_i$ contains $G_i(\Z_p)$, and thus the associated graded
  Lie algebra of $H$ surjects onto the associated graded Lie algebra of
  each $G_i$.  Since $H$ surjects on $\prod_i G_i(\F_p)$, the associated
  graded contains the image of $\Ad(g_i)-1$ for all $g_i\in G_i(\F_p)$, which by
  Corollary \ref{cor:sc_is_perfect} below implies that the associated
  graded of $H$ contains the associated graded of each $G_i$, so that $H$
  is not actually a counterexample.
\end{proof}

At this point, we restrict our attention to the case of a simply connected
simple group which is not one of the excluded types.  It turns out that the
main further reductions of \cite{VasiuA:2003} apply.

\begin{prop}
  Let $G$ be any simply connected simple group.  The image of a proper
  closed subgroup $H\subset G(\Z_p)$ in $G(\Z/p^2\Z)$ is a proper subgroup.
\end{prop}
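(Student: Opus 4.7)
The plan is to prove the contrapositive: assume $H\subset G(\Z_p)$ is a closed subgroup whose image in $G(\Z/p^2\Z)$ is all of $G(\Z/p^2\Z)$, and deduce $H=G(\Z_p)$. My main tool will be the filtration $I_k:=\ker(G(\Z_p)\to G(\Z/p^k\Z))$, whose associated graded in this unramified setting is $\bigoplus_{k\ge 1}\Lie(G)(\F_p)$ equipped with the bracket coming from commutators ($[I_i,I_j]\subseteq I_{i+j}$) and the $p$-th power operation recalled in \S6.1. Since $H$ is closed it is determined by its images modulo every power of $p$, so the task reduces by induction on $k$ to showing that $H\cap I_k$ surjects onto $I_k/I_{k+1}\cong \Lie(G)(\F_p)$ for each $k\ge 1$; the base case $k=1$ is precisely the hypothesis.

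The inductive step from degree $k$ to $k+1$ is carried by the $p$-th power: whenever $A\in I_e$ with $e>1/(p-1)$ in the unramified case, one has $A^p\in I_{e+1}$ with associated graded image the identity $X\mapsto X$ under the natural identifications. This handles every inductive step when $p\ge 3$, and every step from degree $\ge 2$ onward when $p=2$; in these ranges the graded subalgebra of $H$ inherits the full $\Lie(G)(\F_p)$ in each degree, closing the induction.

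The only residual case is $p=2$ passing from degree $1$ to degree $2$, where $e/d=1/(p-1)$ lies on the boundary and the $p$-th power instead induces the non-linear map $X\mapsto X+X^{[2]}$ on $\Lie(G)(\F_2)$. A priori the graded piece of $H$ in degree $2$ only contains the image of this map; however, commutators of pairs of elements of $H\cap I_1$ also lie in $I_2$, contributing $[X,Y]$ to the graded piece, so in total it contains both $\{X+X^{[2]}:X\in\Lie(G)(\F_2)\}$ and $[\Lie(G)(\F_2),\Lie(G)(\F_2)]$. For every simply connected simple $G$ other than $\Sp_{2n}$, the earlier Proposition on perfectness of $\Lie(G)(k)$ gives $[\Lie(G)(\F_2),\Lie(G)(\F_2)]=\Lie(G)(\F_2)$, which completes the argument.

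The main obstacle I anticipate is the single exceptional case $G=\Sp_{2n}$ with $p=2$, in which $[\sp_{2n}(\F_2),\sp_{2n}(\F_2)]$ is a proper subspace. Here I would argue directly that the non-linear set $\{X+X^{[2]}:X\in\sp_{2n}(\F_2)\}$ meets every coset of the commutator subspace, a claim reducible to an explicit calculation in the restricted Lie algebra $\sp_{2n}(\F_2)$. This is the step most likely to require care, and presumably corresponds to the correction relative to \cite{VasiuA:2003} alluded to above.
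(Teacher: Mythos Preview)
Your approach is essentially the paper's: prove the contrapositive via the filtration $I_k$, use the $p$-th power map to propagate fullness of the graded pieces, and for $p=2$ treat the boundary step from degree $1$ to degree $2$ separately. For $p>2$ and for $p=2$ in degrees $\ge 2$ your argument matches the paper's exactly.

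The divergence, and the gap, is in the $p=2$ degree-$1$-to-$2$ step. You invoke the perfectness proposition $[\Lie(G)(\F_2),\Lie(G)(\F_2)]=\Lie(G)(\F_2)$, which forces you to exclude $\Sp_{2n}$ and leave that case as an unproved claim about cosets of the commutator subspace. The paper avoids this split entirely by a cleaner observation: for any root vector $e_\alpha$ one has $e_\alpha^{[2]}=0$ (since $2\alpha$ is never a root), so the squaring map $X\mapsto X+X^{[2]}$ is the identity on root elements, placing every root space into the degree-$2$ piece. Separately, since $G$ is simply connected the coroots $h_\alpha=[e_\alpha,e_{-\alpha}]$ span the Cartan, so $[\Lie(G),\Lie(G)]$ contains the Cartan. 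Together these give root spaces plus Cartan, i.e.\ all of $\Lie(G)(\F_2)$, uniformly for every simply connected simple $G$ including $\Sp_{2n}$. This is the missing ingredient in your $\Sp_{2n}$ case, and it is simpler than the coset-meeting calculation you propose.

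Finally, your guess that the $\Sp_{2n}$ issue is the ``correction relative to \cite{VasiuA:2003}'' is off: that correction concerns the list of excluded low-rank types in Theorem~\ref{thm:vasiu_fixed} (the missed $B_3$, $D_4$, etc.\ counterexamples), not this proposition, which holds for all simply connected simple $G$ without exception.
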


\begin{proof}
  If $H$ surjects on $G(\Z/p^2\Z)$, then the degree 1 component of its
  associated graded Lie algebra is $\Lie(G)$.  For $p>2$, the $p$-th power
  operation on that algebra induces an isomorphism between the components
  in consecutive degrees (it simply multiplies $g-1$ by $p$) and thus $H$
  has associated graded Lie algebra $\Lie(G)$ in every degree, so that
  $H=G(\Z_p)$.  This same argument works for $p=2$ if we can show
  surjectivity to $G(\Z/8\Z)$.  The squaring map in degree $1$ has the form
  $q:x\mapsto x+x^2$, and thus in particular is the identity on root
  elements.  Since $G$ is simply connected, $[\Lie(G),\Lie(G)]$ contains
  the Cartan, and thus we conclude that the Lie algebra in degree 2
  contains both the Cartan and the root spaces, so is everything.
\end{proof}

We thus reduce to considering subgroups of $G(\Z/p^2\Z)$ that surject on
$G(\F_p)$.  If $H$ were such a subgroup, then it would meet $\Lie(G)(\F_p)$
in a $G(\F_p)$-invariant subspace, and our objective is to prove that the
subspace cannot be proper.  Since a proper subspace is contained in a
maximal subspace, it suffices to show that $H$ contains an element outside
every maximal subspace, or equivalently that the image of $H\cap
\Lie(G)(\F_p)$ in $\cosoc\Lie(G)(\F_p)=\Lie(G)(\F_p)/\rad(\Lie(G)(\F_p))$
intersects every simple summand.  This is simplified greatly by the fact
that there is a unique such summand, or equivalently that there is a unique
maximal submodule, the restriction to $\F_p$ of the unique maximal
submodule of $G(\bar\F_p)$ acting on $\Lie(G)(\bar\F_p)$.  This follows
from the analogous statement for group schemes via the following.

\begin{lem}
  For any simply connected simple group scheme $G/\F_q$ other than
  $A_1(2)$, every $G(\F_q)$-invariant submodule of $\Lie(G)$ is
  $G(\bar\F_q)$-invariant.
\end{lem}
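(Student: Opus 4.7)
My plan is to show that any $G(\F_q)$-invariant subspace $W\subset\Lie(G)$ is a Lie ideal of $\Lie(G)$. Once that is established, the scheme-theoretic stabilizer $\operatorname{Stab}_G(W)\subset G$ has Lie algebra equal to the Lie-algebra normalizer of $W$, which is all of $\Lie(G)$; since $G$ is connected, the stabilizer is open in $G$ and hence equal to $G$, so $W$ is $G(\bar\F_q)$-invariant.

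To prove the ideal property it suffices to show $[e_\alpha,W]\subset W$ for every root $\alpha$, since then the Jacobi identity gives $[h_\alpha,W]\subset[e_\alpha,[e_{-\alpha},W]]+[e_{-\alpha},[e_\alpha,W]]\subset W$, and the $e_\alpha$ together with the $h_\alpha$ span $\Lie(G)$ in every simply connected simple type. Fix $\alpha$ and parameterize the root subgroup by $u_\alpha\colon\G_a\xrightarrow{\sim}U_\alpha\subset G$. The map $t\mapsto\Ad(u_\alpha(t))w$ is a polynomial $\G_a\to\Lie(G)$ whose degree is bounded by a constant $D$ depending only on the root system: explicitly, by the length of the longest $\alpha$-string in the adjoint representation, so $D\le 4$. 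Since $u_\alpha(t)\in G(\F_q)$ for every $t\in\F_q$, this polynomial takes values in $W$ at every $\F_q$-point. When $q>D$, Lagrange interpolation over $\F_q$ forces each coefficient of the polynomial to lie in $W$; in particular its linear coefficient $[e_\alpha,w]$ does.

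The remaining difficulty, which I expect to be the real technical core of the argument, is the small-$q$ range where interpolation on a single root subgroup is no longer sufficient; this is $q\in\{2,3\}$ in general (with a closer look required for $G_2$ in characteristic $2$). Here one supplements the argument with additional elements of $G(\F_q)$: Weyl-group representatives $n_\alpha=u_\alpha(1)u_{-\alpha}(-1)u_\alpha(1)$, mixed products $u_\alpha(s)u_\beta(t)$ governed by Chevalley's commutator formula, and (for non-prime $q$) Frobenius twists yielding extra $\F_q$-points of the root subgroups. A case-by-case check through the simply connected simple Dynkin types in characteristics $2$ and $3$ establishes the root-closure condition everywhere except $A_1(2)$, where $\SL_2(\F_2)\cong S_3$ preserves the $2$-dimensional complement $W=\F_2(e+I)\oplus\F_2(f+I)$ to the center of $\sl_2(\F_2)$, and the explicit computation $[e+I,f+I]=I\notin W$ shows that $W$ is not a Lie ideal; correspondingly $W$ fails to be preserved by $U_\alpha$ at $\bar\F_2$-points, matching the exception in the statement.
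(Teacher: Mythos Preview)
Your interpolation idea is sound, but the passage from ``$W$ is a Lie ideal'' to ``$W$ is $G(\bar\F_q)$-invariant'' fails in positive characteristic. The inference ``same Lie algebra $\Rightarrow$ open subgroup'' needs the stabilizer to be smooth, and it need not be: already for $\SL_2$ in characteristic $2$, the subspace $\langle h,e\rangle\subset\sl_2$ is a Lie ideal (since $h=I$ is central and $[f,e]=h$), yet $\Ad(u_{-\alpha}(1))\,e=e+f+h\notin\langle h,e\rangle$. The fix is immediate and you almost wrote it: interpolation puts \emph{every} coefficient of $t\mapsto\Ad(u_\alpha(t))w$ into $W$, so that polynomial lands in $W$ for all $t\in\bar\F_q$, giving $U_\alpha(\bar\F_q)$-invariance directly. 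Since the root subgroups generate $G$, this finishes the case $q>D$ with no detour through ideals.

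The paper follows a different route that avoids needing every root subgroup. It proves the equivalent statement $\Ad(\bar\F_q[G(\F_q)])=\Ad(\bar\F_q[G])$ and observes that it is enough to find a \emph{single} nontrivial smooth connected $H\subset G$ with $\Ad_G(\bar\F_q[H(\F_q)])=\Ad_G(\bar\F_q[H])$: the subgroup scheme generated by $H$ and $G(\F_q)$ then has identity component whose $\F_q$-points contain $G(\F_q)$, and simplicity of $G(\F_q)$ forces it to be $G$. Taking $H$ to be the highest-root unipotent makes $\Ad(\bar\F_q[H])$ three-dimensional, so three distinct $\F_q$-points already span and every $q\ge 3$ is handled uniformly; only $q=2$ remains, dealt with by a suitably twisted maximal torus, together with direct checks of the four cases where $G(\F_q)$ is not simple. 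Your approach, requiring $q$ to exceed the longest $\alpha$-string length for \emph{every} $\alpha$, leaves a larger small-$q$ residue for the case-by-case verification.
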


\begin{proof}(Sketch)
  We in fact show something slightly stronger, namely that
  $\Ad(\bar\F_q[G(\F_q)])=\Ad(\bar\F_q[G])$.  We may feel free to assume
  $G(\F_q)$ simple, as this only fails in four cases ($A_1(3)$, $B_2(2)$,
  ${}^2A_2(2)$, $G_2(2)$) for which the claim may be checked directly.  In
  that case, we observe that it suffices to find {\em some} nontrivial
  smooth connected subgroup scheme $H\subset G$ such that
  $\Ad_G(\bar\F_q[H(\F_q)])=\Ad_G(\bar\F_q[H])$.  Indeed, we could then
  replace $G(\F_q)$ with the group scheme $H^+$ generated by $G(\F_q)$ and
  $H$ without changing the image of the group algebra, and since
  $(H^+)^0(\F_q)$ is a nontrivial normal subgroup of $G(\F_q)$, we see that
  $(H^+)^0(\F_q)=G(\F_q)$, forcing $H^+=G$.

  For instance, let $H$ be the unipotent subgroup corresponding to the
  highest root.  By weight considerations, we see that $\bar\F_q[H]$ is
  $3$-dimensional, and has a basis such that the coordinates of an element
  $\alpha\in \G_a\cong H$ are $(1,\alpha,\alpha^2)$.  It follows that any
  three distinct elements of $H(\bar\F_q)$ span, and thus $H(\F_q)$ spans
  as long as $q>2$.

  For the $q=2$ cases, we may instead take $H$ to be a suitably twisted
  maximal torus.  Such a torus is determined by an element of the
  appropriate coset in the extended Weyl group, and we observe that
  $\bar\F_2[T^w(\F_2)]=\bar\F_2[T^w]$ iff any two distinct eigenspaces of
  $T^w$ have distinct eigenvalues for some element of $T^w(\F_2)$, or
  equivalently if the subgroup of $T(\bar\F_2)$ satisfying $wtw^{-1}=t^2$
  distinguishes root spaces from each other and from the Cartan.  In
  particular, if $w=-1$, i.e., the element of the extended Weyl group that
  negates every root, then this subgroup is $T[3]$ and the condition holds
  as long as no root or difference of roots is 3 times a weight.  This
  condition is invariant under the Weyl group, so we may restrict one of
  the roots to a simple root, at which point it is straightforward to
  check.  In particular, we see that this works for ${}^2A_n$, $n>2$,
  $B_n$, $C_n$, $D_{2n}$, ${}^2D_{2n+1}$, ${}^2E_6$, $E_7$, $E_8$, and
  $F_4$.  (This is every case containing $T^{-1}$ except for ${}^2A_2$ and
  $G_2$, which are not simple, so were already checked directly.)

  For $A_n$, $n>1$, we twist by an $(n+1)$-cycle in $S_{n+1}$, while for
  ${}^2D_{2n}$ and $D_{2n+1}$ we twist by a signed permutation that negates
  $n-2$ elements and acts on the remaining two by $(x,y)\mapsto (y,-x)$.
  Finally, for ${}^3D_4$ and $E_6$, we twist by an (extended) Weyl group
  element with minimal polynomial $w^2+w+1$.
\end{proof}

\begin{rems}
  The case $A_1(2)$ needs to be excluded, as ${\mathfrak{sl}}_2$ splits as
  a direct sum of $1$-dimensional and $2$-dimensional $\SL_2(2)$-modules.
\end{rems}

\begin{rems}
  For $A_1(3)$, the main argument actually works despite the failure to be
  simple, since $\SL_2(\F_3)$ is generated by unipotent elements.  For
  $B_2(2)$, the normal subgroup generated by $H(\F_2)$ is $\Alt_6\subset
  S_6=B_2(2)$, but $\Alt_6$ is not the group of points of a connected
  reductive group, so again $(H^+)^0$ is everything and the argument works.
  Finally, for ${}^2A_2(2)$ and $G_2(2)$, it suffices to verify (by
  standard computational techniques) that the adjoint representation is
  irreducible.
\end{rems}
  
\begin{rems}
  That $\cosoc\Lie(G(\F_q))$ is irreducible is essentially Corollary 3.12
  of \cite{VasiuA:2003}, although the statement there does not explicitly mention
  the exception $A_1(2)$.
\end{rems}
  
\begin{cor}\label{cor:sc_is_perfect}
  For any simply connected simple $G$ not of type $A_1(2)$, $G(\Z_p)'$
  is the preimage of $G(\F_p)'$.
\end{cor}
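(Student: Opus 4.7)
The plan is to prove the two inclusions separately. The direction $G(\Z_p)'\subset \pi^{-1}(G(\F_p)')$ is immediate from functoriality of the derived subgroup under the reduction $\pi:G(\Z_p)\to G(\F_p)$. For the reverse direction, I would use smoothness of $G$ to get that $\pi$ is surjective, hence $G(\Z_p)'$ maps onto $G(\F_p)'$; combined with $\ker\pi = I_1$ this gives $\pi^{-1}(G(\F_p)')=G(\Z_p)'\cdot I_1$, reducing everything to showing $I_1\subset G(\Z_p)'$ (with $G(\Z_p)'$ interpreted as the closed derived subgroup).

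To establish $I_1\subset G(\Z_p)'$, I would use the graded Lie algebra machinery of Section 6.1 to prove that, for each $e\ge 1$, the intersection $G(\Z_p)'\cap I_e$ surjects onto the graded piece $I_e/I_{e+1}\cong\Lie(G)(\F_p)$. The key input is the commutator computation from Section 6.1: for $g\in G(\Z_p)$ with reduction $\bar g\in G(\F_p)$ and $h\in I_e$ reducing to $y\in\Lie(G)(\F_p)$, the commutator $(g,h)$ lies in $G(\Z_p)'\cap I_e$ (automatically, as a commutator) and its graded image is $(1-\Ad(\bar g^{-1}))y$, essentially from $(g,h)-1\equiv (1-\Ad(g^{-1}))(h-1)\pmod{I_{2e}}$. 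Thus the image of $G(\Z_p)'\cap I_e$ in $\Lie(G)(\F_p)$ contains the span of $\{(\Ad(\bar g)-1)y:\bar g\in G(\F_p),\ y\in\Lie(G)(\F_p)\}$, which equals all of $\Lie(G)(\F_p)$ iff $H_0(G(\F_p);\Lie(G)(\F_p))=0$. The latter is a quotient of $H_0(G(\F_p)';\Lie(G)(\F_p))$, which vanishes for $G\ne A_1(\F_2)$ by the earlier proposition, giving the required surjectivity.

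Given surjectivity on each graded piece, a standard successive-approximation argument completes the proof: for $h\in I_1$, inductively pick $h_e'\in G(\Z_p)'\cap I_e$ so that $h(h_1'\cdots h_e')^{-1}\in I_{e+1}$; the partial products all lie in $G(\Z_p)'$ and converge to $h$, which is therefore in the closed derived subgroup $G(\Z_p)'$.

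There is no real obstacle beyond the routine graded commutator calculation; the main thing to watch is that the argument deliberately avoids any appeal to Theorem \ref{thm:vasiu_fixed}, which has a longer list of excluded types and itself invokes this corollary via Lemma \ref{lem:lifting_products}, so there is no circularity.
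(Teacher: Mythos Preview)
Your proof is correct and is essentially the dual of the paper's argument. The paper reduces to showing that every 1-dimensional character of $G(\Z_p)$ is trivial on $\ker\pi=I_1$: a minimal counterexample would be an order-$p$ character on some $I_e/I_{e+1}\cong\Lie(G)(\F_p)$ invariant under $G(\F_p)$, i.e., a $G(\F_p)$-invariant linear functional, and the paper rules this out using the irreducibility of $\cosoc\Lie(G)(\F_p)$ (which is not the trivial representation since long root spaces inject). You instead show directly that $I_1\subset G(\Z_p)'$ by exhibiting each graded piece as spanned by images of commutators $(g,h)$ with $h\in I_e$, invoking the proposition $H_0(G(\F_p)';\Lie(G)(\F_p))=0$. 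The two arguments are Pontryagin dual: ``no invariant functional on $\Lie(G)(\F_p)$'' is exactly ``$H_0(G(\F_p);\Lie(G)(\F_p))=0$'', and you correctly note that the latter is a quotient of $H_0(G(\F_p)';\Lie(G)(\F_p))$. The commutator computation you use is the same one the paper records in the remark immediately following the corollary (there for the purpose of Lemma~\ref{lem:lifting_products}), so there is no substantive difference in ingredients, only in which of the two stated propositions (cosocle irreducibility vs.\ $H_0$ vanishing) is invoked.
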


\begin{proof}
  Since $G(\Z_p)'$ is the common kernel of the $1$-dimensional characters
  of $G(\Z_p)$, it suffices to show that every $1$-dimensional character of
  $G(\Z_p)$ is trivial on the kernel of reduction.  Since the kernel of
  reduction is a $p$-group, if there were a counterexample, there would be
  one of order $p$, which would in turn induce a nontrivial
  $G(\Z_p)$-invariant {\em homogeneous} linear functional on the associated
  graded Lie algebra, and thus a $G(\F_p)$-invariant linear functional on
  $\Lie(G)(\F_p)$.  Such a functional would factor through
  $\cosoc\Lie(G)(\F_p)$, and thus by irreducibility of the latter could
  only exist if $\cosoc\Lie(G)(\F_p)$, and thus $\cosoc\Lie(G)$, were the
  trivial representation.  Since the map $\Lie(G)\to \cosoc\Lie(G)$ is
  injective on long root spaces, the Weyl group acts faithfully, and thus
  $G$ cannot act trivially.
\end{proof}

\begin{rems}
  The claim fails for $\SL_2(\Z_2)$, since $\SL_2(\Z_2)'$ has index 4.
\end{rems}

\begin{rems}
  Note that the graded Lie algebra of $G'(\Z_p)$ is generated by
  commutators of elements that commute mod $p$, and any such element
  is a sum of elements of the form $(\Ad(g)-1)l$, establishing the
  missing step of the proof of Lemma \ref{lem:lifting_products}
  above.
\end{rems}

The following is a simplified (and somewhat generalized: we do not require
the subgroup to be contained in a split torus) rephrasing of a
reduction of Vasiu.  Note that $H$ will typically {\em not} be a simply
connected simple group, but will at least be reductive.

\begin{prop}
  Let $G$ be simply connected and simple, let $\Delta\subset \Aut(G)$ be a
  diagonalizable subgroup scheme.  If $(G^\Delta)^0$ satisfies the
  hypotheses of the theorem, $[\Lie(G^\Delta),\Lie(G^\Delta)]$ is not
  contained in $\rad \Lie(G)$, and the conclusion of the theorem holds for
  $(G^\Delta)^0$, then it holds for $G$.
\end{prop}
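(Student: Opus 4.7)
The plan is to deduce the conclusion for $G$ from the known case for $(G^\Delta)^0$ by intersection. Let $H \subset G(\Z_p)$ be a closed subgroup whose reduction mod $p$ contains $G(\F_p)'$; I want to show $H \supset G(\Z_p)'$. Set $H_0 := H \cap (G^\Delta)^0(\Z_p)$.

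The first step is to verify that $H_0$ surjects onto $(G^\Delta)^0(\F_p)'$ modulo $p$. Diagonalizability of $\Delta$ is essential here: $\Delta$ is linearly reductive, so its action on every graded piece of the filtration of the kernel of reduction $G(\Z_p) \to G(\F_p)$ (which is naturally $\Lie(G)(\F_p)$) splits into isotypic components, with the trivial component being $\Lie((G^\Delta)^0)(\F_p)$. Combined with smoothness of $(G^\Delta)^0$, this lets one run a Hensel-type iteration: starting from an arbitrary lift $h \in H$ of a given $\bar{h} \in (G^\Delta)^0(\F_p)'$, one averages successive corrections over $\Delta$ to modify $h$ by elements in the kernel of reduction into the $\Delta$-fixed subgroup, producing a lift $\tilde{h} \in H_0$.

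With that in hand, the inductive hypothesis applied to $(G^\Delta)^0$ gives $H_0 \supset (G^\Delta)^0(\Z_p)'$, so the associated graded Lie algebra of $H$ contains $[\Lie(G^\Delta)(\F_p), \Lie(G^\Delta)(\F_p)]$ in every positive degree. By assumption this subspace is not contained in $\rad \Lie(G)(\F_p)$, and since $H \bmod p \supset G(\F_p)'$ acts on the graded Lie algebra by conjugation, its $G(\F_p)$-orbit is an arbitrary $G(\F_p)$-submodule containing an element outside the radical. The earlier lemma that $\cosoc \Lie(G)(\F_p)$ is irreducible as a $G(\F_p)$-module (applicable since the hypotheses on $p$ exclude the $A_1(2)$ case) then forces the degree-$1$ piece of the associated graded of $H$ to be all of $\Lie(G)(\F_p)$, and the $p$-th power operation on the graded Lie algebra (via the formulas of subsection 6.1) propagates this to every higher degree, yielding $H \supset G(\Z_p)'$.

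The main obstacle is the first step. One cannot simply average a lift of $\bar{h}$ over $\Delta$ at the level of $H$, because $H$ itself need not be $\Delta$-stable. The argument must instead proceed one layer at a time along the filtration of the kernel of reduction, using at each stage the linear reductivity of $\Delta$ on the relevant graded piece to push the existing lift closer to a $\Delta$-fixed one by an element of $H$ in the kernel of reduction at the previous stage. Convergence is automatic because we work in the pro-$p$ kernel of reduction, but some care is needed to ensure that the corrections live in $H$ rather than merely in $G(\Z_p)$; this should follow from smoothness of $(G^\Delta)^0$ together with the fact that $H$ already surjects onto $G(\F_p)'$, so contains enough elements to realize any required correction modulo each higher power of $p$.
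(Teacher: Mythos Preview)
Your overall architecture—reduce to $(G^\Delta)^0$, then use irreducibility of $\cosoc\Lie(G)$ to bootstrap—is right, but the first step has a genuine circularity that you already sensed. To push a lift $h\in H$ of $\bar h\in (G^\Delta)^0(\F_p)'$ into $(G^\Delta)^0(\Z_p)$, each stage of your Hensel iteration needs an element of $H$ lying in the kernel of reduction whose graded image hits a prescribed piece of the nontrivial $\Delta$-isotypic summand $K\subset\Lie(G)(\F_p)$. But the graded pieces of $H$ in the kernel of reduction are precisely what the proposition is trying to determine; at this point you know nothing about them. Surjectivity of $H\to G(\F_p)'$ supplies one lift per element of $G(\F_p)'$, not elements of $H$ in the kernel, and smoothness of $(G^\Delta)^0$ only produces $\Delta$-fixed lifts in $G(\Z_p)$, not in $H$. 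The iteration cannot get started.

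The paper avoids this by never intersecting $H$ with $(G^\Delta)^0(\Z_p)$. Having first reduced to $G(\Z/p^2\Z)$, it takes $H_\Delta:=\{h\in H:h\bmod p\in G^\Delta(\F_p)\}$, observes that $H_\Delta$ normalizes $K$, and forms $(H_\Delta\cdot K)/K\subset G^\Delta(\Z/p^2\Z)$, which visibly surjects on $G^\Delta(\F_p)$ with no correction needed. The inductive hypothesis applied to \emph{that} group yields $H\cap\Lie(G)(\F_p)+K\supset[\Lie(G^\Delta),\Lie(G^\Delta)](\F_p)$; since $\rad\Lie(G)$ is $\Delta$-stable, the radical hypothesis gives an element of $\Lie(G)^\Delta$ outside $\rad\Lie(G)+K$, so $H\cap\Lie(G)(\F_p)\not\subset\rad\Lie(G)$, and your final step finishes. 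In short: enlarge by $K$ and quotient, rather than try to correct inside $H$.
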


\begin{proof}
  Note first that since $G$ is simply connected, the centralizer of any
  diagonalizable group of automorphisms is connected reductive, so that it
  makes sense to ask whether the theorem holds for $G^\Delta$.  Now,
  suppose $H\subset G(\Z/p^2\Z)$ were a counterexample, and let $H_\Delta$
  be the preimage of $G^\Delta(\F_p)\subset G(\F_p)$ inside $G(\Z/p^2\Z)$.
  Since $\Delta$ is diagonalizable, the natural inclusion
  \[
  \Lie(G^\Delta)=\Lie(G)^\Delta\subset \Lie(G)
  \]
  is canonically split.  The kernel $K$ of the splitting (i.e., the sum of
  nontrivial isotypic subspaces for $\Delta$) is normal in $H_\Delta$, and
  thus we may take the quotient $(H+K)/K$ to obtain a subgroup of
  $G^\Delta(\Z/p^2\Z)$ that surjects on $G^\Delta(\F_p)$.  Since lifting
  holds for $G^\Delta$, it follows that
  \[
  H\cap \Lie(G)(\F_p) + K
  =
  H_\Delta\cap \Lie(G)(\F_p) + K
  \supset
  [\Lie(G^\Delta),\Lie(G^\Delta)](\F_p)+K
  \]
  Since $[\Lie(G^\Delta),\Lie(G^\Delta)]$ is not contained in the radical,
  \[
  \rad(\Lie(G))+K\ne \Lie(G),
  \]
  and thus $H\cap \Lie(G)(\F_p)$ is not contained in $\rad\Lie(G)$,
  forcing $H=G(\Z/p^2\Z)$.
\end{proof}

\begin{rems}
  The general diagonalizable subgroup of $\Aut(G)$ fits into a short exact
  sequence
  \[
  1\to \Delta_{\text{inn}}\to \Delta\to \Delta_{\text{out}}\to 1,
  \]
  where $\Delta_{\text{out}}\subset \Out(G)$ has order prime to the
  characteristic and $\Delta_{\text{inn}}$ is contained in some maximal
  torus of $\Ad(G)$ fixed by $\Delta_{\text{out}}$.  And of course
  $\Delta_{\text{inn}}$ is itself an extension of an \'etale group of order
  prime to the characteristic by its identity component, a multiplicative
  $p$-group.
\end{rems}

\begin{rems}
Note that for any root system arising as the intersection of the root
system of $G$ with a sublattice of the root lattice of $G$, there is a
corresponding diagonalizable subgroup of $G$ such that the centralizer has
that root system (typically with a torus factor as well); this satisfies
the radical condition in any characteristic if the subsystem contains at
least one long root.  (Indeed, $\Lie(G)$ is a highest weight
representation, so is generated as a $G$-module by any highest weight
vector.)  In addition, we should note that if the normalizer of $\Delta$ in
$\Aut(G)$ is disconnected, then $\Delta$ can be twisted by a Galois cocycle
with values in the component group of the normalizer; this in turn induces
a corresponding twist of $H$.  (This corresponds to taking $\Delta$ to lie
in a non-split maximal torus.)
\end{rems}

In light of the Proposition, we say that a type $X$ ``reduces to'' type $Y$
if there is a sequence $X=X_1,\dots,X_m=Y$ such that $X_{i+1}$ is the
simply connected cover of the adjoint of the centralizer of a diagonal
subgroup of $\Aut(X_i)$ satisfying the radical condition.

\begin{prop}
  Any simply connected simple group which is not excluded by the Theorem
  reduces to a group in the following list:
  \begin{itemize}
  \item[(a)] $A_1^{(m)}$ with $p^m>4$
  \item[(b)] $A^{(m)}_2$, ${}^2\!A^{(m)}_2$ with $p^m\in \{3,4\}$
  \item[(c)] $A_4$, ${}^2\!A_4$ with $p=2$.
  \end{itemize}
\end{prop}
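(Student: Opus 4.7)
The plan is to induct on a complexity measure (such as $|\Phi|$ or the total rank) using the preceding Proposition at each step. Given a simply connected simple $G/\Z_p$ of non-excluded type $X$, we want to exhibit a diagonalizable $\Delta\subset \Aut(G)$ whose centralizer $(G^\Delta)^0$ has (i) root system containing at least one long root of $G$, so that the radical hypothesis is automatic by the earlier Remark; (ii) no factor of excluded type, so that the centralizer itself satisfies the hypotheses of the Theorem; and (iii) strictly smaller complexity than $X$, unless $X$ is already in the target list. Iterating then terminates in a base case. The mechanism is the classical one: an inner $\Delta\subset T$ corresponds to a sublattice $H$ of the character lattice of $T$, and the root system of its centralizer is $\Phi\cap H$; combining with diagram automorphisms and Frobenius-cycling cocycles (i.e., allowing $\Delta$ to lie in a non-split torus, as in the Remark following the Proposition) gives access to the twisted and restriction-of-scalars forms $({}^d\!X^{(m)})$ as well.

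For $p\ge 5$ no types are excluded, so one may cut $\Phi$ all the way down to a single long-root copy of $A_1$ in a single step, landing directly in target (a). For $p=3$ the only excluded simple type is $A_1$, so one descends by cutting off a single Dynkin node at a time (each step leaves a long root, and each intermediate has rank $\ge 2$, hence is non-excluded), until reaching $A_2$ or ${}^2\!A_2$, both in target (b). The twisted and Frobenius-cycled cases are handled by requiring the chosen sublattice $H$ to be stable under the outer and Galois actions, so that the centralizer inherits the corresponding twist; for example, reducing ${}^d\!X^{(m)}$ typically either shrinks the root system of $X$ while preserving the cycle length $m$, or divides $m$ into a shorter Frobenius cycle.

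The case $p=2$ is where the main obstacle lies, since the excluded list is large ($A_1^{(1)}$, $A_1^{(2)}$, $A_2$, ${}^2\!A_2$, $B_2$, $G_2$, $A_3$, ${}^2\!A_3$, $B_3$, $D_4$) while the target is narrow: (a) requires $m\ge 3$, (b) requires the base field $\F_4$, and (c) admits only $A_4$ or ${}^2\!A_4$. Node-by-node rank reduction can land on excluded intermediates, so one must verify by an explicit (but finite) case analysis for each simply connected simple family that at least one allowed descent pattern exists. In outline: $A_n$ with $n\ge 5$ descends to $A_{n-1}$ or directly to $A_4$; the classical $B_n, C_n, D_n$ with $n$ large descend within the classical family while jumping over $D_4$ and $B_3$ by reducing rank by $2$ when needed; the exceptionals $E_6, E_7, E_8, F_4$ descend through their standard maximal-rank subsystems ($E_8\supset D_8$, $E_7\supset A_7$, $E_6\supset A_5 A_1$, $F_4\supset B_4$) and then into the classical analysis. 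When rank reduction stalls near the excluded types and no $A_4$ target is available, one incorporates a Frobenius cycle of length $\ge 3$ into $\Delta$, producing an $A_1^{(m)}$ factor with $m\ge 3$ so that $p^m>4$, landing in target (a). The substantive difficulty is the bookkeeping needed to verify that such a $\Delta$ exists in every surviving case; this is a straightforward combinatorial check on extended Dynkin diagrams and their automorphism groups, rather than a conceptually deep step, and we leave the enumeration for the following sections.
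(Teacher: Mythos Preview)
Your overall strategy matches the paper's: reduce inductively via centralizers of diagonalizable subgroups, ensuring each centralizer contains a long root (for the radical condition) and avoids excluded factors. However, two of your proposed specific reductions fail, and the paper's execution is cleaner.

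For $p=3$, your claim that node-by-node deletion always terminates at $A_2$ or ${}^2\!A_2$ is false for type $C_n$. Removing the long simple root from $C_n$ yields $A_{n-1}$ consisting entirely of \emph{short} roots, so your condition ``each step leaves a long root'' forces you instead down the chain $C_n\to C_{n-1}\to\cdots\to C_2$. But $C_2$ contains no $A_2$ subsystem at all (its angles are $90^\circ$ and $135^\circ$), and both of its rank-$1$ reductions are $A_1$, which is excluded. The paper avoids this by reducing $C^{(m)}_n$ in one step to its long-root subsystem $A_1^n$, twisted by the $n$-cycle in the Weyl group, landing in $A_1^{(mn)}$ with $p^{mn}\ge 9>4$.

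For $p=2$, your reduction $E_6\supset A_5A_1$ is not admissible: the $A_1$ factor is on the excluded list, so the centralizer fails hypothesis~(ii) and the preceding Proposition cannot be applied. The paper instead uses the chain $E_8\to E_7\to E_6\to D_5\to A_4$ (removing one node each time), together with a special treatment of ${}^2\!D_4^{(m)}$ via a $\mu_2$ whose centralizer, after an affine-diagram twist, is $A_1^{(4m)}$. More generally, the paper gives a short explicit table of reductions---long-root subsystems for non-simply-laced types, removal of a single orbit of simple roots for simply-laced types---and then observes that for $p^m>4$ these chains reach $A_1^{(mm')}$, for $p^m\in\{3,4\}$ they reach $A_2^{(m)}$, ${}^2\!A_2^{(m)}$, or $A_1^{(mm')}$ with $m'>1$, and for $p^m=2$ they either increase $m$ or reach $A_4$ or ${}^2\!A_4$. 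This uniform table replaces the case-by-case bookkeeping you defer, and in particular handles $C_n$ and $E_6$ correctly.
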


\begin{proof}
  For non-simply-laced root systems, we may reduce to a suitable twist of
  the subsystem corresponding to the long roots, giving reductions:
  \begin{align}
  B^{(m)}_n&\to {}^2\!D^{(m)}_{n-1}&C^{(m)}_n&\to A^{(mn)}_1\\
  F^{(m)}_4&\to {}^2\!D^{(m)}_4&G^{(m)}_2&\to A^{(m)}_2.
  \end{align}
  In each case to which the Theorem applies, the reduction is to another
  case to which the Theorem applies, so the Proposition holds in these
  cases.

  For simply-laced cases, we may reduce by removing a single orbit
  of roots:
  \begin{align}
  A^{(m)}_n&\to A^{(m)}_{n-1}&  
  {}^2\!A^{(m)}_{2n}&\to {}^2\!A^{(m)}_{2n-2}&
  {}^2\!A^{(m)}_{2n+1} &\to {}^2\!A^{(2m)}_n\\
  D^{(m)}_n&\to A^{(m)}_{n-1}& {}^2\!D^{(m)}_n &\to {}^2\!D^{(m)}_{n-1}
  &{}^3\!D^{(m)}_4&\to A^{(3m)}_1
  \end{align}
  with $E^{(m)}_8\to E^{(m)}_7\to E^{(m)}_6\to D^{(m)}_5\to A^{(m)}_4$.
  Finally, ${}^2\!D^{(m)}_4$ has a $\mu_2$ with centralizer (corresponding
  to removing the middle node of the affine Dynkin diagram) of type
  $A^{(2m)}_1A^{(2m)}_1$ which may be further twisted (by the affine
  diagram automorphism) to give a reduction to $A^{(4m)}_1$.

  When $p^m>4$, the above reductions take us to $A^{(mm')}_1$ for some
  $m'\ge 1$ as required, while for $p^m\in \{3,4\}$, they take us to
  one of $A^{(m)}_2$, ${}^2\!A^{(m)}_2$, or $A^{(mm')}_1$ for $m'>1$.
  Finally, for $p^m=2$, we either reduce to a group with $p^m>2$
  or to one of $A_4$, ${}^2\!A_4$.
\end{proof}

For type $A^{(m)}_1$ with $p^m>4$, i.e., $\SL_2(R)$ where $R$ is
the corresponding unramified $p$-adic ring, lifting was shown in
\cite[Lem. 3, \S IV.3.4]{SerreJP:1968}, and thus it remains only to verify it in the remaining
six cases.  For the four such cases in characteristic 2, the radical
of the Lie algebra is 0, and thus it suffices to show that any lift
contains {\em some} Lie algebra element.  The same argument as in
the proof of the reduction shows that it suffices to find a centralizer
for which the extension is not split, allowing us to reduce to the
following fact.

\begin{prop}
  The homomorphism $\SL_2(W_2(\F_4))\to \SL_2(\F_4)$ does not split.
\end{prop}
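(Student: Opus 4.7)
The plan is to argue by contradiction using a direct matrix computation. If a splitting $\sigma \colon \SL_2(\F_4) \to \SL_2(W_2(\F_4))$ existed, every element of order $2$ in $\SL_2(\F_4)$ would lift to an element of the same order in $\SL_2(W_2(\F_4))$. I would fix any $a \in \F_4^\times$ and work with the unipotent $u_0 := I + E_{12}(a)$, which has order $2$ in $\SL_2(\F_4)$ because $E_{12}(a)^2 = 0$ and $2 = 0$. Its lift $u := \sigma(u_0)$ would then satisfy $u \equiv u_0 \pmod 2$, $u^2 = I$, and $\det(u) = 1$. Setting $M := u - I$, these translate to $M \equiv E_{12}(a) \pmod 2$, $M^2 = -2M = 2M$, and $\Tr(M) + \det(M) = 0$ (the last coming from $\det(I + M) = 1 + \Tr(M) + \det(M)$ for a $2 \times 2$ matrix).

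The key step is to combine these via the Cayley--Hamilton identity $M^2 = \Tr(M) M - \det(M) I$, which together with $M^2 = 2M$ gives
\[
(\Tr(M) - 2) M = \det(M) I
\]
in $\Mat_2(W_2(\F_4))$. Reducing mod $2$ and using $M \equiv E_{12}(a)$, the left-hand side becomes $\Tr(M)_0 \cdot E_{12}(a)$, whose only potentially nonzero entry is $\Tr(M)_0 \cdot a$ in position $(1,2)$; matching against the scalar matrix $\det(M)_0 \cdot I$ forces $\det(M) \equiv 0 \pmod 2$ and, since $a \neq 0$, $\Tr(M) \equiv 0 \pmod 2$. Writing $\Tr(M) = 2T$ and $\det(M) = 2D$ (each with $T, D$ determined mod $2$ because multiplication by $2$ on $W_2(\F_4)$ has kernel equal to its image, namely $2 W_2(\F_4)$), the same identity becomes $2(T-1) M = 2 D I$, i.e.\ $(T-1) M \equiv D \cdot I \pmod 2$. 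Reducing once more and invoking $M \equiv E_{12}(a)$ forces $D \equiv 0$ and $T \equiv 1$. Hence $\Tr(M) = 2$ and $\det(M) = 0$ exactly in $W_2(\F_4)$, so $\Tr(M) + \det(M) = 2$, which is nonzero in $W_2(\F_4) \cong (\Z/4)[\omega]/(\omega^2 + \omega + 1)$, contradicting the determinant condition.

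The main obstacle is purely bookkeeping: one has to track carefully what ``mod $2$'' means inside the Witt ring of order $16$, where $4 = 0$ but $2 \neq 0$, and justify the two successive ``divisions by $2$'' using the fact that the kernel and image of multiplication by $2$ coincide. No group-cohomological input beyond this is needed; the nontriviality of the obstruction class is detected by a single explicit unipotent element refusing to lift to an involution.
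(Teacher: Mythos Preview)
Your proof is correct and takes a genuinely different route from the paper's. The paper lifts the full $2$-Sylow $(\Z/2\Z)^2\subset\SL_2(\F_4)$: it writes down general lifts of the two generators, imposes only the order-$2$ condition (which \emph{does} have solutions---in $\GL_2$, not $\SL_2$), and then checks that the group commutator of any two such lifts is $\begin{pmatrix}1&2(\omega+1)\\0&1\end{pmatrix}\neq I$. In particular the paper never invokes $\det=1$; it is really showing that the $2$-Sylow does not even lift to $\GL_2(W_2(\F_4))$. Your argument instead stays inside $\SL_2$ and shows that a \emph{single} nontrivial unipotent has no involutive lift there, via Cayley--Hamilton together with the constraint $\Tr(M)+\det(M)=0$ coming from $\det(I+M)=1$. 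Your approach is shorter, uses only one element, and works verbatim over $W_2(\F_{2^r})$ for every $r\ge 1$; the paper's approach, by not using the determinant, yields the slightly stronger conclusion (relevant to its subsequent remark about lifts modulo the center) that the obstruction persists even in $\GL_2$.
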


\begin{proof}
  It suffices to check that there is no lift modulo 4 of the $2$-Sylow.  If
  we choose lifts of the generators of the $2$-Sylow:
  \[
  \begin{pmatrix}
    1+2a_1&1+2b_1\\
    2c_1&1+2d_1
  \end{pmatrix},
  \begin{pmatrix}
    1+2a_2&\omega+2b_2\\
    2c_2&1+2d_2
  \end{pmatrix},
  \]
  then the condition is that their lifts also be commuting elements of
  order 2.  The order 2 condition forces $c_1=a_1+d_1+1=c_2=a_2+d_1+1=0$,
  but then the commutator is
  \[
  \begin{pmatrix}
    1 & 2(\omega+1)\\
    0 & 1
  \end{pmatrix}
  \ne 0.
  \]
\end{proof}

\begin{rem}
  Note that this does not imply that the Theorem holds for
  $G(k)=\SL_2(\F_4)$, as we have not ruled out the existence of a lift
  modulo the center.  (Indeed, such a lift exists!)
\end{rem}

It remains to consider the two characteristic 3 cases.  In those cases, the
cosocle of the Lie algebra is the quotient by the center.  Moreover, in
either case, we can find two commuting elements of the $3$-Sylow of the
forms
\[
g_1:=\begin{pmatrix}
  1 & a & b\\ 0&1&c\\ 0&0&1
\end{pmatrix}\!,\  
g_2:=\begin{pmatrix}
  1 & 0 & d\\ 0&1&0\\ 0&0&1
\end{pmatrix}
\]
with $d\ne 0$ and $(a,c)\ne (0,0)$.  The non-diagonal coefficients of the
commutator must still vanish, and vanishing of the $23$ and $32$
coefficients of the commutator forces the $31$ entry of the lift of $g_2$
to vanish.  But then
\[
g_2^3 =  \begin{pmatrix}
  1 & 0 & 3d\\ 0&1&0\\ 0&0&1\end{pmatrix}
  \pmod{9},
\]
and thus we do not get the (undesired) splitting.

This finishes the proof of Theorem \ref{thm:vasiu_fixed}.\qed

\medskip

For completeness, we show that the listed groups are indeed counterexamples
to lifting.  For characteristic 3, we have an injective morphism
\[
U(\H)\to U(\H\otimes\Z_3)\cong \SL_2(\Z_3)
\]
of (reduced) norm 1 subgroups, where $\H$ is the ring of Hurwitz
quaternions, and this surjects on $\SL_2(\F_3)$, so gives the desired
counterexample.

For characteristic 2, we have already noted the examples $W(E_8)\subset
O^+_8(\Z_2)$, $W(E_7)\subset O_7(\Z_2)$, $S_6\subset O_5(\Z_2)$ and
$S_8\subset O^+_6(\Z/4\Z)$, to which we may add the additional examples
$W(E_6)\subset O^-_6(\Z_2)$, $S_5\subset O^-_4(\Z_2)$ and $S_3\subset
O_3(\Z_2)$.  One can also verify by direct calculation that the subgroup
$G_2(\F_2)\subset W(E_7)'$ preserves a suitably general\footnote{I.e., such
that its reduction mod any prime is in the unique dense orbit of $\SL_7$ in
$\wedge^3$.} 3-form over $\Z$, so maps to $G_2(\Z)$.  For $A_2$, we note
that the complex reflection group $ST_{24}$ maps to
$\GL_3(\Z[(-1+\sqrt{-7})/2])$; restricting this map to the derived subgroup
and completing at a prime over 2 gives the desired morphism $\SL_3(\F_2)\to
\SL_3(\Z_2)$.  Finally, for ${}^2\!A_2$, we note that $\GU_3(2)$ is the
normalizer in $\GL_n(\C^3)$ of an extraspecial $3$ group, and thus has a
$3$-dimensional unitary representation over $\Z[\zeta_3,1/3]$, which
restricts to give the requisite unitary representation of $\SU_3(2)$ over
$\Z_2[\zeta_3]$.

\subsection{Unramified Jordan-Pochhammer representations}

For a cyclotomic Jordan-Pochhammer representation at an unramified prime,
it is straightforward to apply the above result to deduce largeness in the
corresponding $p$-adic group.  There is no difficulty for $n>4$, as then
there are no counterexamples $A_n$ or ${}^2\!A_n$.  For $p$ odd, this extends
to $n>2$ (as long as the representation is large mod $p$, that
is!), and even for $p=2$, we can only hit counterexamples when $N\in
\{3,6\}$, the only cases in which the residue field of the cyclotomic field
is $\F_4$.  Of course, we can still prove largeness if we can show by other
means that the degree 1 component of the associated graded Lie algebra has
nonscalar elements.  The simplest way to do so is to exhibit a transvection
in the global group: it has minimal polynomial $g^2-2g+1=0$ and thus $g^2 =
1+2(g-1)$ maps to a nonscalar Lie algebra element.  In particular, if any
set of between $2$ and $n$ parameters multiplies to 1, then lifting holds,
and it is easy to see that the only possible cases in which lifting can
fail (up to Galois) are then $(\zeta_6,\dots,\zeta_6,\zeta_6^2)$ for $n=3$
and $(\zeta_6,\dots,\zeta_6)$ for $n=4$.  In both cases, the global image
is actually finite, and thus lifting does indeed fail to hold.

\subsection{The partially ramified case}

When the cyclotomic field is ramified at $p$, this argument fails, as it
cannot rule out the reductive group over the unramified subfield.  However,
the general principle still applies, at least in the partially ramified
case: for $n>2$, if the degree 1 piece of the associated graded Lie algebra
contains a nonscalar element, then it contains $\sl_n$ or $\su_n$ and thus
the associated graded Lie algebra contains $\sl_n$ or $\su_n$ in every
degree.

In particular, since this involves calculations mod the square of the
prime, we may first quotient by that square.  A choice of generator of the
maximal ideal (say $\pi=\zeta_{p^l}-1$) induces an isomorphism
$R/\frakm^2\cong \F_q[\epsilon]/(\epsilon^2)=:\bar{R}$, and thus we may
rephrase the question in terms of Jordan-Pochhammer representations over
the dual numbers, with parameters of the form $\lambda_i(1+\epsilon
\nu_i)$.  Moreover, since the dual numbers are free of rank 2 over $\F_q$,
this also induces a representation in $\GL_{2n}(\F_q)$.  In particular, any
element of the image has a Jordan decomposition $g=g_s g_u=g_ug_s$ with
$g_s$ semisimple and $g_u$ unipotent, such that $g_s$ and $g_u$ are both
powers of $g$.  (Explicitly, we may take $g_s=g^{q^{(2n)!}}$.)

\begin{prop}
  Suppose $g\in \GL_n(\bar{R})$ reduces to a semisimple element, but is not
  itself semisimple.  Then $\langle g\rangle$ contains a nontrivial Lie
  algebra element.
\end{prop}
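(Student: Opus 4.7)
The Proposition is essentially an immediate corollary of the Jordan decomposition recalled just before its statement, so my plan is to simply unpack it.

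First I recall that, thanks to the restriction-of-scalars embedding $\GL_n(\bar R)\hookrightarrow \GL_{2n}(\F_q)$, the element $g$ admits a Jordan decomposition $g=g_sg_u=g_ug_s$ with $g_s$ semisimple, $g_u$ unipotent, and both $g_s$ and $g_u$ expressible as powers of $g$ (e.g.\ $g_s=g^{q^{(2n)!}}$). In particular $g_u\in\langle g\rangle$.

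Next I pass this decomposition through the reduction map $\pi\colon\GL_n(\bar R)\to\GL_n(\F_q)$. Since $\pi$ is an $\F_q$-algebraic quotient, $\pi(g)=\pi(g_s)\pi(g_u)$ is still a Jordan decomposition, with $\pi(g_s)$ semisimple and $\pi(g_u)$ unipotent. The hypothesis that $\pi(g)$ is semisimple forces the unipotent part of $\pi(g)$ to be trivial, and by uniqueness of Jordan decomposition in $\GL_n(\F_q)$ this gives $\pi(g_u)=1$, i.e.\ $g_u=1+\epsilon M$ for some $M\in\gl_n(\F_q)$.

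Finally, I invoke the second hypothesis: since $g$ itself is not semisimple, the unipotent part $g_u$ cannot be trivial, so $M\neq 0$. Thus $g_u$ is a nontrivial element of $\langle g\rangle$ lying in the kernel of reduction, which under the identification of $\ker(\pi)$ with $\gl_n(\F_q)$ is exactly a nontrivial Lie algebra element. The only thing requiring any care is making sure that the Jordan decomposition really does descend compatibly under $\pi$ — but this is immediate because both the semisimple-unipotent factorization and its uniqueness are preserved by any algebra homomorphism, so there is no genuine obstacle.
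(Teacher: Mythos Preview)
Your proof is correct and follows essentially the same approach as the paper's: both use that the Jordan decomposition is respected by the reduction homomorphism, so that $g_u$ lies in the kernel of reduction and is nontrivial since $g$ is not semisimple. The paper's proof is simply the one-line version of what you have written out in detail.
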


\begin{proof}
  The Jordan decomposition respects homomorphisms, and thus $g_u$ reduces
  to the unipotent factor $1$ of $\bar{g}$ as required.
\end{proof}

\begin{rem}
  The simplest sufficient condition for an element to not be semisimple is
  if its characteristic polynomial depends on $\epsilon$.
\end{rem}

\begin{cor}
  Suppose $g\in \GL_n(\bar{R})$ reduces to a semisimple element,
  but is not itself semisimple, and that $\ker(g-1)$ contains a
  nonzero free $\bar{R}$-module.  Then $\langle g\rangle$ contains
  a nonscalar Lie algebra element.
\end{cor}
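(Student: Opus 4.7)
The plan is to leverage the previous Proposition, which under the first two hypotheses furnishes a nontrivial Lie algebra element $g_u \in \langle g\rangle$ of the form $g_u = 1 + \epsilon X$ with $X \in \Mat_n(\F_q) \setminus \{0\}$; what remains is to use the free-module hypothesis on $\ker(g-1)$ to force $X$ to be nonscalar.

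The key observation is that since $g_u$ commutes with $g$, it preserves $\ker(g-1)$, and the identity $g = g_s g_u$ specializes on this subspace to $g_u = g_s^{-1}$. Next I would establish that $g_s^m = 1$ in $\GL_n(\bar R)$ on the nose, where $m$ is the order of $\bar g$ (necessarily coprime to $p$, as $\bar g$ is semisimple in characteristic $p$). Indeed, $g_s^m$ has order coprime to $p$ as a power of $g_s$, while simultaneously reducing to $\bar g^m = 1$ and hence lying in the kernel $1 + \epsilon\,\Mat_n(\F_q)$, which is an elementary abelian $p$-group; compatibility forces $g_s^m = 1$, and hence $g_u^m$ restricts to the identity on $\ker(g-1)$.

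Expanding $g_u^m = (1+\epsilon X)^m = 1 + m\epsilon X$ via $\epsilon^2 = 0$, the condition $g_u^m v = v$ for $v \in \ker(g-1)$ reads $m\epsilon X v = 0$ in $\bar R^n$. Since $m$ is invertible in $\F_q$ and since $\epsilon w = 0$ in $\bar R^n$ forces $w = 0$ for $w \in \F_q^n$, this simplifies to $X\bar v = 0$ in $\F_q^n$. Applying this to a generator $e$ of a free rank-one submodule of $\ker(g-1)$ yields $\bar e \ne 0$ with $X\bar e = 0$, so $X$ annihilates a nonzero vector; in particular $X$ cannot be a nonzero scalar, and combined with $X \ne 0$ from the Proposition it must be nonscalar, making $g_u$ the desired nonscalar Lie algebra element in $\langle g\rangle$.

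The one step that requires genuine care is the verification that $g_s^m = 1$ exactly, rather than just modulo $\epsilon$; once that equality is in place, the rest reduces to a one-line power expansion and an immediate scalar-versus-nonscalar dichotomy applied to a nonzero vector in the reduction of the hypothesized free submodule.
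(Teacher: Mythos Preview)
Your proof is correct and follows essentially the same route as the paper: show that the Lie algebra element $X$ annihilates the reduction of $\ker(g-1)$, then use the free-submodule hypothesis to produce a nonzero vector in that reduction. The one difference is that your detour through $g_s^m=1$ is unnecessary: since the paper already records that $g_u$ is itself a power of $g$, it fixes $\ker(g-1)$ pointwise directly, giving $\epsilon Xv=0$ without passing to the $m$-th power.
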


\begin{proof}
  Here we note that $g_u$ fixes $\ker(g-1)$, and thus the corresponding Lie
  algebra element annihilates $\ker(g-1)\otimes \F_q$; since it is nonzero,
  it cannot be scalar.
\end{proof}

\begin{cor}
  The Jordan-Pochhammer representation over $\bar{R}$ contains a nonscalar
  Lie algebra element unless $(\lambda_0\lambda_i-1)(\nu_0+\nu_i)=0$ for
  all $1\le i\le n+1$.
\end{cor}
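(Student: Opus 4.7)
The plan is to locate an index $i$ violating the condition and exhibit $g := g_i^m$ (with $m$ a suitable power) as a direct input to the previous Corollary. Suppose $(\lambda_0\lambda_i - 1)(\nu_0 + \nu_i) \ne 0$ for some $i$; then both factors are nonzero in $\F_q$, so in particular $\lambda_0\lambda_i \in \F_q^\times \setminus \{1\}$. Let $m$ be the multiplicative order of $\lambda_0\lambda_i$ in $\F_q^\times$; since $m \mid q - 1$, we have $\gcd(m,p) = 1$, so $m$ is a unit in $\F_q$.

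The nontrivial eigenvalue of $g_i$ over $\bar R$ is $\mu_i := \lambda_0\lambda_i(1 + \epsilon(\nu_0 + \nu_i))$, and the difference
\[
\mu_i - 1 = (\lambda_0\lambda_i - 1) + \epsilon \lambda_0\lambda_i(\nu_0 + \nu_i)
\]
reduces modulo $\epsilon$ to the nonzero element $\lambda_0\lambda_i - 1$, so it is a unit in the local ring $\bar R$. The factors $(X - 1)$ and $(X - \mu_i)$ of the characteristic polynomial of $g_i$ are thus coprime, and the standard splitting lemma for modules over a local ring gives a $g_i$-stable decomposition
\[
\bar R^n = \ker(g_i - 1) \oplus \ker(g_i - \mu_i),
\]
into free $\bar R$-modules of ranks $n-1$ and $1$ respectively. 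Using $\epsilon^2 = 0$ one computes $\mu_i^m = (\lambda_0\lambda_i)^m\bigl(1 + m\epsilon(\nu_0 + \nu_i)\bigr) = 1 + m\epsilon(\nu_0 + \nu_i)$, which differs from $1$ since $m$ is a unit and $\nu_0 + \nu_i \ne 0$.

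I then verify the three hypotheses of the previous Corollary for $g = g_i^m$: its reduction $\bar g_i^m$ is the identity (hence semisimple); the computation above shows $g$ acts trivially on the rank-$(n-1)$ summand and as the scalar $\mu_i^m \ne 1$ on the rank-$1$ summand, so $g$ is a nontrivial lift of the identity and therefore a nontrivial unipotent element of $\GL_{2n}(\F_q)$, which is not semisimple; and $\ker(g - 1)$ contains the free $\bar R$-module $\ker(g_i - 1)$ of rank $n - 1 \ge 1$ (the case $n = 1$ being vacuous for Jordan--Pochhammer). The Corollary then produces a nonscalar Lie algebra element in $\langle g \rangle$, hence in the Jordan--Pochhammer representation.

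The only step requiring real care is the splitting of $\bar R^n$ as a $g_i$-module, and that is precisely where the hypothesis $\lambda_0\lambda_i \ne 1$ enters: it is exactly the condition that the eigenvalue gap $\mu_i - 1$ remains a unit in $\bar R$. This also explains the shape of the ``unless'' condition: if $\lambda_0\lambda_i = 1$ but $\nu_0 + \nu_i \ne 0$, then $g_i$ itself is a nontrivial unipotent with a similar free kernel and the Corollary already applies directly to $g_i$; whereas if $\nu_0 + \nu_i = 0$, the $i$-th parameter is undeformed and $g_i$ genuinely gives nothing new.
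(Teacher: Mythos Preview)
Your main argument is correct and is essentially the paper's: both apply the previous Corollary to $g_i$ (or a power thereof). The paper feeds $g_i$ directly into that Corollary---its reduction is diagonalizable with eigenvalues $1$ and $\lambda_0\lambda_i\ne 1$, hence semisimple, while $g_i$ itself is not semisimple in $\GL_{2n}(\F_q)$ since the $2\times 2$ block representing $\mu_i$ has a repeated eigenvalue $\lambda_0\lambda_i$ with nonzero off-diagonal---and the resulting unipotent part $g_u$ is the nonscalar Lie algebra element. Your explicit power $g_i^m$ is nothing other than $g_u^m$ (since the semisimple part $g_s$ has order $m$), so you are producing the same element up to a prime-to-$p$ power; in fact, once you have written $g_i^m=1+\epsilon\,\mathrm{diag}(0,\dots,0,m(\nu_0+\nu_i))$ in your eigenbasis, you already hold the nonscalar Lie algebra element in hand and need not invoke the Corollary a second time.

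Your closing aside, however, is not right. When $\lambda_0\lambda_i=1$ and $\nu_0+\nu_i\ne 0$, the reduction $\bar g_i$ is a \emph{transvection}, which is not semisimple, so the previous Corollary does not apply to $g_i$. (Nor does the splitting of $\bar R^n$ into free eigenspaces survive: $\mu_i-1=\epsilon(\nu_0+\nu_i)$ is now a zero-divisor.) This case genuinely requires a different mechanism, and the paper treats it separately: in characteristic $2$ via the next Proposition on elements satisfying $(g-(1+\epsilon))(g-1)=0$, and in odd characteristic via a conjugacy argument showing transvections in $\SU_n$ are conjugate to their inverses.
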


\begin{proof}
  If $\lambda_0\lambda_i\ne 1$, then the generator $g_i$ has semisimple
  reduction with $\ker(g_i-1)$ of positive free rank, but cannot itself be
  semisimple unless $\nu_0+\nu_i=0$.  Thus if
  $(\lambda_0\lambda_i-1)(\nu_0+\nu_i)\ne 0$ for some $i$, then some power
  of $g_i$ is the desired nonscalar Lie algebra element.
\end{proof}

There is another useful case in characteristic 2.

\begin{prop}
  Suppose $q$ is a power of 2 and the nonscalar element $g\in
  \GL_n(\bar{R})$ satisfies $(g-(1+\epsilon))(g-1)=0$.  Then $\langle
  g\rangle$ contains a nonscalar Lie algebra element.
\end{prop}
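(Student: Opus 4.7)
The plan is to analyze the polynomial relation $(g-(1+\epsilon))(g-1)=0$ directly, expand it in characteristic $2$, and read off a low power of $g$ that lies in the kernel of reduction $\GL_n(\bar R)\to\GL_n(\F_q)$ while remaining nonscalar. Reducing the relation modulo $\epsilon$ gives $(\bar g-1)^2=0$, so $\bar g$ is unipotent of index $\le 2$, which suggests splitting into the two cases $\bar g=1$ and $\bar g\ne 1$.

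If $\bar g=1$, then $g=1+\epsilon A$ for some $A\in\Mat_n(\F_q)$, so $g$ itself is an element of the kernel of reduction. Since $g$ is nonscalar by hypothesis, $A$ cannot be a scalar matrix, so $g$ is already the desired nonscalar Lie algebra element. The main content of the argument is thus the case $\bar g\ne 1$. Here I expand the quadratic relation using $2=0$:
\[
g^2=(2+\epsilon)g-(1+\epsilon)=\epsilon g+1+\epsilon.
\]
Since $\epsilon\cdot g=\epsilon\cdot \bar g$ (because $\epsilon^2=0$), and in characteristic $2$ we have $1+\epsilon+\epsilon\bar g=1+\epsilon(\bar g-1)$, this rearranges to
\[
g^2=1+\epsilon(\bar g-1),
\]
which lies in the kernel of reduction. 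To conclude that $g^2$ is nonscalar, I invoke the fact that $(\bar g-1)^2=0$: since $\bar g\ne 1$ by assumption, $\bar g-1$ is a nonzero nilpotent matrix, and nonzero nilpotents are never scalar (nonzero scalars are invertible, hence not nilpotent). Thus $\bar g-1$ is a nonscalar element of $\Mat_n(\F_q)$, and $g^2\in\langle g\rangle$ is the desired nonscalar Lie algebra element.

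There is no real obstacle here: the whole proof is a two-line computation once one notices that the quadratic relation, combined with $\epsilon^2=0$ and $2=0$, forces $g^2-1$ to lie in $\epsilon\cdot\Mat_n(\F_q)$ and expresses it explicitly in terms of $\bar g-1$. The only subtle point is the need to dispose of the case $\bar g=1$ separately, where $g^2$ would reduce to $1$ in the kernel trivially (indeed $g^2=1$), so one must use $g$ itself rather than $g^2$.
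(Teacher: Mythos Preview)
Your proof is correct and essentially identical to the paper's: both split into the cases $\bar g=1$ (where $g$ itself works) and $\bar g\ne 1$ (where the quadratic relation in characteristic $2$ gives $g^2=1+\epsilon(\bar g-1)$ with $\bar g-1$ nonzero nilpotent, hence nonscalar). The only difference is cosmetic---you spell out the expansion a bit more carefully than the paper does.
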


\begin{proof}
  If $\bar{g}=1$, then $g$ is already a nonscalar Lie algebra element.
  Otherwise, $g^2 = 1 + \epsilon (g+1) = 1+\epsilon(\bar{g}-1)$ represents
  a nonzero nilpotent element of the Lie algebra.
\end{proof}

\begin{cor}
  In characteristic 2, the Jordan-Pochhammer representation contains a
  nonscalar Lie algebra element unless $\nu_0=\nu_1=\cdots=\nu_{n+1}$.
\end{cor}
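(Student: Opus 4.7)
The plan is to combine the preceding corollary and proposition. Invoking the previous corollary, I would reduce immediately to the case in which $(\lambda_0\lambda_i - 1)(\nu_0 + \nu_i) = 0$ for every $1 \le i \le n+1$. In characteristic $2$, the equality $\nu_0 + \nu_i = 0$ is the same as $\nu_i = \nu_0$, so if this holds for every $i \ge 1$, then combining with the (trivially true) equality $\nu_0 = \nu_0$ gives $\nu_0 = \nu_1 = \cdots = \nu_{n+1}$, which is the excluded case of the statement.

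Otherwise, I would fix some index $i$ with $\nu_i \ne \nu_0$; the reduction step then forces $\lambda_0\lambda_i = 1$. Consequently the nontrivial eigenvalue of the generator $g_i$ is $\lambda_0\lambda_i(1 + \epsilon(\nu_0 + \nu_i)) = 1 + \epsilon c$ with $c := \nu_0 + \nu_i \ne 0$, and $g_i$ therefore satisfies $(g_i - 1)(g_i - (1 + \epsilon c)) = 0$. Setting $\epsilon' := c\epsilon$ (another square-zero generator of the maximal ideal of $\bar R$, since $c \in \F_q^\times$), this is precisely the hypothesis of the preceding proposition applied with $\epsilon'$ in place of $\epsilon$, and I would invoke it to produce the desired nonscalar Lie algebra element inside $\langle g_i \rangle$.

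The whole proof is a bookkeeping matchup between the two immediately preceding results; the essential observation is the characteristic-$2$ coincidence $\nu_0 - \nu_i = \nu_0 + \nu_i$, which is what allows those two tools between them to exhaust every case. There is no real obstacle: the only mild technical check is that rescaling the uniformizer $\epsilon \mapsto c\epsilon$ is harmless, but this is automatic, since the filtration of $\GL_n(\bar R)$ by $1 + \epsilon\Mat_n$ and the identification of its graded piece with $\Mat_n(\F_q)$ are intrinsic to the local structure of $\bar R$ and do not depend on the chosen uniformizer.
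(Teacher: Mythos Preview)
Your proof is correct and follows essentially the same approach as the paper: split on whether $\lambda_0\lambda_i\ne 1$ (handled by the earlier corollary) or $\lambda_0\lambda_i=1$ (handled by the preceding proposition), using that in characteristic $2$ the condition $\nu_0+\nu_i=0$ is the same as $\nu_i=\nu_0$. The paper's version is terser---it simply says ``$g_i^2$ is the desired nonscalar Lie algebra element'' rather than explicitly rescaling $\epsilon$---but your observation that one can absorb the nonzero constant $c=\nu_0+\nu_i$ into the uniformizer is a clean way to invoke the proposition verbatim, and your remark that the filtration is intrinsic to $\bar R$ correctly justifies this.
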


\begin{proof}
  If $\nu_0+\nu_i\ne 0$, either $\lambda_0\lambda_i\ne 1$ and we have
  already exhibited a nonscalar Lie algebra element, or
  $\lambda_0\lambda_i=1$ and $g_i^2$ is the desired nonscalar Lie algebra
  element.
\end{proof}

Since at least one $\nu_i$ must be nonzero (as otherwise the
parameters wouldn't generate the dual numbers), this implies the
existence of a nonscalar Lie algebra element whenever $n$ is odd.
This is already enough to let us prove all characteristic 2 cases
with $n>4$: either $n$ is odd and some $g_S$ works directly, or
there exists some pair $i<j$ with $\lambda_i\lambda_j\ne 1$ (WLOG
$\lambda_n\lambda_{n+1}\ne 1$) and the modular representation with
parameters
\[
(\lambda_0;\lambda_1,\dots,\lambda_{n-1},\lambda_n\lambda_{n+1})
\]
is large and satisfies lifting.  The subgroup generated by
$g_1,\dots,g_{n-1}$ differs only by scalars, so is also large and has
nonscalar Lie algebra elements, and thus we may find an element of that
subgroup that has semisimple reduction with no eigenvalue 1 and nontrivial
unipotent part.  But then the corresponding element in the original
representation still has semisimple reduction and nonscalar unipotent part.

This fails for $n=4$, as there are cases with $n=4$ such that no way of
reducing to $n=3$ has large reduction.  This is not a major issue, however,
and we find that the only such case (up to the obvious symmetries) is
\[
(\zeta_3(1+\epsilon);\zeta_3(1+\epsilon),\zeta_3(1+\epsilon),\zeta_3(1+\epsilon),\zeta_3(1+\epsilon),\zeta_3(1+\epsilon)).
\]
(The situation for $n=2$ is much more subtle.)

We may thus reduce to the case of odd characteristic.

\begin{lem}
  Let $K/\F_q$ be a quadratic \'etale algebra.  Then for $n>2$, every
  transvection in $\SU_n(K/\F_q)$ is conjugate to its inverse.
\end{lem}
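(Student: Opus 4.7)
The plan is to give an explicit construction of the conjugator, using the geometric parametrization of transvections as shears along isotropic lines.

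First I would characterize transvections in $\SU_n(K/\F_q)$: any such $g$ has the form $g(x) = x + c\langle x, v\rangle v$ with $v$ isotropic and $c \in \F_q^\times$. Writing $g = I + N$ with $N(x) = \phi(x)v$ and $\phi(v) = 0$, unitarity of $g$ (expanded using the anti-Hermitian relation $\langle y, x\rangle = -\overline{\langle x, y\rangle}$) yields $\langle v, v\rangle = 0$ and, since $\ker\phi = v^\perp$, forces $\phi = c\langle \cdot, v\rangle$ for some $c \in K$; the remaining condition $(\bar c - c)\langle x, v\rangle\overline{\langle y, v\rangle} = 0$ then gives $c \in \F_q^\times$. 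The inverse $g^{-1}$ is the transvection with parameter $(v, -c)$.

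Second, I would observe that if $h \in \U(K/\F_q)$ satisfies $hv = \mu v$, then the conjugate $hgh^{-1}$ is again a transvection, now with parameter $(v, N_{K/\F_q}(\mu)c)$:
\[
hgh^{-1}(x) = x + c\langle x, hv\rangle hv = x + c\,N_{K/\F_q}(\mu)\langle x, v\rangle v.
\]
So it suffices to find $h \in \SU_n(K/\F_q)$ with $hv = \mu v$ and $N_{K/\F_q}(\mu) = -1$. In characteristic $2$ the statement is trivial, since $(g - 1)^2 = 0$ forces $g^2 = 1$ and hence $g = g^{-1}$. In odd characteristic, surjectivity of the norm map $K^\times \to \F_q^\times$ in both the split case ($K = \F_q \times \F_q$) and the inert case ($K = \F_{q^2}$) provides such a $\mu$.

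To build $h$ explicitly, I would extend $v$ to a hyperbolic pair $(v, v')$ with $\langle v, v'\rangle = 1$ via Witt's lemma, set $W = \{v, v'\}^\perp$, and define $h(v) = \mu v$, $h(v') = \bar\mu^{-1}v'$, with $h|_W$ any unitary transformation of $W$ with determinant $\bar\mu/\mu$. The action on the hyperbolic plane contributes $\mu\bar\mu^{-1}$ to $\det h$, and $\bar\mu/\mu$ has norm $1$; since $W$ has positive dimension and the determinant map $\U(W) \to \{\lambda \in K^\times : \lambda\bar\lambda = 1\}$ is surjective, such an $h|_W$ exists, and the resulting $h$ lies in $\SU_n(K/\F_q)$.

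The argument has no substantive obstacle; the only delicate point is the role of the hypothesis $n > 2$, which enters precisely to guarantee $\dim W = n - 2 \geq 1$, so that the determinant of the hyperbolic plane action of $h$ can be absorbed by $h|_W$. In the excluded case $n = 2$, the determinant constraint forces $\mu \in \F_q$ and reduces the condition to $\mu^2 = -1$, which indeed fails whenever $-1$ is a non-square in $\F_q$.
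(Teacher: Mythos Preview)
Your proof is correct and follows essentially the same approach as the paper's: both reduce to the action on a hyperbolic plane containing the isotropic vector $v$, scale $v$ by an element $\mu$ of norm $-1$, and use the orthogonal complement (of dimension $n-2\ge 1$) to absorb the resulting determinant $\mu/\bar\mu$. Your version is more explicit about the parametrization of unitary transvections and the $\SU_n$ determinant constraint, while the paper phrases the same reduction as ``the stabilizer of a nondegenerate rank $n-2$ submodule of $\ker(g-1)$ acts on the complement as $\GU_2$''; these are the same decomposition viewed from opposite ends.
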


\begin{proof}
  We first note that any transvection in $\GU_2(K/\F_q)$ is conjugate to
  its inverse.  Indeed, relative to the anti-Hermitian form
  $\begin{pmatrix} 0&-1\\1&0\end{pmatrix}$, a typical transvection has the
  form
  \[
  \begin{pmatrix}
    1 & \beta\\
    0 & 1
  \end{pmatrix}
  \]
  with $\beta\in \F_q$, and conjugating by an element
  \[
  \begin{pmatrix}
    \alpha & 0\\
    0 & \bar\alpha^{-1}
  \end{pmatrix}
  \]
  multiplies $\beta$ by any norm, so in particular by $-1$.

  For $n>2$, let $g$ be a transvection, and note that $\ker(g-1)$ contains
  a free submodule of rank $n-2$ on which the form is nondegenerate.  The
  stabilizer of such a submodule acts on the orthogonal complement as
  $\GU_2(K/\F_q)$, and thus there is an element $h$ of the stabilizer such
  that $h g h^{-1} g$ fixes the orthogonal complement pointwise.  Since
  both $g$ and $h g h^{-1}$ also fix the submodule pointwise, $h g
  h^{-1}=g^{-1}$ as required.
\end{proof}

In particular, if $\nu_0+\nu_1\ne 0$ but $\lambda_0\lambda_1=1$, then there
is an element $h$ of the image of the Jordan-Pochhammer representation
(assuming that image is $\SU_n$) such that $h g_1 h^{-1}$ has the same
reduction as $g_1^{-1}$.  Both elements are pseudoreflections and thus
their product is an element of the Lie algebra with image of rank at most
2; since its trace is $2(\nu_0+\nu_1)\ne 0$, it gives the desired nonscalar
Lie algebra element.  The one caveat is the requirement that the reduction
be $\SU_n$.  This fails in the totally ramified case, but otherwise works
for all $n>2$: in the two cases that fail to be large, none of the
generators can be transvections!

We thus reduce to the case that $\nu_0+\nu_i=0$ for all $i$.  This is
impossible for $n=4$ (since $4\nu_0=0$), and as in the characteristic 2
case, this produces nonscalar Lie algebra elements for all larger $n$.  For
$p>3$, we can also rule out $n=3$, while for $p=n=3$ there are a couple of
cases for which the above constructions of Lie algebra elements fails.
(Those cases can be dealt with in other ways, but we omit the details.)

\subsection{The totally ramified case}

In the totally ramified case $\lambda_0=\cdots=\lambda_{n+1}=-1$,
(where as in the partially ramified case, these are still the
reductions of the original parameters, which are negatives of
$p^l$-th roots of unity) there are two difficulties.  The first is
that the associated graded Lie algebra is more complicated, so we
need to establish that it is still generated in degree 1 and that
in degree 1 the invariant subspace generated by a nonscalar element
contains the full Lie algebra.  The second is that it is no longer
the case that a transvection is necessarily conjugate to its inverse.

We deal with the latter issue first, and show that there is still a
nonscalar element of the degree 1 piece of the Lie algebra.  We can easily
check that the representation
\[
(g_1,g_2,g_3)
\mapsto
\left(
\begin{pmatrix}
  1 & 2\\
  0 & 1
\end{pmatrix}
,
\begin{pmatrix}
  1 & 0\\
 -2 & 1
\end{pmatrix}
,
\begin{pmatrix}
 -1 & 2\\
 -2 & 3
\end{pmatrix}
\right)
\]
is a Jordan-Pochhammer representation with parameters $(-1,-1,-1,-1)$, and
thus in characteristic $p$, we have
\[
g_1^{(1-p)/2} g_2^{(1-p)/2} g_1^{(1-p)/2}
=
\begin{pmatrix}
  0 & -1\\
  1 & 0
\end{pmatrix}.
\]
It then follows that the element
\[
g_1^{(1-p)/2} g_2^{(1-p)/2} g_1^{(1-p)/2}
\]
is semistable in any Jordan-Pochhammer representation over $\F_p$ with all
parameters $-1$, as it respects a filtration for which the actions on
subquotients have relatively prime minimal polynomials $t-1$ and $t^2+1$.
But then
\[
(g_1^{(1-p)/2} g_2^{(1-p)/2} g_1^{(1-p)/2})^4
\]
is a Lie algebra element, which is nontrivial (having nonzero trace by
virtue of the element of $\GL_2(\bar{R})$ having nontrivial determinant) as
long as $\nu_1\ne \nu_2$.

We can then deal with cases with $n>4$ for which the parameters are all
equal by reducing to the rank $n-2$ case: the group for rank $n-2$ contains
nonscalar Lie algebra elements and multiplying such an element by $-1$
gives an element that still has semisimple reduction in rank $n$ with
nontrivial unipotent part.

For $n=4$, $p>3$ we may use the other relation of $\SL_2(\Z)$
to get a Lie algebra element
\[
(g_1^{(1-p)/2} g_2^{(1-p)/2})^6,
\]
which is nontrivial as long as $2\nu_0+\nu_1+\nu_2\ne 0$.  But for $n=4$,
these equations cannot be simultaneously satisfied for {\em all} pairs, and
thus we have lifting in that case as well.

For $n=4$, $p=3$, lifting in fact fails when $\nu_1=\cdots=\nu_5$ (forcing
$\nu_0=\nu_1$); indeed, this is the reduction of the finite case with
parameters $(\zeta_6,\dots,\zeta_6)$.

\medskip

It remains only to show that the graded Lie algebra is generated in degree
1 and the degree 1 component is generated as a $\Sp_{2n}(\F_p)$-module by
any nonscalar element.  More precisely, since we are including elements in
$\GU$ rather than $\SU$, we are given a nonscalar element of the
corresponding slightly larger Lie algebra (isomorphic as a representation
to $\wedge^2(\F_p^{2n})$, with scalars corresponding to multiples of
$J^{-1}$ where $J$ is the symplectic form), and need to know that the
$\Sp_{2n}(\F_p)$-module it generates contains the true Lie algebra (the
alternating forms with $\Tr(AJ)=0$).  For any form $A$ not proportional to
$J^{-1}$, let $v$ be a vector which is not an eigenvector for $AJ$.  Then
the corresponding transvection adds a form of rank 2 to $A$, and thus we
may reduce to the case that the given nonscalar element has the form
$v_1\wedge v_2$, with $v_1$ and $v_2$ orthogonal with respect to the
symplectic pairing.  But then we can act by $\Sp_{2n}(\F_p)$ to replace
$v_2$ by anything else orthogonal to $v_1$ and then $v_1$ by anything else
orthogonal to the new $v_2$, and thus see that {\em every} such form is
contained in the $\Sp_{2n}(\F_p)$-module, proving the desired result.

Finally, to see that the graded Lie algebra is generated in degree 1, we
note that the graded Lie algebra is periodic with period 2: the degree $k$
space is $\frakg_{k\bmod 2}$ with $\frakg_1$ the space of traceless
matrices $A$ with $AJ$ alternating and $\frakg_0=\sp$ the space of
matrices $A$ with $AJ$ symmetric.  Thus the claim reduces to showing that
$\sp_{2n}=[\frakg_1,\frakg_1]$ and $\frakg_1=[\sp_{2n},\frakg_1]$.  The latter follows by noting that the group is generated by
transvections, and for any transvection, the action $x\mapsto (g-1)x$ on
$\sp_{2n}$ is the same as the action of the Lie algebra element $\lim_{t\to
  0} (g^t-1)/t$.  For the former, since we are in odd characteristic,
$\sp_{2n}$ is an irreducible $\Sp_{2n}$-module, and thus it suffices to
show that $[\frakg_1,\frakg_1]$ is nonzero.  But $\frakg_1$ has
dimension $2n(2n-1)/2-1$, while any abelian subalgebra of $\sl_{2n}$ has
dimension at most $2n-1$, so this is automatic (at least for $2n>2$).
(Note that this fails for $2n=2$ for the simple reason that in that case
the Lie algebra is actually only supported in even degrees, as the quotient
of the group by its center is defined over the real subfield!)

\medskip

Putting everything together, we in particular conclude that lifting holds
for any case with $n>4$, and thus have finished the proof that every
cyclotomic Jordan-Pochhammer representation for $n>4$ is large.

For $n=4$, the only remaining issue is that the isomorphism $\PSp_4(\F_3)\cong
\SU_4(\F_2)$ causes issues with pairwise surjectivity, but this
only arises when $N=6$, $n=4$, and the parameters are all $\pm
\zeta_6^{\pm 1}$.  This is either the usual finite case or contains
a transvection, and the latter has different orders mod $2$ and
$3$, so that we still have pairwise surjectivity.  For $n=3$, the
fact that $\SU_3(\F_2)$ is not perfect causes mild issues, but it
can only arise once and thus again only the finite case fails to
be large.  For $n=2$, the situation is significantly more complicated
(more finite cases, an additional isomorphism $\PSL_2(\F_4)\cong
\PSL_2(\F_5)$, the possibility for the image to be defined over a
subfield), but is likely still tractable.  (In particular, the fact
that the group is generated over its center by two pseudo-reflections
is likely to be significantly useful!)

\section{Pryms of singular curves}

There are two natural ways in which we would like to generalize the result
on largeness of the Jordan-Pochhammer group: we would like to allow more
general base curves, and would like to extend the result to the analogous
monodromy group in characteristic $p$, including the wild case $p|N$.  In
both cases, a key ingredient is the fact that the Prym (and thus the
$L$-torsion-of-Prym sheaf) can have good reduction even when the curves do
not, and in such a case is closely related to a Prym of a
desingularization.  This lets us embed certain Jordan-Pochhammer families
in families over higher genus curves and gives significantly more freedom
in lifting to characteristic 0.  We will thus temporarily digress to
consider how to understand Pryms of (reduced) singular curves.

To start with, let $C$ be a reduced proper curve over a field $k$.  Such a
curve still has a Picard scheme $\Pic(C)$ and thus one may still consider
the identity component $\Pic^0(C)$, classifying line bundles with degree 0
along each component of $C$.  (See the discussion in
\cite[Chap. 9]{BoschS/LutkebohmertW/RaynaudM:1990}.)  If the \'etale cyclic
group $Z$ of order $N$ acts on $C$, we may thus define $\Prym(C,Z)$ to
be the tensor product with the representation $\Z[\zeta_N]$ of $Z$.
Although this behaves badly in families (not even the dimension is constant
when reducing to bad characteristic), it still has useful content.

By a result of Chevalley and Rosenlicht (see \cite[Thm. 1,
  \S9.2]{BoschS/LutkebohmertW/RaynaudM:1990}), any smooth algebraic group
scheme $G$ over a perfect field has a natural filtration $U\subset L\subset
G$ where $U$ is unipotent, $L$ is linear, $L/U$ is reductive, and $G/L$ is
an abelian variety.  (When the base field is not perfect, this filtration
of course exists over some finite inseparable extension!)  When $G$ is
commutative, we thus obtain an induced abelian variety $G/L$ and an induced
{\em semi}abelian variety $G/U$.  In the case of $\Pic^0$, both quotients
are themselves Jacobians of curves.

\begin{lem}\cite[\S9.2]{BoschS/LutkebohmertW/RaynaudM:1990}
  Let $C$ be a reduced proper curve over a perfect field $k$, let
  $\tilde{C}$ be its normalization, and let $C'$ be obtained from
  $\tilde{C}$ by identifying those distinct points lying over the same
  points of $C$ (the ``seminormalization'' of $C$).  Then
  $\Pic^0(\tilde{C})$ is abelian, $\Pic^0(C')$ is semiabelian, and there
  are exact sequences
  \begin{align}
  0\to L\to &\Pic^0(C)\to \Pic^0(\tilde{C})\to 0\\
  0\to U\to &\Pic^0(C)\to \Pic^0(C')\to 0\\
  0\to T\to &\Pic^0(C')\to \Pic^0(C)\to 0
  \end{align}
  where $L$ is a smooth connected linear algebraic group, $U$ is a smooth
  connected unipotent group, and $T$ is a torus.
\end{lem}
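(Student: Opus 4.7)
The plan is to extract the three exact sequences from cohomology on $C$, with the kernels identified as smooth algebraic groups by a local analysis at the singularities.

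Let $\pi'' \colon \tilde{C} \to C'$ and $\pi' \colon C' \to C$ denote the normalization and seminormalization morphisms. I would begin with two short exact sequences of sheaves on $C$:
\begin{equation}
1 \to \mathcal{O}_C^\times \to \pi'_* \mathcal{O}_{C'}^\times \to \mathcal{Q}_U \to 1, \qquad 1 \to \pi'_* \mathcal{O}_{C'}^\times \to (\pi' \pi'')_* \mathcal{O}_{\tilde{C}}^\times \to \mathcal{Q}_T \to 1,
\end{equation}
where $\mathcal{Q}_U$ and $\mathcal{Q}_T$ are skyscraper sheaves supported on the singular loci of $C$ and $C'$ respectively. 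Taking cohomology, using that $R^1 \pi'_* \mathcal{O}_{C'}^\times = 0 = R^1 (\pi' \pi'')_* \mathcal{O}_{\tilde{C}}^\times$ for finite morphisms and that skyscraper sheaves have trivial $H^1$, yields exact sequences relating $\Pic(C)$, $\Pic(C')$ and $\Pic(\tilde{C})$, with kernels given by the global sections of $\mathcal{Q}_U$ and $\mathcal{Q}_T$.

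I would then upgrade these to exact sequences of group schemes: the sheaves $\mathcal{Q}_U, \mathcal{Q}_T$ are represented by finite-type affine commutative group schemes whose global sections agree with the kernels in question. At a singular point $x \in C$ with preimages $\{p_i\}$ on $\tilde{C}$, the local contribution to $\mathcal{Q}_T$ is $(\prod_i k(p_i)^\times)/k(x)^\times$, a torus, by the explicit construction of the seminormalization, which glues the $p_i$ into a single point with common residue field. The local contribution to $\mathcal{Q}_U$ is obtained by filtering $\mathcal{O}_{C',x}^\times / \mathcal{O}_{C,x}^\times$ by powers of the conductor ideal; each successive quotient is a finite-dimensional additive group over $k$, so the total contribution is smooth, connected, and unipotent. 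Perfectness of $k$ is needed here to guarantee smoothness and to descend the local structure from an algebraic closure.

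The main obstacle is verifying that the resulting kernels on $\Pic$ descend to the stated exact sequences on $\Pic^0$: one must check that $U$ and $T$ land in the identity components, which follows because line bundles pulling back to the trivial bundle on $C'$ or on $\tilde{C}$ necessarily have degree zero on every irreducible component (the map on components of $\Pic$ is induced by pulling back degree functions, and a connected affine group cannot map nontrivially to the discrete component group). Taking identity components gives the short exact sequences $0 \to U \to \Pic^0(C) \to \Pic^0(C') \to 0$ and $0 \to T \to \Pic^0(C') \to \Pic^0(\tilde{C}) \to 0$. Splicing them produces an extension $0 \to L \to \Pic^0(C) \to \Pic^0(\tilde{C}) \to 0$ with $L$ an extension of $T$ by $U$, hence smooth, connected, and affine (i.e.\ linear), completing the proof.
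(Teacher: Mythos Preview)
The paper does not supply its own proof of this lemma: it is simply quoted with a citation to \cite[\S9.2]{BoschS/LutkebohmertW/RaynaudM:1990}, where the argument is carried out essentially as you outline (sheaf exact sequences for $\mathcal{O}^\times$ under $\pi'$ and $\pi'\pi''$, vanishing of $R^1$ for finite maps, local identification of the skyscraper quotients as unipotent and toric pieces, then passage to identity components). So your approach is not merely correct but is the standard one the paper is invoking.

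Two small points you glossed over that are worth making explicit. First, the kernel of $\Pic(C')\to\Pic(\tilde{C})$ is not literally $H^0(\mathcal{Q}_T)$ but its quotient by the cokernel of $H^0(\mathcal{O}_{C'}^\times)\to H^0(\mathcal{O}_{\tilde{C}}^\times)$; when $\tilde{C}$ has more connected components than $C'$ this cokernel is a nontrivial torus, and one must check the quotient is still a torus (it is). Second, surjectivity of $\Pic^0(C)\to\Pic^0(C')$ and $\Pic^0(C')\to\Pic^0(\tilde{C})$ is not automatic from ``taking identity components''; it follows because the component groups $\Pic/\Pic^0$ on both sides are the free abelian groups on irreducible components, and neither seminormalization nor normalization changes the set of irreducible components, so the induced maps on $\pi_0(\Pic)$ are isomorphisms. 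With those two remarks your sketch is complete.
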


Note that although $C'$ is only defined over the perfect closure, the group
$\Pic^0(C'_{k^{\text{perf}}})$ is naturally defined over $k$.  Indeed, the
kernel $U$, being unipotent, is annihilated by some fixed power of $p$,
while multiplication by $p$ on $\Pic^0(C'_{k^{\text{perf}}})$ is
surjective.  Thus for sufficiently large $L$,
$\Pic^0(C'_{k^{\text{perf}}})\cong p^l\Pic^0(C_k)$, and the latter is
defined over $k$.  Since $\Pic^0(C'_{k^{\text{perf}}})$ is semiabelian, its
torus subgroup and thus its abelian quotient
$\Pic^0(\tilde{C}_{k^{\text{perf}}})$ are also defined over $k$.

\begin{cor}
  Let $C$ be a reduced proper curve over a perfect field $k$ with an action
  of the \'etale cyclic group $Z$, which thus acts on the normalization
  $\tilde{C}$ and the seminormalization $C'$.  Then $\Prym(\tilde{C},Z)$ is
  abelian, $\Prym(C',Z)$ is semiabelian, and there are exact sequences
  \begin{align}
  0\to L\to &\Prym(C,Z)\to \Prym(\tilde{C},Z)\to 0\\
  0\to U\to &\Prym(C,Z)\to \Prym(C',Z)\to 0\\
  0\to T\to &\Prym(C',Z)\to \Prym(\tilde{C},Z)\to 0
  \end{align}
  where $L$ is a smooth connected linear algebraic group, $U$ is a smooth
  connected unipotent group, and $T$ is a torus.
\end{cor}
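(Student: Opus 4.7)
The plan is to apply the preceding Lemma to the triple $C$, $\tilde{C}$, $C'$ (each equipped with its induced $Z$-action) and then push each of the three short exact sequences of $\Pic^0$'s through the Prym functor $G \mapsto G \otimes_{\Z[Z]} \Z[\zeta_N]$. First I would observe that normalization and seminormalization are functorial constructions, so the $Z$-action on $C$ lifts uniquely to $Z$-actions on $\tilde{C}$ and $C'$, and the canonical morphisms $\tilde{C} \to C' \to C$ are $Z$-equivariant. Consequently, the three short exact sequences supplied by the Lemma are sequences of smooth commutative group schemes with $Z$-action.

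Second, since the Chevalley--Rosenlicht filtration $U \subset L \subset G$ is intrinsic to $G$, it is preserved by every automorphism, in particular by the $Z$-action. Hence the subgroups $L$, $U$, $T$ appearing in the Lemma are $Z$-stable, and the Prym functor is defined term by term.

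To push exactness through, I would work after inverting $N$. There $\Z[Z][1/N] \cong \prod_{d \mid N} \Z[\zeta_d][1/N]$ splits as a product of rings, and $\Z[\zeta_N][1/N]$ is a direct summand; tensoring with a direct summand is exact. Combined with the $N$-divisibility of smooth connected commutative group schemes, this turns the three $Z$-equivariant exact sequences into the three stated exact sequences of Pryms (up to isogeny, which is all that is needed for the structural conclusions). The claims that $\Prym(\tilde{C}, Z)$ is abelian, that $\Prym(C', Z)$ is semiabelian, and that the resulting $L$, $U$, $T$ remain connected linear, connected unipotent, and toric, respectively, then follow at once: each of these classes is closed under passage to connected subgroups and quotients, hence under the Prym functor.

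The only mild obstacle is giving an integrally clean meaning to the Prym of a group scheme, since $\Z[\zeta_N]$ is not flat over $\Z[Z]$ at primes dividing $N$. This is not a genuine difficulty for the corollary as stated: every assertion is insensitive to isogeny, and after inverting $N$ the Prym functor reduces to a direct-summand projection, so the argument goes through without further work.
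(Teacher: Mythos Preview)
Your proposal shares the essential insight with the paper—that the Prym functor is a quotient and the classes ``abelian variety'', ``semiabelian'', ``linear'', ``unipotent'', ``torus'' are closed under quotients—but the route you take through inverting $N$ is both unnecessary and leaves a small gap.

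The paper's argument is simply: the three short exact sequences of Jacobians are $Z$-equivariant, so tensoring with $\Z[\zeta_N]$ over $\Z[Z]$ gives long exact sequences of $\Tor$. Right-exactness already gives surjectivity of $\Prym(C,Z)\to\Prym(\tilde C,Z)$, etc., and the long exact sequence identifies each kernel as the \emph{image} of the corresponding $L_{\text{Jac}}\otimes\Z[\zeta_N]$ (resp.\ $U_{\text{Jac}}\otimes$, $T_{\text{Jac}}\otimes$). Since that tensor product is itself a quotient of the smooth connected linear (resp.\ unipotent, toric) group from the Lemma, its image is again smooth, connected, and of the same type. No inversion of $N$, no isogeny fudge.

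Your detour through $\Z[Z][1/N]$ gives the sequences only ``up to isogeny'', and your claim that ``every assertion is insensitive to isogeny'' is not quite right: the Corollary asserts that $L$, $U$, $T$ are \emph{connected}, and passing to an isogeny class does not by itself rule out the actual kernel picking up extra $N$-torsion components. The fix is exactly the observation the paper makes (and which you almost make in your last full paragraph): the kernel is literally the image of $L_{\text{Jac}}\otimes\Z[\zeta_N]$, hence a quotient of $L_{\text{Jac}}$, hence connected. Once you say that, the inverting-$N$ machinery is doing no work and can be dropped entirely.
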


\begin{proof}
  The exact sequences associated to the Jacobian are equivariant, and thus
  there are long exact sequences of tensor products.  Thus each group is an
  image of a group of the desired type (linear, abelian, etc.) so is itself
  a group of that type.
\end{proof}

\begin{rem}
  In fact, the actual kernels of the maps to $\Prym(\tilde{C},Z)$ are
  isogenous to the groups $L\otimes_{\Z[Z]} \Z[\zeta_N]$ and
  $T\otimes_{\Z[Z]}\Z[\zeta_N]$ coming from the algebraic parts of the
  Jacobian, as the next term in the long exact sequence is a subgroup of
  the abelian variety $\Prym(\tilde{C},Z)$, so is proper, and thus has
  finite image in any linear algebraic group.
\end{rem}

\begin{prop}
  Let $C$ be a seminormal curve over a field $k$.  The dimension of the torus
  factor of $\Prym(C,Z)$ is $\varphi(N)(b-n-c+i)$, where $n$ is the
  number of free orbits of $Z$ on the (geometric) branch points of $C$, $b$
  is the number of free orbits of $Z$ on their preimages in $\tilde{C}$,
  $i$ is the number of free orbits on the geometrically connected
  components of $C$, and $i$ is the number of free orbits on the
  geometrically irreducible components of $C$.  In particular, if
  $b-n-c+i=0$, then $\Prym(C,Z)$ is an abelian variety.
\end{prop}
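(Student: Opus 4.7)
My plan is to reduce the claim to an Euler-characteristic computation on the dual graph of $C$, tracking the $Z$-action so that only free orbits survive after tensoring with $\Z[\zeta_N]$. Since $C$ is seminormal, $C=C'$, and the preceding Corollary yields a $Z$-equivariant exact sequence
\[
0 \to T \to \Pic^0(C) \to \Pic^0(\tilde C)\to 0
\]
with $T$ a torus. Because $\Prym(\tilde C,Z)$ is already an abelian variety, the torus factor of $\Prym(C,Z)$ is, up to isogeny, the image of $T\otimes_{\Z[Z]}\Z[\zeta_N]$, so its dimension equals the $\Z$-rank of $X_*(T)\otimes_{\Z[Z]}\Z[\zeta_N]$.

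Next I would identify $X_*(T)$ with $H_1(\Gamma_C,\Z)$, where $\Gamma_C$ is the bipartite dual graph of $C$ having one vertex for each irreducible component of $\tilde C$ and one vertex for each branch point of $C$, joined by one edge for each preimage of a branch point in $\tilde C$ (connecting that branch point to the component containing the preimage). This identification comes from taking $\Pic$-cohomology of the normalization sequence $1\to\mathcal O_C^\times\to \nu_*\mathcal O_{\tilde C}^\times\to\mathcal Q\to 1$, and it is manifestly $Z$-equivariant. The cellular chain complex of $\Gamma_C$ is then a short complex of permutation $\Z[Z]$-modules computing $H_0(\Gamma_C,\Z)=\Z^{\pi_0(C)}$ and $H_1(\Gamma_C,\Z)=X_*(T)$.

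To extract the torus factor of the Prym I would pass to the $\Q(\zeta_N)$-isotypic component, which is exact since $\Q[Z]\cong\prod_{d\mid N}\Q(\zeta_d)$ is semisimple. The basic lemma is that for any $Z$-set $S$, the $\Q(\zeta_N)$-dimension of $\Z[S]\otimes_{\Z[Z]}\Q(\zeta_N)$ equals the number of free $Z$-orbits in $S$. Applying this to the two-term chain complex of $\Gamma_C$ and taking Euler characteristics,
\[
\dim_{\Q(\zeta_N)} H_1(\Gamma_C)_{\zeta_N} - \dim_{\Q(\zeta_N)} H_0(\Gamma_C)_{\zeta_N}
= \#(\text{free edge orbits}) - \#(\text{free vertex orbits}),
\]
where the vertex and edge orbit counts are respectively $n$ (branch-point orbits) plus the orbit count on components of $\tilde C$, and the orbit count on preimages $b$. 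Since $H_0(\Gamma_C)=\Z^{\pi_0(C)}$ contributes the free-orbit count on connected components of $C$, solving gives $\dim_{\Q(\zeta_N)}H_1(\Gamma_C)_{\zeta_N}$ as the claimed combination of $b$, $n$ and the orbit counts on components, and multiplying by $\varphi(N)=[\Q(\zeta_N):\Q]$ yields the torus dimension. The ``in particular'' is then immediate: when this dimension vanishes, $\Prym(C,Z)$ agrees up to isogeny with the abelian variety $\Prym(\tilde C,Z)$, and a group scheme isogenous to an abelian variety is itself an abelian variety.

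The main obstacle is that $-\otimes_{\Z[Z]}\Z[\zeta_N]$ is not exact (it can introduce torsion from non-free orbits), so the Euler-characteristic step must really be carried out after passing to $\Q(\zeta_N)$, using semisimplicity of $\Q[Z]$ and the fact that the $\Q(\zeta_N)$-isotypic piece of a permutation module on $S$ only sees free orbits. A secondary and entirely routine point is verifying that orbit counts on the vertices and edges of $\Gamma_C$ (branch points, components of $\tilde C$, preimages of branch points) match the proposition's geometric counts on $C$ and $\tilde C$; this is immediate from the construction of $\Gamma_C$, since the $Z$-action on edges is the $Z$-action on preimages, and similarly for vertices.
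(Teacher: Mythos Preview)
Your proposal is correct and follows essentially the same approach as the paper. Both arguments resolve the torus $T$ by permutation tori indexed by branch points, their preimages, and irreducible and connected components, then compute the dimension of $T\otimes_{\Z[Z]}\Z[\zeta_N]$ via an Euler-characteristic argument; you phrase this as cellular homology of the bipartite dual graph and pass to $\Q(\zeta_N)$ using semisimplicity of $\Q[Z]$, while the paper writes the same four-term exact sequence of tori and observes that the failure of exactness after tensoring is $\Tor_1(-,\Z[\zeta_N])$, hence $N$-torsion, so ranks are unaffected---the same point you flag as the ``main obstacle.''
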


\begin{proof}
  The torus factor $T$ of the Jacobian has a natural resolution of the form
  \[
  1\to T_i\to T_c\times T_n\to T_b\to T\to 1
  \]
  where $T_i$ is the torus associated to the \'etale algebra of
  components of $\tilde{C}$, etc.  The cohomology group schemes of the
  tensor product of this complex with $\Z[\zeta_N]$ agree (up to a shift)
  with the cohomology group schemes of the image under
  $\Tor_1({-},\Z[\zeta_N])$, and are thus annihilated by $N$.  It follows,
  therefore, that the alternating sum of ranks of the tensor product is 0,
  giving the desired result.
\end{proof}

We next want to understand how the Prym behaves when the curve degenerates.
In particular, given a curve over a dvr with smooth generic fiber and
reduced special fiber, how does the Prym of the special fiber relate to the
special fiber of the N\'eron model of the Prym?

Here we note that the Prym construction makes sense for an arbitrary
commutative group scheme (or group algebraic space): if $A$ is a
commutative group scheme with an action of the \'etale cyclic group $Z$ of
order $N$, then $\Prym(A,Z)$ is the tensor product with $\Z[\zeta_N]$.  So
we can generalize this to analogous questions for abelian varieties with
actions of \'etale cyclic groups: how does the Prym of the special fiber of
the N\'eron model relate to the N\'eron model of the Prym?  And how does
the Prym of the special fiber of the N\'eron model relate to the Prym of
the its identity component?

For the effect of taking the identity component, we have the following.

\begin{prop}
  Let $A$ be a commutative group scheme with an action of the \'etale
  cyclic group $Z$.  Then the natural map $\Prym(A^0,Z)\to \Prym(A,Z)^0$
  has \'etale kernel of degree dividing the order of the component group of
  $\Prym(A,Z)$.
\end{prop}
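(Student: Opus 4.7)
The plan is to exploit the length-$1$ free $\Z[Z]$-module resolution
\[
0 \to \Z[Z] \xrightarrow{f} \Z[Z] \to \Z[\zeta_N] \to 0,
\]
where $f = (z^N-1)/\Phi_N(z)$. For any commutative group scheme (or, more intrinsically, fppf sheaf of abelian groups) $B$ with $Z$-action, tensoring presents $\Prym(B,Z)$ as $\coker(f\colon B \to B)$, with $f$ acting through the $Z$-action, and identifies $\Tor_1^{\Z[Z]}(B,\Z[\zeta_N])$ with $\ker(f\colon B\to B) =: B[f]$. This makes $\Prym(-,Z)$ right-exact with a standard six-term derived exact sequence, which is the only machinery I really need.

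I would first apply the snake lemma to two copies of $0 \to A^0 \to A \to \pi_0(A) \to 0$ with vertical arrows $f$, obtaining
\[
A[f] \to \pi_0(A)[f] \xrightarrow{\delta} \Prym(A^0, Z) \to \Prym(A, Z) \to \Prym(\pi_0(A), Z) \to 0.
\]
The kernel $K$ of $\Prym(A^0,Z)\to\Prym(A,Z)$ is then $\im(\delta)$, hence a quotient of the finite étale group scheme $\pi_0(A)[f]$; in particular $K$ is finite étale with $|K|$ dividing $|\pi_0(A)[f]|$.

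Next, I would identify the image with $\Prym(A,Z)^0$. Since $A^0$ is connected, its fppf quotient $\Prym(A^0, Z) = A^0/f(A^0)$ is also connected, so the image sits inside $\Prym(A,Z)^0$; conversely the cokernel $\Prym(\pi_0(A),Z)$ is étale (since $\pi_0(A)$ is étale and $f$ is an endomorphism of it), so the image must contain $\Prym(A,Z)^0$. Hence $\pi_0(\Prym(A,Z)) \cong \Prym(\pi_0(A),Z)$. Because $\pi_0(A)$ is finite abelian, its endomorphism $f$ satisfies $|\pi_0(A)[f]| = |\pi_0(A)/f\pi_0(A)|$, and this common value equals $|\Prym(\pi_0(A),Z)| = |\pi_0(\Prym(A,Z))|$, giving the divisibility claim.

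The only real obstacle is the foundational one: justifying the snake lemma in the category of fppf sheaves of abelian groups and checking that $\Prym(B,Z)$ is genuinely computed by this sheaf cokernel construction when $B$ is not smooth (e.g., if $A$ has non-reduced component group or $A^0$ has a non-smooth subgroup $f(A^0)$). Everything after that reduction is essentially formal diagram chasing.
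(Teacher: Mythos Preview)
Your approach is essentially the paper's: both apply the Snake Lemma to the endomorphism defining $\Prym$ on $0\to A^0\to A\to \pi_0(A)\to 0$, identify the image of $\Prym(A^0,Z)\to\Prym(A,Z)$ with $\Prym(A,Z)^0$ via the connected/\'etale dichotomy, and use $|\ker(e)|=|\coker(e)|$ for an endomorphism $e$ of a finite abelian group to get the divisibility.

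There is, however, a slip in your setup. The sequence $0\to\Z[Z]\xrightarrow{f}\Z[Z]\to\Z[\zeta_N]\to 0$ with $f=(z^N-1)/\Phi_N(z)$ is \emph{not} exact: multiplication by $f$ on $\Z[Z]$ has kernel $\Phi_N\cdot\Z[Z]\neq 0$, and its cokernel is $\Z[z]/(f)$, not $\Z[\zeta_N]=\Z[z]/(\Phi_N)$. In fact $\Z[\zeta_N]$ admits no length-$1$ free $\Z[Z]$-resolution; its minimal free resolution is the $2$-periodic complex
\[
\cdots\xrightarrow{\,f\,}\Z[Z]\xrightarrow{\Phi_N}\Z[Z]\xrightarrow{\,f\,}\Z[Z]\xrightarrow{\Phi_N}\Z[Z]\to\Z[\zeta_N]\to 0.
\]
(Consequently $\Tor_1^{\Z[Z]}(B,\Z[\zeta_N])$ is $B[\Phi_N]/fB$, not simply a kernel.) But none of this is actually needed: the only input you use is the presentation $\Prym(B,Z)=\coker(\Phi_N\colon B\to B)$, which follows from $\Z[\zeta_N]=\Z[Z]/\Phi_N\Z[Z]$ and right-exactness of tensor. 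With $\Phi_N$ in place of your $f$, and $B[\Phi_N]$ in place of $B[f]$, your snake-lemma argument goes through verbatim and coincides with the paper's proof.
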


\begin{proof}
  We have a short exact sequence
  \[
  0\to A^0\to A\to A/A^0\to 0
  \]
  which by the Snake Lemma gives an exact sequence of the form
  \[
  K\to \Prym(A^0,Z)\to \Prym(A,Z)\to \Prym(A/A^0,Z)\to 0
  \]
  where $K$ is the kernel of the endomorphism of $A/A^0$ of which
  $\Prym(A/A^0,Z)$ is the cokernel.  Since $A/A^0$ is \'etale, $K$ and
  $\Prym(A/A^0,Z)$ are \'etale group schemes of the same order, so that the
  kernel of $\Prym(A^0,Z)\to \Prym(A,Z)$ is \'etale.  Since $\Prym(A^0,Z)$
  is connected and $\Prym(A/A^0,Z)$ is \'etale, the image of
  $\Prym(A^0,Z)\to \Prym(A,Z)$ is $\Prym(A,Z)^0$ and thus $\Prym(A/A^0,Z)$
  is the component group of $\Prym(A,Z)$.
\end{proof}

For the effect of taking special fibers of N\'eron models, we can simplify
using the following fact.

\begin{lem}
  If $A$ is a commutative group scheme with an action of the \'etale cyclic
  group $Z$ and $Z'\subset Z$ is the maximal subgroup of square-free order,
  then $\Prym(A,Z)\cong \Prym(A,Z')$.
\end{lem}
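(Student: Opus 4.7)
The plan is to reduce the claim to the polynomial identity $\Phi_N(x) = \Phi_{N'}(x^{N/N'})$ in $\Z[x]$, where $N = |Z|$ and $N' = |Z'| = \rad(N)$. Fixing a generator $\sigma$ of $Z$, the element $\sigma' := \sigma^{N/N'}$ generates $Z'$, and unwinding the definition gives $\Prym(A, Z) = A/\Phi_N(\sigma)A$ with the $\Z[\zeta_N]$-action sending $\zeta_N$ to $\sigma$, and $\Prym(A, Z') = A/\Phi_{N'}(\sigma')A$ with $\Z[\zeta_{N'}]$ acting via $\zeta_{N'} \mapsto \sigma'$. Once the polynomial identity is established, substituting $\sigma$ yields $\Phi_N(\sigma) = \Phi_{N'}(\sigma^{N/N'}) = \Phi_{N'}(\sigma')$ as endomorphisms of $A$, so the two cokernels coincide; and under the natural embedding $\Z[\zeta_{N'}] \hookrightarrow \Z[\zeta_N]$ sending $\zeta_{N'}$ to $\zeta_N^{N/N'}$, the restriction of the $\Z[\zeta_N]$-structure on $\Prym(A,Z)$ matches the intrinsic $\Z[\zeta_{N'}]$-structure on $\Prym(A,Z')$, producing the desired isomorphism.

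The polynomial identity is proved by induction on the number of excess prime power factors of $N$, with the base case $N = N'$ being trivial. The inductive step uses the classical identity $\Phi_{pn}(x) = \Phi_n(x^p)$, valid whenever $p \mid n$: choose a prime $p$ with $p^2 \mid N$, set $M := N/p$ so that $p \mid M$ and $\rad(M) = N'$, giving $\Phi_N(x) = \Phi_M(x^p)$; then apply the inductive hypothesis $\Phi_M(x) = \Phi_{N'}(x^{M/N'})$ with $x$ replaced by $x^p$ to conclude $\Phi_N(x) = \Phi_{N'}(x^{pM/N'}) = \Phi_{N'}(x^{N/N'})$.

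There is no real obstacle here; the only points requiring any care are the bookkeeping with multiplicities in the cyclotomic identity (where one must check that $p^2 \mid N$ is exactly what allows the ``clean'' form of $\Phi_{pn}(x) = \Phi_n(x^p)$ to apply at each step) and the verification that the $\Z[\zeta_{N'}]$-structures on the two sides genuinely coincide, which, once the underlying endomorphisms of $A$ are identified, is tautological via $\zeta_{N'} = \zeta_N^{N/N'}$. The argument is insensitive to whether one interprets $\Prym$ literally as the cokernel $A/\Phi_N(\sigma)A$ or (up to isogeny) as the identity component of $\ker \Phi_N(\sigma)$, since both depend only on the endomorphism $\Phi_N(\sigma)$.
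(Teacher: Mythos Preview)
Your argument is correct and is essentially the same as the paper's, just unpacked one level further. The paper reduces to the $\Z[Z']$-module identity $\Z[\zeta_N]\otimes_{\Z[Z]}\Z[Z']\cong\Z[\zeta_{N'}]$ (equivalently, $\Z[Z]\otimes_{\Z[Z']}\Z[\zeta_{N'}]\cong\Z[\zeta_N]$ as $\Z[Z]$-modules), which after choosing a generator is precisely your cyclotomic identity $\Phi_N(x)=\Phi_{N'}(x^{N/N'})$; you then actually prove that identity by the standard induction, whereas the paper leaves it implicit. The only point worth flagging is that $Z$ is an \'etale cyclic group scheme, so a generator $\sigma$ need not exist over the base; the paper handles this by passing \'etale locally, and you should say the same before ``fixing a generator $\sigma$''.
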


\begin{proof}
  This reduces to the fact that
  \[
  \Z[\zeta_N]\otimes_Z \Z[Z']\cong \Z[\zeta_{N'}]
  \]
  as $Z'$-modules, where $N'$ is the square-free part of $N$.  This may in
  turn be verified \'etale locally, so reduces to the case $Z\cong \Z/N\Z$.
\end{proof}
  
\begin{lem}
  If $Z_1$, $Z_2$ are \'etale cyclic group schemes of relatively prime
  order and $Z_1\times Z_2$ acts on the commutative group scheme $A$, then
  $\Prym(A,Z_1\times Z_2)\cong \Prym(\Prym(A,Z_1),Z_2)$.
\end{lem}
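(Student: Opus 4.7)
The plan is to reduce the statement to a ring-theoretic identity via the Chinese Remainder Theorem, mirroring the structure of the preceding lemma. Since the Prym construction is étale-local on the base, I first pass to an étale cover over which both $Z_1$ and $Z_2$ become constant, so that I may assume $Z_i \cong \underline{\Z/N_i\Z}$ with chosen generators $z_i$ and $\gcd(N_1,N_2)=1$; faithfully flat descent then recovers the global statement.

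The essential ring identification is
\[
\Z[Z_1\times Z_2]/\bigl(\Phi_{N_1}(z_1),\Phi_{N_2}(z_2)\bigr)
\;\cong\;
\Z[\zeta_{N_1}]\otimes_\Z \Z[\zeta_{N_2}]
\;\cong\;
\Z[\zeta_{N_1N_2}],
\]
where the first isomorphism comes from two successive quotients and the second sends $\zeta_{N_1}\otimes 1$ and $1\otimes \zeta_{N_2}$ to the corresponding roots of unity inside $\Z[\zeta_{N_1N_2}]$. Surjectivity of the second map uses Bezout to write $\zeta_{N_1N_2}=\zeta_{N_1}^a \zeta_{N_2}^b$ for integers with $aN_2+bN_1=1$, and injectivity follows by rank count since both sides are free $\Z$-modules of rank $\varphi(N_1)\varphi(N_2)=\varphi(N_1N_2)$. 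Moreover, the identification is $Z_1\times Z_2$-equivariant, where on the tensor product $z_1$ acts on the first factor and $z_2$ on the second, matching the action on $\Z[\zeta_{N_1N_2}]$ induced by $\mu_{N_1N_2}\cong \mu_{N_1}\times \mu_{N_2}$.

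Given these identifications, both sides of the claimed isomorphism compute to the same quotient group scheme $A/\bigl(\Phi_{N_1}(z_1)A+\Phi_{N_2}(z_2)A\bigr)$: the iterated Prym $\Prym(\Prym(A,Z_1),Z_2)$ is this quotient by definition, using the presentations $\Z[\zeta_{N_i}]=\Z[Z_i]/\Phi_{N_i}(z_i)$ applied twice, while $\Prym(A,Z_1\times Z_2)$ is this same quotient via the ring identity above applied to the presentation of $\Z[\zeta_{N_1N_2}]$ as a $\Z[Z_1\times Z_2]$-module. The only real subtlety is making sure that the associativity of tensor product $A\otimes_R M$ survives when $A$ is a commutative group scheme rather than a module; this I expect to be the main point requiring care, but it reduces to the standard observation that the cokernel of a collection of $R$-linear endomorphisms of $A$ (as a group scheme) is computed by the same diagram as in the module setting, so any two compatible presentations of $\Z[\zeta_{N_1N_2}]$ yield canonically isomorphic tensor products, completing the proof.
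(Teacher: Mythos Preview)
Your proof is correct and is exactly the argument the paper has in mind: the paper actually states this lemma without proof, immediately after the analogous lemma on passing to the square-free part, whose proof is the one-line reduction to a ring identity checked \'etale locally. Your write-up simply fills in the omitted details of that same strategy---pass to constant cyclic groups \'etale locally, use $\Z[\zeta_{N_1}]\otimes_\Z\Z[\zeta_{N_2}]\cong\Z[\zeta_{N_1N_2}]$ for coprime $N_i$, and read off that both sides are the cokernel of $(\Phi_{N_1}(z_1),\Phi_{N_2}(z_2))$ on $A$---so there is nothing to compare.
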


For abelian varieties, we may define a dual Prym by
\[
\Prym^{\vee}(A,Z):=\Prym(A^\vee,Z)^\vee.
\]
The Prym can be computed as the cokernel of $\chi_Z:=|Z|e_Z$ where $e_Z\in
\Q[Z]$ is a suitable idempotent, and the dual Prym is then the identity
component of the kernel of $\chi_Z$.  But this can be computed instead as
the image of the endomorphism $|Z|-\chi_Z$.  Since that endomorphism
annihilates the image of $\chi_Z$, it factors through the Prym, giving a
factorization
\[
A\to \Prym(A,Z)\to \Prym^\vee(A,Z)\to A
\]
of $|Z|-\chi_Z$.  The composition
\[
\Prym^\vee(A,Z)\to A\to \Prym(A,Z)\to \Prym^\vee(A,Z)
\]
is then multiplication by $|Z|$.  Since the two versions of the Prym have
the same dimension, it follows that they are isogenous, and both isogenies
have kernel annihilated by $|Z|$.

\begin{prop}
  Let $R$ be a dvr with field of fractions $K$ and residue field $k$, and
  let $f_K:A_K\to B_K$ be an isogeny of abelian varieties with
  $n$-torsion kernel.  Then the induced morphism of special fibers of
  N\'eron models has $n$-torsion kernel and cokernel.
\end{prop}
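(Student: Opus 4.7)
The plan is to construct a ``dual isogeny'' over $K$, extend it to the N\'eron models by the mapping property, and read off the conclusion on the special fibers. Since $\ker(f_K) \subset A_K[n]$, the multiplication-by-$n$ map on $A_K$ kills $\ker(f_K)$ and hence factors through $f_K$, producing an isogeny $g_K : B_K \to A_K$ with $g_K \circ f_K = [n]_{A_K}$. Post-composing with $f_K$ and using the fppf-surjectivity of $f_K$ then gives the companion identity $f_K \circ g_K = [n]_{B_K}$.

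Next I would invoke the N\'eron mapping property: since the N\'eron models $A$ and $B$ are smooth over $R$, the morphisms $f_K$ and $g_K$ extend uniquely to $R$-morphisms $f : A \to B$ and $g : B \to A$. Both $g \circ f$ and $[n]_A$ are morphisms $A \to A$ that agree on the generic fiber, so by uniqueness in the mapping property applied to the smooth $R$-scheme $A$, they agree as morphisms over $R$; symmetrically, $f \circ g = [n]_B$.

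Restricting to the special fiber gives $g_k \circ f_k = [n]_{A_k}$ and $f_k \circ g_k = [n]_{B_k}$. The first identity shows $\ker(f_k) \subset \ker([n]_{A_k}) = A_k[n]$, so the kernel is $n$-torsion. The second identity shows that $[n]_{B_k}$ factors through $f_k$, so $n$ annihilates the cokernel. There is no real obstacle here; the only thing requiring a bit of care is to interpret $\coker(f_k)$ as an fppf-sheaf quotient (equivalently, a quotient in the category of commutative group schemes over $k$), but the factorization $[n]_{B_k} = f_k \circ g_k$ works in any reasonable formalism.
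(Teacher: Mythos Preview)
Your proof is correct and follows essentially the same approach as the paper: construct the dual isogeny $g_K$ with $g_K\circ f_K=[n]$ and $f_K\circ g_K=[n]$, extend both to the N\'eron models by the universal property (uniqueness forcing the same identities over $R$), and read off the kernel and cokernel statements on the special fiber.
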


\begin{proof}
  Since $\ker(f_K)\subset A_K[n]$, there is an isogeny $g_K$ such that
  $g_K\circ f_K=[n]$ and $f_K\circ g_K=[n]$.  Since extensions of morphisms
  to N\'eron models are unique, the extensions also satisfy $g\circ f=[n]$
  and $f\circ g=[n]$, and thus so do the actions on special fibers.
  It follows that $\ker(f_k)\subset A_k[n]$ and $\coker(f_k)$ is a quotient
  of $B_k/nB_k$.
\end{proof}

\begin{prop}\cite[Prop. 7.5.3]{BoschS/LutkebohmertW/RaynaudM:1990}
  If the short exact sequence $0\to A'_K\to A_K\to A''_K\to 0$ of abelian
  varieties splits up to an isogeny with $n$-torsion kernel with $n$
  invertible on $R$, then the corresponding sequence
  \[
  0\to A'_k\to A_k\to A''_k
  \]
  of special fibers is exact and $\coker(A_k\to A''_k)$ is $n$-torsion.
\end{prop}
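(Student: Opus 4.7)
The plan is to lift the splitting data to N\'eron models, specialize, and then exploit that $n \in R^\times$ to convert $n$-torsion annihilators into honest vanishing.

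First I would unpack the hypothesis: splitting up to an $n$-isogeny means there are morphisms $r_K : A_K \to A'_K$ and $s_K : A''_K \to A_K$ satisfying $r_K f_K = [n]_{A'_K}$, $g_K s_K = [n]_{A''_K}$, and $f_K r_K + s_K g_K = [n]_{A_K}$. By the N\'eron mapping property each extends uniquely to a morphism of N\'eron models, and since both sides of each identity are morphisms agreeing on the generic fiber, the identities hold integrally, and thus on special fibers.

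For injectivity of $f_k$: the kernel of $f : \mathcal{A}' \to \mathcal{A}$ is contained in $\ker(r \circ f) = \mathcal{A}'[n]$, which is finite \'etale over $R$ because $n$ is a unit. Any closed subgroup scheme of a finite \'etale $R$-scheme is finite \'etale, so $\ker(f)$ is finite \'etale; since its generic fiber vanishes, $\ker(f) = 0$, and in particular $f_k$ is a closed immersion. The cokernel bound is similar: for any $T$-point $b$ of $A''_k$ one has $nb = g_k(s_k(b)) \in \im(g_k)$, so $\coker(g_k)$ is killed by $n$.

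The main obstacle is middle exactness. The composition $g_k \circ f_k$ vanishes because it is the specialization of $g_K \circ f_K = 0$, so $\im(f_k) \subseteq \ker(g_k)$. For the reverse inclusion, the identity $f_k r_k + s_k g_k = [n]$ gives $na = f_k(r_k(a))$ for every $a \in \ker(g_k)$, so $\ker(g_k)/\im(f_k)$ is $n$-torsion. To upgrade this from ``quotient is $n$-torsion'' to ``quotient vanishes'' I would restrict to identity components, where multiplication by $n$ is an \'etale surjection (since $n$ is invertible on $k$): an $n$-torsion quotient of $\ker(g_k)^0$ by the closed subgroup scheme $\im(f_k) \cap \ker(g_k)^0$ must then be trivial, and the contribution from the (finite \'etale) component groups is handled by applying the same $[n]$-multiplication argument and the injectivity of $f_k$ to lift the identity $na = f_k(r_k(a))$ through the division by $n$.
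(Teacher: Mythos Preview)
The paper gives no proof here; it cites \cite[Prop.~7.5.3]{BoschS/LutkebohmertW/RaynaudM:1990} directly.  Your strategy---lift the retraction and section $r_K,s_K$ to the N\'eron models via the mapping property and read off the identities $r_kf_k=[n]$, $g_ks_k=[n]$, $f_kr_k+s_kg_k=[n]$ on the special fiber---is the natural one, and the cokernel bound follows immediately as you say.

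The injectivity argument, however, has a genuine gap.  The scheme $\mathcal{A}'[n]$ is \'etale over $R$ (since $n\in R^\times$) but it is \emph{not} finite unless $A'_K$ has good reduction: when the special fiber of the N\'eron model has a toric or unipotent part, the $n$-torsion drops in rank and $\mathcal{A}'[n]$ is only quasi-finite.  More seriously, even granting finiteness, your claim that a closed subgroup scheme of a finite \'etale $R$-group scheme is itself finite \'etale is false.  Take $G=\Z/2\Z$ constant over $R$ (with $2\in R^\times$) and let $H$ be the union of the identity section with the closed point of the non-identity component; one checks directly that $H$ is a closed subgroup scheme with $H_K=0$ but $H_k\cong\Z/2\Z$.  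So from $r_kf_k=[n]$ you can conclude only $\ker(f_k)\subset A'_k[n]$, not $\ker(f_k)=0$.  The argument in Bosch--L\"utkebohmert--Raynaud avoids this by working with the schematic closure of $A'_K$ inside $\mathcal{A}$, which is flat by construction, and using the splitting-up-to-prime-to-$p$-isogeny hypothesis to show this closure is smooth; the N\'eron property then identifies it with $\mathcal{A}'$, so that $f$ is a closed immersion.  Your middle-exactness sketch is closer to correct, but the component-group step is vague and in any case leans on the injectivity of $f_k$ that has not yet been established.
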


\begin{prop}
  If the short exact sequence $0\to A'_K\to A_K\to A''_K\to 0$ of abelian
  varieties splits up to an isogeny with $n$-torsion kernel, then
  $\ker(A'_k\to A_k)$ and $\ker(A_k\to A''_k)/\im(A'_k\to A_k)$
  are unipotent and $\coker(A_k\to A''_k)$ is $n$-torsion.
\end{prop}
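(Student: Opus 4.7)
The plan is to bootstrap from the preceding lemma on isogenies (the one asserting that an isogeny with $n$-torsion kernel induces a map on special fibres with $n$-torsion kernel and cokernel) by tracking the isogeny duality carefully. The hypothesis yields an isogeny $\phi=(i,s)\colon A'_K\times A''_K\to A_K$ with $n$-torsion kernel (where $i$ is the inclusion and $s$ the section up to isogeny) together with a dual isogeny $\psi\colon A_K\to A'_K\times A''_K$ satisfying $\phi\psi=[n]_{A_K}$ and $\psi\phi=[n]_{A'_K\times A''_K}$. By the N\'eron mapping property these extend to $\bar\phi,\bar\psi$ on the N\'eron models, and the relations $\bar\phi\bar\psi=\bar\psi\bar\phi=[n]$ persist on the special fibres.

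For the cokernel assertion, the composition $A''_K\xrightarrow{s}A_K\xrightarrow{\pi}A''_K$ is an isogeny with $n$-torsion kernel, which one deduces from the identity $\pi\circ\phi\circ\psi=[n]\circ\pi$ upon writing $\pi\circ\phi=(0,\pi\circ s)$. Applying the preceding lemma to this composition, the induced map $A''_k\to A''_k$ on special fibres has $n$-torsion cokernel, and since it factors through $A_k\to A''_k$, the cokernel of the latter is killed by $n$. For the two kernel assertions, I would instead analyze the composition $A'_k\to A_k\xrightarrow{\bar\psi}A'_k\times A''_k$, whose generic fibre equals $\psi\circ i=\psi\circ\phi\circ(\text{incl})=[n]\circ(\text{incl})=(n,0)$ and hence equals $(n,0)$ on the special fibre by uniqueness of N\'eron extensions. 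This confines $\ker(A'_k\to A_k)$ to a finite subscheme of $A'_k[n]$, and a parallel chase using $\bar\phi$ in place of $\bar\psi$ confines $\ker(A_k\to A''_k)/\im(A'_k\to A_k)$ to a finite $n$-torsion group scheme as well.

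The main obstacle is upgrading ``finite $n$-torsion'' to ``unipotent'' when $n$ is not invertible on $R$; this is exactly the new content relative to the previous proposition. The key input is that both kernels have trivial generic fibre, since the $K$-sequence is exact. Any \'etale subquotient of these kernels would then lift by Hensel's lemma applied to the smooth N\'eron model to a nonzero $K$-contribution inside $\ker(A'_K\hookrightarrow A_K)$, contradicting injectivity; any multiplicative-type subquotient would similarly extend uniquely, by the rigidity of tori over $R$, to a nontrivial subtorus of the (trivial) generic-fibre kernel, again a contradiction. The only finite commutative subquotients of $n$-torsion that can survive with trivial generic fibre are therefore unipotent, yielding the two unipotence assertions and completing the proof.
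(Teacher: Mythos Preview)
Your cokernel argument and the reduction of both kernel terms to $n$-torsion via the dual isogenies $\phi,\psi$ are fine. The gap is in the last paragraph. First, in this paper ``unipotent'' is being used to mean $p$-power torsion (the proof writes ``$p$-power (and thus unipotent)''), so you only need to kill the prime-to-$p$ part; ruling out $\mu_p$ is neither required nor achieved by your rigidity sketch---a finite multiplicative subgroup of $A'_k$ has no evident extension to a subgroup of the (non-flat) kernel over $R$, and ``rigidity of tori'' says nothing here. Second, your Hensel claim as phrased would exclude \emph{all} \'etale subquotients, including $\Z/p\Z$, which is both wrong as a conclusion and unsupported: the argument that actually works is that for $\ell\ne p$ the scheme $\mathcal A'[\ell]$ is \'etale over $R$, so an $\ell$-torsion point of $\ker(A'_k\to A_k)$ lifts uniquely to $\mathcal A'[\ell](R^{\mathrm{sh}})$, its image in the \'etale separated $R$-scheme $\mathcal A[\ell]$ reduces to $0$ hence is $0$, whence the lift vanishes generically and thus everywhere. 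That salvages the first kernel. But for the middle term $\ker(A_k\to A''_k)/\im(A'_k\to A_k)$ you give no argument, and none is apparent along these lines: this homology is not the special fibre of anything flat over $R$, and showing that a given element lies in an \emph{image} is not a Hensel-type lifting problem.

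The paper avoids this entirely. One chooses the splitting obstruction as an $n$-torsion subgroup $S\subset A'_K$ and lets $S_p$ be its $p$-Sylow. Then $0\to A'_K/S_p\to A_K/S_p\to A''_K\to 0$ splits up to an isogeny with prime-to-$p$ kernel, so the \emph{previous} proposition (the one assuming $n$ invertible) gives exactness of the special-fibre sequence for these quotients. Comparing via the $p$-power isogenies $A'\to A'/S_p$ and $A\to A/S_p$---whose special-fibre maps have $p$-power kernel and cokernel by the earlier lemma on isogenies---and chasing the resulting diagram yields all three assertions at once.
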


\begin{proof}
  Let $S\subset A'_K$ be an $n$-torsion subgroup such that $A_K/S\cong
  (A'_K/S)\times A''_K$.  (Note that if $B_K\subset A_K$ is an abelian
  subvariety such that $B_K\to A_K\to A''_K$ has $n$-torsion kernel, then
  we may take $S=\ker(B_K\to A''_K)=B_K\cap A'_K$.)  Let $S_p$ be the
  $p$-Sylow subgroup scheme of $S$, where $p$ is the residue
  characteristic.  Then
  \[
  0\to A'_K/S_p\to A_K/S_p\to A''_K\to 0
  \]
  splits up to an isogeny with $n'$-torsion kernel, where $n'$ is the
  $p'$-part of $n$.  It follows from the previous Proposition that the
  complex
  \[
  (A'_K/S_p)_k\to (A_K/S_p)_k\to A''_k
  \]
  of special parts of N\'eron models has cohomology supported on the
  $A''_k$ term, with that term $n'$-torsion.  Since the morphisms
  \[
  A'_k\to (A'_K/S_p)_k\qquad A_k\to (A_K/S_p)_k
  \]
  have $p$-power (and thus unipotent) kernel and cokernel, the claim
  follows.
\end{proof}

\begin{cor}
  Let $R$ be a dvr with field of fractions $K$ and residue field $k$, and
  let $Z$ be an \'etale cyclic group.  If $A_K$ is an abelian variety such
  that $\Prym(A_K,Z)$ has good reduction, then the natural morphism
  $\Prym((A_k)^0,Z)\to \Prym(A_K,Z)_k$ is a surjection with unipotent
  kernel.
\end{cor}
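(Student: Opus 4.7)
Plan. The proof combines the three preceding Propositions with the observation that $\Prym^\vee(A_K,Z)$, being isogenous to $\Prym(A_K,Z)$, also has good reduction. The key input is the factorization
\[
\Prym^\vee(A_K,Z)\hookrightarrow A_K\to \Prym(A_K,Z)
\]
as an isogeny with $|Z|$-torsion kernel. Viewing this as a splitting up to $|Z|$-isogeny of the short exact sequence $0\to C_K\to A_K\to \Prym(A_K,Z)\to 0$ (with $C_K:=\chi_Z(A_K)$, an abelian subvariety), the third Proposition above yields that $\coker(A_k\to \Prym(A_K,Z)_k)$ is $|Z|$-torsion; since the target is a divisible abelian variety, this cokernel vanishes, and restricting to the connected identity component gives a $Z$-equivariant surjection $(A_k)^0\twoheadrightarrow \Prym(A_K,Z)_k$. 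Applying the right-exact functor $\Prym(-,Z)$---which is idempotent on $\Z[\zeta_N]$-modules so that $\Prym(\Prym(A_K,Z)_k,Z)=\Prym(A_K,Z)_k$---produces the claimed surjection $\Prym((A_k)^0,Z)\twoheadrightarrow \Prym(A_K,Z)_k$.

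For unipotence of the kernel, let $K_0:=\ker((A_k)^0\to \Prym(A_K,Z)_k)$. The third Proposition shows that $K_0$ is an extension of a unipotent group by the image of $C_k$ inside $(A_k)^0$ (itself a quotient of $C_k$ by a unipotent kernel). The crucial point is that $Z$ acts trivially on $C_K=\chi_Z(A_K)$: for $c=\chi_Z(a)$ and any $z'\in Z$, $z'\cdot c=\sum_{z\in Z}(z'z)\cdot a=c$. This trivial action descends to $C_k$ and to its image in $(A_k)^0$. For any smooth commutative algebraic group $B$ with trivial $Z$-action, $\Prym(B,Z)=B/\chi_Z(B)=B/|Z|B$; since multiplication by $|Z|$ is surjective on every abelian variety and on every torus, $\Prym(B,Z)$ is supported in the unipotent piece of the Chevalley--Rosenlicht filtration of $B$, and is therefore unipotent.

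Consequently $\Prym(K_0,Z)$ is an extension of unipotent groups, hence unipotent; and a final application of right-exactness to $0\to K_0\to (A_k)^0\to \Prym(A_K,Z)_k\to 0$ identifies the kernel of our surjection as the image of $\Prym(K_0,Z)$ in $\Prym((A_k)^0,Z)$, which is a quotient of a unipotent group and thus unipotent.

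The main obstacle is checking that $\Prym(-,Z)$ is sufficiently well-behaved as a right-exact functor on the category of smooth commutative algebraic groups with $Z$-action, and tracking the unipotent error terms of the third Proposition through this functor carefully enough to confirm that no abelian or toric piece of $K_0$ survives to obstruct unipotence; both reduce to routine diagram chases once one notes that $\Prym$ is the cokernel functor for $\chi_Z$, which respects exact sequences and subquotients.
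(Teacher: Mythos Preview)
Your surjectivity argument is essentially the paper's and is fine. The genuine gap is in the unipotence half, specifically the claim that $Z$ acts trivially on $C_K=\chi_Z(A_K)$. Your verification $z'\cdot\chi_Z(a)=\sum_{z\in Z}(z'z)\cdot a=\chi_Z(a)$ tacitly assumes $\chi_Z=\sum_{z\in Z}z$, but the paper's $\chi_Z=|Z|e_Z$ (with $e_Z$ the idempotent of $\Q[Z]$ complementary to the $\Q(\zeta_N)$-factor) equals the full trace only when $N$ is prime. For $N=4$, for instance, $\chi_Z=2(1+z^2)$ and $z$ acts on $C_K$ through the quotient $Z\to Z/2Z$, not trivially. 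If instead you redefine $\chi_Z$ to be the trace $\sum_z z$, then $Z$ does act trivially on its image, but that image is strictly smaller than $\ker\bigl(A_K\to\Prym(A_K,Z)\bigr)=\Phi_N(z)A_K$, so your short exact sequence is no longer correct. Either way the deduction that $\Prym(K_0,Z)$ is unipotent collapses: with the correct $C_K$ one only knows that $\Phi_N(z)$ acts as an isogeny on it, so $\Prym(C_K,Z)$ is finite, and finite groups are not unipotent in general.

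The paper avoids analyzing $\Prym(K_0,Z)$ altogether. After establishing that $A_k\to\Prym(A_K,Z)_k$ is surjective, it observes that the composition of this map with the defining endomorphism on $A_k$ vanishes (the identity holds over $K$ and extends uniquely to N\'eron models), so the map factors through $\Prym(A_k,Z)$. This yields a surjection $\Prym(A_k,Z)\to\Prym(A_K,Z)_k$ with unipotent kernel; since the component group of $\Prym(A_k,Z)$ is then unipotent, the earlier Proposition bounding $\ker\bigl(\Prym(A^0,Z)\to\Prym(A,Z)^0\bigr)$ by an \'etale group of order dividing that component group finishes the job. The step you are missing is this direct factorization through $\Prym(A_k,Z)$, which replaces your attempt to push the Chevalley--Rosenlicht filtration of $K_0$ through the $\Prym$ functor.
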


\begin{proof}
  The morphism $A_K\to \Prym(A_K,Z)$ has smooth kernel and is split by a
  $|Z|$-torsion isogeny.  We thus conclude that the corresponding map
  $A_k\to \Prym(A_K,Z)_k$ of special fibers has unipotent kernel and
  $|Z|$-torsion cokernel.  The cokernel is a quotient of the component
  group of $\Prym(A_K,Z)_k$, which is trivial since $\Prym(A_K,Z)$ has good
  reduction, and thus $A_k\to \Prym(A_K,Z)_k$ is a surjection with
  unipotent kernel.  The composition with the relevant endomorphism of
  $A_k$ is 0 (since this holds over $K$) and thus this morphism factors
  through $\Prym(A_k,Z)$, letting us conclude that $\Prym(A_k,Z)\to
  \Prym(A_K,Z)_k$ is also a surjection with unipotent kernel, as is the
  corresponding morphism of identity components.

  In particular, the component group of $\Prym(A_k,Z)$ is unipotent, and
  thus the kernel of the map $\Prym((A_k)^0,Z)\to \Prym(A_k,Z)^0$ is an \'etale
  unipotent group.  It follows that the kernel of the composition
  \[
  \Prym((A_k)^0,Z)\to \Prym(A_K,Z)_k^0\to \Prym(A_K,Z)_k
  \]
  is an extension of unipotent groups, so is itself unipotent.
\end{proof}

We can now prove the main result we will use on good reduction of Pryms.

\begin{prop}
  Let $C/R$ be a projective curve with smooth connected general fiber and
  an action of the \'etale cyclic group $Z$.  If
  $\dim\Prym(\tilde{C}_{k^{\text{perf}}},Z)=\dim\Prym(C_K,Z)$, then
  $\Prym(C_K,Z)$ has good reduction and for all $L$ prime to the residue
  characteristic of $R$, $\Prym(C_k,Z)[L]\cong \Prym(C_K,Z)_k[L]$.
\end{prop}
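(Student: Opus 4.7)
The plan is to first show that $\Prym(C_K,Z)$ has good reduction, and then deduce the $L$-torsion statement by a direct application of the previous corollary. Let $\mathcal{J}:=\Pic^0(C/R)$, so that $\mathcal{J}_K=\Pic^0(C_K)$ is an abelian variety and $\mathcal{J}_k=\Pic^0(C_k)$ fits into the Chevalley sequence $0\to L\to \mathcal{J}_k\to \Pic^0(\tilde{C}_{k^{\text{perf}}})\to 0$ from the start of the section. Let $\mathcal{P}$ be the N\'eron model of $\Prym(C_K,Z)$; the generic quotient $\mathcal{J}_K\to\Prym(C_K,Z)$ extends via the N\'eron mapping property to a morphism $\mathcal{J}\to\mathcal{P}$ which factors through the Prym construction (since $\Phi_N(g)$ annihilates $\mathcal{P}$) and is surjective on identity components of special fibers.

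The central step is to show that $\mathcal{P}^0_k$ is abelian. I would argue this via the dimension identity
\[
\dim\mathcal{P}^0_k=\dim\Prym(L,Z)+\dim\Prym(\tilde{C}_{k^{\text{perf}}},Z),
\]
which one obtains by identifying $\mathcal{P}^0$ with (the Prym of) the Raynaud extension of the N\'eron model of $\mathcal{J}_K$, using Raynaud's theorem that $\Pic^0(C/R)^0$ is the identity component of that N\'eron model when $C/R$ is regular (ensured by resolution if necessary) and invoking additivity of Prym dimensions across the Chevalley exact sequence. Since $\dim\mathcal{P}^0_k=\dim\Prym(C_K,Z)$ and the hypothesis gives $\dim\Prym(\tilde{C}_{k^{\text{perf}}},Z)=\dim\Prym(C_K,Z)$, the identity forces $\dim\Prym(L,Z)=0$; as $\Prym(L,Z)$ is a smooth connected quotient of the smooth connected linear group $L$, it must be trivial. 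Hence $\mathcal{P}^0_k$ has no linear part, so is abelian, giving the desired good reduction of $\Prym(C_K,Z)$.

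With good reduction in hand, the previous corollary applied to $A_K=\Pic^0(C_K)$ produces a surjection $\Prym(C_k,Z)=\Prym(\Pic^0(C_k),Z)\to\Prym(C_K,Z)_k$ with unipotent kernel (the distinction between $\Prym(A^0,Z)$ and $\Prym(A,Z)^0$ from the earlier proposition is vacuous here since $\Pic^0(C_k)$ is already connected). For any $L$ prime to the residue characteristic $p$, multiplication by $L$ is an automorphism of the unipotent kernel, so the snake lemma yields $\Prym(C_k,Z)[L]\cong\Prym(C_K,Z)_k[L]$ as required. The main obstacle is making rigorous the dimension identity used in the good reduction step; this rests on the functoriality of Raynaud extensions under the Prym construction, and on reducing (via base change or resolution) to a situation where Raynaud's theorem applies and where we may assume semistable reduction so that $L$ is a torus. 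Once those technicalities are handled, the rest is just a dimension count combined with the previous corollary.
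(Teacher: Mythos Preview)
Your second step---applying the previous corollary once good reduction is known---is correct and matches the paper.  The problem is entirely in your good reduction argument.

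The dimension identity
\[
\dim\mathcal{P}^0_k=\dim\Prym(L,Z)+\dim\Prym(\tilde{C}_{k^{\text{perf}}},Z)
\]
is the crux, and it is not established.  The right-hand side is nothing other than $\dim\Prym(C_k,Z)$, so the identity is equivalent to asserting that the map $\Prym(C_k,Z)\to\mathcal{P}^0_k$ is an isogeny.  You argue surjectivity (which is fine), but the identity needs the kernel to be finite, and for that you invoke ``identifying $\mathcal{P}^0$ with the Prym of the Raynaud extension'' and ``functoriality of Raynaud extensions under the Prym construction''.  These phrases do not point to any actual theorem: taking N\'eron models does not commute with quotients of abelian varieties in general, and Raynaud's result only identifies $\Pic^0(C_k)$ with the identity component of the N\'eron model of the \emph{Jacobian}---it says nothing about the N\'eron model of the Prym.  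Your suggestion to reduce to the semistable case by base change also changes $C_k$ and $\tilde{C}_{k^{\text{perf}}}$, so the hypothesis no longer refers to the objects you want.  In short, the argument is circular: the finiteness of the kernel is exactly what the previous corollary supplies, but only \emph{after} good reduction is known.

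The paper avoids this circularity by a direct inequality.  Rather than trying to compare $\Prym(C_k,Z)$ and $\mathcal{P}^0_k$ in full, it passes to abelian quotients on both sides.  The map $\Prym(C_k,Z)\to\Prym(C_K,Z)_k$ sends the linear part into the linear part (any connected linear group maps trivially to an abelian variety), so induces a map from $\Prym(\tilde{C}_{k^{\text{perf}}},Z)$ to the abelian quotient $A$ of $\Prym(C_K,Z)_k$.  Since the domain is already an abelian variety, the kernel (being linear algebraic) is finite, giving
\[
\dim\Prym(\tilde{C}_{k^{\text{perf}}},Z)\le\dim A\le\dim\Prym(C_K,Z)_k=\dim\Prym(C_K,Z).
\]
The hypothesis forces equality throughout, so $A=\Prym(C_K,Z)_k$ and the linear part of $\Prym(C_K,Z)_k$ is trivial.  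This is the missing idea: instead of proving your dimension identity, one only needs the inequality $\dim\Prym(\tilde{C}_{k^{\text{perf}}},Z)\le\dim A$, and that follows immediately from the induced map on abelian quotients.
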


\begin{proof}
  Since the special fiber of the N\'eron model of $\Pic^0(C_K)$ has
  identity component $\Pic^0(C_k)$, we have a natural morphism
  \[
  \Prym(C_k,Z)\cong \Prym(\Pic^0(C_K)_k^0,Z)\to \Prym(\Pic^0(C_K),Z)_k\cong
  \Prym(C_K,Z)_k,
  \]
  and thus (temporarily replacing $k$ by its perfect closure) a composition
  to the quotient $A$ of $\Prym(C_K,Z)_k$ by its maximal smooth linear
  algebraic subgroup.  The corresponding quotient of $\Prym(C_k,Z)$ is
  $\Prym(\tilde{C}_{k^{\text{perf}}},Z)$, so that we have a natural
  morphism $\Prym(\tilde{C}_{k^{\text{perf}}},Z)\to A$ with linear
  algebraic kernel.  But $\Prym(\tilde{C}_{k^{\text{perf}}},Z)$ has no
  positive-dimensional linear algebraic subgroup and thus
  \[
  \dim\Prym(\tilde{C}_{k^{\text{perf}}},Z)\le \dim(A)\le
  \dim\Prym(C_K,Z)_k=\dim\Prym(C_K,Z).
  \]
  We thus conclude that $A\cong \Prym(C_K,Z)_k$ and thus that
  $\Prym(C_K,Z)$ has good reduction.

  It follows that $\Prym(C_k,Z)\to \Prym(C_K,Z)_k$ is a surjection with
  unipotent kernel, and thus induces an isomorphism of $L$-torsion for all
  $L$ prime to the characteristic.
\end{proof}

\section{Ramification data}

Similarly, we need to understand how ramified covers behave when the base
curve degenerates.

If $C/S$ is a (reduced, but possibly singular) curve, and $G/S$ is an
\'etale group scheme, then a $G$-cover of $C$ ramified along a divisor $Z$
is \'etale on the complement of $Z$, so determines a class in in
$H^1(C\setminus Z;G)$.  (All cohomology will be \'etale unless otherwise
specified.)  When $G$ is abelian (in particular in the cyclic case of
present interest), this fits into a long exact sequence
\begin{align}
0&\to H^0(C;G)
\to H^0(C\setminus Z;G)
\to H^1_Z(C;G)\notag\\
&\to H^1(C;G)
\to H^1(C\setminus Z;G)
\to H^2_Z(C;G)
\to H^2(C;G)\to\cdots
\end{align}
This suggests that we should view the local cohomology group
$H^2_Z(C;G)$ as classifying the local structure of the ramification
(``ramification data'').  Given such a class, there is then an obstruction
in $H^2(C;G)$ to the existence of a global ramified cover with the
given ramification data, and when such a cover exists, we may twist by an
\'etale cover, with $H^1_Z(C;G)$ mapping to \'etale covers of $C$ that
become trivial after removing $Z$.

We should caution here that while removing $Z$ does not lose any
information when $C$ is smooth, removing a singular point {\em can} lose
information about the cover.  However, there does not appear to be a way to
retain that additional information without breaking the group structure on
the family of covers.

It turns out that the primary issue when deforming a cover is to deform the
ramification data, as opposed to the cover itself.

\begin{prop}\label{prop:covers_lift}
  Let $R$ be a Henselian dvr, let $C/R$ be a flat proper curve with
  geometrically connected special fiber, and let $Z\subset C$ be any closed
  subset.  Let $\hat{C}_k\to C_k\setminus Z_k$ be a Galois cover with group
  $G$, and let $\phi\in H^2_Z(C;G)$ be a class restricting to the class
  corresponding to $\hat{C}_k$.  Then there is a $G$-cover of $C\setminus Z$
  with special fiber $\hat{C}_k$ and ramification data $\phi$.
\end{prop}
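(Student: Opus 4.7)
The plan is to combine proper base change for \'etale cohomology with the long exact local cohomology sequence already displayed at the start of the section. In the statement $G$ is implicitly a finite abelian \'etale group scheme (since $H^2_Z(C;G)$ appears), $C/R$ is proper, and $R$ is Henselian, so proper base change gives isomorphisms $H^i(C;G)\xrightarrow{\sim} H^i(C_k;G)$ for all $i\ge 0$, and functoriality makes these compatible with the analogous restriction maps for $C\setminus Z$ and with the boundary maps into local cohomology.

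First I would show that $\phi$ actually lies in the image of $H^1(C\setminus Z;G)\to H^2_Z(C;G)$. The hypothesis that $\phi|_{C_k}$ corresponds to $\hat{C}_k$ means that $\phi|_{C_k}$ is the image of $[\hat{C}_k]\in H^1(C_k\setminus Z_k;G)$ under the boundary map, so in particular $\phi|_{C_k}$ dies in $H^2(C_k;G)$. Chasing the commutative square
\[
\begin{CD}
H^2_Z(C;G) @>>> H^2(C;G)\\
@VVV @VV{\wr}V\\
H^2_{Z_k}(C_k;G) @>>> H^2(C_k;G)
\end{CD}
\]
and using that the right vertical arrow is an isomorphism by proper base change, we conclude that $\phi$ itself dies in $H^2(C;G)$, and therefore admits some lift $[\hat{C}]\in H^1(C\setminus Z;G)$.

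Second, I would correct the special fiber. By construction both $[\hat{C}]|_{C_k\setminus Z_k}$ and $[\hat{C}_k]$ map to $\phi|_{C_k}$ under the boundary, so their difference lies in the image of $H^1(C_k;G)\to H^1(C_k\setminus Z_k;G)$, i.e., it is (the restriction of) the class of an \'etale $G$-cover of $C_k$. Invoking proper base change a second time, $H^1(C;G)\to H^1(C_k;G)$ is surjective, so that class lifts to some $\eta\in H^1(C;G)$. Twisting $[\hat{C}]$ by (the image in $H^1(C\setminus Z;G)$ of) $\eta^{-1}$ does not change the ramification data (since the image of $H^1(C;G)$ in $H^2_Z(C;G)$ is zero) and, by the preceding identification, produces a cover whose special fiber is $\hat{C}_k$.

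The one technical point that deserves care is the compatibility of base change with the long exact local cohomology sequence: both $R\Gamma(C;-)$ and $R\Gamma(C\setminus Z;-)$ have the expected behavior under the Henselian specialization $s\colon \Spec k\to \Spec R$, so the four-term sequence over $R$ maps to the analogous one over $k$ with the first and last vertical arrows isomorphisms. This is routine given the proper base change theorem for torsion sheaves (and is where the hypothesis that $C/R$ is proper is used), but it is the only nontrivial input. The assumption that the special fiber is geometrically connected is not essential to the argument sketched here, but ensures that the construction produces a well-behaved Galois cover in the sense intended (and aligns with the usual comparison of \'etale fundamental groups under specialization).
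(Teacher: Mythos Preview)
Your proposal is correct and follows essentially the same approach as the paper: use proper base change (over the Henselian base) to identify $H^*(C;G)\cong H^*(C_k;G)$, deduce that the obstruction in $H^2(C;G)$ vanishes because its restriction does, lift to some cover, and then twist by a global \'etale $G$-cover (lifted via $H^1(C;G)\cong H^1(C_k;G)$) to correct the special fiber. Your write-up is more detailed about the diagram chasing, but the argument is the same.
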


\begin{proof}
  Since $\phi$ restricts to the ramification data of an actual cover, the
  image of the corresponding obstruction under $H^2(C_R;G)\to H^2(C_k;G)$
  is trivial.  Since $C$ is proper with geometrically connected special
  fiber, the reduction map $H^*(C_R;G)\to H^*(C_k;G)$ is a
  quasi-isomorphism, and thus the obstruction is trivial.  There is thus
  {\em some} cover of $C_R$ with ramification data $\phi$.  This may have
  the wrong special fiber, but will differ by a class in the image of
  $H^1(C_k;G)\cong H^1(C_R;G)$, and thus we may twist by a global \'etale
  cover to make the special fiber correct.
\end{proof}

By excision, when computing $H^p_Z(C;G)$, we may replace $C$ by any
\'etale neighborhood of $Z$, or by the limit over all such neighborhoods.
In particular, we may decompose it into contributions from each connected
component of $Z$ (or, \'etale locally, from each geometrically connected
component of $Z$).  

\begin{lem}
  Let $C/S$ be a smooth curve over a scheme $S$ on which $N$ is invertible,
  and let $z:S\to C$ be a section.  Then $H^2_z(C;\mu_N)\cong
  \Gamma(S;\Z/N\Z)$.
\end{lem}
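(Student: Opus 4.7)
The plan is to reduce to a computation with $\mathbb{G}_m$ via the Kummer sequence and then invoke the divisorial structure of $z$. Since $N$ is invertible on $S$ (hence on $C$), the Kummer sequence
\[
1\to \mu_N\to \mathbb{G}_m\xrightarrow{N} \mathbb{G}_m\to 1
\]
is exact on the \'etale site of $C$, and induces a long exact sequence in local cohomology along $z$. So the first step is to identify $H^p_z(C;\mathbb{G}_m)$ for $p\le 2$.

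For this, let $j:C\setminus z\hookrightarrow C$ and $i:z\hookrightarrow C$ denote the open and closed immersions. The divisorial structure on the regular codimension-one subscheme $z$ (coming from the section) gives the exact sequence
\[
0\to \mathbb{G}_{m,C}\to j_*\mathbb{G}_{m,C\setminus z}\to i_*\Z\to 0,
\]
where the right-hand map is the divisor-of-zeros-and-poles. First I would show, using that $C\to S$ is smooth of relative dimension $1$ and $z$ is regular of codimension $1$ (so locally defined by a single non-zero-divisor), that $R^qj_*\mathbb{G}_m=0$ for $q\ge 1$ \'etale locally: this is the higher Hilbert~90 for regular local rings, applied fiber-wise. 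Combined with the short exact sequence above, this shows $\underline{H}^0_z(\mathbb{G}_m)=0$, $\underline{H}^1_z(\mathbb{G}_m)\cong i_*\Z$, and $\underline{H}^q_z(\mathbb{G}_m)=0$ for $q\ge 2$.

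Next, I would feed this into the local-cohomology long exact sequence coming from Kummer. Since $\underline{H}^0_z(\mathbb{G}_m)=0$ and $\underline{H}^2_z(\mathbb{G}_m)=0$, multiplication by $N$ on $\underline{H}^1_z(\mathbb{G}_m)\cong i_*\Z$ is injective, and the connecting map yields
\[
\underline{H}^2_z(\mu_N)\;\cong\;\underline{H}^1_z(\mathbb{G}_m)/N\;\cong\; i_*(\Z/N\Z).
\]
Taking global sections (and using that $i:z\hookrightarrow C$ is a closed immersion with $z\cong S$ via the section) gives
\[
H^2_z(C;\mu_N)\cong \Gamma(z;\Z/N\Z)\cong \Gamma(S;\Z/N\Z),
\]
as required.

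The only delicate point is the vanishing $R^qj_*\mathbb{G}_m=0$ for $q\ge 1$ in the relative setting; this is the main obstacle, but it follows from the standard higher Hilbert~90 for strictly Henselian regular two-dimensional local rings (or, more conceptually, from absolute cohomological purity for $\mu_N$ with $N$ invertible, which gives the conclusion $H^2_z(C;\mu_N)\cong H^0(z;\Z/N\Z)$ in one step). Everything else is routine bookkeeping with the Kummer sequence.
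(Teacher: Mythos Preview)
Your proposal is correct, but it takes a different route from the paper. The paper simply invokes cohomological purity: since $z$ is smooth of codimension $1$ in the smooth $C$ and $N$ is invertible, one has $Rz^!\mu_N\cong(\Z/N\Z)[-2]$, and the result drops out in one line. Your approach instead unpacks this via the Kummer sequence and an explicit computation of the local cohomology sheaves $\underline{H}^q_z(\mathbb{G}_m)$ using the divisor exact sequence, which is essentially how one proves purity in codimension $1$ in the first place (and you acknowledge as much in your final parenthetical). The paper's approach is cleaner if one is willing to cite purity as a black box; yours is more self-contained and makes the identification concrete.

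One small point worth making explicit: to pass from the sheaf isomorphism $\underline{H}^2_z(\mu_N)\cong i_*(\Z/N\Z)$ to the global statement $H^2_z(C;\mu_N)\cong\Gamma(S;\Z/N\Z)$, you are implicitly using the local-to-global spectral sequence $H^p(C;\underline{H}^q_z(\mu_N))\Rightarrow H^{p+q}_z(C;\mu_N)$, which needs $\underline{H}^0_z(\mu_N)=\underline{H}^1_z(\mu_N)=0$. This does follow from your Kummer computation (since $\underline{H}^0_z(\mathbb{G}_m)=0$ and multiplication by $N$ on $\underline{H}^1_z(\mathbb{G}_m)\cong i_*\Z$ is injective), but it would be worth saying so.
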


\begin{proof}
  This follows from cohomological purity: since $C$ and the image of $z$
  are both smooth over $R$, with the latter of codimension 1,
  $Rz^!\mu_N\cong (\Z/N\Z)[-2]$, and thus $H^2_z(C;\mu_N)\cong
  \Gamma(S;\Z/N\Z)$ as required.
\end{proof}

If $C$ is proper, there is a homomorphism $H^2(C;\mu_N)\to
\Gamma(S;\Z/N\Z)$ corresponding to the obstruction that the total weight of
a $\mu_N$-cover of a smooth proper curve is $0$.  When this obstruction
vanishes, there is a further obstruction in $H^1(S;\Pic_S(C)[N])$: the
ramified covers form a torsor over the group of \'etale covers, which maps
to $\Pic_S(C)[N]$.  The vanishing of this obstruction corresponds to the
existence of a point in the Picard scheme with specified image under
multiplication by $N$, and thus there is a remaining Brauer obstruction in
$H^2(S;\mu_N)$.  (This filtration of the group of obstructions corresponds
to that coming from the obvious spectral sequence.)

\medskip

For one of our arguments below, we will also need to consider some cases
with ramification in codimension 2.  The following is essentially an
observation of \cite{ArtinM:1977}, corresponding to the fact that rational
double points are quotient singularities.

\begin{prop}\label{prop:smoothing_AN}
  Let $X$ be a surface over a field $k$, $N$ a positive integer invertible
  on $k$, and let $x\in X$ be a rational double point of split type
  $A_{mN-1}$.  Then there is a natural morphism $\Z/N\Z\to H^2_x(X;\mu_N)$.
\end{prop}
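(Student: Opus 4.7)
The plan is to realize the required class in $H^2_x(X;\mu_N)$ as the image, via the boundary map in the long exact sequence of local cohomology, of the class of an explicit $\mu_N$-torsor on a punctured étale neighborhood of $x$. That $\mu_N$-torsor is obtained by pushout from the canonical $\mu_{mN}$-cover witnessing the standard quotient presentation of a split $A_{mN-1}$ singularity.

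First, by excision, $H^2_x(X;\mu_N)$ depends only on an étale neighborhood of $x$, so I may freely replace such a neighborhood by one of the origin in $\A^2/\mu_{mN}$, where $\mu_{mN}$ acts diagonally by $(s,t)\mapsto(\zeta s,\zeta^{-1}t)$; this is precisely the content of the split hypothesis. In this model the quotient morphism $q:\A^2\setminus\{0\}\to U:=X\setminus\{x\}$ is a finite étale $\mu_{mN}$-torsor.

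Second, I push $q$ forward along the canonical surjection $\mu_{mN}\twoheadrightarrow\mu_N$ given by $\zeta\mapsto\zeta^m$, whose kernel is the unique order-$m$ subgroup $\mu_m\subset\mu_{mN}$. This produces a $\mu_N$-torsor $F$ on $U$ with class $[F]\in H^1(U;\mu_N)$, and I let $\eta$ denote its image in $H^2_x(X;\mu_N)$ under the boundary map from the sequence
\[
H^1(X;\mu_N)\to H^1(U;\mu_N)\to H^2_x(X;\mu_N).
\]
The homomorphism $\Z/N\Z\to H^2_x(X;\mu_N)$ is then defined by $k\mapsto k\eta$.

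The main obstacle is verifying naturality, i.e., independence of $\eta$ from the étale-local identification of $X$ with $\A^2/\mu_{mN}$. The key point is that split $A_{mN-1}$ is a strictly stronger datum than an abstract $A_{mN-1}$ singularity: it records a compatible ordering of the two branches through $x$. The $\mu_{mN}$-scaling automorphisms of the local model act trivially on the torsor class (they are inner to the $\mu_{mN}$-torsor structure), and the remaining potential ambiguity, the swap $s\leftrightarrow t$ which would invert $\zeta$ and hence $\eta$, is precisely ruled out by the split structure. Once this is checked, the construction descends unchanged to the original $X$ and produces the asserted natural map.
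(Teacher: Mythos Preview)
Your construction is essentially the paper's: build an explicit $\mu_N$-torsor on the punctured neighborhood $U=X\setminus\{x\}$ and push its class through the boundary $H^1(U;\mu_N)\to H^2_x(X;\mu_N)$. The paper writes the cover directly in affine coordinates as the $\mu_N$-quotient $\Spec k[x,y,t]/(xy-t^m)\to\Spec k[X,Y,t]/(XY-t^{mN})$ via $X=x^N$, $Y=y^N$, whereas you pass through the smooth $\mu_{mN}$-chart $\A^2$ and then push out along $\mu_{mN}\twoheadrightarrow\mu_N$; the resulting torsor is literally the same, since the total space of your pushout is $(\A^2\setminus\{0\})/\mu_m$, i.e.\ the punctured $A_{m-1}$ surface.

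What you omit, and what the paper supplies, is the check that $\eta$ has exact order $N$: each nontrivial proper multiple of $\eta$ is the class of an intermediate cover (an $A_{m'-1}$ surface over the $A_{mN-1}$ surface with $m<m'<mN$), which is still genuinely ramified at the origin and hence does not extend across $x$, so its image in $H^2_x$ is nonzero. Read literally the proposition only asks for \emph{a} morphism, but this order-$N$ statement is the actual content (the zero map is trivially natural) and is what gets used downstream. Your naturality discussion is more explicit than the paper's, which simply works in a fixed affine model and leaves the point implicit; your identification of the swap $s\leftrightarrow t$ as the residual ambiguity is correct, though the reason it is harmless in the paper is not quite that ``split'' forbids it, but that in the intended application the surface arises as a one-parameter smoothing of a node whose two branches are already distinguished.
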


\begin{proof}
  Note first that $X$ has a cover ramified at $x$, namely a surface with
  split $A_{m-1}$-singularity.  Indeed, a surface with split
  $A_{m-1}$-singularity has the form
  \[
  \Spec(k[x,y,t]/(xy-t^m)),
  \]
  while the quotient by the action of $\mu_N$ such that $x$ has weight $1$,
  $y$ has weight $-1$ and $t$ has weight 0 is
  \[
  \Spec(k[X,Y,t]/(XY-t^{mN})),
  \]
  where $X=x^N$, $Y=y^N$.  This cover is not \'etale at the origin, and
  thus the corresponding class in $H^1(X\setminus \{x\};\mu_N)$ has
  nontrivial image in $H^2_x(X;\mu_N)$.  Since the same applies to all
  intermediate covers, the image in $H^2_x(X;\mu_N)$ has order $N$ as
  required.
\end{proof}

\begin{rem}
  Note that it would have sufficed to consider the case $m=1$, as all of
  the other cases are base changes of that case.  This would not be a
  particular simplification here, but might be of future use when dealing
  with more complicated singularities (e.g., $\Z/p\Z$-covers of cusps).
\end{rem}

We may think of this as a family of covers depending on $t$; for $t\ne 0$,
it is \'etale, while for $t=0$ it is a ramified cover of the nodal curve
$xy=0$, which on the normalization corresponds to a cover with weights $\pm
1$.  (Indeed, every family of nodal curves is \'etale-locally of this form
for some $m$.)  In particular, this corresponds to the fact that
``admissible'' covers are smoothable, but lets us obtain precise control
over the ramification and residue field extension.

If $\hat{C}_k\to C_k$ is a $\mu_N$-cover in characteristic prime to $N$
with $C_k$ smooth, and $x$ and $y$ are two ramification points with
opposite weights, then the curve $C^+_k$ obtained by gluing $x$ and $y$ is
smoothable to a curve $C^+_R$ over a Henselian dvr with residue field $k$.
Moreover, at the cost of replacing $R$ by a totally ramified extension of
degree dividing $N$, we may arrange for $C^+_R$ to have an $A_{mN-1}$
singularity at the node of $C^+_k$.  We may extend the ramification data at
the other ramification points to $C^+_R$ (extend them to sections and
assign the same weight) and assign the appropriate power of the
ramification data corresponding to Proposition \ref{prop:smoothing_AN} at
the node.

To apply Proposition \ref{prop:covers_lift}, we would need to
show that the resulting ramification data is unobstructed on the special
fiber.  This is not true in general, but something slightly weaker holds:
the image of the obstruction in $H^2(C^+_k;\mu_N)$ vanishes.  The issue is
that the ramification data on $C^+_k$ has the correct weights, but there is
an additional arithmetic contribution to the local cohomology at the node.
However, having the correct weights ensures that the induced ramification
data on $C_k$ agrees with the original ramification data, so is
unobstructed, and thus the image in $H^2(C_k;\mu_N)$ under either
composition in
\[
\begin{CD}
  H^2_{Z\cup x}(C^+_k;\mu_N) @>>> H^2_{Z\cup x\cup y}(C_k;\mu_N)\\
  @VVV @VVV\\
  H^2(C^+_k;\mu_N) @>>> H^2(C_k;\mu_N)
\end{CD}
\]
vanishes.  Applying the long exact sequence
\[
\cdots
\to H^1(C_k;\mu_N)
\to H^1(k;\mu_N)
\to H^2(C^+_k;\mu_N)
\to H^2(C_k;\mu_N)
\to \cdots,
\]
we see that there may still be a residual obstruction in $H^1(k;\mu_N)\cong
H^1(R;\mu_N)$.  It follows that if $R^+$ is the splitting field of the
corresponding $\mu_N$-cover, then the obstruction in $H^2(C^+_{R^+};\mu_N)$
vanishes, and thus Proposition \ref{prop:covers_lift} applies to
give a global $\mu_N$-cover of $C^+_{R^+}$ as prescribed.

Note here that the obstruction only depends on the original cover (and the
choice of points to glue), and not on the choice of smoothing, so there is
a fixed $\mu_N$-cover $k^+$ of $k$ such that for any smoothing of
$C^+_{k^+}$ and any extension of the remaining ramification data, there is
a corresponding extension of the cover.

In terms of the surface $k[x,y,t]/(xy-t^{nm})$ (viewed as a family over
$k[t]$) what is going on is that if we consider the $d$-th power of the
class of Proposition \ref{prop:smoothing_AN} (assuming WLOG that $d|N$),
then the induced ramified cover of the complement of the node on the
special fiber has the form
\[
z^N=x^d,\qquad w^N=y^d,
\]
while the general cover with those weights has two parameters $\zeta$,
$\omega$, and looks like
\[
z^N=\zeta x^d,\qquad w^N=\omega^{-1} y^d.
\]
(The inverse on $\omega$ reflects the fact that we should really be taking
a root of $y^{-d}$.)  To give the corresponding ramification data on the
special fiber, we must quotient by unramified extensions of the special
fiber, which translates to dividing both $\zeta$ and $\omega$ by the same
quantity.  Since this leaves $\zeta/\omega$ alone, we need to choose an
$N$-th root of $\zeta/\omega$ in order to make the ramification data agree.

Note that when the node is not totally ramified, this choice of $N$-th root
can affect the cover of the singular fiber, as it changes the
identification between the two \'etale schemes of preimages.  Since this
acts transitively on the set of equivariant identifications, we see that we
can still use the $H^1(C;R)$ freedom to ensure that we obtain the correct
covering on the special fiber.  (We in particular see that changing the
identification comes from an action of $H^1_x(C_k;G)$, which is not
faithful in general.)

\medskip

For $\Z/p^l\Z$-covers in characteristic $p$, the situation is much more
complicated, as the groups $H^2_z(C;\Z/p^l\Z)$ are infinite-dimensional.
However, since $(C,z)$ is \'etale-locally isomorphic to $(\P^1,0)$, one can
at least identify the ramification data at each point with an element of
the group $H^1(\P^1\setminus \{0\};\Z/p^l\Z)$ classifying $\Z/p^l\Z$-covers
of $\P^1$ ramified only at $0$.  (This is an observation of
\cite{KatzNM:1986}.)  Of course, this identification is not canonical, as
it depends on a choice of uniformizer at $z$.  For any class in
$H^1(\P^1\setminus \{0\};\Z/p^l\Z)$, the subgroup spanned by its images
under changes in uniformizer is finite-dimensional, and thus we obtain a
well-behaved variety for each ``analytic'' isomorphism class of
ramification data.  In particular, given a family of curves with marked
points, there is a covering family in which each marked point is assigned
ramification data from a specific analytic isomorphism class.  (One could
also consider a larger family in which one only specifies the contributions
of each ramification point to the genera of the intermediate covers; it is
unclear to what extent that larger family is connected.)

Given a curve $C_R$ over a Henselian dvr of characteristic $p$ and a
$\Z/p^l\Z$-cover of $C_k$ ramified at rational points of the smooth locus,
we can extend the ramification points to sections of $C_R$, then choose
global uniformizers on each section.  This lets us identify each group
$H^2_Z(C;\Z/p^l\Z)$ with $H^1(\P^1_R\setminus \{0\};\Z/p\Z)$ and thus give
a corresponding constant extension of the ramification data.  By
Proposition \ref{prop:covers_lift}, there is then a cover of
$C_R$ sgreeing with the original cover of $C_k$ and having the ``same''
ramification data on the marked sections.  This extends in the obvious way
to $\Z/p^l\Z\times \mu_{N'}$-covers with $\gcd(p,N')=1$, as the additional
ramification data is an assignment of an element of $\Z/N'\Z$ to each
marked section, which extends canonically over $R$.

For mixed characteristic, the situation is rather more complicated, as each
wild ramification point of the special fiber is a coalescence of multiple
ramification points on the general fiber.  It is a nontrivial fact
\cite{PopF:2014} that ramification data for {\em cyclic} groups lift to
characteristic 0.  We only note the following weak form (of the ``Oort
conjecture''), which suffices for our purposes.

\begin{prop}\cite{PopF:2014}\label{prop:oort}
  Let $k$ be an algebraically closed field of characteristic $p$ and
  $\hat{C}_k\to C_k$ be a ramified $\Z/N\Z$-cover with $\hat{C}_k$ smooth.
  Then there is a mixed characteristic dvr $R$ with residue field $k$ and a
  smooth curve $\hat{C}_R$ with special fiber $\hat{C}_k$ and an extension
  of the action of $\Z/N\Z$.
\end{prop}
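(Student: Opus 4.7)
The plan is to reduce the global lifting problem to a purely local one at each branch point, and then to invoke the (now-theorem) cyclic Oort conjecture. First, since $\hat{C}_k \to C_k$ is a $\Z/N\Z$-cover with $\hat{C}_k$ smooth, the base $C_k$ is also smooth, and smooth proper curves lift canonically to mixed characteristic. So I fix any smooth lift $C_R$ of $C_k$ over a complete mixed-characteristic dvr $R$ with residue field $k$ and extend the branch locus of $\hat{C}_k \to C_k$ to a horizontal divisor of $C_R$ (ramification points, being closed points of a smooth curve, are unobstructed).

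Next, I apply the Bertin--M\'ezard (equivalently, Garuti) local-global principle: a smooth $\Z/N\Z$-equivariant lift of $\hat{C}_k$ over $C_R$ exists if and only if, at each branch point $b$, the action of the inertia subgroup $I_b \subseteq \Z/N\Z$ on the completed local ring $k[[t]]$ above $b$ admits a $\Z/N\Z$-equivariant extension to some $R[[T]]$ reducing to the given action. Decompose $N = p^m n$ with $\gcd(n,p)=1$, and correspondingly split $I_b = P_b \times I_b^t$ into its wild and tame parts. The tame factor $I_b^t$ lifts trivially: a generator acts on $k[[t]]$ by $t \mapsto \zeta t$, and the same formula defines a lift to $R[[T]]$ once we adjoin the relevant root of unity. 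So the substantive content is the lift of the cyclic $p$-group $P_b$.

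The local lifting problem for a faithful action of a cyclic $p$-group on $k[[t]]$ is precisely the local Oort conjecture. The strategy, following Pop (and Obus--Wewers in partial cases), is to induct on the order $p^{m_b}$ of $P_b$. The base case $m_b \le 1$ reduces to Artin--Schreier extensions, which lift to Kummer extensions after adjoining a $p$-th root of unity. For the inductive step, one approximates the given action by a Harbater--Katz--Gabber cover, i.e., a totally ramified $\Z/p^{m_b}\Z$-cover of $\P^1_k$ branched only at $\infty$ with the same upper-numbering ramification jumps, explicitly lifts the HKG cover (via Sekiguchi--Suwa theory or Artin--Schreier--Witt), and then descends the generic fibre to a lift of the original local extension, after possibly enlarging $R$. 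The main obstacle is the delicate numerical input: one must arrange that the Hasse--Arf jumps produced by the approximation match those of the target, which requires careful control of the Kato--Swan conductor as a function of the deformation parameter. Once the local lifts are in hand, the local-global principle reassembles them with the chosen $C_R$ to produce the desired smooth equivariant cover $\hat{C}_R$.
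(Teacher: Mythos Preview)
The paper does not actually prove this proposition: it is stated with the citation \cite{PopF:2014} and no proof environment follows. The text immediately preceding it says explicitly that this is a ``nontrivial fact'' and that the paper only records ``the following weak form (of the `Oort conjecture')'' for later use. So there is nothing to compare your argument against on the paper's side.

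That said, your proposal is a fair high-level roadmap of the known proof. The reduction to local lifting via the Bertin--M\'ezard/Garuti local-global principle is correct, as is the observation that the tame part is trivial and the content lives in the cyclic $p$-group case. Your sketch of Pop's method (HKG approximation, matching ramification filtrations, Obus--Wewers input) gestures at the right ingredients, but be aware that this is a summary of a long and delicate argument rather than a proof: the ``main obstacle'' you name is genuinely hard, and your sentence about descending the generic fibre elides most of the work. As a write-up for this paper it would be out of proportion---the paper is right to simply cite the result---but as an outline of what lies behind the citation it is accurate.
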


\begin{rem}
  The result of \cite{PopF:2014} allows $k$ not to be algebraically closed and
  gives a lift over a finite extension $R$ of the ring $W(k)$ of Witt
  vectors, which can even be given as the compositum with an algebraic
  extension of $\Z_p$.  For our application, however, we need the
  ramification points of the generic fiber to be defined over the function
  field, which requires a further extension in general.  We can thus only
  apply lifting in scenarios where we do not need to control the extension
  of residue fields, so may as well assume they are algebraically closed.
\end{rem}

With this in mind, we define the {\em multiplicity} of a ramification point
in bad characteristic to be the number of points over it in a lift to
characteristic 0.  This is 1 for tamely ramified points, while for wildly
ramified points with wild ramification data given by a $\Z/p^l\Z$-cover
$C\to \P^1$ ramified only at 0, the multiplicity is
\[
\frac{g(C)-g(C/(\Z/p\Z))}{p^l-p^{l-1}}
\]
(i.e., the rank of $\Prym(C,\Z/p^l\Z)$ as a $\Z[\zeta_{p^l}]$-module).

\medskip

We also note that the node-smoothing construction considered above applies
in the bad characteristic setting, as long as the two points being glued
are tamely ramified.  Indeed, if $N=p^lN'$ with $\gcd(p,N')$, then we can
separately extend the $\mu_{N'}$-cover (at the cost of a $\mu_{N'}$-cover of the
residue field) and the $\Z/p^l\Z$-cover.  Here the issue is that there is
an obstruction to the existence of an \'etale cover of the nodal curve that
induces the given \'etale cover of the smooth curve.  The relevant piece of
the long exact sequence is
\[
\to H^1(C;\Z/p^l\Z)
\to H^1(\tilde{C};\Z/p^l\Z)
\to H^1(k;\Z/p^l\Z)\to
\]
so that again the obstruction is killed by some fixed \'etale cover of
$R$.  We thus see that we can extend the $\mu_{N'}\times \Z/p^l\Z$-cover at
the cost of making a $\mu_{N'}\times \Z/p^l\Z$-extension of the residue
field.

\section{Extending to higher genus base curves}

Now that we have shown that (in characteristic 0) the monodromy of Pryms is
large in the case of cyclic $N$-gonal curves with at least 7 ramification
points, the next natural question is whether this extends to higher genus
base curves.  Of course, once the base curve has higher genus, it has
moduli, which leads to some additional variations on the family.  Note also
that the $\Z[\zeta_N]$-rank of the Prym has an additional contribution of
$2g$, and thus to have rank $n$, one should have $n+2-2g$ ramification
points.

The most natural analogue of the family we considered in genus $0$ again
involves varying a single point.  The one caveat is that, as we have seen,
the ramification data does not suffice to determine a cover.  There are
actually two issues: one is that there are multiple geometric isomorphism
classes of covers for a given collection of ramification data, while the
other is that a given geometric isomorphism class may fail to be
represented uniquely by a rational isomorphism class, due to the presence
of cyclic twists.

These issues can be combined into a single observation: for a given family
of ramification data, the moduli of compatible $\mu_N$-covers is a torsor
over the $N$-torsion in the Picard {\em stack} of the base curve.  To be
precise, given ramification data $m_0,\dots,m_{n+1-2g}\in \Z/N\Z$ at points
$x_0,\dots,x_{n+1-2g}\in C$, an actual cover requires specifying a line
bundle ${\cal L}$ and an isomorphism ${\cal L}^{\otimes N}\cong {\cal
  L}(\sum_i m_i[x_i])$ (for lifts of $m_0,\dots,m_{n+1}$ to $\Z$, the
choices of which do not affect the family).  This of course is only
determined up to isomorphism, but is still a gerbe over the preimage under
$[N]$ of the line bundle with given divisor.  (That this is a gerbe should
not be a surprise: after all, the objects we are classifying have
nontrivial automorphisms, namely the $\mu_N$ action!)

There are two natural ways to deal with this gerbe structure.  The first is
to fix some unramified point of $C$ (analogous to $\infty$ in the genus 0
family) and mark one of its preimages in the $\mu_N$-cover.  Since the
fixed automorphism group acts strictly transitively on the preimages, the
corresponding moduli space is precisely the na\"{i}ve one, i.e., the torsor
over $\Pic^0(C)[N]$.  (There would normally be a Brauer obstruction to the
existence of a line bundle in a given rational geometric isomorphism class,
but marking a point also kills that obstruction.)  The one caveat is that
the requirement that the point be unramified introduces additional boundary
components to the moduli space.  (Of course, the monodromy around such
components simply changes the marked point, and thus the action on the Prym
lies in $\mu_N$.)

The other approach is somewhat more subtle, and involves the observation
that the ``degerbification'' can itself be viewed as a moduli problem.
More precisely, we may define a ``pseudocover'' to be the \'etale
sheafification of the notion of a $\mu_N$-cover modulo cyclic twists.  In
particular, a $\mu_N$-pseudocover is determined by a point ${\cal L}\in
\Pic(C)$ such that $N{\cal L}\cong [D]$, where $D$ is given by the
ramification data.  There is then an \'etale covering $U/S$ over which this
point in $\Pic(C)$ is represented by a line bundle, and a choice of
isomorphism then gives rise to an actual $\mu_N$-cover, with different
choices corresponding to different cyclic twists.  The cost to this
approach is that we cannot define the Prym of a pseudocover.  However, if
the pseudocover is split by the \'etale cover $U$, then the Prym, and thus
its $L$-torsion, can be defined over $U$.  Moreover, although
$\Prym(\hat{C})[L]$ itself does not descend to $U$, the quotient sheaf of
sets $\Prym(\hat{C})[L]/\mu_N$ {\em does}.  The action of an element of the
fundamental group on the quotient sheaf determines a {\em projective}
action on the original sheaf (i.e., an action up to scaling by $\mu_N$),
and thus one may still consider largeness in this setting.

The one advantage of the pseudocover approach is that there is never any
difficulty in defining it, so in particular given a family of ramification
data over a proper base, the corresponding family of pseudocovers will
itself be proper.  (This includes points where the covering curve itself
either fails to be flat or becomes singular.)  Moreover, pseudocovers
naturally form a group.

With this in mind, we turn to the question of the analogue of our genus 0
family, i.e., in which all but one ramification point is fixed.  With this
in mind, let us fix an irreducible $\mu_N$-cover $\hat{C}/C$ and a
ramification point $x_0\in C$.  This induces a family of ramification data
as we vary $x_0$ and thus a corresponding family of pseudocovers, which
will be a $\Pic(C)[N]$-torsor over $C$.  This, of course, leads to an
immediate question of whether this torsor is connected, and if not what its
components are.  In fact, it can only be connected if $x_0$ is totally
ramified: if $x_0$ has ramification degree $d$, then the change in the
intermediate $\mu_{N/d}$-cover is \'etale, and thus must be locally
constant as we vary $x_0$.  We should thus restrict to the component(s) for
which this intermediate cover is trivial.  We thus are left with
considering the $\Pic(C)[d]$-torsor over $C$ that classifies
$\mu_d$-pseudocovers with precisely two totally ramified points, one of
which is $x_0$.

We thus need to understand this torsor better.  It turns out that this is
closely related to a certain universal $\Pic(C)[d]$-torsor over $C$.  There
is a natural map $C\to \Pic^1(C)$ and thus we may consider the composition
$C\times \Pic^1(C)\to \Pic^1(C)^2\to \Pic^0(C)$ where the second map is
$(x,y)\mapsto yx^{-1}$.  This is clearly $\Pic^0(C)$-equivariant relative
to the natural action of $\Pic^0(C)$ on $\Pic^1(C)$, and thus is also
$[d]\Pic^0(C)$-equivariant, so that there is an induced morphism of stacks
\[
C\times [\Pic^1(C)/[d]\Pic^0(C)]\to [\Pic^0(C)/[d]\Pic^0(C)].
\]
Since $[d]$ is surjective, the latter quotient stack is nothing other than
the classifying stak $B\Pic^0(C)[d]$, and thus one may view this instead as
a morphism
\[
C\times [\Pic^1(C)/[d]\Pic^0(C)]\to B\Pic^0(C)[d].
\]
Taking the fiber product with $S\to B\Pic^0(C)[d]$ gives a
universal family of $\Pic^0(C)[d]$-torsors over $C$ parametrized by
$[\Pic^1(C)/[d]\Pic^0(C)]$.  Note that since the latter has a single
geometric point, the torsors in this family are all geometrically
isomorphic.

\begin{lem}
  This covering curve is irreducible.
\end{lem}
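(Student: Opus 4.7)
The plan is to identify the covering curve with an \'etale pullback of the multiplication-by-$d$ isogeny on $\Pic^0(C)$, and then to deduce irreducibility from the well-known surjectivity of the Abel--Jacobi map on \'etale fundamental groups.

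First I would make the construction concrete. Since $[\Pic^1(C)/[d]\Pic^0(C)]$ has a single geometric point and all torsors in the universal family are geometrically isomorphic, it suffices to work with the torsor corresponding to a chosen representative $L_0\in \Pic^1(C)$. Unwinding the definitions of the quotient and classifying stacks, this torsor is the pullback of the isogeny $[d]:\Pic^0(C)\to\Pic^0(C)$ (regarded as a $\Pic^0(C)[d]$-torsor) along the morphism $\phi_{L_0}:C\to\Pic^0(C)$ given by $x\mapsto L_0(-x)$, a translate of the Abel--Jacobi embedding composed with inversion. Hence the covering curve is the finite \'etale $\Pic^0(C)[d]$-cover
\[
\tilde{C}_{L_0}:=C\times_{\Pic^0(C)}\Pic^0(C)
\]
of $C$ (of degree $d^{2g}$), where the first factor maps via $\phi_{L_0}$ and the second via $[d]$.

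Second, because $C$ is smooth and $\tilde{C}_{L_0}\to C$ is \'etale, $\tilde{C}_{L_0}$ is smooth, so irreducibility is equivalent to connectedness. Connectedness of the Galois cover $\tilde{C}_{L_0}\to C$ is in turn equivalent to surjectivity of the monodromy homomorphism $\pi_1^{\et}(C)\to \Pic^0(C)[d]$, and this monodromy factors as
\[
\pi_1^{\et}(C)\xrightarrow{(\phi_{L_0})_*} \pi_1^{\et}(\Pic^0(C))\twoheadrightarrow \Pic^0(C)[d],
\]
with the first arrow the standard surjection induced by any translate of Abel--Jacobi (identifying $\pi_1^{\et}(\Pic^0(C))$ with the profinite abelianization of $\pi_1^{\et}(C)$, i.e., the Tate module of $\Pic^0(C)$) and the second the quotient by $[d]$-multiplication on the Tate module. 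The composition is therefore surjective, completing the proof.

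I do not foresee any significant obstacle: the argument is essentially a reduction to the well-known surjectivity of $\pi_1^{\et}(C)$ onto the Tate module of $\Pic^0(C)$. The only mildly delicate bookkeeping is matching the universal $\Pic^0(C)[d]$-torsor described via the stack $B\Pic^0(C)[d]$ with the concrete fiber product $\tilde{C}_{L_0}$, which is a routine unwinding of the definitions of a quotient stack and a classifying stack.
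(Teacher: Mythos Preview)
Your proposal is correct and follows essentially the same route as the paper: both identify the covering curve with the pullback of $[d]:\Pic^0(C)\to\Pic^0(C)$ along (a translate of) the Abel--Jacobi map and reduce irreducibility to the surjectivity of $\pi_1(C)\to\pi_1(\Pic^0(C))\twoheadrightarrow\Pic^0(C)[d]$. The only cosmetic difference is that the paper first lifts to characteristic $0$ and works over $\C$ with the topological fundamental group, whereas you invoke the \'etale version of the Abel--Jacobi surjectivity directly.
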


\begin{proof}
  Since the torsors are geometrically isomorphic, we may reduce to
  considering the pullback of $f:C\to \Pic^0(C)$ by $[d]$ where $f(x)=0$
  for some point $x\in C$.  This is an \'etale cover of a smooth projective
  curve, so irreducibility is inherited from generalizations, and thus we
  may lift to characteristic 0 and work over $\C$, where it reduces to
  surjectivity of $\pi_1(C,x)\to \pi_1(\Pic^0(C),0)[d]\cong H_1(C;\Z)[d]$.
\end{proof}

Pulling back through the morphism $C\to [\Pic^1(C)/[d]\Pic^0(C)]$ gives a
torsor over $C\times C$, classifying triples $(x,y,{\cal L})$ with $x,y\in
C$ and ${\cal L}^{\otimes d}\cong {\cal L}(y-x)$, the moduli space of
$\mu_d$-pseudocovers with two ramification points of exponents $\pm 1$.
(The pseudocover of course degenerates to the trivial cover when the points
coincide.)

Since we can always multiply the weights in our ramification data by units
in $\Z/N\Z$, we may assume that our original ramified point $x_0$ has
weight $N/d$, and thus that the torsor we needed to understand is the fiber
over $x=x_0$ of the above family of torsors, and is thus an irreducible
$\Pic(C)[d]$-torsor over $C$.  We can then either consider the quotient
sheaf $\Pic(\hat{C})/\mu_d$ on the resulting family of pseudocovers
(which is \'etale on the complement of the preimages of the fixed
ramification points) or can mark a point which is unramified for the
original cover and consider the corresponding sheaf $\Pic(\hat{C})$ (which
is \'etale on the complement of the preimages of the fixed ramification
points and the fixed unramified point).  In either case, we may ask whether
the monodromy is large, and the two answers will be the same.

Similar considerations apply if we allow all of the ramification points to
move; in that case, the only difference is that we should replace $d$ by
the least common multiple of the ramification degrees.  (If that lcm is not
$N$, then we may as well fix the intermediate \'etale $\mu_{N/d}$-cover,
which we should assume irreducible.)  This gives a sheaf on a
$\Pic(C)[d]$-torsor over $C^{n+2-2g}$ (or over $C^{n+3-2g}$ if we mark an
additional unramified point), and again we may ask whether the monodromy is
large.

Finally, we may allow $C$ itself to vary.  If $d=N$, then the corresponding
family is a $\Pic(C)[N]$-torsor over ${\cal M}_{g,n+2-2g}$ (for pseudocovers)
or ${\cal M}_{g,n+3-2g}$ (for the version with additional marked point), while
otherwise we get a $\Pic(C)[d]$-torsor over the fiber product with the
family of irreducible \'etale $\mu_{N/d}$-covers of genus $g$ curves.
(This is contained in the relative $\Pic(C)[N/d]$ as the component where
the point has exact order $N/d$; that this is irreducible is well-known.)

Of course, allowing additional things to vary can only make the monodromy
larger, and thus we will aim whenever possible to work with the
single-varying-point family.  (This also has the advantage of being
$1$-dimensional, and thus making descent to finite characteristic somewhat
simpler.)  The main exception (apart from some issues with wild
ramification) is that the single-varying-point family requires there to
{\em be} a ramification point to vary, and thus we must allow the curve to
vary if we want to consider \'etale covers.

\medskip

The idea for extending to higher genus is that if an \'etale sheaf extends
along a dvr, then the monodromy of the general fiber contains the monodromy
of the special fiber.  In particular, in our context, this implies that if
$\hat{C}/R$ is a $\mu_N$-pseudocover with smooth general fiber and Prym of
good reduction (this is independent of cyclic twist, so still makes
sense!), then the monodromy action on $\Prym(C_k)/\mu_N$ lifts to $C_K$ and
thus to the varying curve family.  Since $C_k$ is allowed to be singular,
its Prym is built out of lower-dimensional Pryms, so allows us to induct.
Most of the time, of course, this only gives a relatively small subgroup of
monodromy (though see \cite{AchterJD/PriesR:2007} for a way to use
compatible pairs of degenerations to gain traction), but it turns out that
there is a way to degenerate so that the result still has large monodromy.

We consider, in particular, the boundary component $\Delta_0$ on which the
curve remains irreducible but acquires a node.  As we have seen, the node
may be ramified on the degenerate cover; when this happens, there is no
free orbit of nodes, and thus no torus contribution to the Prym.  It
follows that the Prym has good reduction along any dvr degenerating to such
a cover, and thus that the $L$-torsion-of-Prym sheaf extends.  Working in
reverse, let $\hat{C}_{g-1}\to C_{g-1}$ be a $\mu_N$-cover of a smooth
curve of genus $g-1$ with $n+4-2g$ marked ramification points, such that
two of the ramification points have opposite weight.  Then as discussed
following Proposition \ref{prop:smoothing_AN}, the nodal curve obtained by
gluing the two ramification points has an equivariant smoothing over some
dvr with residue field a $\mu_N$-extension of the field of definition of
$C_{g-1}$.

Now, consider a family of such genus $g-1$ covers with large monodromy,
parametrized by a stack ${\cal X}/k$ with trivial generic stabilizer, and
let $\hat{C}_{g-1}\to C_{g-1}$ be the generic cover in the family, defined
over $\bar{k}({\cal X})$.  The action of the fundamental group on an
\'etale sheaf has the same image as the action of $\Gal(\bar{k}({\cal
  X}))$, so the latter also has large image.  Let $l$ be the
$\mu_N$-extension of $\bar{k}({\cal X})$ discussed following Proposition
\ref{prop:smoothing_AN}, and let $C^+_R$ be any smoothing of the nodal
curve to a Henselian dvr with residue field $l$ and field of fractions $L$.
Then the cover extends to $C^+_R$ and the image of the action of $\Gal(L)$
on the resulting Prym contains the image of the action of $\Gal(l)$.  But
$\Gal(l)$ is the kernel of a morphism $\Gal(\bar{k}({\cal X}))\to \mu_N$,
and thus (since our rank assumption ensures that $\SU$ is perfect) is still
large!

For instance, if we start with a fixed cover over an algebraically closed
field with a chosen ramification point not being glued, then we may first
choose an equivariant smoothing over the original field and then base
change to the field of definition of the corresponding single-varying-point
family.  This modified cover extends (over a $\mu_N$-cover of the field of
definition) as well, and its generic fiber is a base change of a
single-varying-point family, so can only have smaller monodromy than that
family.  We thus conclude inductively (and using the fact that in
characteristic 0, the monodromy is constant in families) that
single-varying-point families always have large monodromy.

\begin{thm}
  Fix $N>1$, $n\ge \max(5,2g-1)$, and let $\hat{C}\to C$ be a
  $\mu_N$-cover of a smooth genus $g$ curve of characteristic 0 with
  $n+2-2g$ ramification points.  Let $x_1$ be one of those points, with
  ramification degree $d$.  Then the monodromy of the family of covers
  obtained by varying $x_1$ is large.
\end{thm}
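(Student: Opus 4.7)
The plan is to induct on the genus $g$. The base case $g=0$ is precisely the largeness of the cyclotomic Jordan-Pochhammer monodromy established in Sections~2--6 (taking into account the various mod-$\frakp$, pairwise, and lifting steps).

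For the inductive step with $g\ge 1$, I would realize the given single-varying-point family as a specialization of an auxiliary family for which the $(g-1)$ case applies. Pick any nonzero auxiliary weight $m\in \Z/N\Z$, and consider a family of $\mu_N$-covers $\hat{C}_{g-1}\to C_{g-1}$ of smooth genus $(g-1)$ curves with $n+4-2g = n+2-2(g-1)$ ramification points: the original $n+2-2g$ data (with $x_1$ still free to vary and with ramification degree $d$), together with two auxiliary fixed ramification points carrying opposite weights $m$ and $-m$. Since $n\ge \max(5,2g-1)$ implies $n\ge \max(5,2(g-1)-1)$, the inductive hypothesis applies, and this auxiliary family has large monodromy.

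Now degenerate by gluing the two auxiliary ramification points to form a nodal cover of arithmetic genus $g$. By Proposition~\ref{prop:smoothing_AN} and the subsequent discussion, this nodal $\mu_N$-cover admits an equivariant smoothing over a Henselian dvr $R$ whose residue field $k$ is a $\mu_N$-extension of the original field of definition; the generic fiber $C_K$ is a smooth $\mu_N$-cover of a genus-$g$ curve with exactly the target ramification data, with $x_1$ still free to vary. Because the node is ramified (opposite weights), the Prym decomposition for seminormal curves in Section~7 shows there is no torus contribution to the Prym of the special fiber. The good-reduction criterion of Section~7 then gives that $\Prym(C_K,Z)$ has good reduction along $R$ and that, for $L$ prime to the residue characteristic, $\Prym(C_k,Z)[L]\to \Prym(C_K,Z)_k[L]$ is an isomorphism on identity components.

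Specializing along $R$, the monodromy of the $L$-torsion-of-Prym sheaf on the generic fiber contains the monodromy on the special fiber. By the normalization sequence from Section~7, the latter agrees with the monodromy of the $(g-1)$ auxiliary family, which is large by induction. Replacing $k$ by its $\mu_N$-extension merely passes to the kernel of a $\mu_N$-valued character of $\Gal$, a normal subgroup of index dividing $N$; since $n\ge 5$ forces each $\SU(A_\frakp)$ to be perfect (Corollary~\ref{cor:sc_is_perfect}), this kernel still projects densely onto $\prod_{\frakp\in S_A}\SU(A_\frakp)$, so largeness is preserved. Finally, because the geometric monodromy of a family of smooth covers is locally constant in characteristic $0$, largeness of the specific member of the target family produced by the smoothing implies largeness of the whole single-varying-point family, completing the induction.

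The main technical obstacle is ensuring that the smoothed generic fiber genuinely represents a point of the target single-varying-point family (rather than one lying in a different coset of the Picard-stack torsor classifying cyclic twists): this is exactly what the pseudocover formalism introduced at the start of Section~9 is designed to control, allowing the $\mu_N$-ambiguity to be absorbed harmlessly into the residue-field extension.
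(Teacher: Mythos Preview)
Your proposal is correct and follows essentially the same inductive strategy as the paper: reduce to the genus $g-1$ single-varying-point family by gluing two opposite-weight ramification points, invoke good reduction of the Prym at the ramified node (Section~7), smooth equivariantly over a Henselian dvr at the cost of a $\mu_N$-extension of the residue field, and use perfectness of $\SU$ together with constancy of monodromy in characteristic $0$ to finish. One small point you gloss over: you write ``pick any nonzero auxiliary weight $m$'', but if $N/\gcd(m,N)$ is a prime power the auxiliary genus $g-1$ family can acquire extra reducible primes not present in the target genus $g$ family, so largeness there would not propagate at those primes; the paper notes this in the remark following the theorem and fixes it by taking the glued points totally ramified (i.e., $m$ a unit in $\Z/N\Z$).
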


\begin{rem}
  One technical issue is that if we are not careful about the ramification
  degrees on the points being glued, the monodromy of the genus $g-1$
  family might acquire additional reducible primes.  Of course, we can
  avoid this possibility by only gluing totally ramified points.
\end{rem}

\begin{rem}
  We can replace $5$ by $4$ above by noting that the counterexamples we
  encountered for $n=4$ had no pairs of exponents adding to 0.
\end{rem}

This implies all of the other cases of interest other than that of
unramified covers, for which the only difference is that since one of the
points being glued must be movable, the smoothing cannot descend, and thus
we do not get a constant-curve family.

\begin{thm}
  For any $N>1$ and $g>3$, the monodromy of the Prym of a general
  irreducible \'etale $\mu_N$-cover of a smooth genus $g$ curve of
  characteristic 0 is large.
\end{thm}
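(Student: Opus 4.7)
The plan is to imitate the node-smoothing argument of the previous theorem, accepting that in the \'etale case the base curve must vary as the node is smoothed. The idea is to degenerate a generic smooth genus $g$ curve to a nodal curve whose normalization is a smooth genus $g-1$ curve equipped with a suitable $\mu_N$-cover, apply the previous theorem there, and then transport largeness back to the smooth generic fiber via good reduction of the Prym.

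First, I would take a $\mu_N$-cover $\hat{C}_{g-1}\to C_{g-1}$ of a smooth genus $g-1$ curve with exactly two ramification points $x_0,x_1$ of opposite weights $\pm 1$ (so both totally ramified) and consider the one-parameter family obtained by letting $x_1$ vary. The Prym has $\Z[\zeta_N]$-rank $n=2g-2$; since $g>3$ forces both $n\ge 5$ and $n\ge 2(g-1)-1$, the previous theorem applies and this one-parameter family has large monodromy.

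Next, I would glue $x_0$ to $x_1$ to form a nodal genus $g$ curve $C^+_{g-1}$. Since the glued points are tamely ramified with opposite weights, the construction following Proposition \ref{prop:smoothing_AN} provides a $\mu_N$-equivariant smoothing $\hat{C}^+_R$ over a Henselian dvr (after a $\mu_N$-extension of residue fields) whose generic fiber is an \'etale $\mu_N$-cover of a smooth genus $g$ curve. The only branching on the special fiber is at the node, whose two preimages on the normalization form a single non-free $\mu_N$-orbit, so the criterion $b-n-c+i=0$ from Section 7 yields no torus contribution and $\Prym$ has good reduction. The corollary on good reduction of Pryms then gives that the $L$-torsion-of-Prym sheaf extends across $R$ and that the monodromy on the generic fiber contains the monodromy on the special fiber, which in turn is the large monodromy of the genus $g-1$ family restricted to the finite-index subgroup of Galois cut out by the $\mu_N$-extension of residue fields. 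Commutators of base Galois elements lie in this subgroup and map densely into $\SU_n$ by perfectness (for $n\ge 5$), so largeness is preserved under this restriction.

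Finally, the smoothing, viewed together with the varying-$x_1$ parameter of the genus $g-1$ family, embeds as a two-parameter subfamily of the full moduli of \'etale $\mu_N$-covers of smooth genus $g$ curves; since the image of the monodromy of a subfamily is contained in the image of the monodromy of the full family, largeness of the subfamily propagates to largeness of the general \'etale cover. The main obstacle I anticipate is the bookkeeping around the $\mu_N$-ambiguity intrinsic to the pseudocover perspective and the forced $\mu_N$-extension of residue fields coming from the obstruction in $H^1(k;\mu_N)$ identified in the node-smoothing discussion. The perfectness-of-$\SU_n$ argument above handles the residue-field issue, and the pseudocover ambiguity is harmless because largeness is a condition on the image modulo scalars (the determinant component is controlled separately). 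Thus, modulo these formal bookkeeping steps, the degeneration argument reduces the \'etale case to the already-established ramified case.
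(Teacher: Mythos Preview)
Your proposal is correct and follows essentially the same approach as the paper: degenerate to a genus $g-1$ cover with two totally ramified points of opposite weight, one of which is the moving point, glue and smooth equivariantly using the node-smoothing construction, and transport largeness via good reduction of the Prym together with perfectness of $\SU_n$ to absorb the $\mu_N$-extension of residue fields. The paper phrases the final step at the level of function fields and absolute Galois groups rather than as an embedding of a two-parameter subfamily into moduli, but this is a cosmetic difference.
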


\begin{rem}
  Similarly, this can be extended to $g\ge 3$ without too much difficulty.
\end{rem}

\section{Reducing to curves of finite characteristic}

\subsection{Monodromy in good characteristic}

Although it is already of interest to know that the monodromies of Pryms
are large in characteristic 0, the most interesting applications require
understanding the monodromy in finite characteristic.  As one expects, the
complexity of this depends greatly on whether the characteristic divides
$N$.  Note that in any case, we must modify the definition of ``large'' in
characteristic $p$ to exclude the factors corresponding to $p$-power
torsion.  (The $p^l$-torsion of the Prym is badly ramified at $p$, so
although there is still a reasonable notion of monodromy, it is
inaccessible from characteristic 0.)

Since it is no longer a priori the case that the single-varying-point and
fixed-curve families have monodromy indepdendent of the fixed input data,
we focus first on the single-varying-point case.  This not only gives the
strongest result (at least for {\em ramified} covers!), but simplifies the
argument, as it lets us descend the monodromy from a smooth curve rather
than a higher dimensional scheme or even stack.

\begin{thm}
  Fix $N>1$, $n\ge \max(5,2g-1)$, and let $\hat{C}\to C$ be a
  geometrically connected $\mu_N$-cover of a smooth genus $g$ curve of
  characteristic $p\nmid N$ with $n+2-2g$ ramification points.  Let
  $x_1$ be one of those points, with ramification degree $d$.  Then the
  monodromy of the family of covers obtained by varying $x_1$ is large.
\end{thm}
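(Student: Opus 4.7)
The plan is to reduce to the characteristic-$0$ version of the theorem (just proved) by lifting the data to mixed characteristic and then descending the generic monodromy to the special fiber via specialization of the prime-to-$p$ tame fundamental group.

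Since $p\nmid N$ the cover $\hat{C}_k\to C_k$ is tame, so it lifts: take a strict Henselian mixed characteristic DVR $R$ with residue field containing $k$ and fraction field $K$ of characteristic $0$; lift $C_k$ to a smooth proper curve $C_R/R$; extend each of the $n+2-2g$ ramification points to a section $\Spec(R)\to C_R$; and extend the ramification data (in the tame case, just weights $m_i\in\Z/N\Z$) constantly over $R$.  Proposition~\ref{prop:covers_lift} then yields a $\mu_N$-cover $\hat{C}_R\to C_R$ with the prescribed generic and special fibers, and tameness makes it smooth over $R$ with unchanged ramification profile.  The single-varying-point base $U_k := C_k\setminus\{x_0,x_2,\dots,x_{n+1-2g}\}$ thus lifts to $U_R := C_R\setminus\{x_0,x_2,\dots\}$, and the entire one-parameter family of $\mu_N$-covers extends to a smooth relative family over $U_R$.

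The Prym of this relative family is an abelian scheme over $U_R$ (the relative cover has tame ramification at smooth sections of a smooth relative curve), so for any $L$ coprime to $p$ the $L$-torsion is a lisse \'etale sheaf $\mathcal{F}_L$ on $U_R$, tame along the deleted sections.  By the specialization theorem for smooth curves over Henselian DVRs (SGA~1~X), the prime-to-$p$ quotients of the tame fundamental groups of $U_{\bar k}$ and $U_{\bar K}$ are canonically isomorphic, and hence the images of the monodromy on $\mathcal{F}_L$ on the two fibers coincide.  Applying the characteristic-$0$ theorem to the generic fiber gives largeness there, i.e., dense image in $\prod_{\frakp\in S_A}\SU(A_\frakp)$; specializing to characteristic $p$ preserves this image at every factor with $\frakp\nmid p$, which is exactly largeness in the modified sense appropriate to characteristic $p$.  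Passing to a finite \'etale covering of $R$ if necessary to account for a residue-field extension, and then taking the geometric monodromy, does not affect the image in $\SU$ beyond what is already allowed by the definition.

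The main obstacle is ensuring that the lift genuinely exists with all desired compatibilities: the cover, the fixed ramification points, any auxiliary rigidifying unramified marked point, and each ramification weight must lift simultaneously while preserving the ramification degree $d$ at the moving point.  In the tame case this is routine---the local ramification data is simply an element of $\Z/N\Z$ at each marked section and extends canonically, while the global Brauer-type obstruction lives in $H^2(C_R;\mu_N)\cong H^2(C_k;\mu_N)$ and so vanishes because the cover on the special fiber already exists.  Once the lift is in hand, the remainder is a formal application of proper-smooth base change and the tame specialization theorem.
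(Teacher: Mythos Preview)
Your overall strategy—lift the cover and its marked points to a mixed-characteristic dvr, then compare monodromy on the two fibers via tame specialization—is exactly the paper's approach.  But the step where the argument actually has content is the one you pass over: the assertion that $\mathcal F_L$ is ``tame along the deleted sections.''  Your parenthetical justification (``the relative cover has tame ramification at smooth sections of a smooth relative curve'') concerns the ramification of $\hat C\to C$, not the ramification of the \emph{monodromy sheaf on the parameter space} $U$.  These are different questions.  The sheaf $\mathcal F_L$ is lisse on $U_R$, but the local monodromy around the puncture $x_1=x_i$ is an element of $\GL(\mathcal F_L)$ whose order must be shown to be prime to $p$; nothing you have said forces this.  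The paper fills this in by identifying that local monodromy explicitly: it is a pseudo-reflection with nontrivial eigenvalue $\zeta_N^{m_1+m_i}$.  If this eigenvalue is $\ne 1$ (a homology), the order divides $N$; if it is $1$ (a transvection), the action on $L$-torsion has order dividing $L$.  Since $p\nmid N$ and $p\nmid L$, both cases are tame, and only then does the specialization isomorphism of tame fundamental groups apply.

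A secondary point: for $g>0$ the base of the one-parameter family is not the punctured curve $C\setminus\{x_0,x_2,\dots\}$ itself but the natural $\Pic(C)[d]$-torsor over it (Section~9).  This does not damage your argument—the torsor is finite \'etale of degree prime to $p$, so passing to it only replaces the monodromy by a prime-to-$p$ finite-index subgroup, which preserves largeness—but it should be acknowledged, since the family of covers is not literally parametrized by your $U_k$.
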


\begin{proof}
  Choose a lift of the tuple $(C,x_1,\dots,x_{n+2-2g})$ over a mixed
  characteristic dvr $R$.  Then the family of pseudocovers obtained by
  varying $x_1$ is a punctured version of the natural $\Pic(C)[d]$-torsor
  over $C$.  The ramification around the preimages of $x_i$ are
  pseudo-reflections with eigenvalue $\zeta_N^{m_1+m_i}$.  If such a
  pseudo-reflection is a homology, then it has order dividing $N$, so is
  tame, while if it is a transvection, then its action on $L$-torsion has
  order $L$, so is again tame.  Thus the sheaf has tame ramification and
  the ramification divisors are disjoint, so form a normal crossings
  divisor, and thus the monodromy in characteristic $p$ is isomorphic to
  the monodromy in characteristic 0.
\end{proof}

By semicontinuity of monodromy, this immediately implies the analogous
statement for the fixed-curve family, as well as for any varying-curve
family that {\em has} a ramification point.  It thus remains only to
consider the case of (geometrically connected) unramified
$\mu_N$-pseudocovers.  Here we observe that the argument we used to extend
from genus $g-1$ to genus $g$ works equally well in good characteristic,
and thus we may reduce the unramified genus $g$ case to any ramified genus
$g-1$ case with two ramification points.

\begin{cor}
  For $g>3$, Pryms of geometrically connected unramified $\mu_N$-covers of
  smooth genus $g$ curves have large monodromy.
\end{cor}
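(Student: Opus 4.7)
The plan is to repeat the nodal-degeneration argument used in the characteristic $0$ passage from genus $g-1$ to genus $g$, but now in the good-characteristic setting provided by the theorem just proved. Concretely, I would start from a smooth genus $g-1$ cover $\hat{C}_{g-1}\to C_{g-1}$ carrying exactly two ramification points of opposite weight, parametrized by the associated single-varying-point family with one of those two points moving. Gluing the two ramification points produces an irreducible nodal curve of arithmetic genus $g$; since the node is totally ramified, it contributes no free orbit to the set of nodes, and the proposition of Section 7 on good reduction forces the Prym to have no torus factor along any equivariant smoothing. Proposition \ref{prop:smoothing_AN}, combined with the discussion following it, supplies such a smoothing over a Henselian dvr whose residue field is a $\mu_N$-extension of the base field, and the good-reduction criterion then guarantees that the $L$-torsion-of-Prym sheaf extends across the boundary for every $L$ prime to the residue characteristic.

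The rank bookkeeping works out exactly at the threshold we need. An unramified genus $g$ cover has $\Z[\zeta_N]$-rank $n = 2g-2$, while the genus $g-1$ cover with $n'+2-2(g-1) = 2$ ramification points also has rank $n' = 2g-2$. For $g > 3$ this common rank is at least $6$, so $n'\ge \max(5, 2(g-1)-1)$ and the preceding theorem applies to the ramified genus $g-1$ family, giving largeness of its monodromy. Because the sheaf extends across the degeneration, the monodromy of the generic fiber of the smoothing contains the monodromy on the special fiber, i.e., the image of the genus $g-1$ monodromy restricted to the kernel of the $\mu_N$-valued character describing the residue-field extension. Corollary \ref{cor:sc_is_perfect} (applicable since $n\ge 6$) ensures that each factor $\SU_n(A_\frakp)$ is perfect, so passing to the kernel of a $\mu_N$-character does not shrink the projection to $\prod_\frakp \SU_n(A_\frakp)$, and largeness is preserved.

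To conclude, I would observe that the generic fiber of this smoothing is nothing other than a geometrically connected unramified $\mu_N$-cover of a smooth genus $g$ curve, and that by semicontinuity the monodromy of the full unramified genus $g$ family contains that of this one-parameter subfamily. Combined with the lower bound just extracted, this gives largeness for the generic unramified family, which is the content of the corollary.

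The main obstacle I anticipate is the descent bookkeeping: verifying that the good-reduction proposition of Section 7 truly applies to the specific smoothings produced by Proposition \ref{prop:smoothing_AN} (so that the $L$-torsion sheaf is well-defined on the total space, not merely on the generic fiber), and that the $\mu_N$-extension of residue fields introduced by the ramification-data obstruction in $H^1(k;\mu_N)$ is innocuous in the sense above. Everything else is a direct application of the preceding theorem together with perfectness of $\SU_n$ in the relevant rank range.
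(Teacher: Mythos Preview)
Your proposal is correct and follows essentially the same approach as the paper: reduce the unramified genus $g$ case to a ramified genus $g-1$ single-varying-point family with two ramification points of opposite weight, glue them to a ramified node, and invoke the good-reduction criterion for the Prym together with perfectness of $\SU_n$ to transport largeness across the $\mu_N$-extension of residue fields. The paper's own proof is terser---it simply observes that the genus-$(g-1)$-to-$g$ argument of Section 9 goes through unchanged in good characteristic---but your spelled-out version, including the rank bookkeeping and the appeal to Corollary \ref{cor:sc_is_perfect}, is exactly what that observation unpacks to.
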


\begin{rem}
  As before, this extends without too much difficulty to the
  case $g=3$.
\end{rem}

\subsection{Monodromy in bad characteristic}

The main issue with the varying-point family in bad characteristic is that
although there is an isomorphism between the spaces of ramification data at
different points of $C$, that isomorphism is not canonical (unless
$g(C)=1$), and thus there is no general way to define the family.  The
exception, of course, is when the moving point is tamely ramified, since
then the data is just an integer.  It turns out that those cases can still
be deduced from largness in characteristic 0, though the argument is quite
a bit more delicate.  The biggest issue is that the \'etale sheaf has wild
ramification, and (relatedly) the ramification divisor does not have normal
crossings.  We can kill the wild ramification by a suitable cyclic cover,
which for suitable choices of the tame part of the cover gives a new family
with normal crossings ramification.  Such choices of the tame part tend
{\em not} to produce covers with good reduction, but we can still arrange
to the get the correct {\em Prym}.

In this section, we suppose that $N$ factors as $p^lN'$ with $\gcd(p,N')=1$.

Note that since the point has tame ramification, we are only varying the
tame part of the cover, and thus the family of covers is the usual
$\Pic(C)[d]$-torsor over $C$.

\begin{lem}\label{lem:ramdeg}
  Let $R$ be a dvr, and let $\hat{C}_R$ be a smooth projective curve with
  geometrically irreducible fibers with an action of $\Z/N\Z$ and quotient
  $C_R$, such that $\Z/N\Z$ acts faithfully on $\hat{C}_k$.  Suppose
  moreover that the ramification points of $C_K$ are defined over $K$, and
  let $\bar{x}$ be a ramification point of $C_k$ of degree $m$.  Then
  the ramification points lying over $\bar{x}$ have degree dividing $m$,
  and at least one has degree $m$.
\end{lem}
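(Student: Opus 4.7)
The plan is to treat the two claims separately. Let $H\subset \Z/N\Z$ denote the unique subgroup of order $m$, which by the abelian hypothesis is the common stabilizer of every preimage of $\bar x$ in $\hat C_k$.

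For the first claim, the idea is that fixed loci specialize: given a ramification point $x_K\in C_K$ whose closure in $C_R$ meets the special fiber at $\bar x$, pick any preimage $\hat x_K\in \hat C_K$ and let $H_K\subset \Z/N\Z$ be its stabilizer.  Since $\hat C_R$ is proper over $R$, the closure of $\hat x_K$ specializes to a closed point of $\hat C_k$ lying over $\bar x$, and this specialization is fixed by $H_K$.  Because every preimage of $\bar x$ has stabilizer $H$, we get $H_K\subseteq H$, hence $|H_K|$ divides $m$.

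For the second claim I would construct an $H$-stable horizontal subscheme $Y_R\hookrightarrow\hat C_R$ through $\hat x$.  The generic fiber $Y_K$ is then a closed point of $\hat C_K$ whose $\Z/N\Z$-stabilizer contains $H$; its image in $C_K$ is $K$-rational by hypothesis, has ramification degree equal to the size of the stabilizer of $Y_K$, hence at least $m$ (and so exactly $m$ by the first claim), and specializes to $\bar x$ by construction.  To build $Y_R$, decompose $H = H_{p'}\times H_p$ into its prime-to-$p$ and $p$-primary parts, where $p$ is the residue characteristic of $R$.  Since $\hat x$ is an isolated fixed point of $H$ on $\hat C_k$ and $k^\times$ has no $p$-torsion, the character of $H$ on the cotangent of $\hat C_k$ at $\hat x$ is already nontrivial on $H_{p'}$; hence $H_{p'}$ acts tamely with isolated fixed point on $\hat C_k$, and a diagonalization calculation in the $2$-dimensional regular local ring $\mathcal O_{\hat C_R,\hat x}$ shows that its fixed locus in $\hat C_R$ is \'etale of relative dimension $0$ over $R$ near $\hat x$; taking $Y_R$ to be the component through $\hat x$ gives the desired horizontal subscheme.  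Finally, since $\Z/N\Z$ is abelian, $H_p$ commutes with $H_{p'}$, so $H_p$ preserves the $H_{p'}$-fixed locus and (fixing $\hat x$) also its component $Y_R$ through $\hat x$, so $Y_R$ is $H$-stable.

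The main obstacle is the wild case $p\mid m$.  Scheme-theoretically, the full $H$-fixed subscheme of $\hat C_R$ at $\hat x$ can collapse to a fat point: its Zariski tangent space is only $1$-dimensional (the $H$-fixed part of the cotangent is spanned just by $d\pi$), which bounds the Krull dimension only from above.  The key move is to peel off the tame part $H_{p'}$ first, which cuts out an honest horizontal curve; the wild part $H_p$ is then constrained to preserve that curve by the commutativity argument, even though on its own its fixed subscheme in $\hat C_R$ might only be $\{\hat x\}$.
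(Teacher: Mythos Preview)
Your argument for the first claim (that ramification degrees over $\bar x$ divide $m$) is correct and matches the paper's: fixed loci are closed, so the stabilizer $H_K$ of any preimage $\hat x_K$ also fixes its specialization, forcing $H_K\subseteq H$.

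For the second claim there is a genuine gap: your construction of the horizontal curve $Y_R$ relies on $H_{p'}$ being nontrivial, but nothing in the hypotheses prevents $m$ from being a pure power of the residue characteristic $p$. When $H_{p'}=1$ its fixed locus is all of $\hat C_R$, and ``the component through $\hat x$'' is not a curve. Artin--Schreier type covers give examples with $H=H_p$, so this case genuinely occurs.

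The paper sidesteps the tame/wild dichotomy with a single uniform observation: for any nontrivial $h\in\Z/N\Z$, the $h$-fixed subscheme of $\hat C_R$ is a \emph{Cartier divisor}, realized as the intersection of the diagonal and the graph of $h$ inside $\hat C_R\times_R\hat C_R$ (both Cartier because $\hat C_R\to\Spec R$ is smooth of relative dimension~$1$). Applying this to a generator $h$ of $H$, the fixed locus is purely $1$-dimensional in $\hat C_R$, while faithfulness of the action on $\hat C_k$ forces its intersection with the special fiber to be finite. Hence the component through the preimage $\bar y$ of $\bar x$ dominates $\Spec R$, and its generic point is a closed point of $\hat C_K$ fixed by $H$, whose image in $C_K$ is the required ramification point of degree $m$.

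This also corrects the misconception in your final paragraph. Since $H$ is cyclic, its fixed subscheme equals that of a single generator and is therefore always a Cartier divisor, never a $0$-dimensional fat point. Your tangent-space heuristic is off as well: in the wild case the defining equation of the fixed locus can lie in $\mathfrak m^2$ (e.g.\ $h^*t-t\in(t^2)$ when $h$ is wildly ramified), so the Zariski tangent space of the fixed subscheme at $\hat x$ can be the full $2$-dimensional tangent space of $\hat C_R$, not $1$-dimensional. The Cartier-divisor argument handles all cases at once and makes the tame/wild split unnecessary.
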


\begin{proof}
  Denote the generator of $\Z/N\Z$ by $g$, so that the hypothesis on
  $\bar{x}$ is that it is the image of a point of $\hat{C}_k$ fixed by
  $g^{N/m}$ but not by any smaller power of $g$.  Let $C'$ be the quotient
  of $\hat{C}$ by $\langle g^{N/m}\rangle$.  Then $C'$ is a
  $\Z/m\Z$-cover of $C$, and we can split the desired conclusion into two
  pieces: (1) that $\hat{C}_K/C'_K$ is unramified at points reducing to
  preimages of $\bar{x}$ and (2) that $C'_K/C_K$ is totally ramified at
  some point reducing to $\bar{x}$.  In the first case, we may assume WLOG
  $C'=C$, so $m=1$, while in the second case we may assume WLOG that
  $\hat{C}=C'$, so $m=N$.

  For both claims, we note that for any nontrivial finite automorphism $h$
  of $\hat{C}_R$, the $h$-fixed subscheme of $\hat{C}_R$ is a Cartier
  divisor.  Indeed, we may write that subscheme as the intersection in
  $\hat{C}_R\times_R \hat{C}_R$ of the diagonal and the graph of $h$, both
  of which are Cartier divisors since $\hat{C}_R$ is smooth.  Since $h$
  acts nontrivially on $\hat{C}_R$ and $\hat{C}_R$ is irreducible, the
  intersection is itself a Cartier divisor.

  For the first claim, we note that since Cartier divisors are closed, for
  any nonidentity element $h\in \Z/N\Z$, and any fixed point of $h$ on
  $\tilde{C}_K$, its closure in $\hat{C}_R$ is also fixed by $h$, and thus
  the reduction to $\hat{C}_k$ is fixed.  In particular, if there is a
  ramified point with limit $\bar{x}$, then its preimages in $\hat{C}_K$
  are fixed by some $h$ and thus so is $\bar{x}$.

  For the second claim, since Cartier divisors have codimension 1, the
  preimage $\bar{y}$ of $\bar{x}$ cannot be an isolated fixed point of $g$
  in $\hat{C}_R$, but since $\Z/N\Z$ acts faithfully on $\hat{C}_k$, it
  {\em is} an isolated fixed point of $g$ in $\hat{C}_k$.  In particular,
  the generic point of any component of the $g$-fixed subscheme containing
  $\bar{y}$ is a ramified point over $K$ as required.
\end{proof}

We will need to know when a $\Z/p^l\Z\times \mu_{N'}$-cover has good
reduction.  Since this is hard for $\Z/p^l\Z$-covers, we restrict our
attention to understanding additional conditions that suffice given that
the intermediate $\Z/p^l\Z$-cover has good reduction.

\begin{lem}
  Let $R$ be a mixed characteristic dvr of residue characteristic $p$, and
  let $\hat{C}_R\to C_R$ be a smooth $\Z/p^l\Z$-cover such that the
  ramification points in $C_K$ are defined over $K$.  Further let
  $\hat{C}'_R\to C_R$ be a ramified $\mu_{N'}$-cover such that $\hat{C}'_k$
  is smooth and the ramification points in $C_K$ are defined over $K$.
  Suppose moreover that for every ramification point $x\in C_K$ of
  $\hat{C}'_K\to C_K$, the ramification degree of $\bar{x}$ for
  $\hat{C}_k\to C_k$ agrees with that of $x$ for $\hat{C}_K\to C_K$.  Then
  the normalization of $\hat{C}\times_C \hat{C}'$ has good reduction.
\end{lem}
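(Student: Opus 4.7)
The plan is to show that the normalization $Z_R$ of $\hat C_R\times_{C_R}\hat C'_R$ is smooth over $R$ by formal-local analysis at each closed point, distinguishing whether its image in $C_R$ lies on a ramification section of $\hat C'_R\to C_R$. Since $\hat C'$ has good reduction and its ramification points are defined over $K$, those sections extend over $R$; off of them, $\hat C'_R\to C_R$ is \'etale, so $Z_R\to\hat C_R$ is \'etale, and smoothness of $Z_R$ then follows from the assumed smoothness of $\hat C_R$ over $R$ (no normalization being needed there).

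The heart of the argument is at a closed point lying over a ramification section $x$. I would choose formal coordinates $\hat O_{C_R,x}\cong R[[t]]$; using $\gcd(e',p)=1$ together with smoothness of $\hat C'_R$, I can arrange $\hat O_{\hat C'_R,y'}\cong R[[s']]$ with $t=s'^{e'}$ (possibly after an unramified base change of $R$ to adjoin the needed $e'$-th roots of unity). For a wild preimage $y$, write $\hat O_{\hat C_R,y}\cong R[[s]]$ with $t=F(s)$ for some $F\in R[[s]]$. The crucial input from the hypothesis is that $v_s(F\bmod K)=v_s(F\bmod k)=:e$, which forces $F(s)=s^eU(s)$ with $U\in R[[s]]^\times$: the coefficients of $s^i$ for $i<e$ must vanish in both $K$ and $k$, hence in $R$, and the coefficient of $s^e$ must be a unit in $k$, hence in $R$.

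The formal fiber product is then $R[[s,s']]/(s^eU(s)-s'^{e'})$. Since $e\mid p^l$ and $e'\mid N'$ are coprime, Hensel's lemma applied to $T^{e'}-U(u^{e'})$ in $R[[u]]$ (valid because $\gcd(e',p)=1$ and $U(0)$ is a unit) produces $V\in R[[u]]^\times$ with $V^{e'}=U(u^{e'})$. The assignment $s\mapsto u^{e'}$, $s'\mapsto u^eV(u)$ embeds the fiber product as a subring of $R[[u]]$; since $T^{e'}-s^eU(s)$ is irreducible over $\mathrm{Frac}\,R[[s]]$ thanks to $\gcd(e,e')=1$, the two rings have equal fraction fields, and $R[[u]]$ is finite over the subring (as $u$ satisfies $u^{e'}=s$). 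Being regular, $R[[u]]$ is the normalization, and it is smooth over $R$.

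The hard part, addressed precisely by the ramification hypothesis, is ensuring the clean factored form $F=s^eU$: without matching ramification indices, $F$ would take a mixed shape (the instructive model being $e_K=1<p=e_k$, giving $F=s^p+\pi_Rs$ up to units), and a direct Jacobian computation shows the normalization of $R[[s,s']]/(F-s'^{e'})$ would then have a non-smooth special fiber. The hypothesis eliminates this obstruction uniformly in $e$, so the Hensel/normalization step goes through without incident.
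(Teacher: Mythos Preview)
Your proof is correct but takes a different route from the paper's. The paper argues globally: it views the normalization as a $\mu_{N'}$-cover of $\hat C_R$ and invokes the standard criterion that a tame cyclic cover of a smooth $R$-curve has good reduction exactly when its ramification sections remain disjoint on the special fiber. The ramification points of this $\mu_{N'}$-cover are the preimages in $\hat C_K$ of the ramification points of $\hat C'_K/C_K$; since the latter have distinct reductions (because $\hat C'_k$ is smooth), one only needs that each has the same number of preimages in $\hat C_K$ as in $\hat C_k$, namely $p^l$ divided by the ramification degree, which is equal on both fibers by hypothesis. Your formal-local computation encodes the same phenomenon---your factorization $F=s^eU$ with $U$ a unit is precisely the statement that no preimages collide in that formal neighborhood---but you then carry out the normalization explicitly via Hensel. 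The paper's approach is shorter and more conceptual, at the cost of appealing to the black-box criterion for tame covers; yours is self-contained and makes the role of the hypothesis completely transparent.
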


\begin{proof}
  We may view the compositum as a $\mu_{N'}$-cover of $\hat{C}_K$, which
  has good reduction as long as the ramification points remain
  distinct on the special fiber.  Since $\hat{C}'_K$ has good reduction,
  the images of the ramification points in $C_K$ have distinct reductions,
  so it remains only to show that this continues to hold on $\hat{C}_K$, or
  equivalently that the number of preimages in $\hat{C}_K$ is the same as
  in $\hat{C}_k$.  But this follows immediately from the condition on
  ramification degrees.
\end{proof}

\begin{thm}
  Let $C/k$ be a smooth projective curve over a field of characteristic
  $p$, and $\hat{C}\to C$ a smooth $\Z/p^lZ\times \mu_{N'}$-cover of $C$,
  where $\gcd(N',p)=1$.  Suppose moreover that $\hat{C}\to C$ has at least
  one tame ramification point, and consider the family of covers obtained
  by varying that point.  If the total multiplicity of the ramification
  points of $C$ is at least $7-2g$, then the corresponding family of Pryms
  has large geometric monodromy.
\end{thm}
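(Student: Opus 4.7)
The plan is to reduce the statement to the characteristic zero largeness theorem of Section 9 by lifting to mixed characteristic and then specialising. The three essential ingredients are Pop's theorem (Proposition \ref{prop:oort}), the good-reduction-of-Prym criterion from Section 7, and the compositum / node-smoothing construction built on Lemma \ref{lem:ramdeg} in Section 8.

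First, I would apply Proposition \ref{prop:oort} to lift $\hat C \to C$ to a smooth $\Z/p^l\Z \times \mu_{N'}$-cover $\hat C_R \to C_R$ over a mixed-characteristic dvr $R$ with residue field containing $k$. After a further finite base change of $R$ so that all generic-fibre ramification points are rational, Lemma \ref{lem:ramdeg} shows that each tame ramification point of $\hat C_k$ lifts to a single tame point of the generic fibre, while each wild point of multiplicity $m$ lifts to exactly $m$ distinct tame points. The varying tame ramification point $x_1$ extends to a section of $C_R$. The generic fibre therefore has at least $7-2g$ tame ramification points, so the characteristic-zero theorem of the previous section applies: varying $x_1$ on $C_K$ produces a family of Pryms with large geometric monodromy.

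Second, I would transfer this largeness to the special fibre. The $\Z[\zeta_N]$-rank of the Prym depends only on the ramification data, and the bookkeeping of Lemma \ref{lem:ramdeg} gives the dimensional equality needed to apply the good-reduction-of-Prym proposition of Section 7 at each fixed section, yielding an isomorphism of $L$-torsion between generic and special Pryms for every $L$ coprime to $p$. Consequently the monodromy of the characteristic-$p$ family contains (on the tame part, equals) the image of the characteristic-zero monodromy under specialisation.

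The main obstacle is the wild ramification of the Prym sheaf at the wild ramification points of $\hat C \to C$: the base of the special-fibre family has fewer punctures than the base of the generic-fibre family, and several tame characteristic-zero ramification loci of the sheaf coalesce to a single wild locus in characteristic $p$, so the naive specialisation map on $\pi_1$ of the punctured bases does not directly control the sheaf monodromy. To handle this I would invoke the auxiliary-cover construction sketched at the start of this section: choose a $\mu_{N'}$-cover $\hat C' \to C$ whose ramification degrees at the wild points of $\hat C \to C$ are calibrated, via the compositum good-reduction lemma of Section 8, so that the normalisation $\overline{\hat C \times_C \hat C'}$ has good reduction over $R$ and tame, normal-crossings ramification along the varying-point locus. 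The Prym of $\hat C$ then appears as an isotypic summand of the Prym of the compositum under the extra $\mu_{N'}$-action, and largeness propagates back. The delicate point --- and the principal technical difficulty --- is to simultaneously arrange good reduction of the compositum Prym, matching of the characteristic-zero ramification configuration, and preservation of the correct isotypic summand under specialisation; this is precisely what makes the tameness hypothesis on $x_1$ indispensable, since only then does the moving ramification datum extend uniformly over $R$.
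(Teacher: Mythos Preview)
Your first two paragraphs are on target: lift via Pop, invoke characteristic-zero largeness, and specialise through the good-reduction-of-Prym criterion. The gap is in the third paragraph, where you have misread the role of the auxiliary cover.

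The paper does \emph{not} form a compositum $\overline{\hat C\times_C \hat C'}$ and then extract the original Prym as an isotypic summand. It does two independent things. First, it \emph{replaces} $\hat C_K$ by another $\mu_N$-cover $\hat C'_K$ obtained by altering only the tame residues $m_i\bmod N'$ of the weights, keeping the intermediate $\mu_{p^l}$-cover fixed and keeping the total weight over each closed-fibre point unchanged. That bookkeeping forces the \emph{normalisation} of the special fibre of $\hat C'$ to coincide with $\hat C_k$, so the Prym dimensions match and the good-reduction criterion gives $\Prym(\hat C'_K)_k[L]\cong\Prym(\hat C_k)[L]$; the special fibre of $\hat C'$ itself is typically highly singular, so ``good reduction of the compositum'' is neither achieved nor needed. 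Second, to tame the ramification of the Prym \emph{sheaf} on the \emph{base} $X$ of the varying-point family, one pulls $X$ back through the intermediate $p^l$-cover, $Y=X\times_C\hat C_{p^l}$. On $Y_K$ the sheaf ramification order over $x_i$ becomes $N'/\gcd(m_1+m'_i,N')$, and the freedom in the $m'_i$ is spent precisely to make this equal to $1$ at all but one point of each coalescing cluster, producing the normal-crossings configuration you want. Since $Y/X$ is Galois of degree $p^l$, passing to $Y$ costs only a $\Z/p^l\Z$-quotient of monodromy, which preserves largeness.

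Your compositum picture loses both moves. The lemma you cite applies to a compositum of a $\Z/p^l\Z$-cover with a $\mu_{N'}$-cover and imposes a condition on the ramification of the \emph{given lift} $\hat C_K$, not something you can arrange by choosing $\hat C'$. And even granting a smooth compositum, its Prym is a Prym for a larger cyclic group; the assertion that ``largeness propagates back'' to the $\mu_N$-isotypic piece is exactly the hard step and is not supplied. The correct slogan is: modify the \emph{cover} to fix the special-fibre Prym, then cover the \emph{base} to tame the sheaf.
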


\begin{proof}
  Let $\hat{C}/R$ be the lift to characteristic 0.  Since we have been
  considering geometric monodromy, we may feel free to assume that $R$
  contains $\zeta_{p^l}$ so that we may view $\hat{C}_K$ as a
  $\mu_{p^lN'}$-cover.  Similarly, we may feel free to extend $R$ as
  necessary to ensure that the ramification points of $C_K$ are defined
  over $K$, so extend to sections $x_i$ of $C_R$.  Each such section meets
  $C_k$ in a ramification point, and the fact that $\hat{C}$ is smooth
  ensures that the intermediate $\mu_{N'}$-cover is smooth, and thus
  that the ramification points of the intermediate cover remain distinct
  in $C_k$.

  Now, suppose we modify the weights of $\hat{C}_K/C_K$ so that (a) the
  intermediate $\mu_{p^l}$-cover does not change and (b) for each point
  $\bar{x}$ of $C_k$, the sum of the weights of the ramification points
  reducing to $\bar{x}$ does not change.  Since $\bar{x}$ either has a
  single point over it, unramified for the $\mu_{p^l}$-cover, or has
  multiple ramification points over it, all ramified for the
  $\mu_{p^l}$-cover, we see that the modified cover $\hat{C}'_K$ has the
  same number of ramified points as $\hat{C}_K$ and thus has the same Prym
  dimension.  Of course, the special fiber of the normalization over $R$
  may become quite singular, but the {\em normalization} of the special
  fiber is the same as for $\hat{C}_K$, and thus it, too, has the same Prym
  dimension.  This remains true if we allow the tame points to move, and
  thus gives us a large collection of different varying-point families for
  which the Prym has the desired reduction mod $p$.
  
  We choose $\hat{C}'_K$ in the following way.  For each set of coalescing
  ramification points, choose one that maximizes the $p$-part of its
  ramification degree (which maximum by Lemma \ref{lem:ramdeg} agrees with
  the $p$-part of the ramification degree of the limit).  Then we can set
  the tame parts of the exponents (i.e., their reductions modulo $N'$) at
  the other points arbitrarily, and may thus in particular ensure that
  $m_1+m'_i=0(N')$ at each such point.  Let $\hat{C}'_K$ be the resulting
  curve.
  
  Now, let $X_R$ be the corresponding varying-point family, which we note
  is an \'etale cover of $C/R$.  The torsion-of-Prym sheaf on $X_K$ is
  ramified at the preimages of the fixed ramification points, with the
  ramification at $x_i$ having order $N/\gcd(m_1+m'_i,N)$.  Now, let
  $Y:=X\times_C \hat{C}_{p^l}$ where $\hat{C}_{p^l}$ is the
  intermediate $p^l$-cover.  The ramification degree of $Y/X$ at $x_i$ is
  thus given by
  \[
  p^l/\gcd(m_i,p^l) = p^l/\gcd(m'_i,p^l) = p^l/\gcd(m_1+m'_i,p^l).
  \]
  Thus if $y_i$ is a point over $x_i$ on $Y$, then the ramification of the
  torsion-of-Prym sheaf on $Y_K$ at $y_i$ has order
  \[
  (N/\gcd(m_1+m'_i,N))/(p^l/\gcd(m_1+m'_i,p^l))
  =
  N'/\gcd(m_1+m'_1,N'),
  \]
  so is prime to $p$.  Moreover, our choices of exponents ensure that this
  degree is 1 (so that the sheaf extends!) at all but at most one of the
  ramification points from each coalescing set.

  In other words, we have an \'etale sheaf on $Y_K$ with tame monodromy
  ramified along sections that remain distinct on $Y_k$, and thus the two
  monodromy groups agree.  Moreover, since $Y/X$ is Galois, the monodromy
  over $Y_K$ is the kernel of a morphism to the Galois group $\Z/p^l\Z$ and
  thus is still large, so that the same holds for $Y_k$.  Since we have
  shown that the extension of the sheaf to $X_k$ (and thus to $Y_k$) agrees
  with the Prym of the desired varying-point family, the claim follows.
\end{proof}

\begin{rem}
  The bound on the ramification multiplicity translates to the usual bound
  that the rank of the Prym as a $\Z[\zeta_N]$-modules be at least $5$.
\end{rem}

We can also obtain cases without tame ramification points (including the
unramified case) via the node-smoothing construction.  The main cost is
that the nodes being glued must be tamely ramified, and thus at least one
must be allowed to vary.  As a result, we only obtain results in the
varying curve scenario, and cannot deal with the case that $N$ is a power
of $p$.  In addition, if $N=p^lq^m$, then the monodromy of the relevant
genus $g-1$ family is reducible mod the primes over $q$, so we must exclude
that Tate module.

\begin{thm}
  Let $g>0$, $N'>1$.  For any $n\ge 0$ and any $n$-tuple of classes in
  $H^1_0(\A^1;\mu_{N'}\times \Z/p^l\Z)$, consider the family of global
  covers of smooth genus $g$ curves with $n$ ramification points having the
  specified analytic isomorphism classes.  Then the monodromy of the
  corresponding family of Pryms is large, as long as the total ramification
  multiplicity is at least $7-2g$ and we exclude the $q$-adic Tate module
  when $N'$ is a power of $q$.
\end{thm}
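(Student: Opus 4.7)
The plan is to reduce to the previous theorem (handling varying-tame-point families in bad characteristic) via the node-smoothing construction at the end of Section 8, following the genus-reduction argument used in Section 9 in characteristic zero. Since $g>0$, each smooth genus $g$ cover in our family can be viewed as the smoothing of a nodal genus $g$ cover whose normalization is a genus $g-1$ cover carrying two extra tame ramification points of opposite $\mu_{N'}$-weight.

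Concretely, I form the auxiliary varying-tame-point family of genus $g-1$ covers with the same $n$ prescribed wild classes at fixed sections together with two added tame ramification points $x_\pm$ of $\mu_{N'}$-weights $\pm 1$, allowing $x_+$ to vary. The total ramification multiplicity of the auxiliary family is at least $(7-2g)+2=7-2(g-1)$, so the previous theorem applies and yields largeness of the genus $g-1$ Prym monodromy at every prime for which the new tame parameters $\zeta_{N'}^{\pm 1}$ are nontrivial. These parameters reduce to $1$ modulo primes over $q$ exactly when $N'$ is a power of $q$, which is precisely where the statement permits dropping the $q$-adic Tate module.

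Next I apply the tame-node-smoothing construction of Section 8. Gluing $x_+$ to $x_-$ produces a nodal genus $g$ cover whose node is ramified in the cover (opposite tame weights kill the $\mu_{N'}$-orbit on the node), so by Section 7 the Prym of the nodal cover is abelian and isogenous to $\Prym(\hat{C}_{g-1})$. Proposition \ref{prop:smoothing_AN} and the ensuing discussion, applied separately to the $\mu_{N'}$ and $\Z/p^l\Z$ parts, yield an equivariant smoothing over a Henselian dvr, possibly after a $\mu_{N'}\times \Z/p^l\Z$-extension of the residue field; by the good-reduction results of Section 7 the Prym of the generic fiber has good reduction with special fiber agreeing on $L$-torsion for $L$ prime to $p$ with $\Prym(\hat{C}_{g-1})$. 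Writing $\bar{k}(\mathcal{X})$ for the generic field of the auxiliary moduli stack and $\ell$ for the needed residue extension, $\Gal(\ell)$ is the kernel of a homomorphism $\Gal(\bar{k}(\mathcal{X}))\to \mu_{N'}\times \Z/p^l\Z$; since our multiplicity bound forces the relevant $\SU$ factors to be perfect, the image of $\Gal(\ell)$ on the Prym is still large. The generic fiber of the smoothing is a smooth genus $g$ cover sitting inside the target family, so specialization identifies the target monodromy as containing the $\Gal(L)$-image, and hence it is large with the stated exclusion.

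The main obstacle is verifying that the tame smoothing at the node can be performed simultaneously with a constant extension of the wild ramification data along the $n$ fixed sections, so that the generic fiber carries precisely the prescribed analytic isomorphism classes. The essential point is that $H^2_Z(C^+_R;\mu_{N'}\times \Z/p^l\Z)$ decomposes as a direct sum over connected components of $Z$, so the wild contributions at the $n$ sections (extended constantly in chosen uniformizers, as in the Katz-style treatment of Section 8) and the tame ramification-data contribution at the node can be prescribed independently; Proposition \ref{prop:covers_lift} then delivers the global smoothed cover. Any residual arithmetic obstruction in $H^2(C^+_k;\mu_{N'}\times \Z/p^l\Z)$ disappears after precisely the $\mu_{N'}\times \Z/p^l\Z$-residue extension already built into the construction, which is also what produces the finite quotient we pass to above.
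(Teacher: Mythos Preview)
Your proposal is correct and follows essentially the same approach as the paper. The paper does not give a formal proof block for this theorem, but the paragraph preceding it sketches exactly your argument: reduce to the previous varying-tame-point theorem by the tame node-smoothing construction of Section~8, gluing two tame points of opposite $\mu_{N'}$-weight on a genus $g-1$ cover and using the good-reduction results of Section~7 to transfer monodromy; the $q$-adic exclusion arises precisely because the added tame parameters $\zeta_{N'}^{\pm 1}$ reduce to $1$ modulo the prime over $q$ when $N'$ is a $q$-power, making the auxiliary Jordan--Pochhammer representation reducible there.
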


\begin{rem}
  Again, we could reduce the bound on the multiplicity to $6-2g$ without
  too much difficulty.
\end{rem}

\begin{rem}
  The exclusion of the $q$-adic Tate module is presumably just a
  technicality: the genus $g-1$ family has the same monodromy as in
  characteristic 0, and thus its $q$-adic monodromy contains (by
  considering the degenerate fiber in characteristic 0 where the two glued
  points merge) a subgroup with large image on a corank 2 sublattice.
  (Here assume the ramification multiplicity in genus $g$ is at least
  $9-2g$ so that the corank 2 sublattice will still have large monodromy.)
  Thus the only way the $q$-adic monodromy could fail to be large is by
  actually being reducible.
\end{rem}

\subsection{Loose ends}

We note that there are several gaps in our coverage of wildly ramified
covers.  The most significant is that since we reduce everything to cases
with at least one tame ramification point, we cannot deal with {\em purely}
wild covers; in particular, we do not obtain any results for unramified
$p^l$-covers in characteristic $p$.  In addition, we can say nothing about
general $p^lN'$-covers when the base curve is fixed (even in the $\P^1$
case!).  Finally, when $N'$ is a power of $q$, we have the presumably
technical issue that we could a priori fail to be large $q$-adically.

For these questions, the approach of \cite{AchterJD/PriesR:2007} may prove
fruitful.  Their method makes it quite straightforward to give inductive
proofs of {\em irreducibility} of monodromy, which in characteristic 0
tends to hold even when the monodromy isn't large, making the base cases in
principle more accessible.  Given irreducibility, largeness should follow
for any case that degenerates to a case of more than half the rank with
large monodromy.  (This is easier to find than the two compatible
degenerations with large monodromy required by
\cite{AchterJD/PriesR:2007}.)

\section{Selmer groups of elliptic surfaces with constant $j$}

Let $E/C/\F_q$ be an elliptic surface with constant $j$-invariant $j_0$,
where if $j_0=0$ we assume that the characteristic is not $2$ or $3$.  If
$E_{j_0}/\F_q$ is some fixed curve with that $j$-invariant, then there is
an induced cyclic cover of $C$, which at geometric points where $E$ has
good reduction is the $\Aut(E_{j_0,\bar\F_q})$-torsor of isomorphisms
between the given fiber and $E_{j_0}$.  This of course depends on the
choice of $E_{j_0}$, but we see that the {\em pseudocover} of $C$ is
independent of this choice, so is completely intrinsic to $E$.  Moreover
any choice of cover over the given pseudocover gives a curve $\hat{C}$
such that the minimal proper regular model of $E\times_C \hat{C}$ is
isomorphic to $E_{j_0}\times \hat{C}$ for some $E_{j_0}$; one can then
recover $E$ as the quotient of this product by the \'etale group
scheme $\Aut(E_{j_0})\cong \mu_N$, $N\in \{2,4,6\}$.

In other words, for each elliptic curve $E_{j_0}$ with cyclic automorphism
group, there is a natural identification between the moduli stack of
elliptic surfaces with $j$-invariant $j_0$ and the moduli stack of cyclic
covers.  For any family of such covers, we can use the corresponding family
of elliptic surfaces to construct additional representations of the
fundamental group of the family.  In particular, we may consider the {\em
  Selmer groups} of the surface, which under reasonable conditions
correspond to \'etale sheaves on the family.  We can also consider the
sheaf of sets obtained by quotienting by the cyclic group action, and find
that that sheaf descends to the corresponding family of pseudocovers.

It turns out that this sheaf of sets is canonically isomorphic to one we
have already considered: for $L$ prime to $N$, the sheaf of sets
corresponding to the $L$-Selmer group turns out to be the same as the sheaf
of sets corresponding to the $L$-torsion of the Prym.  In other words, the
two monodromy representations are projectively isomorphic.  We have already
seen that projective largeness implies largeness, and thus this will let us
see that the monodromy of the $L$-Selmer group is large (apart from certain
very small cases).

The key idea is that there are maps between the Selmer groups of an abelian
variety and the Selmer groups of its base changes, which in many cases let
us write the former in terms of the latter.  So in many cases, the
computation of the Selmer group of an isotrivial family can be reduced to
the analogous computation for a {\em trivial} family.

The first step is to observe that if we allow abelian varieties, then the
Selmer group over a (finite separable) field extension can be written as a
Selmer group over $K$.

\begin{lem}
  Let $L/K$ be a finite separable extension of global fields, and let
  $\phi:A\to B$ be an isogeny of abelian varieties over $L$.  Let
  $\Res_{L/K}\phi$ denote the corresponding isogeny of Weil restrictions to
  $K$.  Then there is a canonical isomorphism
  \[
  \Sel(\phi)\cong \Sel(\Res_{L/K}\phi).
  \]
\end{lem}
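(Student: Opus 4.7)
The plan is to deduce the isomorphism from the fact that Weil restriction is right adjoint to base change, together with Shapiro's lemma applied globally and at each place. Both sides of the desired isomorphism are kernels of a restriction map from a global $H^1$ to a product of local $H^1$'s (modulo local points), and the Shapiro isomorphisms intertwine these maps.

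First I would identify kernels. Since $\Res_{L/K}$ is a right adjoint, it is left exact and commutes with finite limits, so $\ker(\Res_{L/K}\phi) = \Res_{L/K}(\ker \phi)$ as finite commutative group schemes over $K$. As a $\Gal(\bar K/K)$-module, its $\bar K$-points form the induced module of $(\ker\phi)(\bar L)$ from $\Gal(\bar K/L)$ to $\Gal(\bar K/K)$. Shapiro's lemma then gives a canonical isomorphism
\[
H^1(K, \Res_{L/K}\ker\phi) \cong H^1(L, \ker\phi),
\]
and similarly $(\Res_{L/K}A)(K) = A(L)$, $(\Res_{L/K}B)(K) = B(L)$, so the global Kummer sequences correspond.

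Second, I would handle the local conditions. For a place $w$ of $K$, the decomposition $K_w \otimes_K L \cong \prod_{v \mid w} L_v$ and the compatibility of Weil restriction with base change yield
\[
(\Res_{L/K}A) \times_K K_w \cong \prod_{v \mid w} \Res_{L_v/K_w}(A_{L_v}),
\]
so again by Shapiro
\[
H^1(K_w, (\Res_{L/K}A)_{K_w}) \cong \prod_{v \mid w} H^1(L_v, A),
\]
and $(\Res_{L/K}A)(K_w) = \prod_{v \mid w} A(L_v)$, with the analogous statement for $B$. Hence the local conditions at $w$ cutting out $\Sel(\Res_{L/K}\phi)$ correspond, factor by factor, to the local conditions at the places $v \mid w$ cutting out $\Sel(\phi)$.

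Finally I would assemble the pieces: the Shapiro isomorphisms are natural in the Galois module and compatible with restriction to subgroups, so the diagram relating the global $H^1$ to the product of local $H^1$'s commutes, and the canonical isomorphism on the kernels of these restriction maps is the asserted isomorphism of Selmer groups. The main obstacle is precisely this functoriality check: one must verify that the global Shapiro isomorphism is compatible with the local Shapiro isomorphisms under restriction to completions, which reduces to the standard fact that induction commutes with restriction to the decomposition subgroup at any place (equivalently, that Weil restriction commutes with base change to $K_w$, already used above).
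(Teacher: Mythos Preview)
Your proposal is correct and follows essentially the same approach as the paper: identify $\ker(\Res_{L/K}\phi)$ with $\Res_{L/K}(\ker\phi)$, apply the Shapiro-type isomorphism globally and (via base change of Weil restriction) locally at each place, and check that these fit into a commutative square so that the Selmer subgroups correspond. The only cosmetic difference is that the paper phrases everything in flat cohomology (so as to cover the case where $\ker\phi$ is not \'etale), whereas you invoke Galois cohomology of induced modules; the adjunction underlying Shapiro holds equally for fppf sheaves along the finite \'etale map $\Spec L \to \Spec K$, so this causes no difficulty.
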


\begin{proof}
  We first note that over any field $K'$ extending $K$, we have canonical
  isomorphisms
  \begin{align}
  H^1_{\fl}(K';\ker\Res_{L/K}\phi)
  &\cong
  H^1_{\fl}(K';\ker\Res_{L\otimes_K K'/K'}\phi)\notag\\
  &\cong
  H^1_{\fl}(K';\Res_{L\otimes_K K'/K'}\ker\phi)\notag\\
  &\cong
  H^1_{\fl}(L\otimes_K K';\ker\phi).
  \end{align}
  In particular, for any place $v$ of $K$, we have a commutative diagram
  \[
  \begin{CD}
  H^1_{\fl}(K;\ker\Res_{L/K}\phi)
  @>>>
  H^1_{\fl}(K_v;\ker\Res_{L/K}\phi)\\
  @VVV @VVV\\
  H^1_{\fl}(L;\ker\phi)
  @>>>
  \prod_{w/v} H^1_{\fl}(L_w;\ker\phi)
  \end{CD}
  \]
  in which the vertical maps are the canonical isomorphisms.  Since
  $\Sel(\Res_{L/K}\phi)$ is by definition the kernel of the morphism
  \[
  H^1_{\fl}(K;\ker\Res_{L/K}\phi)
  \to
  \prod_v H^1_{\fl}(K_v;\ker\Res_{L/K}\phi),
  \]
  the isomorphisms identify it with the kernel of the morphism
  \[
  H^1_{\fl}(L;\ker\phi)
  \to
  \prod_v \prod_{w/v} H^1_{\fl}(L_w;\ker\phi)
  =
  \prod_w H^1_{\fl}(L_w;\ker\phi),
  \]
  which is $\Sel(\phi)$ as required.
\end{proof}

For $A$ defined over $K$, we have a canonical diagonal morphism $A\to
\Res_{L/K}A$ as well as a canonical trace morphism $\Res_{L/K}A\to A$, such
that the composition is multiplication by $\deg(L/K)$.  By functoriality,
this induces morphisms
\[
\Sel(\phi)\to \Sel(\phi_L)\to \Sel(\phi),
\]
composing to multiplication by $\deg(L/K)$.  (Note that the analogous maps
on Galois cohomology are nothing other than restriction and transfer.)  If
$L/K$ is Galois, there is an induced action of $\Gal(L/K)$ on
$\Res_{L/K}A$, and the maps $A\to \Res_{L/K}A\to A$ are equivariant for
this action and the trivial action on $A$.  This implies that we have
morphisms
\[
H_0(\Gal(L/K);\Sel(\phi_L))\to \Sel(\phi)\to H^0(\Gal(L/K);\Sel(\phi_L))
\]
where the composition is the canonical map
\[
H_0(\Gal(L/K);\Sel(\phi_L))\to H^0(\Gal(L/K);\Sel(\phi_L))
\]
induced by symmetrization, giving us a triangle of maps
\[
H_0(\Gal(L/K);\Sel(\phi_L))\to \Sel(\phi)\to H^0(\Gal(L/K);\Sel(\phi_L))
\to H_0(\Gal(L/K);\Sel(\phi_L))\to\cdots
\]
such that the composition once around the triangle starting from any point
is multiplication by $\deg(L/K)$.

In the application of interest, $\phi_L$ is an isogeny of constant abelian
varieties.

\begin{prop}
  Let $\phi:A\to B$ be an isogeny of abelian varieties defined over a
  separably closed field $k$, let $C/k$ be a curve, and suppose $\deg\phi$
  is invertible in $k$.  Then there is a canonical isomorphism
  \[
  \Sel_\phi(A_{k(C)}) \cong J(C)[\deg\phi]\otimes_{\Z}
  \mu_{\deg\phi}^{-1}\otimes_{\Z} \ker\phi.
  \]
\end{prop}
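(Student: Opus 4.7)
The plan is to unwind the Selmer group into \'etale cohomology of $C$ with constant coefficients and then evaluate that cohomology via the Kummer sequence.

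Since $k$ is separably closed and $n := \deg\phi$ is invertible in $k$, the kernel $\ker\phi$ is a constant \'etale group scheme; write $M$ for the underlying finite abelian group, which is $n$-torsion. Flat and \'etale cohomology of $M$ agree, so $\Sel_\phi(A_{k(C)})$ is the subgroup of $H^1_\et(k(C);M)$ cut out by the local Kummer conditions at the closed points $v$ of $C$.

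At each such $v$, since $A$ and $B$ are constant they have smooth N\'eron models $A_{\mathcal{O}_v}$, $B_{\mathcal{O}_v}$, and properness gives $B(K_v) = B(\mathcal{O}_v)$, so the local Kummer map
\[
B(K_v)/\phi A(K_v) \hookrightarrow H^1_\et(K_v;M)
\]
factors through $H^1_\et(\mathcal{O}_v;M)$. The residue field $k(v)$ is purely inseparable over $k$ and hence itself separably closed, so $\mathcal{O}_v$ is strictly Henselian and this cohomology vanishes; in fact $B(k(v))/\phi A(k(v)) = 0$ already, since $\phi$ is \'etale surjective onto a separably closed field. Thus the Selmer condition at each $v$ is simply that the class extends to $\mathcal{O}_v$, and the Gersten/localization sequence on the smooth curve $C$,
\[
0 \to H^1_\et(C;M) \to H^1_\et(k(C);M) \xrightarrow{\oplus \partial_v} \bigoplus_v H^1_\et(K_v;M)/H^1_\et(\mathcal{O}_v;M),
\]
identifies $\Sel_\phi(A_{k(C)})$ canonically with $H^1_\et(C;M)$.

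To evaluate the right side, I write $M \cong M \otimes_\Z \mu_n^{-1} \otimes_\Z \mu_n$ canonically, using that $\mu_n \otimes_\Z \mu_n^{-1} = \Z/n\Z$ via the evaluation pairing together with the $n$-torsion of $M$. Pulling the constant factor $M \otimes_\Z \mu_n^{-1}$ outside the cohomology yields
\[
H^1_\et(C;M) \cong M \otimes_\Z \mu_n^{-1} \otimes_\Z H^1_\et(C;\mu_n).
\]
The Kummer sequence together with divisibility of $k^\times = H^0(C;\G_m)$ canonically identifies $H^1_\et(C;\mu_n)$ with $\Pic(C)[n] = J(C)[n]$, giving the claimed isomorphism after reordering tensor factors. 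The main subtlety is the identification of the local Kummer image with the (vanishing) unramified subgroup at each $v$; this rests on properness of $B$, smoothness of the constant N\'eron models, strict Henselianness of $\mathcal{O}_v$, and \'etale surjectivity of $\phi$, all of which reduce ultimately to the hypotheses that $\deg\phi$ is invertible in $k$ and that $k$ is separably closed.
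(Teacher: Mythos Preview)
Your proof is correct and follows essentially the same route as the paper's: both identify the local Selmer condition with the unramified condition (using that the constant abelian variety has good reduction everywhere), deduce $\Sel_\phi(A_{k(C)})\cong H^1_\et(C;\ker\phi)$, and then evaluate this via the Kummer sequence and $H^1_\et(C;\mu_n)\cong\Pic(C)[n]$. You simply spell out in more detail why the local Kummer image is unramified (and in fact zero, since $\mathcal{O}_v$ is strictly Henselian) and invoke the localization sequence explicitly, where the paper states the same identification in one line.
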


\begin{proof}
  Since $A_{k(C)}$ is constant, it has good reduction everywhere, and thus
  the local condition at any place $v$ of $k(C)$ for a cohomology class to
  lie in the Selmer group is nothing other than the requirement that it be
  unramified.  In other words, we have
  \[
  \Sel_\phi(A_{k(C)})\cong H^1_{\fl}(C;\ker\phi).
  \]
  Since $\ker\phi$ is a constant \'etale sheaf annihilated by $L=\deg\phi$,
  we have
  \[
  H^1_{\fl}(C;\ker\phi)
  \cong
  H^1_{\fl}(C;\mu_L)\otimes \mu_L^{-1}\otimes \ker\phi
  \cong
  \Pic(C)[L]\otimes \mu_L^{-1}\otimes \ker\phi.
  \]
\end{proof}

If $\phi$ and $C$ are defined over a non-closed field $k$, then the
Proposition gives an isomorphism of $\Gal(k)$-modules.  In nice cases, we
can recover the Selmer group over $k$ from the corresponding
$\Gal(k)$-module.

\begin{prop}
  Let $\phi:A\to B$ be an isogeny of abelian varieties defined over a
  global function field $K$, and let $\tilde\phi:{\cal A}\to {\cal B}$
  denote the corresponding morphism of N\'eron models.  If $\phi$ induces
  an isomorphism on component groups, then $\Sel_\phi(A)$ may be identified
  with the first hypercohomology of $\tilde\phi$, viewed as a complex of
  \'etale sheaves.
\end{prop}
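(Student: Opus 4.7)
The plan is to factor the identification into two steps: first, show that the complex $[{\cal A}\xrightarrow{\tilde\phi}{\cal B}]$ is quasi-isomorphic (on the small flat site of $C$) to its kernel sheaf ${\cal K}:=\ker\tilde\phi$ placed in degree zero; second, identify $H^1(C;{\cal K})$ with $\Sel_\phi(A)$ via the local conditions at places of bad reduction.

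For the first step, I would argue that $\tilde\phi$ is an epimorphism of flat sheaves. Since $\phi$ is an isogeny over $K$, its kernel is finite over $K$, and as $\ker\tilde\phi$ is a closed subscheme of ${\cal A}$ with finite generic fiber, it is quasi-finite over $C$. Comparison of relative dimensions then shows that $\tilde\phi^0:{\cal A}^0\to{\cal B}^0$ is surjective on each fiber; combined with the isomorphism on component groups, this gives sheaf-level surjectivity of $\tilde\phi$. Hence ${\cal K}$ is a quasi-finite flat group scheme on $C$ with generic fiber $A[\phi]$, and the short exact sequence $0\to{\cal K}\to{\cal A}\to{\cal B}\to 0$ produces the desired quasi-isomorphism, yielding $\mathbb{H}^1(C;[{\cal A}\to{\cal B}])\cong H^1(C;{\cal K})$.

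For the second step, I would compare $H^1(C;{\cal K})$ with $H^1(K;A[\phi])$ via the Leray spectral sequence for $j:\Spec K\hookrightarrow C$. A class in $H^1(C;{\cal K})$ restricts to $H^1(K;A[\phi])$, and it lies in the image iff at every closed $v\in C$ its local restriction lies in the image of $H^1({\cal O}_v;{\cal K})\to H^1(K_v;A[\phi])$. Taking cohomology of $0\to{\cal K}\to{\cal A}\to{\cal B}\to 0$ over $\Spec {\cal O}_v$ and using the N\'eron mapping property ${\cal A}({\cal O}_v)=A(K_v)$, ${\cal B}({\cal O}_v)=B(K_v)$, this image equals the image of the local Kummer connecting map $B(K_v)\to H^1(K_v;A[\phi])$ provided the natural map $H^1({\cal O}_v;{\cal A})\to H^1(K_v;A)$ is injective. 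This injectivity invokes the component-group hypothesis together with the standard vanishing $H^1({\cal O}_v^{\text{sh}};{\cal A}^0)=0$ coming from Lang's theorem.

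The main obstacle is this last injectivity at places of bad reduction: without the component-group hypothesis, the condition packaged into $H^1(C;{\cal K})$ (extending a torsor across $\Spec {\cal O}_v$) would differ from the Selmer condition (lifting to $B(K_v)$) by a cokernel contribution from component groups, producing a spurious correction term and breaking the identification. Once the local conditions match place-by-place, gluing them into the global statement $H^1(C;{\cal K})=\Sel_\phi(A)$ is a routine excision argument.
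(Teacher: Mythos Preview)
Your first step contains a genuine gap. The claim that ``$\ker\tilde\phi$ is a closed subscheme of ${\cal A}$ with finite generic fiber, hence quasi-finite over $C$'' is false as stated: a closed subscheme of a smooth family over a curve can have finite generic fiber yet acquire positive-dimensional components over special points. And this actually happens here. Take $A=B$ with a place $v$ of additive reduction whose identity component is $\mathbb G_a$, and let $\phi=[p]$ in characteristic $p$. The component groups at $v$ can perfectly well be trivial (so the hypothesis is satisfied), yet $[p]$ restricted to $\mathbb G_a$ is the zero map, so $\ker\tilde\phi$ has a one-dimensional fiber at $v$ and $\tilde\phi$ is not an epimorphism of sheaves. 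Thus $[{\cal A}\xrightarrow{\tilde\phi}{\cal B}]$ is \emph{not} quasi-isomorphic to any sheaf in degree $0$, and your reduction to $H^1(C;{\cal K})$ breaks down. If you are willing to assume $\deg\phi$ is prime to $p$ (as in the paper's applications), your argument can be repaired, but the proposition as stated does not carry that assumption.

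The paper sidesteps this entirely by never passing to the kernel sheaf. It works directly with the torsor interpretation of $\mathbb H^1(C;\tilde\phi)$: a class is an ${\cal A}$-torsor ${\cal X}$ together with an equivariant map ${\cal X}\to{\cal B}$, and the N\'eron mapping property (not surjectivity of $\tilde\phi$) is what produces such data from a Selmer class. The obstruction to a hypercohomology class lying in $\Sel_\phi$ is then the class of ${\cal X}_{k_v}$ in $H^1(k_v;\pi_0({\cal A}_v))$, and the whole obstruction map is induced by a chain map $[{\cal A}\to{\cal B}]\to[\pi_0({\cal A})\to 0]$ which visibly factors through $[\pi_0({\cal A})\xrightarrow{\pi_0(\tilde\phi)}\pi_0({\cal B})]$; under the component-group hypothesis the latter complex is acyclic, so the obstruction vanishes. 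This is where the hypothesis enters, not in any injectivity of $H^1({\cal O}_v;{\cal A})\to H^1(K_v;A)$ (that injectivity is unconditional). Your second step is morally the local shadow of the same mechanism, but you have mislocated the role of the hypothesis, and in any case it rests on the first step.
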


\begin{proof}
  The cohomology group $H^1_{\fl}(K;\ker\phi)$ agrees with the first
  hypercohomology of $\phi$, which classifies $A$-equivariant morphisms
  $X\to B$ where $X$ is an $A$-torsor and the action of $A$ on $B$ is
  induced by $\phi$.  (I.e., it classifies $A$-torsors together with an
  explicit trivialization of the induced $B$-torsor.)  The Selmer group
  then classifies morphisms such that $X$ is trivial at every completion.
  This in particular implies that $X$ is unramified, and thus extends to an
  ${\cal A}$-torsor, which by the universality of N\'eron models gives us a
  morphism ${\cal X}\to {\cal B}$, i.e., a class in the hypercohomology
  group $H^1_\et(C;\tilde\phi)$.  Such a class can fail to be in the Selmer
  group: $X$ is unramified at the place $v$ iff $X(K_v^{\text{unr}})$ is
  nonempty, while to be in the Selmer group requires that $X(K_v)$ itself
  be nonempty.  The obstruction at a given place $v$ is nothing other than
  \[
  H^1_{\et}(k_v;{\cal A}) \cong H^1_{\et}(k_v;\pi_0({\cal A}))
  \]
  and thus we see in general that the Selmer group is the kernel of the
  morphism
  \[
  H^1_\et(C;\tilde\phi)\to H^1_\et(C;\pi_0({\cal A})),
  \]
  noting that the sheaf $\pi_0({\cal A})$ is supported at finitely many
  points.  The morphism of complexes comes from the chain map
  \[
  \begin{CD}
    {\cal A}@>\tilde\phi >> {\cal B}\\
    @VVV @VVV\\
    \pi_0{\cal A}@>0>> 0
  \end{CD}
  \]
  which factors through the chain map
  \[
  \begin{CD}
    {\cal A}@>\tilde\phi >> {\cal B}\\
    @VVV @VVV\\
    \pi_0({\cal A})@>\pi_0(\tilde\phi) >> \pi_0({\cal B}).
  \end{CD}
  \]
  In particular, we see that if $\pi_0(\tilde\phi)$ is an isomorphism, then
  the latter chain map is 0, forcing
  \[
  H^1_\et(C;\tilde\phi)\to H^1_\et(C;\pi_0({\cal A}))
  \]
  to vanish.
\end{proof}

\begin{rem}
In the case of isotrivial families of elliptic curves, we find that the
condition for $\pi_0(\tilde\phi)$ to be an isomorphism is precisely the
same as the condition for the monodromy of the Prym to be irreducible.  For
instance, for $j=0$, the component group at a ramification point $x$ of
weight $1,2,3,4,5$ is $1,C_3,C_2^2,C_3,1$ (corresponding to Kodaira types
$\text{II},\text{IV},\text{I}_0^*,\text{IV}^*,\text{II}^*$).
\end{rem}

\begin{prop}
  Let $\phi:A\to B$ be an isogeny of abelian varieties defined over a
  global function field $K=k(C)$, and let $\tilde\phi:{\cal A}\to {\cal B}$
  denote the corresponding morphism of N\'eron models, which we assume
  induces an isomorphism on component groups.  If $\ker\phi(k^\sep(C))=0$,
  then
  \[
  \Sel_\phi(C)\cong \Sel_\phi(C_{k^\sep})^{\Gal(k)}.
  \]
\end{prop}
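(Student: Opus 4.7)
The plan is to reinterpret both sides as \'etale hypercohomology via the preceding proposition and then apply the Hochschild--Serre spectral sequence for the pro-Galois cover $C_{k^\sep}\to C$. The hypothesis that $\pi_0(\tilde\phi)$ is an isomorphism is stable under base change to $k^\sep$, so the preceding proposition applies over both $k$ and $k^\sep$, identifying $\Sel_\phi(C)$ and $\Sel_\phi(C_{k^\sep})$ with the first \'etale hypercohomology of $[\mathcal{A}\xrightarrow{\tilde\phi}\mathcal{B}]$ over $C$ and $C_{k^\sep}$ respectively. Thus it suffices to show
\[
H^1_\et(C;[\mathcal{A}\to\mathcal{B}])\cong H^1_\et(C_{k^\sep};[\mathcal{A}\to\mathcal{B}])^{\Gal(k)}.
\]

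Next I would write down the Hochschild--Serre spectral sequence
\[
E_2^{p,q}=H^p(\Gal(k);H^q_\et(C_{k^\sep};[\mathcal{A}\to\mathcal{B}]))\Longrightarrow H^{p+q}_\et(C;[\mathcal{A}\to\mathcal{B}]),
\]
obtained as the inverse limit of the usual spectral sequences for the finite Galois subcovers $C_{k'}\to C$. Setting $M:=H^0_\et(C_{k^\sep};[\mathcal{A}\to\mathcal{B}])$, the low-degree five-term exact sequence reads
\[
0\to H^1(\Gal(k);M)\to H^1_\et(C;\tilde\phi)\to H^1_\et(C_{k^\sep};\tilde\phi)^{\Gal(k)}\to H^2(\Gal(k);M),
\]
so the desired isomorphism reduces to $M=0$.

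This vanishing is the main step. By definition, $M$ is the kernel of the map on global sections induced by $\tilde\phi:\mathcal{A}\to\mathcal{B}$, i.e., the group of $C$-morphisms $C_{k^\sep}\to \ker\tilde\phi\subset\mathcal{A}$. Since $\mathcal{A}$ is smooth and separated over $C$, any such morphism is determined by its restriction to the generic point of $C_{k^\sep}$, giving an injection $M\hookrightarrow\ker\phi(k^\sep(C))$, and the latter vanishes by hypothesis. The main obstacle beyond this is essentially bookkeeping: one needs the Hochschild--Serre formalism for the two-term complex and the descent of the preceding proposition's identification to $k^\sep$, both of which are routine given stability of the component-group hypothesis and of hypercohomology under flat base change.
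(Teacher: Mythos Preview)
Your argument is correct and is essentially the paper's own proof: both identify the Selmer groups with $H^1_\et$ of the complex $\tilde\phi$ via the preceding proposition, apply the Hochschild--Serre five-term exact sequence, and reduce to the vanishing of $H^0_\et(C_{k^\sep};\tilde\phi)$, which follows from the hypothesis $\ker\phi(k^\sep(C))=0$. The only cosmetic difference is that the paper invokes the N\'eron mapping property to get an actual equality $H^0_\et(C_{k^\sep};\tilde\phi)=\ker\phi(k^\sep(C))$, whereas you give only the injection, which of course suffices.
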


\begin{proof}
  The Lyndon-Hochschild-Serre spectral sequence gives us a five-term exact
  sequence relating the hypercohomology of $\tilde\phi$ over $k(C)$ and the
  hypercohomology over $k^\sep(C)$:
  \[
  0\to H^1(k;H^0_{\et}(C_{k^\sep};\tilde\phi))
   \to H^1_\et(C_k;\tilde\phi)
   \to H^0(k;H^1_{\et}(C_{k^\sep};\tilde\phi))
   \to H^2(k;H^0_{\et}(C_{k^\sep};\tilde\phi)).
  \]
  We thus see that the desired isomorphism holds as long as
  \[
  H^0_{\et}(C_{k^\sep};\tilde\phi)=0,
  \]
  which is equivalent (by the extension property of N\'eron models) to the
  vanishing of $\ker\phi(k^\sep(C))$.
\end{proof}

\begin{rem}
  Note that the additional hypothesis typically failes for trivial
  families.  For isotrivial families, the condition often holds, however,
  in particular when $\deg\phi$ is prime to the degree of the Galois
  extension where the family becomes trivial.  Even that is often
  unnecessary: e.g., for the $j=0$ family, the condition at 2 is unneeded
  if the intermediate $3$-cover is nontrivial, and the condition at $3$ is
  unneeded if the intermediate $2$-cover is nontrivial.
\end{rem}

Returning to our elliptic curve example, let $A=E_0$ be some fixed curve
with $j=0$, say $y^2=x^3+1$, and suppose that $L$ is prime to $6$. Then
elliptic surfaces with $j$-invariant 0 over $C$ (or, when $p|6$, a certain
subfamily of such surfaces) are in natural correspondence (given our choice
of $E_0$) with $\mu_6$-covers $\hat{C}$ of $C$, and we find that for such a
surface $E$, we have geometric Selmer group (i.e., Selmer group after base
changing to $\bar\F_q$)
\[
\Sel_L(E) = (J(\hat{C})[L]\otimes_{\Z} E_0[L]\otimes_{\Z} \mu_L^{-1})^{\mu_6}
\]
Since $\mu_6$ acts trivially on $\mu_L^{-1}$, we may pull that sheaf
outside (all it does is twist the Galois action).  As a $\Z/L\Z$-module,
$E_0[L]$ is free of rank 2, and is faithful as a $\mu_6$-module, and thus
is nothing other than
\[
\Z[\zeta_6]/L\Z[\zeta_6].
\]
In other words, if we ignore the Galois action, we have
\[
(J(\hat{C})[L]\otimes_{\Z} E_0[L]\otimes_{\Z} \mu_L^{-1})^{\mu_6}
\cong
J(\hat{C})[L]\otimes_{\Z[t]/(t^6-1)} \Z[t]/(t^2-t+1)
\cong
\Prym(\hat{C},\mu_6)[L].
\]
Since the isomorphism
\[
E_0[L]\cong \Z[\zeta_6]/L\Z[\zeta_6]
\]
is valid over some finite extension of $\F_q$, we see that the Galois
action is relevant to the {\em arithmetic} monodromy, but not the {\em
  geometric} monodromy.

In other words, given a {\em family} of $\mu_6$-covers $\hat{C}/C$ and
the associated family of $j=0$ elliptic surfaces $E$ over $C$, we have an
isomorphism of \'etale sheaves
\[
\Sel_L(E_{\bar\F_q(C)})\cong \Prym(\hat{C}_{\bar\F_q},\mu_6)[L]
\]
which is compatible with the Galois action over some finite extension
$\F_{q^m}/\F_q$.  In particular, we find that the geometric monodromy
groups are isomorphic.

\begin{thm}
  Fix $n\ge \max(5,2g-1)$, and let $\hat{C}\to C$ be a geometrically
  connected $\mu_6$-cover of a smooth genus $g$ curve of characteristic
  $p\nmid 6$ with $n+2-2g$ ramification points. Let $x_1$ be one of those
  points.  Then the monodromy of the family of Selmer groups of $j=0$
  curves obtained by varying $x_1$ is large away from $2$ and $3$.
\end{thm}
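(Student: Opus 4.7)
The plan is to deduce this theorem directly from the previous theorem on Prym monodromy in good characteristic (with $N=6$) by using the isomorphism
\[
\Sel_L(E_{\bar\F_q(C)})\cong \Prym(\hat{C}_{\bar\F_q},\mu_6)[L]
\]
established in the discussion preceding the statement. Since $p\nmid 6$ and $L$ is taken coprime to $6$, the hypotheses of the previous theorem are satisfied and the Prym monodromy is large for the same family. Thus the first step is to verify that the two families of sheaves on the varying-$x_1$ base really are abstractly isomorphic as \'etale sheaves (not merely as sheaves of sets), so that the largeness transports.

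First, I would verify that the hypotheses needed to convert the sheaf-of-sets isomorphism into an actual isomorphism of sheaves hold in the range of $L$ we care about. The construction in the excerpt identifies the $L$-Selmer sheaf with $J(\hat{C})[L]\otimes_\Z E_0[L]\otimes \mu_L^{-1}$ cut down by the $\mu_6$-invariants, and then rewrites $E_0[L]$ as $\Z[\zeta_6]/L$ to produce the Prym. For the identification to give a sheaf (and not just a sheaf of sets or a gerbe), one needs the obstruction in $H^1$ of the base to trivializing $E_0[L]$ as a $\mu_6$-module to vanish; since we have a marked ramification point (or equivalently a marked section over which the cover is trivialized), and since $L$ is prime to $6$, one can arrange such a trivialization locally in the \'etale topology and check it glues up to a character. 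Any residual cohomological twist is by a character of order dividing $6$, which by perfectness of $\SU_n$ (Corollary \ref{cor:sc_is_perfect} applied in the relevant range $n\ge 4$) is absorbed without affecting largeness.

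Next, I would apply the previous theorem: with $N=6$, $n\ge\max(5,2g-1)$, and $p\nmid 6$, the Prym family obtained by varying $x_1$ has large monodromy in $\SU(A)$, away from $2$ and $3$ (and away from $p$). The image of the fundamental group of the $x_1$-family on $\Prym(\hat{C},\mu_6)[L]$ thus contains a subgroup dense in the appropriate product of local $\SU$ factors. Transporting along the isomorphism above, the same statement holds for $\Sel_L(E)$. The one subtle point is the Galois twist by $\mu_L^{-1}$: this affects only the action of Frobenius on the coefficient module and does not alter the geometric monodromy, so largeness of the geometric Selmer monodromy follows immediately.

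The main obstacle, and where care is needed, will be checking that the identification of sheaves respects local conditions at the ramified places precisely enough to guarantee an isomorphism rather than an inclusion. The component-group hypothesis in the preceding proposition (that $\pi_0(\tilde\phi)$ be an isomorphism) is the relevant input; by the remark after that proposition the condition at weight-$1,\dots,5$ ramification points reduces to irreducibility of the Prym monodromy, which is ensured mod any prime $L$ coprime to $6$ under our rank hypothesis. Once this compatibility is in hand, the theorem is a direct corollary of the Prym monodromy result, requiring no further group-theoretic work.
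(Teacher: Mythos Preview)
Your proposal is essentially correct and follows the paper's approach: the theorem is an immediate corollary of the earlier Prym monodromy theorem (for $N=6$, $p\nmid 6$) via the isomorphism $\Sel_L(E_{\bar\F_q(C)})\cong \Prym(\hat{C}_{\bar\F_q},\mu_6)[L]$ established in the preceding discussion, which identifies the geometric monodromy groups.

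That said, you are overcomplicating two points. First, the ``sheaves versus sheaves of sets'' worry is unnecessary here: once a choice of $E_0/\F_q$ is fixed, the identification $E_0[L]\cong \Z[\zeta_6]/L\Z[\zeta_6]$ holds over $\bar\F_q$ as $\mu_6$-modules, so the displayed isomorphism is an honest isomorphism of \'etale sheaves \emph{geometrically}. The sheaf-of-sets subtlety (i.e., the monodromy being only projectively determined) pertains to the arithmetic monodromy and to families of pseudocovers, not to the geometric monodromy statement at hand. There is no $H^1$ obstruction to unwind and no need to invoke perfectness of $\SU_n$ to absorb a character.

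Second, your reading of the component-group remark is slightly off: the paper is observing that the component-group condition and the Prym-irreducibility condition fail at the same primes, not that irreducibility implies the component condition. More to the point, for $L$ coprime to $6$ the restriction/transfer maps compose to multiplication by $6$, which is invertible; this already forces the identification $\Sel_L(E)\cong (\Sel_L(E_0\times\hat{C}))^{\mu_6}$ without a separate local-condition check. The component-group proposition is used to justify the hypercohomology description, but for $L$ prime to $6$ the listed component groups (all of order dividing $6$) make $[L]$ an isomorphism on $\pi_0$ automatically.
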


\begin{rems}
  We of course get analogous results for \'etale covers for $g\ge 3$, as
  long as we allow the base curve to vary.
\end{rems}

A similar argument gives us the following.

\begin{thm}
  Fix $n\ge \max(5,2g-1)$, and let $\hat{C}\to C$ be a geometrically
  connected $\mu_4$-cover of a smooth genus $g$ curve of characteristic
  $p\nmid 4$ with $n+2-2g$ ramification points. Let $x_1$ be one of those
  points.  Then the monodromy of the family of Selmer groups of $j=1728$
  curves obtained by varying $x_1$ is large.
\end{thm}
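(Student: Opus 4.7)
The plan is to mirror the argument sketched just above for the $j=0$ case, with $\mu_6$ replaced throughout by $\mu_4$. Fix some $E_0/\F_q$ of $j$-invariant $1728$, for instance $y^2=x^3-x$, whose geometric automorphism group is $\mu_4$. The moduli of $j=1728$ elliptic surfaces over $C$ is then canonically identified (given the choice of $E_0$) with the moduli of $\mu_4$-pseudocovers of $C$: each such surface $E$ is the quotient of $E_0\times \hat{C}$ by the diagonal $\mu_4$-action, and conversely. In particular, the family of $\mu_4$-covers in the hypothesis (varying $x_1$) gives rise to a family of $j=1728$ elliptic surfaces over $C$, to which we attach the $L$-Selmer sheaf for $L$ coprime to $4$.

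Next, apply the restriction--corestriction formalism: the Weil restriction identity $\Sel(\phi)\cong \Sel(\Res_{L/K}\phi)$ combined with the computation of the Selmer group of a constant abelian variety over a curve, namely
\[
\Sel_L(E_0\times_k \hat{C})\cong J(\hat{C})[L]\otimes_\Z E_0[L]\otimes_\Z \mu_L^{-1},
\]
yields after taking $\mu_4$-invariants the identification
\[
\Sel_L(E_{\bar k(C)})\cong \bigl(J(\hat{C})[L]\otimes_\Z E_0[L]\otimes_\Z \mu_L^{-1}\bigr)^{\mu_4}.
\]
Here one must check the two hypotheses from earlier, namely that multiplication by $L$ induces an isomorphism on component groups of the N\'eron model and that $\ker[L](\bar k(C))=0$. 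The first is a direct check on Kodaira types at weights $1,2,3$ for $j=1728$, namely $\mathrm{III}$, $\mathrm{I}_0^*$, $\mathrm{III}^*$, whose component groups are all $2$-torsion and hence untouched by multiplication by odd $L$; the second follows from $L$ being prime to $4$ and the standard Galois-averaging argument, using geometric connectedness of $\hat{C}$ (and the nontriviality of the intermediate $\mu_2$-cover that $\hat{C}\to C$ always factors through).

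As a $\mu_4$-module over $\Z/L\Z$, the group $E_0[L]$ is free of rank $2$ and faithful, hence canonically isomorphic to $\Z[\zeta_4]/L\Z[\zeta_4]$. Pulling the $\mu_L^{-1}$ factor outside the invariants (its $\mu_4$-action is trivial, so it only twists the arithmetic but not the geometric monodromy), one obtains
\[
\Sel_L(E_{\bar k(C)})\cong J(\hat{C})[L]\otimes_{\Z[t]/(t^4-1)}\Z[t]/(t^2+1)\otimes \mu_L^{-1}
\cong \Prym(\hat{C},\mu_4)[L]\otimes\mu_L^{-1}
\]
as \'etale sheaves over the varying-point base, projectively isomorphic to the $L$-torsion of the Prym.

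Finally invoke the theorem from the previous subsection for $N=4$: the hypothesis $n\ge\max(5,2g-1)$ guarantees the Prym has $\Z[\zeta_4]$-rank at least $5$, putting us in the regime where varying-point monodromy of the Prym is large without exception. Largeness of the Selmer sheaf then follows from the fact (established in Section 3) that projective largeness implies largeness. The main delicate step is the N\'eron-model/component-group verification in the second paragraph: unlike $j=0$ there is only one obstruction prime ($p=2$) to worry about, but the $\mathrm{I}_0^*$ fibers at weight-$2$ ramification have a larger component group, so one must double-check that its $L$-torsion (trivial for odd $L$) really makes the component-group map an isomorphism, not merely one up to $2$-torsion.
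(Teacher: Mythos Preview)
Your proof is correct and follows exactly the approach the paper indicates (the paper itself just says ``A similar argument gives us the following'' and provides no further details). You have filled in the $\mu_4$-specific checks---the Kodaira types $\mathrm{III}$, $\mathrm{I}_0^*$, $\mathrm{III}^*$ with $2$-torsion component groups, and the identification $E_0[L]\cong \Z[\zeta_4]/L\Z[\zeta_4]$ for odd $L$---which are precisely what the paper leaves implicit. Your final paragraph slightly overstates the delicacy: the $\mathrm{I}_0^*$ component group being $(\Z/2\Z)^2$ rather than $\Z/2\Z$ is immaterial once $L$ is odd, so this case is actually simpler than $j=0$, not harder.
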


One can also consider cubic twists of $y^2=x^3+1$, for which one gets the
following.

\begin{thm}
  Fix $n\ge \max(5,2g-1)$, and let $\hat{C}\to C$ be a geometrically
  connected $\mu_3$-cover of a smooth genus $g$ curve of characteristic
  $p\nmid 3$ with $n+2-2g$ ramification points. Let $x_1$ be one of those
  points.  Then the monodromy of the family of Selmer groups of $j=0$
  curves obtained by varying $x_1$ is large.
\end{thm}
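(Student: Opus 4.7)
The plan is to run the Selmer-to-Prym reduction used above for $j=0$ with $\mu_6$, restricted to the subgroup $\mu_3\subset \mu_6=\Aut(E_0)$ that classifies cubic twists of $E_0:y^2=x^3+1$, and then to invoke the $N=3$ case of the large-monodromy theorem for $\mu_N$-Pryms on varying-point families proved earlier in this section.

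Cubic twists of $E_0$ over $K(C)$ are classified by $H^1(K(C),\mu_3)$, equivalently by $\mu_3$-pseudocovers $\hat C\to C$: over $\hat C$, such a twist $E$ becomes isomorphic to $E_0\times\hat C$, and is recovered as the diagonal $\mu_3$-quotient. The hypotheses of the Selmer-equals-hypercohomology and Weil-restriction propositions are verified exactly as in the $\mu_6$ case (at every bad fiber the Kodaira type is $\mathrm{II}$, $\mathrm{IV}$, $\mathrm{IV}^*$ or $\mathrm{II}^*$, so the component group has order dividing $3$ and multiplication by $L$ is an isomorphism on it for every $L$ coprime to $3$; and the $\mu_3$-action on $E_0[L]$ is fixed-point-free since multiplication by $\zeta_3$ has no fixed vector in $\Z[\zeta_3]/L$ when $\gcd(L,3)=1$, so $E[L](k^{\sep}(C))=0$). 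Consequently
\[
\Sel_L(E)_{\bar k}\;\cong\;\bigl(J(\hat C)[L]\otimes_\Z E_0[L]\otimes_\Z \mu_L^{-1}\bigr)^{\mu_3}
\]
as \'etale sheaves on the family of pseudocovers, for every $L$ prime to $3$.

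Next, using $\zeta_6=1+\zeta_3$ one has $\Z[\zeta_6]=\Z[\zeta_3]$, and $E_0[L]$ is free of rank $1$ over $\Z[\zeta_3]/L$ with $\mu_3\subset\mu_6$ acting by multiplication by $\zeta_3$. Taking $\mu_3$-invariants of the tensor product therefore collapses the right-hand side to
\[
\Sel_L(E)_{\bar k}\;\cong\;\Prym(\hat C,\mu_3)[L]\otimes_\Z \mu_L^{-1},
\]
and largeness of the Selmer monodromy reduces to largeness of the monodromy of $\Prym(\hat C,\mu_3)[L]$ on the varying-$x_1$ family (the $1$-dimensional character $\mu_L^{-1}$ being irrelevant for largeness). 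This last is the first theorem of this section applied with $N=3$, $p\nmid 3$, and $n\ge\max(5,2g-1)$. The main obstacle I expect is the pairwise-surjectivity step at small primes for $N=3$, where one must check that the conjugate pair of primes of $\Z[\zeta_3]$ above a rational prime does not produce residue-group isomorphisms outside the scope of Lemma \ref{lem:pairwise}; the assumption $n\ge 5$ keeps us out of the low-rank exceptions, and the argument proceeds verbatim as in the $\mu_6$ case.
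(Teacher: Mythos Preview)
Your argument is correct and matches the paper's (implicit) proof: the paper merely says ``one can also consider cubic twists of $y^2=x^3+1$'' and states the theorem, leaving the reader to rerun the $\mu_6$ argument with $\mu_3$ in its place, which is exactly what you have done.

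Two minor corrections. First, for genuine cubic twists the bad fibers have Kodaira types $\mathrm{IV}$ and $\mathrm{IV}^*$ only (weights $1,2\bmod 3$ correspond to weights $2,4\bmod 6$ under $\mu_3\hookrightarrow\mu_6$), not $\mathrm{II}$ or $\mathrm{II}^*$; this does not affect your conclusion since the component groups are still $C_3$. Second, your closing worry about pairwise surjectivity is unnecessary: for $N=3$ the totally real subfield $\Q(\zeta_3+\zeta_3^{-1})$ is $\Q$ itself, so there is only one prime of the real subfield above each rational prime and the pairwise step of Section~5 is vacuous.
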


One can say something similar about surfaces with constant
$j\notin\{0,1728\}$.  Here it is natural to allow the $j$ invariant to vary
as well, in which case one finds that the monodromy is large in
$\Sp_n\otimes \Sp_2$.

We can strengthen the $j=0$ result in the most natural case, in which the
ramification points of $\hat{C}\to C$ all have weight 1 (or, for that
matter, 5).  In that case, for {\em any} $L$, the $L$-Selmer group is the
$L$-torsion portion of an $L^\infty$-Selmer group, and thus the monodromy
not only makes sense, but lies in a suitable ad\`elic group.

\begin{thm}
  Fix $n\ge \max(5,2g-1)$, and let $\hat{C}\to C$ be a geometrically
  connected $\mu_6$-cover of a smooth genus $g$ curve of characteristic
  $p\nmid 6$ with $n+2-2g$ ramification points, all of weight 1 or 5. Let
  $x_1$ be one of those points.  Then the monodromy of the family of Selmer
  groups of $j=0$ curves obtained by varying $x_1$ is large.
\end{thm}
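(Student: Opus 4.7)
The hypothesis that all weights lie in $\{1,5\}$ forces every ramified fiber to have Kodaira type $\mathrm{II}$ or $\mathrm{II}^*$, and so to have trivial component group (by the Remark from the preceding discussion). My plan is to use this to upgrade the Selmer-Prym identification to include $\ell\in\{2,3\}$ and then extract ad\`elic largeness from the Prym monodromy theorems applied at every prime.

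First, since every component group is trivial, the multiplication-by-$L$ isogeny on the N\'eron model ${\cal E}\to C$ induces an isomorphism on component groups for every $L\ge 1$. The earlier proposition then yields $\Sel_L(E)\cong H^1_{\et}(C;[L]:{\cal E}\to{\cal E})$, and inverse limits over $L=\ell^k$ produce a genuine $\ell$-adic Tate module of Selmer groups for every prime $\ell$, on which the monodromy acts. These then assemble into an action on the ad\`elic Tate module, so it makes sense to ask for largeness at every prime, not just those prime to $6$.

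Second, the restriction/transfer argument identifies $\Sel_L(E_{\bar\F_q(C)})$ with the $\mu_6$-invariants of $\Sel_L(E_0\times\hat C)\cong \Pic(\hat C)[L]\otimes E_0[L]\otimes\mu_L^{-1}$. For $L\in\{2,3\}$ one checks directly on $E_0\colon y^2=x^3+1$ that $E_0[L]\cong\Z[\zeta_6]/L$ as $\mu_6$-modules: at $L=2$ both sides are $\F_4$ with $\mu_3$ acting faithfully and $\mu_2$ trivially; at $L=3$ both are the dual numbers $\F_3[\epsilon]/(\epsilon^2)$ with $\zeta_6$ acting as multiplication by $-1+\epsilon$. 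Taking $\mu_6$-invariants then gives $\Sel_L(E)\cong\Prym(\hat C,\mu_6)[L]$ up to a Tate twist, for every $L$.

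Third, I invoke Prym largeness at each prime $\ell$. For $\ell\nmid 6$ this is the preceding theorem. For $\ell=2$, the prime is inert in $\Z[\zeta_6]=\Z[\zeta_3]$ with unramified residue field $\F_4$: the mod-$2$ Jordan-Pochhammer representation is irreducible of rank $n\ge 5$ with pseudo-reflections of nontrivial eigenvalue $\zeta_3\in\F_4^{\times}$, so Section 4 gives mod-$2$ largeness and Theorem~\ref{thm:vasiu_fixed} supplies the lift to $\Z_2[\zeta_3]$ (no $A_{n-1}$ with $n\ge 5$ over unramified $\F_4$ appears in its exception list). For $\ell=3$, the prime $\frakp=(\zeta_6+1)$ is totally ramified, and because every weight $m_i$ is odd, every parameter $\zeta_6^{m_i}$ reduces to $-1\pmod\frakp$; we are thus in the totally ramified ($\Sp$-type) regime of Section~6.5, where the analysis produces nonscalar Lie algebra elements in the associated graded and yields $p$-adic largeness for $n\ge 5$.

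Ad\`elic largeness then follows from the pairwise surjectivity arguments of Sections~3 and~5, which apply once individual largeness is known at each prime; the condition $n\ge 5$ ensures the relevant residue groups are simple and the arguments go through verbatim. The main obstacle is the totally ramified analysis at $\ell=3$ in the extremal case when all weights are uniformly $1$ or uniformly $5$: in this situation the dual-numbers perturbations $\nu_i$ collapse to a single value and the direct transvection argument fails, so one must invoke the reduction-to-rank-$(n-2)$ trick of Section~6.5 to produce a nonscalar graded Lie algebra element, which is precisely what that section is designed to supply for $n\ge 5$.
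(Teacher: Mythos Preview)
Your proposal has a genuine gap in the second step. The restriction/transfer identification of $\Sel_L(E_{\bar\F_q(C)})$ with the $\mu_6$-invariants of $\Sel_L(E_0\times\hat C)$ works because the composition of restriction and transfer is multiplication by $[\hat C:C]=6$; for $L$ prime to $6$ this is an isomorphism, but for $L\in\{2,3\}$ it is the zero map, so the argument tells you nothing. Your verification that $E_0[L]\cong\Z[\zeta_6]/L$ as $\mu_6$-modules for $L\in\{2,3\}$ is correct but beside the point: it addresses the structure of the target, not whether the map $\Sel_L(E)\to\Sel_L(E_0\times\hat C)^{\mu_6}$ is an isomorphism. (One could try to salvage this by arguing directly that $E_0[L]$ is cohomologically trivial as a $\mu_6$-module and then checking that inflation--restriction respects the local Selmer conditions, but you do not do so, and the local analysis at ramified places is not immediate.)

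The paper sidesteps this by working $\ell$-adically. The restriction and transfer maps still exist at $\ell\in\{2,3\}$ and still compose to multiplication by $6$; on $\Z_\ell$-Tate modules this is not zero but an isogeny, so the Selmer and Prym lattices are commensurable and the underlying $\Q_\ell$-representations of monodromy are isomorphic. The hypothesis that all weights lie in $\{1,5\}$ enters exactly here: it makes each $1-\zeta_6^{m_i}$ a unit in $\Z[\zeta_6]$, so the Prym representation is irreducible modulo $2$ and modulo $\sqrt{-3}$. An irreducible mod-$\ell$ representation admits a unique invariant lattice up to homothety, forcing the Selmer lattice to be homothetic to the Prym lattice and hence the integral representations to agree. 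Your step-3 re-derivation of Prym largeness at $2$ and $3$ is then unnecessary: the paper's Prym theorems already give largeness at every prime once all $1-\lambda_i$ are units.
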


\begin{proof}
  We still have restriction and transfer maps at $2$ and $3$, but now the
  maps are no longer isomorphisms: instead, their compositions are
  symmetrization or multiplication by 6 as appropriate.  However, this is
  enough to establish that the corresponding $2$- and $3$-adic lattices are
  at least {\em comparable}.  In other words, the two monodromy
  representations are isomorphic over $\Q_2$ and $\Q_3$.  But the monodromy
  of the Prym is irreducible modulo $2$ and $\sqrt{-3}$, which implies that
  the two lattices are at least {\em homothetic}, implying that
  the representations are actually isomorphic!
\end{proof}

Note that just as for the Prym, there are residual questions: for $g>0$, we
do not in general know the image of the determinant map.  Furthermore, the
{\em arithmetic} monodromy lies in some coset of the geometric monodromy in
its normalizer, but that coset will depend on the Galois module structure
of $E_0[L]\otimes \mu_L^{-1}$.  (Presumably this will become simpler if we
enlarge our family to allow $E_0$ to vary.)

The failure to understand the determinant should be a relatively minor
issue: experimentally, the different cosets of $\SU$ inside a given coset
of $\U$ give either the same distribution for the shape of the fixed
submodule or very nearly the same, all contained in an $L^1$ ball
with radius converging rapidly to $0$ as the rank goes to infinity.  If
this is indeed the case, then the error introduced by replacing the average
over a coset of geometric monodromy by an average over the corresponding
coset of the unitary group will be small.  Moreover, we can easily tell
which coset of the unitary group contains the arithmetic monodromy: in the
Selmer case, the arithmetic monodromy preserves a quadratic form, and thus
is either contained in the unitary group or in the coset of semilinear
transformations negating the anti-Hermitian form, depending on how
$\Gal(\F_q)$ acts on $\mu_6$.

We have not tried to compute the actual limiting distributions, however, as
this still involves four cases (or six if we include the $3$-power Selmer
group for $j=0$): the behavior for $l$-powers depends on whether $l$ splits
in $\Q(\zeta_6)$ as well as whether $q$ is $1$ or $2$ mod 3.  Note that
when $q=1(3)$, it is straightforward to use the above monodromy
calculations to estimated the expected size of the $l$-Selmer group.  The
key is the following (presumably well-known) analogue of Burnside's Lemma.

\begin{lem}
  Let $G$ be a finite group acting on the finite set $X$, let $N\normal
  G$ be a normal subgroup, and let $C$ be a coset of $N$ in $G$.  Then
  \[
  \frac{1}{|C|} \sum_{g\in C} |\{x:x \in X|gx=x\}|
  \]
  is equal to the number of $N$-orbits in $X$ that are preserved by $C$.
\end{lem}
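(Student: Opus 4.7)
The plan is to prove the lemma by double-counting pairs $(c,x)\in C\times X$ with $cx=x$, in the style of Burnside's lemma, while exploiting the normality of $N$. The key structural observation to set up first is that $C$ permutes the $N$-orbits: if $O=Nx_0$ is an $N$-orbit and $c\in C$, then $cO = cNx_0 = Ncx_0$ is again an $N$-orbit, since $cN=Nc$. Moreover, whether an $N$-orbit $O$ is preserved by $C$ depends only on the action of one element of $C$: if $c'=cn$ with $n\in N$, then $c'O = cnO = cO$ because $N$ preserves $O$. So ``preserved by $C$'' is equivalent to ``preserved by some (equivalently, every) element of $C$.''

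Next I would partition the double sum by $N$-orbits,
\[
\sum_{c\in C}|X^c| = \sum_{O}\sum_{c\in C}|O^c|,
\]
and treat the two cases. If $O$ is not preserved by $C$, then no $c\in C$ fixes any point of $O$, because $cx=x\in O$ would force $cO=O$; so these orbits contribute $0$. If $O$ is preserved by $C$, I would compute the inner sum by reversing the order, $\sum_{c\in C}|O^c| = \sum_{x\in O}|\{c\in C:cx=x\}|$, and show that each inner count equals $|\mathrm{Stab}_N(x)|=|N|/|O|$. Nonemptiness of $\{c\in C:cx=x\}$ follows because $C$ preserves $O$: picking any $c_1\in C$, one has $c_1x\in O$, so by transitivity of $N$ on $O$ there is $n\in N$ with $nc_1x=x$, and $nc_1\in NC=C$; and given one such solution $c_0$, the full set is the coset $c_0(\mathrm{Stab}_N(x))$ inside $C$. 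Thus each preserved orbit contributes $|O|\cdot|N|/|O|=|N|$ to $\sum_{c\in C}|X^c|$.

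Summing, $\sum_{c\in C}|X^c|=|N|\cdot(\text{number of }C\text{-preserved }N\text{-orbits})$, and dividing by $|C|=|N|$ yields the claim. There is no real obstacle here, as the argument is essentially Burnside's lemma applied coset by coset; the only subtlety to make sure to spell out is the use of normality of $N$ in the two places it enters, namely in showing that $C$ permutes the $N$-orbits and in showing that the fixed-point counts on each preserved orbit are uniform in $x\in O$.
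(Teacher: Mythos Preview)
Your proof is correct and complete. The argument is a clean combinatorial double count: partition fixed-point pairs by $N$-orbit, observe that non-preserved orbits contribute nothing, and for each preserved orbit show that $|\{c\in C:cx=x\}|=|\mathrm{Stab}_N(x)|$ by exhibiting the set as a coset of $\mathrm{Stab}_N(x)$. The two uses of normality are exactly where you flag them.

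The paper takes a different route, via the permutation representation $\pi$ of $G$ on $\C X$. It rewrites the average as the trace of $\frac{1}{|C|}\sum_{g\in C}\pi(g)$, then observes that this operator factors through the projection $\Pi_N=\frac{1}{|N|}\sum_{n\in N}\pi(n)$ onto the $N$-fixed subspace. That subspace has a basis indexed by $N$-orbits (the orbit sums), on which any $g\in C$ acts by the permutation of $N$-orbits induced by $C$; the trace is then visibly the number of fixed orbits. Your approach is more elementary and entirely self-contained, requiring no linear algebra; the paper's approach is slightly more conceptual in that it explains \emph{why} the answer should be a count of fixed $N$-orbits (the operator literally lives on the span of orbit sums), and it generalizes immediately if one replaces the permutation module by an arbitrary representation with a distinguished basis permuted by $G/N$ on the $N$-fixed part. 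Either argument is perfectly adequate here.
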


\begin{proof}
  We may write $|\{x:x\in X|gx=x\}|$ as $\Tr(\pi(g))$, where $\pi$ is the
  corresponding permutation representation of $G$, and thus we reduce to
  computing
  \[
  \Tr(\frac{1}{|C|} \sum_{g\in C}\pi(g)).
  \]
  If we define
  \[
  \Pi_N:=\frac{1}{|N|} \sum_{g\in N}\pi(n),
  \]
  so that $\Pi_N$ projects onto the $N$-fixed subspace, we find
  \[
  \frac{1}{|C|} \sum_{g\in C}\pi(g)
  =
  \pi_N \bigl(\frac{1}{|C|} \sum_{g\in C}\pi(g)\bigr) \pi_N,
  \]
  and thus we may instead compute the trace of the restriction to the
  $N$-fixed subspace, which has a basis given by the sums over orbits of
  $N$ in $X$.  In that basis, $g\in C$ acts by a permutation, and thus the
  trace of $g$ on that subspace is nothing other than the number of fixed
  points of that permutation.
\end{proof}

\begin{lem}
  Let $G/\F_q$ be a smooth connected group scheme and let $X/\F_q$ be a
  $G$-set such that every point in $X(\F_q)$ has smooth connected
  stabilizer.  Then the set of $G(\F_q)$-orbits in $X(\F_q)$ is naturally
  bijective with the set of $G(\bar\F_q)$-orbits in $X(\bar\F_q)$ that are
  preserved by the action of Frobenius.
\end{lem}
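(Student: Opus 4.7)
The plan is to prove this via two applications of the Lang-Steinberg theorem: the first to the ambient group $G$ (giving surjectivity of the natural map from $G(\F_q)$-orbits to Frobenius-stable $G(\bar\F_q)$-orbits), and the second to the stabilizers of $\F_q$-points (giving injectivity).

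First I would define the map. Given an $\F_q$-point $x \in X(\F_q)$, its $G(\bar\F_q)$-orbit is automatically Frobenius-stable, and $G(\F_q)$-equivalent points obviously lie in the same $G(\bar\F_q)$-orbit, so we get a well-defined map from $G(\F_q)$-orbits in $X(\F_q)$ to Frobenius-stable $G(\bar\F_q)$-orbits in $X(\bar\F_q)$. For surjectivity, given a Frobenius-stable orbit $O$, choose any $x \in O$; then $F(x) \in O$, so $F(x) = g \cdot x$ for some $g \in G(\bar\F_q)$. By Lang's theorem applied to the smooth connected group $G$, we can solve $h^{-1} F(h) = g^{-1}$ in $G(\bar\F_q)$, and then $y := h \cdot x$ satisfies $F(y) = F(h) F(x) = F(h) g \cdot x = h \cdot x = y$, yielding the desired $\F_q$-point of $O$.

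For injectivity, suppose $x, y \in X(\F_q)$ satisfy $y = g \cdot x$ for some $g \in G(\bar\F_q)$. Applying Frobenius and using $F(x) = x$, $F(y) = y$ gives $y = F(g) \cdot x$, so $g^{-1} F(g) \in \mathrm{Stab}(x)(\bar\F_q)$. This is where the hypothesis enters: $\mathrm{Stab}(x)$ is smooth and connected over $\F_q$, so Lang's theorem applies to it, producing $s \in \mathrm{Stab}(x)(\bar\F_q)$ with $s^{-1} F(s) = g^{-1} F(g)$. Then $h := g s^{-1}$ satisfies $F(h) = h$ (by a direct computation rearranging the Lang identity), hence $h \in G(\F_q)$, and $h \cdot x = g \cdot x = y$ since $s$ fixes $x$.

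There is no real obstacle here beyond correctly identifying the two places Lang-Steinberg is applied; the smoothness and connectedness hypothesis on stabilizers is tailored precisely to make the second application legal, and the connectedness of $G$ itself makes the first application legal. One minor bookkeeping point to verify is that $\mathrm{Stab}(x)$ for $x \in X(\F_q)$ is genuinely an $\F_q$-group scheme (so that Frobenius acts on it), which follows from the fact that $x$ is $\F_q$-rational so the stabilizer is the fiber of an $\F_q$-morphism over an $\F_q$-point.
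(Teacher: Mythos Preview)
Your proof is correct and essentially identical to the paper's: both apply Lang's theorem first to $G$ to produce an $\F_q$-rational point in each Frobenius-stable orbit, and then to the stabilizer $G_x$ to show that any two $\F_q$-rational points of the same geometric orbit are $G(\F_q)$-equivalent. The paper phrases the first step in the language of Galois $1$-cocycles vanishing in $H^1(\F_q;G)$, but this is exactly your Lang-Steinberg computation.
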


\begin{proof}
  Let $x\in X(\bar\F_q)$ be a representative of an orbit preserved by
  Galois, so that $\sigma_q(x)=gx$ for some $g\in G(\bar\F_q)$.  This
  determines a Galois 1-cocycle for $H^1(\F_q;G)$, which is trivial since
  $G$ is connected \cite{LangS:1956}.  There is thus some $h\in
  G(\bar\F_q)$ such that $g = \sigma_q(h)^{-1} h$ and thus
  $\sigma_q(hx)=hx$, so that there is an orbit representative defined over
  $\F_q$.

  It remains to show that that orbit representative is unique up to the
  action of $G(\F_q)$; in other words, if $\sigma_q(x)=x$ and
  $\sigma_q(gx)=gx$, we need to show that $gx=g'x$ for some
  $g'\in G(\F_q)$.  By assumption, $g^{-1}\sigma_q(g)\in G_x$, and
  connectivity of $G_x$ again implies that $g^{-1}\sigma_q(g) =
  h \sigma_q(h)^{-1}$ for some $h\in G_x$, and thus we may take $g'=gh$.
\end{proof}

\begin{lem}
  For $n\ge 3$, the orbits of $\SL_n(\bar\F_q)$ on $V_n\oplus V_n^\perp$
  are distinguished by the evaluation map $(v,\lambda)\mapsto \lambda(v)$,
  except that if that map is 0, there are four orbits depending on which of
  $v$, $\lambda$ or both are 0.  In each case, the orbit stabilizer is
  smooth and connected, and every orbit is preserved by $\GL_n(\bar\F_q)$.
\end{lem}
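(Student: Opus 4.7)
The plan is to classify the $\SL_n(\bar\F_q)$-orbits by producing normal forms, compute stabilizers directly from those normal forms, and then verify $\GL_n$-invariance of each orbit by a short separate argument. Throughout, the case analysis is driven by the scalar $c := \lambda(v)$ and, when $c=0$, by which of $v,\lambda$ vanishes; the argument produces exactly the listed orbits and exceptions.

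First, I would reduce to a normal form. If $v\ne 0$, transitivity of $\SL_n(\bar\F_q)$ on nonzero vectors of $V_n$ lets us take $v=e_1$; the stabilizer of $e_1$ then contains the shears adding multiples of $e_1$ to other basis vectors and an $\SL_{n-1}$-block acting on $\mathrm{span}(e_2,\dots,e_n)$. If $c\ne 0$, the shears $e_i\mapsto e_i-(a_i/c)e_1$ clear every $e_i^*$-component of $\lambda$ for $i\ge 2$, producing $\lambda=c\,e_1^*$. If $c=0$ and $\lambda\ne 0$, then $\lambda$ lies in the hyperplane spanned by $e_2^*,\dots,e_n^*$, and the $\SL_{n-1}$-block (acting on the contragredient representation) is transitive on nonzero vectors of that hyperplane precisely when $n-1\ge 2$, which is exactly where the hypothesis $n\ge 3$ enters. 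The cases $v=0$ are handled symmetrically, using $\SL_n$-transitivity on nonzero dual vectors. This yields one orbit for each $c\ne 0$ and exactly four orbits for $c=0$.

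Next, I would compute stabilizers directly from the normal forms. The zero orbit has stabilizer $\SL_n$; each orbit with exactly one of $v,\lambda$ nonzero has stabilizer the maximal parabolic $\SL_{n-1}\ltimes\G_a^{n-1}$; the orbit $(e_1,c\,e_1^*)$ with $c\ne 0$ is cut out by first column $=e_1$ and first row $=e_1^T$, yielding a block-diagonal $\SL_{n-1}$; and the orbit with $v,\lambda$ both nonzero and $\lambda(v)=0$ has stabilizer the smooth connected parabolic-type subgroup preserving the partial flag $\mathrm{span}(e_1)\subset\ker\lambda$, with Levi $\SL_{n-2}$ and unipotent radical a standard extension. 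All of these are standard smooth connected closed subgroups of $\SL_n$.

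For $\GL_n(\bar\F_q)$-invariance I would decompose $\GL_n=\SL_n\cdot Z$ with $Z=\G_m\cdot I$ the central scalars; the centre acts on $V_n\oplus V_n^\perp$ by $(v,\lambda)\mapsto(zv,z^{-1}\lambda)$, which preserves both $\lambda(v)$ and the nonzeroness of each coordinate, so every $\SL_n$-orbit is stable under $Z$ and hence under $\GL_n$. The only real obstacle is the genuinely mild boundary issue at $n-1=1$: transitivity of $\SL_{n-1}$ on nonzero vectors in a hyperplane fails for $\SL_1$, which is precisely why the lemma requires $n\ge 3$ and why the four-orbit count for $c=0$ collapses for smaller $n$.
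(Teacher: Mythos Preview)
Your proof is correct and follows essentially the same approach as the paper's: both reduce to normal forms via transitivity of $\SL_n$ on nonzero vectors, use shears to clear dual components when $c\ne 0$ and $\SL_{n-1}$-transitivity when $c=0$, handle $\GL_n$-invariance via the central scalar action on $(v,\lambda)$, and identify the stabilizers as $\SL_{n-1}$ or parabolic-type subgroups. Your write-up is in fact slightly more careful than the paper's, making explicit where $n\ge 3$ is used and spelling out the stabilizer in the $v,\lambda\ne 0$, $\lambda(v)=0$ case.
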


\begin{proof}
  Certainly, $\SL_n$ is transitive on nonzero vectors, so that every orbit
  with $v\ne 0$ has a representative with $v=e_1$ (relative to ones
  favorite basis).  Since $\SL_n$ is transitive on nonzero linear
  functionals, there are precisely two orbits with $v=0$, as required.

  If the expansion of $\lambda$ relative to the dual basis is $c f_1 +
  \sum_{i>1} a_i f_i$ with $c\ne 0$, then $e_i \mapsto e_i + (a_i/c) e_1$
  eliminates $a_i$, and thus we have an orbit representative $(e_1,cf_1)$,
  proving that there is indeed a unique orbit for each nonzero value of
  $\lambda(v)$.   Similarly, if $c\ne 0$, then either $\lambda=0$ or we may
  use transitivity of $\SL_{n-1}$ on nonzero linear functionals to reduce
  to the orbit representative $(e_1,f_n)$.

  Since $\GL_n(\bar\F_q)$ is generated over $\SL_n(\bar\F_q)$ by scalars,
  which act in opposite ways on the two summands, $\lambda(v)$ is a
  $\GL_n$-invariant, and thus the orbits are indeed all preserved by
  $\GL_n(\bar\F_q)$.  For the orbits with $\lambda(v)\ne 0$, the stabilizer
  of the standard representative is nothing other than $\SL_{n-1}$, while
  for the orbits with $\lambda(v)=0$, the stabilizer is parabolic, and in
  either case, the stabilizer is a connected group scheme as required.
\end{proof}

\begin{thm}\label{thm:avgsel_j0}
  Fix an integer $n>1$, and let $q_m$ be a sequence of odd prime powers
  congruent to $1$ mod $3$ with $\lim_{m\to\infty} q_m=\infty$.  For each
  $m$, let $X_m/\P^1$ range over all elliptic surfaces of height $n$ (i.e.,
  with discriminant in $O_{\P^1}(12n)$) with $j$-invariant $0$, such that
  the N\'eron model has no reducible fibers.  Then for all primes $l\ne 3$, as
  $m\to\infty$,
  \[
  E|\Sel_l(X_m)| = l+2+\left(\frac{l}{3}\right) + O(q_m^{-1/2})
  \]
\end{thm}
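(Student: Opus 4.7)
The plan is to combine the identification of the $l$-Selmer sheaf with the Prym $l$-torsion (established earlier in this section), Deligne's equidistribution theorem, and the two Burnside-style lemmas immediately above, thereby reducing the average computation to an orbit count that we evaluate using the last lemma above.

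First, I would invoke the identification $\Sel_l(X_m)\cong (\Prym(\hat{C}_m)[l]\otimes_{\Z} E_0[l]\otimes_{\Z}\mu_l^{-1})^{\mu_6}$ established in this section, where $\hat{C}_m\to\P^1$ is the $\mu_6$-cover corresponding to $X_m$ after fixing $E_0\colon y^2=x^3+1$. Surfaces of height $n$ with no reducible fibers correspond to covers with only weight-$1$ and weight-$5$ ramification; for $n>1$ the dominant configurations have $\Z[\zeta_6]$-Prym rank at least $5$, so the hypotheses of the main largeness theorem of Section~10 apply (using $\gcd(l,6)=1$) and the Prym $l$-torsion sheaf over the moduli stack has large geometric monodromy $G\supseteq \SU(A)'$. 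Because $q_m$ is odd and $q_m\equiv 1\pmod 3$, in fact $q_m\equiv 1\pmod 6$, so $\mu_6\subset\F_{q_m}$; this makes the Galois twist by $E_0[l]\otimes\mu_l^{-1}$ act by an $\F_l$-linear scalar that is compensated by the Weil-pairing scalar of Frobenius on the Prym, so the arithmetic monodromy acts by $\F_l$-linear maps preserving the Hermitian/orthogonal structure exactly.

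Second, by Deligne's equidistribution for the pure weight-zero Selmer sheaf, the Frobenius conjugacy classes over $\F_{q_m}$-points of the moduli stack equidistribute in the arithmetic monodromy coset $C$ with error $O(q_m^{-1/2})$, giving
\[
E|\Sel_l(X_m)| = \frac{1}{|C|}\sum_{g\in C}|V^g| + O(q_m^{-1/2}),
\]
where $V$ is the stalk of the Selmer sheaf at a geometric point. The first Burnside-style lemma above identifies the main term with the number of $G$-orbits on $V$ preserved setwise by $C$; the second lemma (whose smooth-connected-stabilizer hypothesis is routinely checked for our stabilizers when $n>1$) identifies this in turn with the number of $G(\F_{q_m})$-orbits on $V(\F_{q_m})$.

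Third, I evaluate the orbit count case by case using the last lemma above. When $l\equiv 1\pmod 3$, the prime $l$ splits in $\Z[\zeta_6]$, so $V\cong W\oplus W^\vee$ with $W$ a rank-$n$ $\F_l$-vector space and $G\supseteq\SL_n(\F_l)$ acting by the standard and dual representations; the last lemma enumerates exactly $l+3$ orbits, distinguished by the $\GL_n(\bar\F_l)$-invariant evaluation $\lambda(v)$ together with the zero/nonzero status of each factor, and hence all preserved by $C$. When $l\equiv 2\pmod 3$, $V$ is a rank-$n$ Hermitian $\F_{l^2}$-space with $G\supseteq\SU_n(\F_{l^2}/\F_l)$; Witt's extension theorem gives $l+1$ orbits indexed by $\langle v,v\rangle\in\F_l$ with the zero/nonzero dichotomy on the isotropic locus, each preserved by $C$. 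Combining yields the main term $l+2+(l/3)$. The chief obstacle is pinning down $C$ precisely enough to guarantee that every geometric orbit is preserved individually rather than fused: in the split case this is automatic (the orbits are $\GL_n$-invariant), but in the inert case fusion would occur if $C$ had a nontrivial similitude factor in $\GU(A)\setminus\U(A)$, so one must verify that Frobenius preserves the Hermitian form exactly. It is precisely the cancellation between the $\mu_6$-scalar of Frobenius on $\Prym$ and the scalar from the $\mu_l^{-1}$ twist in $\Sel_l$---guaranteed by $q_m\equiv 1\pmod 3$---that accomplishes this. Handling the contribution of low-rank configurations (where $b$ is close to maximal) via a separate negligibility estimate, and verifying these compatibilities in detail, constitutes the technically careful part of the argument.
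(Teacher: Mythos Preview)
Your overall strategy---equidistribution, then the Burnside-style lemma, then an orbit count---matches the paper, and your final orbit counts in both residue classes of $l$ are correct.  There are, however, two places where your execution diverges from the paper in ways worth noting.

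First, your invocation of the second (Lang-type) lemma is garbled: you write that it yields ``$G(\F_{q_m})$-orbits on $V(\F_{q_m})$,'' but the relevant field is $\F_l$, not $\F_{q_m}$, and the direction is backwards (the lemma passes from $G(\F_l)$-orbits to $G(\bar\F_l)$-orbits fixed by $l$-Frobenius, not the reverse).  In fact your case-by-case count in the third paragraph never uses this lemma, so the misstatement is harmless, but as written it is confusing.

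Second, and more substantively, the paper does the orbit count uniformly rather than splitting on $l\bmod 3$.  It views the geometric monodromy as the $\F_l$-points of an $\F_l$-form of $\SL_n$, applies the Lang lemma to identify its orbits with $\SL_n(\bar\F_l)$-orbits on $V_n\oplus V_n^\perp$ fixed by $l$-Frobenius, invokes the $\SL_n$-on-$V_n\oplus V_n^\perp$ lemma once over $\bar\F_l$, and then observes that the two ``axis'' orbits ($v=0,\lambda\ne 0$ and $v\ne 0,\lambda=0$) are swapped by $l$-Frobenius exactly when $l\equiv 2\pmod 3$.  This yields $l+3$ or $l+1$ without a separate Witt argument.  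Your split-case route (the $\SL_n$ lemma over $\F_l$ when $l$ splits, Witt's extension theorem when $l$ is inert) is valid but less economical.

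Finally, on the fusion issue: rather than arguing that the twist by $\mu_l^{-1}$ cancels a similitude factor to pin the coset $C$ inside $\U$, the paper simply observes that every $\SU$-orbit is already invariant under the full ambient group $\GU$ (which in this paper's convention denotes the \emph{isometry} group, geometrically $\GL_n$, not the similitude group).  Since the $\SL_n(\bar\F_l)$-orbits on $V_n\oplus V_n^\perp$ are all $\GL_n(\bar\F_l)$-stable by the third lemma, no coset of $\SU$ inside $\GU$ can fuse them, and the count is automatically the same for every such coset.  This sidesteps the delicate compatibility you flag as ``the technically careful part.''
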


\begin{proof}
  By equidistribution, this reduces to showing that the average of
  $|\ker(g-1)|$ over the arithmetic monodromy is
  $l+2+\left(\frac{l}{3}\right)$.  Since the arithmetic monodromy contains
  the appropriate unitary (or linear) group, it suffices to show that this
  holds for any coset of the special unitary group, or equivalently that
  the special unitary group has that many orbits, all of which are
  preserved by the general unitary group.  The special unitary group is the
  $\F_l$-points of a group scheme geometrically isomorphic to $\SL_n$, and
  thus we may apply the previous Lemmas to reduce to counting the number of
  orbits over $\bar\F_l$ that are preserved by Frobenius.  The orbits for
  which both $v$ and $\lambda$ are nonzero are classified by $\lambda(v)$,
  on which Frobenius acts as expected, so that the $\F_l$-rational orbits of
  that form are bijective with $\F_l$.  Similarly, the singleton orbit
  $(v,\lambda)=(0,0)$ is preserved by Frobenius.  Whether the remaining two
  orbits contribute is determined by whether Frobenius swaps $V_n$ and
  $V^\perp_n$, which happens precisely when $l=2(3)$.  We thus see that
  there are $l+1$ orbits when $l=2(3)$ and $l+1+2$ orbits when $l=1(3)$.
\end{proof}

\begin{rem}
  One can show that the formula continues to hold for $l=3$.
\end{rem}

We thus find that one already sees a significant difference in distribution
for half the primes $l$.

\medskip

It should also be possible to use these results to show that various
families of general curves have large Selmer monodromy.  Indeed, for any
family of general curves such that the subfamily with $j=0$ has large
monodromy, the monodromy of the larger family contains the monodromy of the
subfamily, and thus contains a conjugate of the relevant special unitary
group $\SU_n$.  For each $l$, this is close to a maximal subgroup of
$\SO_{2n}(\F_l)$, and thus nearly any way of showing the monodromy is {\em
  not} contained in $\U_n(\F_l)$ will imply largeness.  (Largeness mod each
prime $l$ suffices, for much the same reason as it does for Pryms.)  For
instance, if the monodromy of the general family contains a reflection
(e.g., if it contains curves with precisely one reducible fiber, of type
$\text{II}$ or $\text{I}_2$), this immediately implies largeness of
monodromy.  Indeed, the proof of Theorem \ref{thm:avgsel_j0} shows that the
special unitary group is transitive on anisotropic vectors of any given
norm, and thus if the monodromy contains any reflection, it contains an
entire conjugacy class of reflections.

\bibliographystyle{plain}

\begin{thebibliography}{10}

\bibitem{AchterJD/PriesR:2007}
J.~D. Achter and R.~Pries.
\newblock The integral monodromy of hyperelliptic and trielliptic curves.
\newblock {\em Math. Ann.}, 338(1):187--206, 2007.

\bibitem{ArtinM:1977}
M.~Artin.
\newblock Coverings of the rational double points in characteristic {$p$}.
\newblock In {\em Complex analysis and algebraic geometry}, pages 11--22.
  Iwanami Shoten Publishers, Tokyo, 1977.

\bibitem{BKLPR}
M.~Bhargava, D.~M. Kane, H.~W. Lenstra, Jr., B.~Poonen, and E.~Rains.
\newblock Modeling the distribution of ranks, {S}elmer groups, and
  {S}hafarevich-{T}ate groups of elliptic curves.
\newblock {\em Camb. J. Math.}, 3(3):275--321, 2015.

\bibitem{BoschS/LutkebohmertW/RaynaudM:1990}
S.~Bosch, W.~L\"utkebohmert, and M.~Raynaud.
\newblock {\em N\'eron models}, volume~21 of {\em Ergebnisse der Mathematik und
  ihrer Grenzgebiete (3)}.
\newblock Springer-Verlag, Berlin, 1990.

\bibitem{DicksonLE:1958}
L.~E. Dickson.
\newblock {\em Linear groups: {W}ith an exposition of the {G}alois field
  theory}.
\newblock Dover Publications, Inc., New York, 1958.
\newblock With an introduction by W. Magnus, originally published 1901.

\bibitem{selmer0}
T.~Feng, A.~Landesman, and E.~M. Rains.
\newblock The geometric distribution of {S}elmer groups of elliptic curves over
  function fields.
\newblock {\em Math. Ann.}, 387(1-2):615--687, 2023.

\bibitem{KatzNM:1986}
N.~M. Katz.
\newblock Local-to-global extensions of representations of fundamental groups.
\newblock {\em Ann. Inst. Fourier (Grenoble)}, 36(4):69--106, 1986.

\bibitem{LangS:1956}
S.~Lang.
\newblock Algebraic groups over finite fields.
\newblock {\em Amer. J. Math.}, 78:555--563, 1956.

\bibitem{MalleG/MatzatBH:2018}
G.~Malle and B.~H. Matzat.
\newblock {\em Inverse {G}alois theory}.
\newblock Springer Monographs in Mathematics. Springer, Berlin, second edition,
  2018.

\bibitem{PopF:2014}
F.~Pop.
\newblock The {O}ort conjecture on lifting covers of curves.
\newblock {\em Ann. of Math. (2)}, 180(1):285--322, 2014.

\bibitem{SerreJP:1968}
J.-P. Serre.
\newblock {\em Abelian {$l$}-adic representations and elliptic curves},
  volume~7 of {\em Research Notes in Mathematics}.
\newblock A K Peters, Ltd., Wellesley, MA, 1998.
\newblock With the collaboration of W. Kuyk and J. Labute, Revised reprint of
  the 1968 original.

\bibitem{ShephardGC/ToddJA:1954}
G.~C. Shephard and J.~A. Todd.
\newblock Finite unitary reflection groups.
\newblock {\em Canad. J. Math.}, 6:274--304, 1954.

\bibitem{TakanoK/BannaiE:1976}
K.~Takano and E.~Bannai.
\newblock A global study of {J}ordan-{P}ochhammer differential equations.
\newblock {\em Funkcial. Ekvac.}, 19(1):85--99, 1976.

\bibitem{VasiuA:2003}
A.~Vasiu.
\newblock Surjectivity criteria for {$p$}-adic representations. {I}.
\newblock {\em Manuscripta Math.}, 112(3):325--355, 2003.

\bibitem{VenkataramanaTN:2014}
T.~N. Venkataramana.
\newblock Monodromy of cyclic coverings of the projective line.
\newblock {\em Invent. Math.}, 197(1):1--45, 2014.

\bibitem{VolkleinH:1998}
H.~V\"olklein.
\newblock Rigid generators of classical groups.
\newblock {\em Math. Ann.}, 311(3):421--438, 1998.

\end{thebibliography}

\end{document}